\tikzset{labl/.style={anchor=south, rotate=90, inner sep=.5mm},
  vertex/.style={circle,minimum size=0.15cm,inner sep=0,fill=black},
  thickeraser/.style={line width=2.4pt, white}
}
\title[Cones, snowflakes, shortcuts]{Asymptotic cones of snowflake groups and the strong shortcut property}
\author[Cashen]{Christopher H.\ Cashen}
\address{Faculty of Mathematics\\University of
  Vienna\\Oskar-Morgenstern-Platz 1\\1090 Vienna, Austria\\
\href{https://orcid.org/0000-0002-6340-469X}{\includegraphics[scale=.75]{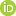}
  0000-0002-6340-469X}}
\email{christopher.cashen@univie.ac.at}
\author[Hoda]{Nima Hoda}
\address{DMA, École normale supérieure \\ Université
  PSL, CNRS \\ 75005 Paris, France}
\email{nima.hoda@mail.mcgill.ca}
\author[Woodhouse]{Daniel J.\ Woodhouse}
\address{Mathematical Institute\\ University of Oxford\\ Andrew Wiles Building\\ Radcliffe Observatory Quarter (550)\\ Woodstock Road, Oxford, OX2 6GG}
\email{daniel.woodhouse@mail.mcgill.ca}
\thanks{C.\ Cashen is supported by the Austrian Science Fund (FWF):
  P30487-N35 and P34214-N and visited the University of Arkansas
  during part of this work.  N.\ Hoda
  is supported by the ERC grant GroIsRan.}
\keywords{snowflake group, asymptotic cone, shortcut group,
  strongly shortcut group}
\subjclass[2020]{20F65, 20F69, 51F30}
\numberwithin{equation}{subsection}
\newtheorem{thm}[equation]{Theorem}
\newtheorem{corollary}[equation]{Corollary}
\newtheorem{lemma}[equation]{Lemma}
\newtheorem{prop}[equation]{Proposition}
\newtheorem{proposition}[equation]{Proposition}
\newtheorem*{theorem*}{Theorem}
\theoremstyle{definition}
\newtheorem{defn}[equation]{Definition}
\newtheorem{definition}[equation]{Definition}
\newtheorem{remark}[equation]{Remark}
\newtheorem{claim}[equation]{Claim}
\newtheorem{fact}[equation]{Fact}
\newcommand{\Z}{\mathbb{Z}}
\newcommand{\R}{\mathbb{R}}
\newcommand{\CAT}{\operatorname{CAT}}
\DeclareMathOperator{\Cone}{Cone}
\DeclareMathOperator{\Cay}{Cay}
\DeclareMathOperator{\sign}{sign}
\DeclareMathOperator{\cl}{cl}
\DeclareMathOperator{\SL}{SL}
\definecolor{amethyst}{rgb}{0.6, 0.4, 0.8}
\newcommand{\hide}[1]{}
\newcommand\restr[2]{{
  \left.\kern-\nulldelimiterspace 
  #1 
  \vphantom{\big|} 
  \right|_{#2} 
  }}
\newcommand{\from}{\colon\thinspace}
\newenvironment{subproof}[1][\proofname]{%
  \begin{proof}[#1]%
}{%
  \end{proof}%
}
\begin{document}

\begin{abstract}
We exhibit an infinite family of snowflake groups all of whose
asymptotic cones are simply connected. Our groups have neither
polynomial growth nor quadratic Dehn function, the two usual sources
of this phenomenon. We further show that each of our groups has an
asymptotic cone containing an isometrically embedded circle or,
equivalently, has a Cayley graph that is not strongly shortcut. These
are the first examples of groups whose asymptotic cones contain
`metrically nontrivial' loops but no topologically nontrivial ones.

\end{abstract}

\maketitle

\setcounter{tocdepth}{1}
\tableofcontents

\section{Introduction}
The strong shortcut property was introduced by the second named author
in order to explore commonalities between the many theories of
nonpositively curved groups which have been developed in recent
decades \cite{Hoda:shortcut_graphs:2022}.  A graph $\Gamma$ is
\emph{strongly shortcut} if, for some $K > 1$, there is a bound on the
lengths of the $K$-biLipschitz cycles of $\Gamma$.  This property has
a natural generalization to rough geodesic metric spaces
\cite{Hodstrongshortcut}.  A group $G$ is \emph{strongly shortcut} if
it satisfies one of the following three equivalent conditions
\cite{Hodstrongshortcut}:
\begin{enumerate}
\item $G$ has a strongly shortcut Cayley graph.
\item $G$ acts properly and cocompactly on a strongly shorcut graph.
\item $G$ acts metrically properly and coboundedly on a strongly
  shortcut rough geodesic metric space.
\end{enumerate}
Strongly shortcut groups are finitely presented and have polynomial
isoperimetric functions \cite{Hoda:shortcut_graphs:2022}.

The class of strongly shortcut spaces is vast.  It includes:
\begin{itemize}
\item Asymptotically $\CAT(0)$ spaces \cite{Kar:2011,
    Hodstrongshortcut}, in particular: $\CAT(0)$ spaces
  \cite{Gromov:1987, Bridson:1999}, Gromov-hyperbolic spaces
  \cite{Gromov:1987} and $\widetilde{\SL(2,\R)}$ with the Sasaki
  metric \cite{Kar:2011}.
\item $1$-skeletons of systolic \cite{Soltan:1983, Chepoi:2000,
    Haglund:2003, Januszkiewicz:2006}, quadric \cite{Bandelt:1988,
    Hod20} and finite dimensional $\CAT(0)$ cubical \cite{Avann:1961,
    Nebesky:1971, Gromov:1987} complexes
  \cite{Hoda:shortcut_graphs:2022}.
\item Standard Cayley graphs of Coxeter groups
  \cite{Hoda:shortcut_graphs:2022, Niblo:2003}.
\item Coarsely injective spaces of uniformly bounded geometry
  \cite{Haettel:hhshelly}, in particular: Helly graphs
  \cite{Chalopin:Helly_groups:2020} of bounded degree.
\item Heisenberg groups of all dimensions \cite{Hoda:heisenberg}.
\end{itemize}
In particular, all Thurston geometries except Sol are strongly
shortcut.  The class of strongly shortcut groups is correspondingly
vast, including: $\CAT(0)$ groups, hyperbolic groups, Helly groups,
systolic groups, quadric groups, the discrete Heisenberg groups and
hierarchically hyperbolic groups \cite{behrstock:2017:hhs1,
  berhstock:2019:hhs2, Haettel:hhshelly}.  In particular, finitely
presented small cancellation groups \cite{Wise:2003,
  Osajda:classyfing_systolic:2018, Hod20, Chalopin:Helly_groups:2020}
and mapping class groups \cite{masur:1999:cc1, masur:2000:cc2} of
surfaces are strongly shortcut.



There is a characterization of strongly shortcut spaces in terms of
asymptotic cones \cite{Hodstrongshortcut} (see
Section~\ref{sec:asymptoticcones} for definitions): a space is
strongly shortcut if and only if none of its asymptotic cones contains
an isometrically embedded copy of a unit length circle with its length
metric.  One might wonder whether, for Cayley graphs of groups, this
condition is equivalent to all asymptotic cones being simply
connected.  We provide the first counterexamples.

We consider an infinite family of \emph{snowflake groups}:
\[ G_L:=\langle a,x,y,s,t\mid sas^{-1}=x, tat^{-1}=y, a^L=xy,
  [a,x]=[x,y]=[y,a]=1\rangle\]
where $L\geq 6$ is an even integer.
These are special cases\footnote{Specifically, $G_{p,1}$ of Brady and
  Bridson is $G_{2p}$ in our notation, which is why the parameter $L$
  is even. The restriction to $L\geq 6$ is explained in Remark~\ref{whyLsix}.} of the groups introduced by Brady and Bridson
\cite{BraBri00}, who show that the Dehn function for such a group is
$n^{2\alpha}$ for $\alpha:=\log_2L$.

Our main results are:
\begin{theorem*}[{Theorem~\ref{maintheorem}}]
  For all even $L\geq 6$, every asymptotic cone of $G_L$ is simply connected.
\end{theorem*}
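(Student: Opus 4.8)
I would deduce this from a recursive ``shortening'' property of the Cayley $2$-complex $\bfX$ of $G_L$, in the spirit of Papasoglu's theorem that a quadratic Dehn function forces simply connected cones but adapted to the fact that $G_L$ is neither virtually nilpotent nor of quadratic Dehn type: there exist $\lambda>1$, $C\geq 1$ and an integer $N\geq 1$ such that every combinatorial loop $\gamma$ in $\bfX$ of length $\ell$ is homotopic, through a homotopy moving each point a distance at most $C\ell$, to a concatenation $\delta_1\cdots\delta_N$ of loops with $\lvert\delta_i\rvert\leq\ell/\lambda$ for every $i$. Because the Dehn function is $\ell^{2\log_2 L}$, the snowflake loops admit no filling of diameter linear in $\ell$, so we cannot simply contract them; the content of the property is that at \emph{each} step the homotopy moves points only a distance linear in the \emph{current} loop length, while the lengths decay geometrically, so the enormous area and diameter swept out by the whole process do no harm.

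Granting this, fix a cone $\Cone_\omega(G_L)$ and represent a based loop in it by combinatorial loops $\gamma_n\subset\bfX$ of length $O(r_n)$. Applying the property to each $\gamma_n$ and sweeping the resulting homotopies geodesically, so that after rescaling by $r_n^{-1}$ they are uniformly Lipschitz, one obtains in the ultralimit the same statement inside $\Cone_\omega(G_L)$: a loop of length $\ell$ is homotopic, with tracking $\leq C\ell$, to a concatenation of $N$ loops of length $\leq\ell/\lambda$. Iterating this presents the original loop as the root of a rooted $N$-ary tree of loops whose lengths decay geometrically to $0$ and which all lie within a bounded multiple of $\ell$ of the basepoint; concatenating the homotopies over all levels of the tree gives a continuous extension of the loop to the disc, the leaves of the tree mapping to points, with continuity there coming from the geometric decay of the tracking bounds. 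Hence the loop is null-homotopic.

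The substance is verifying the shortening property for $G_L$. Here I would exploit the splitting of $G_L$ as a double HNN extension of $\Z^2\cong\langle a,x,y\rangle$ over $\Z$ --- the stable letters $s$ and $t$ conjugating $\langle a\rangle$ onto $\langle x\rangle$ and onto $\langle y\rangle$ respectively --- and the induced action on the Bass--Serre tree $T$. A van Kampen diagram $\Delta$ for $\gamma$ maps to $T$, and the preimages of the midpoints of edges cut $\Delta$ into ``flat pieces'' --- diagrams over the abelian vertex group --- glued in a planar-tree pattern along arcs spelling powers of the distorted cyclic edge-group generators $a$, $x$, $y$. A loop confined to a single flat contracts directly with tracking linear in $\ell$, since $\R^2$ is contractible; loops that reach into $T$ are shortened by \emph{unfolding}, via the identity
\[
 a^{Lm}=(a^L)^m=(xy)^m=x^my^m=(sas^{-1})^m(tat^{-1})^m=s\,a^m\,s^{-1}\,t\,a^m\,t^{-1}
\]
and its analogues with $x$, $y$: a subarc spelling $a^{Lm}$, of length $Lm$, is replaced by one spelling $sa^ms^{-1}ta^mt^{-1}$, of length $2m+4$ --- a saving by roughly the factor $L/2>1$ since $L\geq 6$ --- at the cost of an unfolding subdiagram of area $O(m^2)$ but diameter only $O(m)$. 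Performing this at the long edge-arcs of $\Delta$ simultaneously, subdividing the flat pieces by Euclidean cuts, and, where unfolding does not apply, splitting $\gamma$ according to the branch structure of its image in $T$, should produce the decomposition into boundedly many loops of length at most $\ell/\lambda$; this is, in effect, the recursive structure of the Brady--Bridson snowflake diagrams~\cite{BraBri00} run in reverse.

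The main obstacle is carrying out these reductions simultaneously with constants that do not degrade: one must produce a single filling $\Delta$ that breaks into \emph{boundedly} many pieces after one round; the arcs introduced when slicing a flat piece must not lengthen neighbouring edge-arcs faster than unfolding shortens them; and each elementary move must track linearly in the current loop length --- which is automatic here, because every such move (one unfolding subdiagram, or one Euclidean slice of a flat box) lives in a subdiagram whose diameter is at most a fixed multiple of its own boundary length, so that no \emph{global} isodiametric bound (which $G_L$ does not satisfy linearly) is ever needed. Making this bookkeeping uniform in $\ell$ is precisely where the hypothesis $L\geq 6$ enters (Remark~\ref{whyLsix}): following Brady--Bridson it guarantees that the snowflake diagrams have the clean self-similar form, with the number of relator cells inserted at each level of the recursion controlled and the boundary length genuinely decreasing at each unfolding.
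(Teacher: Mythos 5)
Your high-level framework is essentially the paper's: establish a loop-subdivision property (your ``shortening property'' is a slightly strengthened form of condition~(2) in Theorem~\ref{rileycriteria}, with an explicit tracking bound) and deduce simple connectivity of every cone from it; and the unfolding identity $a^{Lm}=sa^ms^{-1}ta^mt^{-1}$ and its $x,y$ analogues are exactly the snowflake recursion the paper runs in Section~\ref{sec:idea:snowflake}. Your second paragraph (the layer-structure/tree argument producing a null-homotopy in the cone) is standard and fine.

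The genuine gap is the passage from snowflake loops to arbitrary loops, which you acknowledge as ``the main obstacle'' but do not close, and which is where the paper does almost all of its work (Sections~\ref{sec:circletightening}--\ref{sec:constructfillings}). Two specific problems with what you sketch. First, for a generic null-homotopic $\gamma$, an HNN van Kampen diagram can cross any given coset of $H$ an unbounded number of times, so unfolding every long edge-arc produces an unbounded number of new loops; you need a structure theorem saying that, after one round, only boundedly many pieces remain long. The paper obtains this by first using the Circle Tightening Lemma (Theorem~\ref{thm:circle_tightening}) to reduce to $K$-biLipschitz loops with $K$ arbitrarily close to $1$, and then proving (Section~\ref{sec:bilippaths} through Corollary~\ref{cor:very_manageable}) that such a loop admits a unique ``central region'' with at most four long escapes in one of six ``manageable'' configurations, and a tree of enfilades beyond. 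Nothing in your sketch substitutes for this dichotomy. Second, you implicitly rely on the power-law distortion $|a^m|\asymp|m|^{1/\alpha}$ combined with a H\"older-type inequality to argue that unfolding, or a Euclidean slice, costs less than it saves. But as the paper explains (``Why is it hard?'', Proposition~\ref{this_whole_paper_is_not_a_waste_of_time}), the multiplicative gap $C_{hi}/C_{lo}$ in the power-law estimate is \emph{larger} than the reverse-H\"older saving $2^{1-1/\alpha}$ once $L\geq 10$, so the crude estimate cannot certify any local progress; one needs the much finer exponent-level analysis (Lemma~\ref{lem:triangle_ineq_enhanced}, Corollaries~\ref{cor:thepositiveone}--\ref{cor:one_long_path}, Lemma~\ref{biglemma}). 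In short: your plan reproduces the paper's Section~\ref{sec:idea} (the snowflake picture and the hope that general loops look like snowflakes), but stops exactly where the authors say they had to stop being na\"ive; the reduction to biLipschitz loops and the quantitative structure theory of biLipschitz cycles, which are the content of the proof, are absent.
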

\begin{theorem*}[{Corollary~\ref{cor:isometriccircle}}]
  For all even $L\geq 6$, some asymptotic cone of $G_L$ contains an
  isometrically embedded circle.
\end{theorem*}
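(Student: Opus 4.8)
The plan is to deduce this from the characterization recalled above: a space is strongly shortcut if and only if none of its asymptotic cones contains an isometrically embedded copy of the circle of circumference $1$ with its length metric. Thus Corollary~\ref{cor:isometriccircle} is equivalent to the assertion that $G_L$ is \emph{not} strongly shortcut, and to prove it I would exhibit a sequence of cycles $C_k$ in $\Cay(G_L)$ of lengths $\ell_k\to\infty$ that are \emph{asymptotically efficiently embedded}: parametrising $C_k$ by arc length as $C_k\colon\R/\ell_k\Z\to\Cay(G_L)$,
\[ d_{\Cay(G_L)}\bigl(C_k(u),C_k(v)\bigr)\ \geq\ d_{\R/\ell_k\Z}(u,v)\ -\ \varepsilon_k\,\ell_k,\qquad\varepsilon_k\to 0. \]
Given this, pass to the asymptotic cone with scaling sequence $(\ell_k)$ and basepoints on the $C_k$: after rescaling, the $C_k$ are $1$-Lipschitz, so their ultralimit is a $1$-Lipschitz loop $C_\infty\colon\R/\Z\to\Cone(G_L;\dots)$, and the displayed inequality passes to the limit to give $d(C_\infty(u),C_\infty(v))=d_{\R/\Z}(u,v)$ for all $u,v$. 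Hence $C_\infty$ is injective and is an isometric embedding of the unit-length circle, so this asymptotic cone of $G_L$ contains an isometrically embedded circle.

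For the cycles I would use the snowflake geometry of $G_L$. The relators $sas^{-1}=x$, $tat^{-1}=y$, $a^L=xy$, $[x,y]=1$ give $a^{L^k}=x^{L^{k-1}}y^{L^{k-1}}=y^{L^{k-1}}x^{L^{k-1}}$, so, iterating, $a^{L^k}$ is represented by a word of length $\Theta(2^k)$ and $\langle a\rangle$ has distortion $n\mapsto n^{\log_2 L}$; moreover---and this is the crucial structural input, to be extracted from the analysis behind Theorem~\ref{maintheorem}---a geodesic in $\Cay(G_L)$ from $1$ to $a^{L^k}$ is forced (coarsely) to be a recursively built \emph{snowflake path} realising this decomposition and oscillating across the Bass--Serre tree of $G_L$ viewed as an HNN extension of $\langle a,x\rangle\cong\Z^2$ with stable letters $s$ and $t$. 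In particular there are two such geodesics differing at the top scale: $\sigma_k$, realising $a^{L^k}=x^{L^{k-1}}y^{L^{k-1}}$ and passing through $x^{L^{k-1}}$, and $\sigma'_k=\phi(\sigma_k)$, the image under the automorphism $\phi$ swapping $s\leftrightarrow t$ and $x\leftrightarrow y$, which realises $a^{L^k}=y^{L^{k-1}}x^{L^{k-1}}$ and passes through $y^{L^{k-1}}$. Let $C_k$ be the cycle obtained by concatenating $\sigma_k$ with the reverse of $\sigma'_k$; it has length $\ell_k=2\lvert a^{L^k}\rvert=\Theta(2^k)$.

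The efficiency estimate for $C_k$ should come from the tree structure. For two points on the same arc, $\sigma_k$ (or $\sigma'_k$) is a geodesic, so their $\Cay(G_L)$-distance equals their distance along that arc, which is at least their distance around $C_k$. For points $p\in\sigma_k$ and $q\in\sigma'_k$ on opposite arcs, the key point is that they lie past the $s$-edge and past the $t$-edge respectively---distinct edges issuing from the vertex group containing both $1$ and $a^{L^k}$---so a path from $p$ to $q$ must cross back through that vertex group, and one expects $d_{\Cay(G_L)}(p,q)=\min\{\,d(1,p)+d(1,q),\ d(a^{L^k},p)+d(a^{L^k},q)\,\}-o(\ell_k)$, which is precisely the $d_{\R/\ell_k\Z}$-distance around $C_k$. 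Establishing this---in particular, ruling out that the polynomial distortion of $\langle a\rangle$ and of its conjugates $\langle x\rangle,\langle y\rangle$ creates a genuine corner-cutting shortcut of size $\Theta(\ell_k)$ near the vertices $1,a^{L^k}$ or near the midpoints $x^{L^{k-1}},y^{L^{k-1}}$---is the step I expect to be the main obstacle; it rests on the fine understanding of geodesics in $\Cay(G_L)$, equivalently of the structure of the asymptotic cones of $G_L$, that underlies the proof of Theorem~\ref{maintheorem}. With the estimate in hand, the limiting argument of the first paragraph produces an asymptotic cone of $G_L$ containing an isometrically embedded circle, and equivalently shows that $G_L$ is not strongly shortcut.
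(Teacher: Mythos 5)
Your approach is essentially the paper's: the cycles $C_k$ you build from $\sigma_k$ and $\phi(\sigma_k)$ are exactly the snowflake loops $\sigma_{k,s}+\bar\sigma_{k,t}$, and Corollary~\ref{cor:isometriccircle} is obtained precisely by taking a scaling sequence matching their lengths. The one thing to flag is that the efficiency estimate you defer to is not really ``behind Theorem~\ref{maintheorem}''; it is its own result, Theorem~\ref{thm:geodesicloop}, which proves the stronger fact that the snowflake loops are \emph{exactly} geodesic loops (so $\varepsilon_k=0$), via the Bass--Serre corridor argument you sketch together with the $H$-geodesic analysis of Section~\ref{sec:snowflakegroups}.
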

We find an isometrically embedded circle in an asymptotic cone of the
Cayley graph of $G_L$ corresponding to
the generating set $\{a,s,t\}$.
This means that this particular Cayley graph is not a
shortcut graph.
We do not know if $G_L$ is a strongly shortcut group---it is possible
that it admits an action on some other graph that is strongly
shortcut.
If $G_L$ is a strongly shortcut group then this is the first example
of a group for which the strong shortcut property depends on the
choice of generating set.
If $G_L$ is not a strongly shortcut group then this is the first
example of a group for which every asymptotic cone being simply connected does not imply the
strong shortcut property.

The structures of asymptotic cones of snowflake groups are interesting
independent of the strong shortcut property.
An application of having all cones simply connected is due to Riley
\cite[Theorem~C]{Ril03}: the isodiametric and filling length functions
of $G_L$ are both linear.
It is also interesting just to have more diverse examples of groups
with all cones simply connected.
As far as we are aware, the only existing examples are groups of polynomial growth, 
groups with at most quadratic Dehn function, and certain
combinations of these, see Section~\ref{sec:asymptoticcones}.
Our examples are not in any of these classes. 
We also do not know any other examples of groups having all of their
asymptotic cones simply connected, but with at least some of them
containing isometrically embedded circles.
Conjecturally there are no such groups of polynomial growth.
It is an open question whether there exist such groups with
quadratic Dehn function.

\subsubsection*{The idea of the proof that asymptotic cones are simply connected }
There is a criterion, the ``Loop Subdivision Property'', which we recall in Theorem~\ref{rileycriteria}, for a group to have all of its asymptotic cones
simply connected.
This says that every asymptotic cone is simply connected if and only
if every word in the generators representing the trivial element of
the group can be filled by a van Kampen diagram of uniformly bounded
area, subject to the constraint that the boundary lengths of 2--cells of the
diagram are bounded above by a fixed fraction of the length of the
word to be filled.

To construct such diagrams, we establish a fat/thin dichotomy.
The ``fat'' words are those such that the corresponding loop in the Cayley
graph is a $K$--biLipschitz loop for $K$ close to 1.
Thin words we cut into a bounded number of small pieces and at most
one fat one using the ``Circle Tightening Lemma'' of
\cite{Hodstrongshortcut}.
This reduces the problem to constructing diagrams for $K$--biLipschitz
loops for $K$ arbitrarily close to 1.
This reduction step occurs in Section~\ref{sec:circletightening}.

For $K=1$, i.e., geodesic loops, there are some natural example words to
consider: the ``snowflake words'' that give the groups their names.
We describe how to fill loops made from snowflake words in Section~\ref{sec:idea:snowflake}.

In the general case our thesis is that, for $K$ sufficiently close to
1, all $K$--biLipschitz loops ``ought to look like'' a snowflake loop,
and admit a filling diagram using a similar construction.  In
retrospect, this is more a declaration of na\"ivet\'e than adroitness.
It turns out to be true, but it was much harder than we had initially
expected to make rigorous.

\subsubsection*{Why is it hard?}
Consider these two facts:
\begin{fact}[Reverse H{\" o}lder inequality]
  \label{fact:reverse_holder}
  For any $\alpha > 1$ and any
    $r_1, r_2, \ldots, r_n \ge 0$, the following inequalities hold.
  \[
    \Bigl(\frac{1}{n}\Bigr)^{1-1/\alpha}(r_1^{1/\alpha}
    + \cdots + r_n^{1/\alpha}) \le (r_1 + \cdots +
    r_n)^{1/\alpha} \le r_1^{1/\alpha} + \cdots +
    r_n^{1/\alpha} \]
\end{fact}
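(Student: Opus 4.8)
The plan is to reduce both inequalities to the single fact that the function $\phi(t) := t^{1/\alpha}$ is concave on $[0,\infty)$. Writing $p := 1/\alpha \in (0,1)$, we have $\phi''(t) = p(p-1)t^{p-2} < 0$ for $t > 0$, so $\phi$ is concave; and since $\phi(0) = 0$ it is in addition subadditive on $[0,\infty)$. The argument is entirely elementary, and the only thing requiring any care is the degenerate situation (all $r_i = 0$, or $n = 1$), in which every one of the displayed inequalities is an equality or otherwise immediate; so I dispose of that case first and assume $\sum_i r_i > 0$ and $n \ge 2$ henceforth.

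For the right-hand inequality I would prove the subadditivity statement $\sum_{i=1}^n \phi(r_i) \ge \phi\bigl(\sum_{i=1}^n r_i\bigr)$ by induction on $n$, the base case $n = 2$ being $a^p + b^p \ge (a+b)^p$. Assuming $a + b > 0$, write $a$ as the convex combination $a = \tfrac{a}{a+b}(a+b) + \tfrac{b}{a+b}\cdot 0$; concavity of $\phi$ and $\phi(0) = 0$ give $\phi(a) \ge \tfrac{a}{a+b}\phi(a+b)$, and symmetrically $\phi(b) \ge \tfrac{b}{a+b}\phi(a+b)$, so summing yields $\phi(a) + \phi(b) \ge \phi(a+b)$. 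The inductive step then follows by grouping $r_1 + \cdots + r_n = (r_1 + \cdots + r_{n-1}) + r_n$ and applying the case $n = 2$ together with the inductive hypothesis.

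For the left-hand inequality I would apply Jensen's inequality for the concave function $\phi$ to the uniform average of $r_1, \dots, r_n$:
\[
\frac{1}{n}\sum_{i=1}^n r_i^{1/\alpha} = \frac{1}{n}\sum_{i=1}^n \phi(r_i) \le \phi\!\left(\frac{1}{n}\sum_{i=1}^n r_i\right) = \left(\frac{1}{n}\right)^{1/\alpha}\!\left(\sum_{i=1}^n r_i\right)^{1/\alpha}.
\]
Multiplying through by $n = n^{1-1/\alpha}\cdot n^{1/\alpha}$ rearranges this into the claimed bound $\bigl(\tfrac{1}{n}\bigr)^{1-1/\alpha}\sum_{i=1}^n r_i^{1/\alpha} \le \bigl(\sum_{i=1}^n r_i\bigr)^{1/\alpha}$. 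Alternatively, one can invoke Hölder's inequality directly with conjugate exponents $\alpha$ and $\alpha/(\alpha-1)$ applied to the vectors $(r_i^{1/\alpha})_i$ and $(1,\dots,1)$, which produces exactly the same estimate without naming Jensen. In short there is no real obstacle here; the statement is isolated only so that later sections have a clean reference.
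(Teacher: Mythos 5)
Your proof is correct. The paper states this as a standard Fact without proof, so there is no in-paper argument to compare against; your derivation of the right-hand inequality from subadditivity of the concave function $t\mapsto t^{1/\alpha}$ (vanishing at $0$) and of the left-hand inequality from Jensen applied to the uniform average is the usual route and is cleanly executed.
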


\begin{fact}[Power-law distortion, cf \cite{BraBri00} or
  Theorem~\ref{powerdistortion}]\label{fact:powerdistortion}
  Given even $L\geq 6$ and setting $\alpha:=\log_2L$, there exist
$C_{lo}$ and $C_{hi}$ such that for 
$g\in\{a,x,y\}$ and all 
$|m|>0$ we have: 
\begin{equation*}
  C_{lo}\leq\frac{|g^m|}{|m|^{1/\alpha}} < C_{hi}
\end{equation*}  
\end{fact}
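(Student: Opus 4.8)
The plan is to prove the two inequalities separately: the upper bound $|g^m| < C_{hi}|m|^{1/\alpha}$ by an explicit recursive construction of words, and the lower bound $|g^m| \ge C_{lo}|m|^{1/\alpha}$ by a self-similar analysis of van Kampen diagrams. It suffices to treat $g = a$ and $m > 0$: replacing $m$ by $-m$ changes nothing since $|a^m| = |a^{-m}|$, and from $x = sas^{-1}$, $y = tat^{-1}$ we get $\bigl||x^m| - |a^m|\bigr| \le 2$ and $\bigl||y^m| - |a^m|\bigr| \le 2$, so bounds for $a$ pass to $x$ and $y$ after adjusting the constants (and, on the finitely many small values of $|m|$, enlarging $C_{hi}$ and shrinking $C_{lo}$ by hand).

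For the upper bound, write $m = qL + r$ with $q = \lfloor m/L \rfloor$ and $0 \le r < L$. From $a^L = xy$, $[x,y] = 1$, $x^q = sa^qs^{-1}$ and $y^q = ta^qt^{-1}$ we obtain the identity $a^m = sa^qs^{-1}\,ta^qt^{-1}\,a^r$ in $G_L$, and hence the recursion $|a^m| \le 2\,|a^{\lfloor m/L \rfloor}| + L + 3$. Iterating it roughly $\log_L m$ times, until the exponent drops below $1$, summing the resulting geometric series, and using $\log_L 2 = 1/\log_2 L = 1/\alpha$, we get $|a^m| \le C\,|m|^{1/\alpha}$ for a constant $C = C(L)$; the words obtained by unrolling the recursion are exactly the ``snowflake words''. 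Taking $C_{hi}$ slightly larger than $C$ gives the strict inequality.

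For the lower bound it is enough to bound below $\nu(m)$, the least number of $s^{\pm1}$-- and $t^{\pm1}$--letters in a word representing $a^m$, since $|a^m| \ge \nu(m)$. Fix such a minimal word $W$ and a reduced van Kampen diagram $D$ for $W a^{-m}$. Standard corridor arguments (using that $a$, $x$ and $y$ have infinite order in $G_L$) show that the $s$-- and $t$--corridors of $D$ have no annular components and run between pairs of the stable letters of $W$; thus they number at most $\nu(m)/2$. The heart of the argument is that a minimal such $D$ is forced into a self-similar shape: the cells meeting the $a^{-m}$ side of $\partial D$ turn that side into a subword representing $(xy)^q$ for some $q$ comparable to $m/L$, after which an $s$--corridor and a $t$--corridor peel off its $x^q$-- and $y^q$--parts and leave two sub-diagrams, each a minimal filling of a word representing a power of $a$ with exponent comparable to $m/L$. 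This yields a recursion $\nu(m) \ge 4 + 2\,\nu(q')$ with $q'$ comparable to $m/L$, which unrolls down the $\log_L m$ levels to $\nu(m) \ge C_{lo}|m|^{1/\alpha}$, again via $\log_L 2 = 1/\alpha$.

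The step I expect to be the main obstacle is justifying the self-similar picture in the previous paragraph. A reduced van Kampen diagram need not literally display it; one must argue that a minimal one may be brought into, or controlled by, such a form, and in particular one has to tame the commuting relators $[a,x]$, $[x,y]$, $[y,a]$ together with the mixed cells $a^L = xy$, which can reorganize a diagram without respecting any clean band-and-corridor structure. This delicate bookkeeping on van Kampen diagrams is precisely what Brady and Bridson carry out in \cite{BraBri00} when computing the Dehn function $n^{2\alpha}$; we will recall a version of it as Theorem~\ref{powerdistortion}, from which Fact~\ref{fact:powerdistortion} is immediate. Alternatively, one may simply cite \cite{BraBri00}.
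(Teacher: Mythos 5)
Your upper bound is essentially the paper's: the identity $a^{qL+r}=sa^qs^{-1}\,ta^qt^{-1}\,a^r$ unrolled $\sim\log_L m$ times is exactly the construction of the paths attached to expressions $m=\sum_i m_iL^i$, whose length is $\sum_i|m_i|2^i+4(2^j-1)$ as in \eqref{eq:4}, and the geometric-series summation with $\log_L2=1/\alpha$ is the content of Lemma~\ref{lem:alengthboundgeodesicexpression}. That half is fine.

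For the lower bound the paper takes a genuinely different route from the van Kampen diagram argument you sketch, and the difference matters because the step you flag as the main obstacle is in fact the gap in your write-up. Rather than analyzing minimal diagrams, the paper proves a structure theorem for geodesics directly in the Cayley graph: Proposition~\ref{3piecegeodesic} shows (by consolidating escapes, cancelling adjacent $s^{\pm1}$ or $t^{\pm1}$ pairs, and using $\langle x\rangle\cap\langle y\rangle=1$, $\langle a\rangle\cap\langle x,y\rangle=\langle a^L\rangle$ and the $x/y$--symmetry) that every geodesic between elements of $H$ is a concatenation of at most one $x$--escape, at most one $y$--escape, and a short $a$--path. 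Combined with Lemma~\ref{guards} this forces every geodesic to $a^m$ to have the recursive snowflake form, so the \emph{exact} value of $|a^m|$ is given by \eqref{eq:4} for a geodesic expression satisfying \eqref{eq:1}, and the lower bound $|a^m|\ge 5\cdot2^j-4>m^{1/\alpha}$ drops out with no diagram combinatorics at all (and with the sharp constant $C_{lo}=1$). This finer information --- exact lengths, Lemma~\ref{guards}, geodesic expressions --- is then used heavily in the rest of the paper, which is what the geodesic-first approach buys. Your diagrammatic route can be made to work (it is Brady--Bridson's), but as written the ``heart of the argument'' --- that a reduced minimal diagram can be tamed into the self-similar band structure despite the commuting relators --- is asserted, not proved; deferring to \cite{BraBri00} or to Theorem~\ref{powerdistortion} is legitimate for the Fact as stated (the paper does exactly that in its statement), but it means your proposal does not itself contain a proof of the lower bound.
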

We will show in Section~\ref{ageodesics} that
Fact~\ref{fact:powerdistortion} is about as good as can be expected; the
relationship between $|m|$ and $|g^m|$ is not monotone.

Taken together, these two facts suggest that to travel 
between two points of a distorted subgroup like $\langle a\rangle$, it
is more efficient to make one long hop rather than several short ones;
the distortion is non-linear, so the long hop can achieve
proportionally more savings. For example, consider, for $m,n>0$ a path between $1$ and $a^{m+n}$ consisting of a
geodesic from $1$ to $a^m$, followed by a geodesic from $a^m$ to
$a^{m+n}$.
We expect: \[\frac{|a^{m}|+|a^n|}{|a^{m+n}|}\sim\frac{m^{1/\alpha}+n^{1/\alpha}}{(m+n)^{1/\alpha}}=\frac{1+(n/m)^{1/\alpha}}{(1+n/m)^{1/\alpha}}\]
This ratio goes to 1 as $n/m$ goes to 0 or infinity, but equals
$2^{1-1/\alpha}$ when $m=n$.
So the expectation should be that the path is very efficient, in the
sense that the ratio of its length to the distance between its endpoints
is very close to 1, if and only if one of the two subpaths
dominates the other, in the sense that $n/m$ is either very large or
very small.
If this were true it would give us good control over biLipschitz paths
when the biLipschitz constant is close to 1; they would, more or less,
have to be dominated by a long segment that makes good use of the distortion, possibly with some relatively very short additional segments
at the beginning and end.

However, Fact~\ref{fact:powerdistortion} is too rough an estimate to
confirm this intuition. It can only tell us, for instance, that
$\frac{|a^{m}|+|a^m|}{|a^{2m}|}>\frac{C_{lo}}{C_{hi}}\cdot
2^{1-1/\alpha}$, but $\frac{C_{lo}}{C_{hi}}\cdot 2^{1-1/\alpha}$ is
less than 1 when $L\geq 10$ (see
Proposition~\ref{this_whole_paper_is_not_a_waste_of_time}), so this
bound is useless.  The deviation of the distortion function from being
a power-law on the nose overwhelms the efficiency estimates coming
from reverse H\"older.  We must make finer estimates.  The bulk of the
paper is a detailed analysis of biLipschitz paths in snowflake groups.

\subsubsection*{The plan of the paper}
Section~\ref{sec:preliminaries} is background material.
In Section~\ref{sec:snowflakegroups} we analyze geodesics between elements
in the vertex group of the snowflake group.
In Section~\ref{sec:notstrongshortcut} we show that snowflake loops
are geodesic loops, and conclude that our Cayley graph is not strongly
shortcut.

In Section~\ref{sec:idea} we describe how snowflake loops satisfy the
loop subdivision property and give a more detailed explanation of how
we will adapt this construction to the general case. The first step,
in Section~\ref{sec:circletightening}, is to reduce to the case of
biLipschitz loops with biLipschitz constant sufficiently close to
1. We analyze such loops in Section~\ref{sec:bilipschitz}, and
construct fillings of them in Section~\ref{sec:constructfillings}. We
tie things up and make some closing remarks in
Section~\ref{sec:closing}.

\section{Preliminaries}\label{sec:preliminaries}
\subsection{Basics}

If $S$ is a finite set, let $F(S)$ denote the free group generated by
$S$.

If $G$ is a group generated by a finite set $S$, then the \emph{word
  metric} on $G$ with respect to $S$ is defined by taking $|g|_S$ to be the
minimal length of a word $w\in F(S)$ such that $w=_Gg$.
The \emph{Cayley graph} $\Cay(G,S)$ of $G$ with respect to $S$ is the labelled
directed graph whose vertices are in bijection with $G$, and such that
there is an edge from $g$ to $h$ labelled $s$ when $h=gs$.
We say that a word in $F(S)$ labels an edge path, or is read on an
edge path, if it is the concatenation of labels of the edges, with the
convention that an $s$ edge traversed against its orientation
contributes the letter $s^{-1}$.
Given a word $w\in F(S)$ and a basepoint $g\in G$ there is a unique
edge path starting from $g$ labelled by $w$.

Paths in $\Cay(G,S)$ will always start and end at vertices and will always be parameterized by arclength.
If $\gamma$ is a path, $\bar\gamma$ is the path with opposite
parameterization.
We use $+$ to denote concatenation.
We frequently conflate paths with their labels, but there will be only one reasonable interpretation.
For example, if $\gamma$ is a path in
$\Cay(G,S)$ from $g$ to $h$, with $g,h\in G$, $\delta$ is another path
in $\Cay(G,S)$ whose endpoints differ by $k\in G$, and $s,t\in S$ then
$s.\gamma+\bar\delta+st^{-1}$ is a path from $sg$ to $shk^{-1}st^{-1}$ while
$s+\gamma+\bar\delta+st^{-1}$ is a path from 1 to $sg^{-1}hk^{-1}st^{-1}$.

Let $\langle S\mid\mathcal{R}\rangle$ be a finite presentation of a
group $G$.
If $w=_G1$ then $w$ can be written in $F(S)$ as 
$w=\prod_{i=1}^{n}u_ir_iu_i^{-1}$ for some $u_i$ and $r_i$ such
that either $r_i$ or $r_i^{-1}$ belongs to $\mathcal{R}$.
Define $\mathrm{Area}(w)$ to be the minimal such $n$. 
The \emph{Dehn function} or \emph{isomperimetric function} of the presentation is
the function $n\mapsto \max_{|w|_S\leq n,\,w=_G1} \mathrm{Area}(w)$.
While the precise function depends on the choice of presentation, it
turns out that up to a certain notion of equivalence of functions all
of the finite presentations yield equivalent functions, so we define
the \emph{Dehn function of the group} to be this equivalence class.
The equivalence relation preserves, in particular, degrees of
polynomial growth, so it makes sense to say that a group has a Dehn
function of a particular polynomial degree.

\subsection{van Kampen diagrams}
\begin{definition} Let $\mathcal{A}$ be an alphabet.
  A \emph{van Kampen diagram} for a word $w\in\mathcal{A}^*$ over a subset
  $\mathcal{R}\subset\mathcal{A}^*$ of cyclically reduced words is a compact, connected, simply connected 2-dimensional cell
  complex $D$ embeded in the plane
  such that edges are directed and labelled by elements of
  $\mathcal{A}$ in such a way that starting from some base boundary vertex
  the word read around $\partial D$ is $w$ and the word read around the
  boundary of each 2--cell is freely and cyclically reduced and, up to cyclic
  permutation and inversion, belongs to $\mathcal{R}$.

  A 2-cell of $D$ is also called a \emph{region}.

  A van Kampen diagram is \emph{reduced} if there does not exist a
  pair of 2-cells sharing an edge such that the boundary labels
  starting from a shared edge are inverses.

  The \emph{area} of a van Kampen diagram is the number of 2--cells. 

  A van Kampen diagram is \emph{minimal} or has \emph{minimal area} if it has the fewest
  number of 2-cells among all van Kampen diagrams for the given
  boundary word. Minimal diagrams are necessarily reduced. 

  A \emph{disk diagram} is a van Kampen diagram that is topologically
  a disk.

  An \emph{arc} is a maximal segment in the 1--skeleton of $D$ such
  that every interior vertex has valence 2.
  For the purpose of drawing pictures it is often convenient to
  replace each arc by a single directed edge labelled by the word in
  $\mathcal{A}^*$ read along the arc.
  In this case the length of the new edge is defined to be the length
  of its label, that is, the length of the original arc.
  This is called \emph{forgetting the valence 2 vertices}.

  The \emph{mesh} of a diagram is the maximum length of the boundary
  of a 2--cell.
\end{definition}

Given a specified base vertex in $D$, there is a unique label
and orientation preserving map $D^{(1)}\to \Cay(G,\mathcal{A})$
sending the base vertex to 1, where
$G=\langle \mathcal{A}\mid\mathcal{R}\rangle$. 
Thus, we sometimes say that $D$ gives a \emph{filling} for the word
read around $\partial D$, which is a word $w\in F(\mathcal{A})$ such
that $w=_G 1$.
\medskip

The groups we are interested in are double HNN extensions over
$\mathbb{Z}$; that is, groups $G$ having a presentation of the form $\langle
a_1,\dots,a_k,s,t\mid \mathcal{R},
sus^{-1}v^{-1},twt^{-1}x^{-1}\rangle$, where
$H:=\langle
a_1,\dots,a_k\mid \mathcal{R}\rangle$ is the \emph{vertex group}, $s$
and $t$ are called \emph{stable letters}, and $u$, $v$, $w$, and $x$
are infinite order elements of $H$. 
It is a standard result
of Bass-Serre theory that $H$ is a subgroup of the resulting group
$G$.

Let $\mathcal{R}':=\mathcal{R}\cup\{ sus^{-1}v^{-1},twt^{-1}x^{-1}\}$.
Since each stable letter appears in only one relator, and appears as
an inverse pair, we can say more about the structure of reduced diagrams over $\mathcal{R}'$.
Suppose $D$ is a reduced diagram and some region has an $s$--edge in its
boundary.
Either this edge belongs to $\partial D$, or there is an
adjacent region that shares the $s$--edge. However, there are only two
possible occurrences of $s^\pm$ in $\mathcal{R}$, and one of the
possible choices would create an unreduced diagram, so there is a
unique way that the  boundary of the adjacent region can be labelled.
That region has one more edge labelled with $s$ or $s^{-1}$, which is
either on $\partial D$ or else is adjacent to another region whose
boundary label is determined, etc. 
Define an equivalence relation on 1--cells of $D$ labelled $s^\pm$
such that two such edges are equivalent if they belong to a common
2--cell. 
For each equivalence class of $s$--edge, define the
\emph{$s$--corridor} containing it to be the union of cells containing
one of those edges in its closure. 
We define $t$--corridors similarly, and just say \emph{corridor} if we
do not care which of the stable letters the corridor corresponds to. 
It is immediate from the definitions that distinct corridors have
disjoint interiors.

A priori there are three possible types of $s$--corridor: a single $s$--edge, a
closed loop of 2-cells, or a string of 2-cells ending at distinct
edges of $\partial D$. The first type cannot occur if $D$ is a disk
diagram.
This is true in particular when $\partial D$ embeds in $X$.
The second type cannot occur in a minimal diagram because if it did the subdiagram inside
the closed corridor would be a diagram filling a power of $u$, $v$,
$w$, or $x$, but these were assumed to be infinite order elements.
The same argument shows that not only does a corridor have distinct
edges of $\partial D$, these two edges are disjoint. 

Define an \emph{HNN diagram} (with respect to the above presentation
as a multiple HNN extension) as a minimal diagram over the set of
relations:
\[\{g \in F(\{a_i\}_{i=1}^k)\mid g=_G1\}\cup \{su^ns^{-1}v^{-n}\mid n\in
  \mathbb{N}\}\cup\{tw^nt^{-1}x^{-n}\mid n\in\mathbb{N}\}\]

Regions with boundary labelled by an element of the latter two sets correspond to
corridors in a diagram over $\mathcal{R}$, and we again call them
\emph{corridors}.
Regions labelled by element of the first set correspond to vertices in
the tree dual to the corridors in a diagram over
$\mathcal{R}$. We call such a region a \emph{vertex region}.

\subsection{Asymptotic cones}\label{sec:asymptoticcones}

We briefly recall the definition and some history of asymptotic
cones. See \cite{dructu2002quasi, Ril03, brady2007geometry} for more.

\begin{definition}
  Let $(X,d)$ be a metric space, $\omega$ a non-principal ultrafilter,
  $\mathbf{s}=(s_n)\to\infty$ a `scaling sequence' of positive numbers,
  and $\mathbf{o}=(o_n)$ a sequence of `observation points' in $X$.

  Define a pseudometric on sequences $\mathbf{x}=(x_n)$ of points in
  $X$ as follows.
  \[d_{\omega,\mathbf{s}}(\mathbf{x},\mathbf{y}):=\lim_\omega\frac{d(x_n,y_n)}{s_n}\]

  Define the asymptotic cone $\Cone_\omega(X, \mathbf{o}, \mathbf{s})$ of $X$ with respect to $\omega$, $\mathbf{s}$,
  and $\mathbf{o}$ to be the quotient of the space of sequences
  $\mathbf{x}$ at finite $d_{\omega,\mathbf{s}}$--distance from
  $\mathbf{o}$ by identifying sequences at $d_{\omega,\mathbf{s}}$--distance 0 from one another.
\end{definition}
It turns out that asymptotic cones are complete metric spaces.

We are interested in the case that $X$ is the Cayley graph of a group,
in which case $X$ is homogeneous. In this case the choice of
observation points does not matter, and we can take $\mathbf{o}=1$ to be
the constant sequence at the identity vertex.

\medskip

Gromov \cite{Gro81} used a process of passing to a convergent
subsequence of rescaled metric spaces in his proof of the Polynomial
Growth Theorem.
Outside of the polynomial growth case, convergence fails, but van den
Dries and Wilkie \cite{van1984gromov} recognized that this could be
fixed by the imposition of an ultrafilter. 

Pansu \cite{pansu1983croissance} proved nilpotent groups have simply
connected asymptotic cones.  Gromov \cite[$5F_1''$]{Gro93} showed that
if every asymptotic cone of $G$ is simply connected, then $G$ is
finitely presented and has polynomial Dehn function.  Papasoglu
\cite{Pap96} proved the converse for groups with quadratic Dehn
function, but it is not true in general
\cite{bridson1999asymptotic,sapir2002isoperimetric,OlsSap06}, even for
groups with cubic Dehn function.

Drutu and Sapir characterized relatively hyperbolic groups as having  asymptotic cones that are tree-graded with respect to the asymptotic cones of their peripheral subgroups~\cite{DrutuSapir05} and used this notion to give quasi-isometric rigidity results.
Behrstock~\cite{Behrstock06} showed that the asymptotic cone of the mapping class group is tree graded, although it is not relatively hyperbolic.
The tree graded structure of the asymptotic cones of the mapping class group were further studied by Behrstock-Minsky~\cite{BehrstockMinsky08} and Behrstock-Kleiner-Minsky-Mosher~\cite{BehrstockKleinerMinskyMosher12} as part of proving quasi-isometric rigidity.

Kent \cite{kent2014asymptotic} defines a \emph{prairie group} to be
one for which every asymptotic cone is simply connected.
As far as we are aware, the only groups currently known to be prairie
groups are either virtually nilpotent or have quadratic Dehn function
or are built by combining such pieces: direct products of prairie
groups are prairie groups, and groups that are hyperbolic relative to prairie
groups are prairie groups \cite{DrutuSapir05}.

Snowflake groups are not virtually nilpotent, have superquadratic Dehn
functions, do not split as direct products, and are not hyperbolic
relative to any collection of subgroups. 
We show that snowflake groups are prairie groups using a criterion suggested by Gromov 
and explained in detail by Papasoglu \cite[Section~1]{Pap96}.
As expressed by Riley \cite{brady2007geometry} (cf \cite{Ril03}):

\begin{thm}[Loop Subdivision Property {\cite[Theorem~II.4.3.1]{brady2007geometry}}]\label{rileycriteria}
 Let $G$ be a group with finite generating set $S$. 
 Fix a non-principal ultrafilter $\omega$. The following are equivalent.
 \begin{enumerate}
  \item The asymptotic cone $\Cone_\omega(\Cay(G,S), 1, \mathbf{s})$ is simply
    connected for all scaling sequences $\mathbf{s}\to\infty$.
  \item Given $\lambda\in (0,1)$ there exist $N, J \in \mathbb{N}$ such that for all null-homotopic words $w$ there is an equality
  \[
    w = \prod_{i=1}^N u_iw_iu_i^{-1}
  \]
  in the free group $F(S)$ for some words $u_i$ and $w_i$ such that
  the $w_i$ are null homotopic and have length $\ell(w_i) \leq
  \lambda\ell(w) +J$.
  \item  Given $\lambda\in (0,1)$ there exist $A, M \in
  \mathbb{N}$ such that for all words $w\in F(S)$ with $w=_G1$ there is
  a van Kampen diagram for $w$ over $\mathcal{R}=\{g\in F(S)\mid
  g=_G1\}$ with area at most $A$ and mesh at most $\lambda |w|+M$. 
 \end{enumerate}
\end{thm}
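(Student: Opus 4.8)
The plan is to prove $(2)\Leftrightarrow(3)$ by the usual van Kampen dictionary and then close the cycle with $(3)\Rightarrow(1)$ and $(1)\Rightarrow(2)$. For $(2)\Leftrightarrow(3)$ one applies van Kampen's lemma with the ``full'' relator set $\mathcal{R}=\{g\in F(S)\mid g=_G1\}$. From a factorisation $w=\prod_{i=1}^{N}u_iw_iu_i^{-1}$ as in (2) one builds a ``cactus'': a wedge of $N$ lollipops, the $i$-th running out along an arc labelled $u_i$ and around a single $2$-cell with boundary $w_i$ (admissible since $w_i$, being null-homotopic, lies in $\mathcal{R}$); its free boundary reads $w$, it has area $N$, and its mesh is $\max_i\ell(w_i)\le\lambda\ell(w)+J$. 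Conversely, from a van Kampen diagram for $w$ of area $\le A$ and mesh $\le\lambda|w|+M$, van Kampen's lemma---peeling boundary $2$-cells off one at a time---writes $w$ as a product of at most $A$ conjugates whose conjugated factors are the boundary words of the $2$-cells, hence null-homotopic of length $\le\lambda|w|+M$. The one point to observe is that bounding the mesh is literally bounding the lengths of the $w_i$, precisely because short null-homotopic words are permitted here as single relators.

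For $(3)\Rightarrow(1)$, fix $\omega$ and a scaling sequence $\mathbf{s}$ and write $C:=\Cone_\omega(\Cay(G,S),1,\mathbf{s})$, a complete geodesic space. Using completeness one reduces checking $\pi_1(C)=0$ to killing loops of $C$ that are $\omega$-limits of loops $\gamma_n$ in $\Cay(G,S)$ with $\ell(\gamma_n)\le Ds_n$. Let $w_n$ be a null-homotopic word of length $\le Ds_n$ reading $\gamma_n$, fix $\lambda\in(0,1)$, and apply (3): there are van Kampen diagrams $\Delta_n$ for $w_n$ of area $\le A$ and mesh $\le\lambda|w_n|+M$. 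Since $\Delta_n$ has at most $A$ $2$-cells, $\omega$-almost all of them have the same combinatorial type $\Sigma$ (after forgetting valence-$2$ vertices there are only finitely many possibilities); rescaling arc-lengths by $s_n$ and $\omega$-limiting the maps $\Delta_n^{(1)}\to\Cay(G,S)$ yields a map from the (partially collapsed) realisation of $\Sigma$ into $C$ restricting to $\gamma$ on the boundary and sending each $2$-cell boundary to a loop of $C$ of length $\le\lambda D$. Crucially each such boundary loop is itself an $\omega$-limit of Cayley-graph loops of length $\le\lambda Ds_n+M$, so the construction can be reapplied inside every $2$-cell. Iterating produces a decomposition of the closed disk into a ``tree of diagrams'' in which the loops at stage $k$ have length $\le\lambda^kD\to0$; the maps on successive $1$-skeleta agree, and towards the Cantor-type limit set inside each cell region a point is the nested intersection of images of diameter $\le\lambda^kD$, so completeness of $C$ supplies its image. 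The resulting map of the disk into $C$ is continuous---diameters shrink to $0$ towards the limit set, and off it the picture is locally that of a finite diagram---and restricts to $\gamma$, so $\gamma$ is null-homotopic.

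For $(1)\Rightarrow(2)$ we argue the contrapositive. If (2) fails for some $\lambda_0\in(0,1)$ then, since words of bounded length have bounded area, there are null-homotopic words $v_n$ with $\ell(v_n)\to\infty$, none of which is a product of $\le n$ conjugates of null-homotopic words of length $\le\lambda_0\ell(v_n)+n$. Put $s_n:=\ell(v_n)$, let $\gamma_n$ be the constant-speed loop at $1$ in $\Cay(G,S)$ reading $v_n$, and let $\gamma$ be the resulting loop in $C:=\Cone_\omega(\Cay(G,S),1,\mathbf{s})$. If $\gamma$ were null-homotopic it would bound a singular disk; triangulate the domain disk so finely that each $2$-simplex has image of diameter $<\lambda_0/4$ in $C$, and pull this fixed finite complex back through the $\omega$-limit, realising vertices by sequences of vertices of $\Cay(G,S)$ and edges by geodesic segments, arranging that the boundary word is $v_n$. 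Then each triangle pulls back to a null-homotopic word of length $<\lambda_0 s_n$, and the diagram so obtained exhibits, for $\omega$-almost every $n$, $v_n$ as a product of a fixed number $T$ of conjugates of null-homotopic words of length $<\lambda_0 s_n\le\lambda_0\ell(v_n)+n$---contradicting the choice of $v_n$ as soon as $n>T$. Hence $\gamma$ is not null-homotopic, so $C$ is not simply connected and (1) fails.

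I expect the main obstacle to be $(3)\Rightarrow(1)$: iterating (3) only yields a formal infinite ``tree of diagrams'', and promoting it to an honest continuous map of the closed disk requires the completeness of the asymptotic cone together with careful control of the geometrically shrinking mesh; the preliminary reduction, from arbitrary loops of $C$ to $\omega$-limits of bounded Cayley-graph loops, is the other delicate point. By comparison $(1)\Rightarrow(2)$ is routine once one is comfortable pulling a fixed finite complex back through an ultralimit, and $(2)\Leftrightarrow(3)$ is purely formal.
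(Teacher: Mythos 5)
The paper does not give its own proof of this theorem; it is cited verbatim from Riley's chapter (Theorem~II.4.3.1) of the Brady--Riley--Short book, which in turn packages arguments of Gromov and Papasoglu. Your argument is a reconstruction of that standard proof along its usual lines --- $(2)\Leftrightarrow(3)$ by van Kampen, $(3)\Rightarrow(1)$ by iterated subdivision plus completeness, $(1)\Rightarrow(2)$ by the contrapositive and pulling a fine triangulation of a filling disk back through the ultralimit --- and I do not see a genuine gap, though two spots are thinner than a full write-up would need. The first, which you flag yourself, is the preliminary reduction in $(3)\Rightarrow(1)$ from ``$\pi_1(C)=0$'' to ``every $\omega$-limit of combinatorial loops of length $O(s_n)$ bounds'': an arbitrary continuous loop in the cone is not automatically such a limit, nor even rectifiable, and one must approximate it and show the approximation is freely homotopic to the original; this is precisely where Riley's exposition spends its effort. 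The second is your justification that $\ell(v_n)\to\infty$ in $(1)\Rightarrow(2)$. Over the full relator set $\mathcal{R}=\{g\in F(S)\mid g=_G1\}$, every null-homotopic word already has area $1$, so ``bounded length implies bounded area'' is vacuous; the correct observation is that a word $w$ with $\ell(w)\le J/(1-\lambda_0)$ is its own one-conjugate filling satisfying $\ell(w)\le\lambda_0\ell(w)+J$, so any bad witness for parameters $(N,J)=(n,n)$ has $\ell(v_n)>n(1-\lambda_0)^{-1}$. That repair reaches your stated conclusion and the rest of the contrapositive argument then goes through as you describe.
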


\begin{remark}
  Change of finite generating set gives a biLipschitz map between
  Cayley graphs, which in turn induces a biLipschitz homeomorphism
  between asymptotic cones for a common ultrafilter and scaling
  sequence, so if all asymptotic cones of $\Cay(G,S)$ are simply
  connected, then the same is true with respect to any finite
  generating set of $G$.
\end{remark}

\section{Snowflake groups}\label{sec:snowflakegroups}

We consider the following family of groups for $L\geq 6$ and even:
\begin{equation}
  \label{presentation}
  G_L:=\langle a,x,y,s,t\mid sas^{-1}=x, tat^{-1}=y, a^L=xy,
  [a,x]=[x,y]=[y,a]=1\rangle
\end{equation}
Such a group is isomorphic, via $x=a^pb$ and $y=a^pb^{-1}$, to the
Brady-Bridson snowflake group \cite{BraBri00} $G_{p,q}$ with $p=L/2$
and $q=1$ defined by:
\begin{equation}
  \label{bradybridsonsnowflake}
 G_{p,q} := \langle a,b,s,t \mid [a,b] =1, s a^q s^{-1} = a^pb, ta^qt^{-1} = a^pb^{-1} \rangle 
\end{equation}

The group $G=G_L$ is a double HNN extension over
$\mathbb{Z}$ of the subgroup $H:=\langle
a,x,y\rangle\cong\mathbb{Z}^2$.
Note that $\langle a\rangle$, $\langle x\rangle$, and $\langle
y\rangle$ pairwise have trivial intersections since
$H\cong\mathbb{Z}^2$, and that $H=\coprod_{i=0}^{L-1}a^i\langle x,y\rangle$.
Let $X=X_L$ denote the Cayley graph of $G_L$ with respect to the
generating set $\{a,x,y,s,t\}$, with the following modification: we
declare that the $a$, $s$ and $t$ edges have length 1 as usual, but
assign length $L$ to the $x$ and $y$ edges\footnote{For the purposes of this paper one may
  choose any length at least 6 for the length of the $x$ and $y$
  edges, since this is enough to guarantee that no 2--biLipschitz path
  contains such an edge. We choose $L$ because it is the length induced by the
  maximally symmetric metric for this group as in \cite{Cas10}.}.
This follows the convention of \cite{ArzCasTao15} and leads to a
simpler description of geodesics. 
This is not a significant deviation from the standard conventions,
because the induced metric on vertices is still a word metric: it is
the one corresponding to the generating set $\{a,s,t\}$.
Metrically the $x$ and $y$ edges are extraneous.
We include them to preserve the correspondence between the group
structure as an HNN extension and the topological structure of the
universal cover of the
 corresponding graph of spaces, which is a torus
with two annuli glued on.
In this correspondence we have, for instance, that cosets of $\langle
a,x,y\rangle$ correspond to the planes that are the connected components of the preimage of
the torus in the universal cover, so we would like to think of cosets
of $\langle a,x,y\rangle$ as connected subsets of $X$, even if they
are not convex.

Brady and Bridson introduced snowflake groups to furnish examples of groups with
interesting power-law Dehn functions:  the Dehn function of $G_{p,q}$ is $n^{2\alpha}$ where
$\alpha := \log_2(2p/q)$.

For each $n\geq 1$ we inductively construct  a pair of paths
$\sigma_{n,s}$ and $\sigma_{n,t}$ from 1 to $a^{L^n}$ by:
\begin{align*}
  \sigma_{1,s}&:=sas^{-1}tat^{-1}\\
  \sigma_{1,t}&:=tat^{-1}sas^{-1}\\
  \sigma_{n+1,s}&:=s+\sigma_{n,s}+s^{-1}+t+\sigma_{n,s}+t^{-1}\\
  \sigma_{n+1,t}&:=t+\sigma_{n,t}+t^{-1}+s+\sigma_{n,t}+s^{-1}
\end{align*}
We call $\sigma_{n,s}$ and $\sigma_{n,t}$ \emph{snowflake paths} and
call $\sigma_{n,s}+\bar\sigma_{n,t}$ a \emph{snowflake loop}.
The snowflake loops are precisely the loops that Brady and Bridson
showed were hard to fill, witnessing the Dehn function.
An HNN diagram for $\sigma_{7,s}+\bar\sigma_{7,t}$ is illustrated in
Figure~\ref{fig:snowflake}. (The central diamond and all triangular
regions are vertex regions, and the trapezoidal regions are corridors.)

We will show in Theorem~\ref{thm:geodesicloop} that the snowflake
loops are geodesic. In particular, we can then compute lengths:
\begin{equation}
  \label{eq:2}
  |a^{L^n}|=5\cdot2^n-4 \text{ if }n>0
\end{equation}

\begin{figure}[h]
  \centering
  \labellist
  \tiny
  \pinlabel $1$ [r] at 300 580
  \pinlabel $a^{L^7}$ [l] at 800 580
  \pinlabel $x^{L^6}$ [b] at 570 1030
  \pinlabel $y^{L^6}$ [t] at 550 140
  \endlabellist
  \includegraphics[height=5cm]{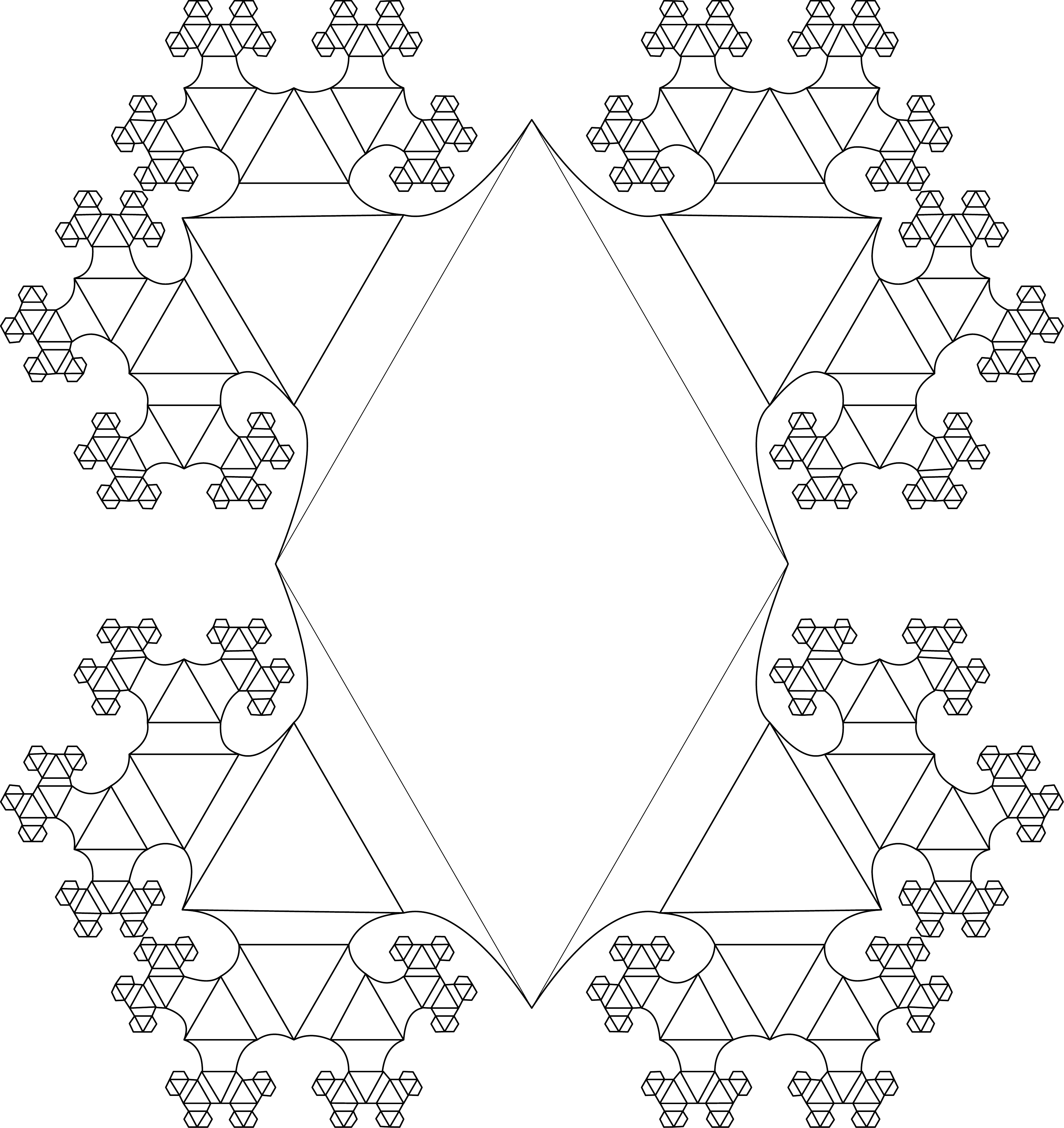}
  \caption{A snowflake for $a^{L^7}$.}
  \label{fig:snowflake}
\end{figure}

\subsection{A first result about geodesics}\label{first_geodesic_result}
Recall that $H=\langle a,x,y\rangle\cong \mathbb{Z}^2$ is the vertex
group of $G$.
With respect to a fixed coset $gH$ of $H$, an \emph{escape} is a
nontrivial path $\gamma \from P \to X$ that intersects $gH$ precisely
at its endpoints. 
The difference between the endpoints of such a $\gamma$ is a power of
either $a$, $x$, or $y$, and we call $\gamma$ an \emph{$a$--escape}, an
\emph{$x$--escape} or a \emph{$y$--escape}, respectively.
Note that there are two types of $a$--escape: paths of the form
$s^{-1}+\gamma+s$ or of the form $t^{-1}+\gamma+t$.
On the other hand, every $x$--escape is of the form $s+\gamma+s^{-1}$, and
every $y$--escape is of the form $t+\gamma+t^{-1}$.

A \emph{toral path}, with respect to $gH$, is an edge path contained
in $gH$.  By our choice of metric on $X$, a toral path whose biLipschitz
constant is less than $2$ does not contain any $x$ or $y$ edges, so it
is just an $a$--path.

If $\gamma$ is a single escape, then its endpoints differ by $g^n$ for
some $g\in\{a,x,y\}$ and some $n\in\mathbb{Z}$.
Define the \emph{trace} of $\gamma$ to be the toral path between the
endpoints of $\gamma$ consisting of $n$--many $g$--edges.
Define the \emph{exponent} of $\gamma$ to be $n$.

Given a path $\gamma$ with endpoints on $gH$
we can decompose it into a concatenation
$\gamma=\gamma_1+\cdots+\gamma_n$ of escapes and toral
subpaths.
We then define the trace of $\gamma$ (in $gH$) by replacing each escape by its
trace. 

Since $H$ is Abelian, for any permutation $\sigma$ of $\{1,\dots,n\}$ there are
translations $g_i\in H$ that give a new path $\gamma'=g_1.\gamma_{\sigma(1)}+\cdots+g_n.\gamma_{\sigma(n)}$ with
the same length and the same endpoints as $\gamma$. 
\emph{Consolidating toral subpaths, $x$--escapes, etc} means choosing
the permutation $\sigma$ so that in $\gamma'$ all of the toral
subpaths, $x$--escapes, etc are adjacent.
Thus, by consolidating toral subpaths we may replace $\gamma$ by a
path $\gamma'$ with the same endpoints that is a concatenation of escapes followed by a single toral subpath, for
instance.

Notice that our presentation \eqref{presentation} for $G$ has a
symmetry that exchanges $x$ and $y$ and fixes $\langle a\rangle$.
This induces an isometry on $X$, which we refer to as an
\emph{$x/y$--symmetry}. 

\begin{proposition}\label{3piecegeodesic}
  For all $h\in H=\langle a,x,y\rangle$, every geodesic from 1
  to $h$ consists of a concatenation, in some order,  of at most one $x$--escape, at
  most one $y$--escape, and some number of toral subpaths that are all
  of the form $a^{\ell_i}$, where all of the $\ell_i$ have the same
  sign.
  If $L>6$ or if $L=6$ and $h\notin\{a^{-10},a^{-9},\dots,a^{10}\}$ then $|\sum_i\ell_i|<L$.
  If $L=6$ and $1\neq h\in\{a^{-10},a^{-9},\dots,a^{10}\}$ 
  then the $a$--path from 1 to $h$ is a geodesic but there also exists a 
  geodesic consisting of at most one $x$--escape, at most one $y$--escape,
  and a toral path that is an $a$--path of exponent strictly less
  than $L$.

  Consequently, there is a geodesic from $1$ to $h$ of the form
  $\gamma_x+\gamma_y+\gamma_a$ (or any permutation of the order of
  these three paths) where $\gamma_x$ is trivial or a geodesic
  $x$--escape, $\gamma_y$ is trivial or a geodesic $y$--escape, and
  $\gamma_a$ is a geodesic $a$--path of length less than $L$. 

  In the special case of $h=x^my^n\neq 1$ every geodesic from 1 to $h$
  is of the
  form $\gamma_x+\gamma_y$ or $\gamma_y+\gamma_x$ where $\gamma_x$ has
  exponent $m$ and $\gamma_y$ has exponent $n$, except in the case
  that $L=6$ and $m=n=\pm 1$ so that $h=a^{\pm L}$, in which case there is such a geodesic,
  but the $a$--path from $1$ to $h$ is also geodesic.
\end{proposition}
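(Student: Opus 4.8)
The plan is to determine the shape of a geodesic $\gamma$ from $1$ to $h$ in two stages: first cut down the possibilities by elementary surgery, then rule out $a$--escapes by induction on $|h|$. Since $x=_Gsas^{-1}$ and $y=_Gtat^{-1}$, an $x$--edge or $y$--edge in $\gamma$ (of length $L\geq 6$) could be replaced by the length--$3$ path $sas^{-1}$, resp.\ $tat^{-1}$, shortening $\gamma$; hence $\gamma$ has no $x$-- or $y$--edges, and every toral subpath of $\gamma$ is an $a$--path. Decomposing $\gamma$ into maximal escapes and toral subpaths with respect to $H$ and consolidating (possible since $H$ is abelian), we obtain a geodesic $\gamma'$ with the same endpoints in which the $x$--escapes, the $y$--escapes, the $a$--escapes through $s$, the $a$--escapes through $t$, and the toral subpaths are each grouped together. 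Two adjacent $x$--escapes $(s+\delta_1+s^{-1})+(s+\delta_2+s^{-1})$ contain the backtrack $s^{-1}+s$, so $\gamma'$ could be shortened; likewise two adjacent $y$--escapes, two adjacent $a$--escapes through the same stable letter (backtrack $s+s^{-1}$ or $t+t^{-1}$), and two adjacent toral subpaths of opposite sign (backtrack $a+a^{-1}$). So $\gamma'$, and therefore $\gamma$ since consolidation only permutes the pieces, has at most one $x$--escape, at most one $y$--escape, at most one $a$--escape through $s$, at most one through $t$, and toral subpaths all of one sign.

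The crux is to eliminate the $a$--escapes, which I would do by strong induction on $|h|$, taking as inductive hypothesis precisely the first assertion of the proposition (\emph{property (P)}: a geodesic is, in some order, at most one $x$--escape, at most one $y$--escape, same--sign toral $a$--subpaths, and no $a$--escape). Suppose the consolidated geodesic $\gamma'$ for $h$ has an $a$--escape through $s$, necessarily of some exponent $n\neq 0$; write it $\eta=s^{-1}+\delta+s$, so $\delta$ joins two points of a coset of $H$ differing by $x^n$. As a subpath of a geodesic, $\delta$ is a geodesic of length $|\eta|-2\leq|h|-2<|h|$, hence, after translating, a geodesic from $1$ to $x^n$ to which the inductive hypothesis applies. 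I claim any such geodesic is a single $x$--escape $s+\delta_0+s^{-1}$; granting this, $\eta=s^{-1}+s+\delta_0+s^{-1}+s$ contains a backtrack, contradicting that $\gamma'$ is geodesic. By the $x/y$--symmetry there is no $a$--escape through $t$ either, so together with the first paragraph this establishes property (P) for $h$, completing the induction.

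The claim just used, and the last two clauses of the proposition, rest on one length estimate. Let $h'=x^My^P$, let $\gamma''$ be a geodesic from $1$ to $h'$ with property (P), and consolidate $\gamma''$ as an $x$--escape of exponent $e_x$, a $y$--escape of exponent $e_y$, and $a^{m''}$. Using $y=_Ga^Lx^{-1}$, the element identity forces $e_x-e_y=M-P$ and $m''=L(P-e_y)$, and a geodesic $x$--escape of exponent $k$ has length $2+|a^k|$; then subadditivity of $k\mapsto|a^k|$ together with $|a^k|\leq|k|$ give
\[
  |a^M|+|a^P|+(L-2)\,|P-e_y|
\]
as a lower bound for the length of $\gamma''$, whereas the choice $e_y=P$ produces a path of length at most $|a^M|+|a^P|+4$. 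Since $L\geq 6$, any choice with $e_y\neq P$ is strictly longer, unless $L=6$, $|P-e_y|=1$, and (examining the equality case) $e_x=e_y=0$, $M=P=\pm 1$, i.e.\ $h'=a^{\pm L}$. This proves the $h=x^my^n$ case---every geodesic is $\gamma_x+\gamma_y$ or $\gamma_y+\gamma_x$ with exponents $m,n$, except that for $L=6$ and $m=n=\pm 1$ the $a$--path to $a^{\pm L}$ is also geodesic---and specializing to $P=0$ gives the claim used in the induction. For the bound on the toral part, $|a^L|\leq|x|+|y|\leq 6\leq L$, strictly when $L>6$, so in that case a geodesic cannot contain a toral subpath $a^{m'}$ with $|m'|\geq L$ (such a subpath would not itself be geodesic), giving $|\sum_i\ell_i|<L$. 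For $L=6$, a finite computation (using property (P) and the length estimate) shows $|a^k|=k$ for $|k|\leq 10$ and $|a^k|<|k|$ for $|k|\geq 11$, and that a geodesic containing both an escape and a toral subpath of size $\geq 6$ can be shortened by substituting $a^6=sas^{-1}tat^{-1}$ and consolidating escapes; these combine to show that either $|\sum_i\ell_i|<L$ or else $\gamma$ is the $a$--path to $a^{m'}$ with $6\leq|m'|\leq10$, in which case (for $m'>0$) $sas^{-1}tat^{-1}a^{m'-6}$ is another geodesic with toral part of size $<L$. The ``consequently'' clause is then immediate from property (P) and these bounds.

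The main obstacle is making the induction of the second paragraph airtight. One must use consolidation only to extract the multiset of escape types, not the ordering of pieces, so that the forbidden backtrack is genuinely present in the \emph{original} geodesic; and one must correctly feed in the length estimate to know that geodesics ending at pure powers of $x$ or $y$ are single escapes. The $L=6$ threshold---why the exceptional set is exactly $\{a^{-10},\dots,a^{10}\}$---is a finite but fussy case analysis best isolated as its own lemma.
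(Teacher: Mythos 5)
Your proposal is correct, and the first stage (no $x$/$y$ edges, consolidation, backtrack argument giving at most one escape of each type and same--sign toral pieces) is the same as the paper's. But you and the paper diverge at the crucial step of killing $a$--escapes, and the two routes are genuinely different.

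The paper proves, as a standalone claim, that any geodesic whose endpoints differ by a power of $x$ is a single $x$--escape, by a \emph{minimal--counterexample} argument: a shortest counterexample has no $x$--escape, hence writes $x^m = a^{\ell}y^n$, which forces $m+n = 0$ and then $|x^m| = |y^m| + |a^{mL}|$; the $x/y$--symmetry $|x^m| = |y^m|$ then forces $mL = 0$, a contradiction. There is no induction and no explicit length estimate until the $L=6$ casework. You instead run a \emph{strong induction on $|h|$} and feed property (P) (obtained for the shorter element $x^n$ from the IH) into an explicit length estimate $|\gamma''| \ge |a^M| + |a^P| + (L-2)|P - e_y|$, comparing against the competitor $\gamma_x+\gamma_y$ of length $4 + |a^M| + |a^P|$. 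Both routes are valid. The paper's is slicker and dodges quantitative estimates; yours is more uniform (the same length lemma does double duty, closing the induction when $P=0$ and proving the final ``$h = x^m y^n$'' clause in general), but it requires more careful casework than you spell out. In particular, the length formula $|\gamma''| = 4 + |a^{e_x}| + |a^{e_y}| + L|P - e_y|$ only holds when both escapes are nontrivial; when $e_x = 0$ or $e_y = 0$ the formula changes (you lose a $+2$), and when both vanish you are left with the pure $a$--path $a^{L(M)}$ with $M=P$. Your stated lower bound is still valid in these degenerate cases (it becomes slack, not false), and your identification of the lone equality case $L=6$, $e_x=e_y=0$, $M=P=\pm 1$ is right, but the reader deserves to see the split. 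One more optional refinement: to conclude that $\delta$ --- the interior of an $a$--escape --- is a \emph{single} $x$--escape, you need property (P) for $x^n$ \emph{and} the length lemma; you flag this dependency, but it is worth phrasing the length lemma as an explicitly conditional statement (``for any geodesic to $x^M y^P$ satisfying (P), $\ldots$'') so the induction's logical flow is airtight.
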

\begin{remark}\label{whyLsix}
  One sees in Proposition~\ref{3piecegeodesic} that there is a `corner
  case' of low $L$ and small powers of $a$ that requires extra
  attention when describing geodesics. The situation is even worse
  when $L=4$, so we have chosen to omit $G_4$ completely rather
  than deal with additional special cases. We suspect that the main
  results of the paper are still true for $G_4$. The group $G_2$ has
  quadratic Dehn function, so we already know its asymptotic cones are
  simply connected by Papasoglu's theorem. 
\end{remark}
\begin{proof}
  A geodesic $\gamma$ from $1$ to $h$ can be subdivided into toral subpaths and
  escapes, each of which we may assume is geodesic, since otherwise we
  would find a shortening of $\gamma$.

  If there is more than one $x$--escape then by consolidating
  $x$--escapes we can replace $\gamma$ by a path of the same length
  with the same endpoints that has adjacent $x$ escapes.
  However, since $x$ escapes are of the form $s+\gamma'+s^{-1}$, if
  there are two adjacent then we get an $s$--cancellation, so there is a
  shorter path from $1$ to $h$, contradicting the fact that we started
  with a geodesic.
  By the same reasoning, there is at most one $y$--escape.

Suppose  that there exist nontrivial geodesics that are not a
single $x$--escape and whose endpoints differ by a power of $x$.
Consider a shortest such geodesic, $\gamma$.
By what we have already argued, $\gamma$ consists of at most one
$x$--escape, at most one $y$--escape, and paths, either toral path or $a$--escapes, whose endpoints differ
by powers of $a$.
By the `shortest' hypothesis, $\gamma$ does not contain an
$x$--escape, because if it did we could remove it and get a shorter path whose
endpoints still differ by a power of $x$.
Let $x^m$ be the difference between the endpoints of $\gamma$, and $y^n$
be the difference between endpoints of the $y$--escape.
Since the remainder of $\gamma$ is paths whose endpoints differ by
powers of $a$, we get that $x^m=a^\ell y^n$.
Now, $x^my^m=a^{mL}$, so $y^{m+n}=a^{mL-\ell}$.
Since $\langle a\rangle\cap\langle y\rangle=1$, this is only possible when $m+n=mL-\ell=0$.
However, this gives us:
\[|x^m|=|\gamma|=|y^m|+|a^{mL}|\]
By $x/y$--symmetry, $|x^m|=|y^m|$, which implies $mL=0$, a
contradiction.
Thus, every geodesic whose endpoints differ by a power of $x$ is a
single $x$--escape.
The corresponding statement for $y$ follows from $x/y$--symmetry. 

From this it follows that there are no geodesic $a$--escapes, for if there were we would have a
geodesic $\gamma$ such that, up to $x/y$--symmetry,
$\gamma=s^{-1}+\gamma'+s$, where $\gamma'$ is a geodesic whose
endpoints differ by a power of $x$.
But we have just shown that such a geodesic is a single $x$--escape,
so $\gamma'=s+\gamma''+s^{-1}$, which contradicts that $\gamma$ is
geodesic.

By consolidating toral subpaths we may replace $\gamma$ by a path
with the same length and the same endpoints in which all of the
toral subpaths are adjacent.
If any of them have different signs then there is an $a$--cancellation,
which contradicts $\gamma$ being a shortest path from $1$ to $h$.
Thus, we may suppose $\gamma$ has a single toral subpath of the form
$a^\ell$ for $|\sum_i\ell_i|=\ell\geq 0$.
The toral subpath must be geodesic, since otherwise we could shorten $\gamma$.
We have $|a^L|=|xy|=|sas^{-1}tat^{-1}|\leq 6$, so if $L>6$ then no
$a$--path of exponent at least $L$ is geodesic.
If $L=6$ then a similar argument shows $|a^{12}|\leq 8$, so no
$a$--path of exponent at least $11$ is geodesic.
Furthermore, if $\ell\geq 6$ then we can replace the toral subpath by
the path $sas^{-1}tat^{-1}a^{\ell-6}$ to get a geodesic $\gamma'$ with the
same endpoints as $\gamma$.
If $h\notin\{a^{-10},a^{-9},\dots,a^{10}\}$ then $\gamma$ could not
have been purely toral, because a geodesic toral path  has endpoints that
differ by a power of $a$ with exponent less
than 11 when $L=6$.
Since $a$--escapes are not geodesic, $\gamma$
must have  already had at least one $x$--escape or
$y$--escape.
Since $\gamma'$ has one more $x$--escape
and one more $y$--escape than $\gamma$, it has at least two of some
flavor of escape, which contradicts the fact that $\gamma'$ is
geodesic.
This leaves for our consideration the case that $h=a^\ell$ for  $0<\ell\leq
10$.
Suppose there is a geodesic $\gamma$ from $1$ to $h$ that is not toral.
Then it contains either an $x$ or $y$--escape, but since the endpoints
are in $\langle a\rangle$ it must contain both with equal exponents.
The minimum length of an escape is 3, so $|\gamma|\geq 6$, with
equality only if $\ell=6$.
Thus, if $\ell<6$ then the $a$--path from 1 to $a^\ell$, which has
exponent strictly less than $L$, is the unique
geodesic.
If $6\leq \ell\leq 10$ and the escapes of $\gamma$ have minimal length then, up to reordering,
$\gamma=sas^{-1}tat^{-1}a^{\ell-6}$, which has length $\ell$, so is
the same length as the $a$--path.
If the escapes are not minimal length then $\ell\geq |\gamma|\geq 8+|a^{\ell-6p}|$ for some $p\geq 2$.
The only solution, since $0<\ell\leq 10$, is when $\ell=10$ and $p=2$, in which
case the paths labelled $a^{10}$, $sas^{-1}tat^{-1}a^4$, and $sa^2s^{-1}ta^2t^{-1}a^{-2}$
all have the same length.
We have shown that when $L=6$ and $0<\ell\leq 10$ it is not possible
to find a shorter path from $1$ to $a^\ell$ than the $a$--path, so
$|a^\ell|=\ell$.
We have also exhibited paths, either $a^\ell$ when
$\ell<6$ or $sas^{-1}tat^{-1}a^{\ell-6}$ when $6\leq \ell\leq 10$, of
the desired length, hence geodesic, and of the desired form: at most one
$x$--escape, at most one $y$--escape, and a toral subpath of exponent
strictly less than $L$. 

Finally, consider $h=x^my^n$ with $m\neq0\neq n$.
Suppose a geodesic $\gamma$ from 1 to $h$ is not just an
$x$--escape and a $y$--escape. By commutativity we may assume that the
$x$ and $y$--escapes, if any,  come first, and end at a point $g\in\langle x,y\rangle$.
Then $1\neq g^{-1}h\in\langle a\rangle\cap\langle x,y\rangle=\langle
a^L\rangle$.
Thus, the geodesic contains a toral subpath whose exponent is a
nontrivial multiple of $L$.
As we have seen above, this is impossible except when $L=6$ and
$h=a^{\pm 6}$.
In this case there exists a geodesic that is a single $x$--escape and
a single $y$--escape, but there is also one that is a toral path.

Now suppose the $x$--escape has exponent $p$ and the $y$--escape has
exponent $q$. Then $h=x^my^n=x^py^q\implies x^{m-p}=y^{q-n}$, but
$\langle x\rangle\cap\langle y\rangle=1$, so $p=m$ and $q=n$.
\end{proof}

\subsection{Geodesics between elements of $\langle a\rangle$.}\label{ageodesics}
\begin{lemma}\label{guards}
  If $0\leq r<L$ and $q\in\mathbb{Z}$ then there exists a geodesic from 1 to
  $a^{qL+r}$ that consists of a geodesic from 1 to  either $a^{qL}$ or
  $a^{(q+1)L}$, followed by a geodesic $a$--path.
\end{lemma}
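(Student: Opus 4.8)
The plan is to read off such a geodesic directly from Proposition~\ref{3piecegeodesic}, letting the abelian arithmetic of $H=\langle a,x,y\rangle$ do the bookkeeping. Write $h:=a^{qL+r}$. By the ``Consequently'' clause of Proposition~\ref{3piecegeodesic} there is a geodesic $\gamma=\gamma_x+\gamma_y+\gamma_a$ from $1$ to $h$ in which $\gamma_x$ is trivial or a geodesic $x$--escape of some exponent $m$, $\gamma_y$ is trivial or a geodesic $y$--escape of some exponent $n$, and $\gamma_a$ is a geodesic $a$--path of some exponent $k$ with $|k|<L$. Since that clause holds for every element of $H$, the $L=6$ corner cases are already absorbed and require no separate discussion.

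Next I would pin down $m$, $n$, and $k$. Since $H\cong\Z^2$ and $xy=a^L$, eliminating $y=a^Lx^{-1}$ gives, in $H$,
\[
  h = x^m y^n a^k = x^{\,m-n}\,a^{\,Ln+k}.
\]
Comparing with $h=a^{qL+r}$, which has zero $x$--coordinate relative to the basis $\{a,x\}$, forces $m=n$ and $k=(q-n)L+r$. A short case check on $q-n$, using $0\le r<L$ together with $|k|<L$, then shows $n\in\{q,q+1\}$, with $n=q+1$ only possible when $r>0$: the case $q-n=1$ gives $k=L+r\ge L$ and any $|q-n|\ge 2$ gives $|k|>L$, so both are excluded.

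Finally, since $\gamma=\gamma_x+\gamma_y+\gamma_a$ already has the two escapes first, I would group it as $\gamma=(\gamma_x+\gamma_y)+\gamma_a$. The initial segment $\gamma_x+\gamma_y$ runs from $1$ to $x^n y^n=(xy)^n=a^{nL}$ with $n\in\{q,q+1\}$, and it must itself be geodesic, since a shorter path between its endpoints would shorten $\gamma$. As $\gamma_a$ is a geodesic $a$--path by construction, this exhibits $\gamma$ as a geodesic from $1$ to $a^{qL+r}$ that is a geodesic from $1$ to $a^{qL}$ or $a^{(q+1)L}$ followed by a geodesic $a$--path. There is no genuinely hard step here once Proposition~\ref{3piecegeodesic} is available; the only points requiring care are the elementary bound on $q-n$ and the remark that the $L=6$ exceptions in Proposition~\ref{3piecegeodesic} do not propagate to this statement.
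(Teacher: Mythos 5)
Your proof is correct and follows essentially the same route as the paper's: both start from the canonical $\gamma_x+\gamma_y+\gamma_a$ geodesic of Proposition~\ref{3piecegeodesic} and use the abelian arithmetic of $H$ to force $m=n$ and $n\in\{q,q+1\}$, then regroup. The only cosmetic difference is that you eliminate $y=a^Lx^{-1}$ and compare coordinates in the basis $\{a,x\}$ of $H\cong\Z^2$, whereas the paper invokes $\langle x,y\rangle\cap\langle a\rangle=\langle a^L\rangle$ directly; these are the same observation.
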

\begin{proof}
  By Proposition~\ref{3piecegeodesic}, there is a geodesic from 1 to $a^{qL+r}$
  of the form $\gamma_x+\gamma_y+a^p$ with $|p|<L$.
Suppose the endpoints of  $\gamma_x$ differ by $x^m$ and the endpoints
of $\gamma_y$ differ by $y^n$. 
Then $x^my^n=a^{qL+r-p}$, but since $\langle x,y\rangle\cap\langle
a\rangle=\langle a^L\rangle$, this implies $m=n$ and $qL+r-p=mL$.
Since $r-p=(m-q)L$ and $-L<r-p<2L$, we have $m=q$ or $m=q+1$.
Thus, $\gamma_x+\gamma_y$ ends at either $a^{qL}$ or $a^{(q+1)L}$.
\end{proof}

\begin{corollary}\label{lem:alengths}\hfill
  
  \begin{enumerate}\item $|a^\ell|=\ell$ for $0<\ell\leq 3+L/2$.\label{item:shorty}
    \item $|a^\ell|=6+L-\ell$ for $3+L/2\leq \ell\leq L$.\label{item:secondbasin}
    \item $ |a^{qL}|=4+2|a^q|$ for all $q\neq 0$.\label{item:powers}
    \item $||a^\ell|-|a^{\ell+1}||=1$ for all $\ell\in\mathbb{Z}$.\label{item:adjacent}
    \item $||a^{qL}|-|a^{(q+1)L}||=2$ for all $q>0$.\label{item:adjacentmultiples}
    \end{enumerate}
  \end{corollary}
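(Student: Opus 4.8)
My plan is to reduce the whole corollary to two facts about $f(\ell):=|a^\ell|$ (where $f(-\ell)=f(\ell)$ and $f(1)=1$): the recursion $f(qL)=4+2f(q)$, which is Part~\eqref{item:powers}, and the closed form $f(\ell)=\min(\ell,6+L-\ell)$ for $0\le\ell\le L$, which is Parts~\eqref{item:shorty} and~\eqref{item:secondbasin}. Both come out of Proposition~\ref{3piecegeodesic}. For the recursion: since $[x,y]=1$ and $xy=a^L$ we have $x^qy^q=a^{qL}$, so $f(qL)=|x^qy^q|$; for $L>6$, or for $L=6$ with $q\neq\pm1$, Proposition~\ref{3piecegeodesic} says every geodesic to $x^qy^q$ is $\gamma_x+\gamma_y$ in some order, with $\gamma_x$ a geodesic $x$--escape of exponent $q$ and $\gamma_y$ a geodesic $y$--escape of exponent $q$. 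Since an $x$--escape of exponent $q$ has the form $s+\gamma'+s^{-1}$ with the endpoints of $\gamma'$ differing by $a^q$, and $\gamma'$ must be geodesic when the escape is, $|\gamma_x|=2+f(q)$, and likewise $|\gamma_y|=2+f(q)$, so $f(qL)=4+2f(q)$. The leftover case $L=6$, $q=\pm1$ I would read straight off Proposition~\ref{3piecegeodesic}, which gives $f(\pm6)=6=4+2f(1)$.

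For the closed form, I would fix $1\le\ell\le L$ and invoke the ``Consequently'' clause of Proposition~\ref{3piecegeodesic}: there is a geodesic $\gamma$ from $1$ to $a^\ell$ that is, in some order, a (possibly trivial) $x$--escape of exponent $m$, a (possibly trivial) $y$--escape of exponent $n$, and an $a$--path of exponent $p$ with $|p|<L$. From $x^my^na^p=a^\ell$, dividing by the identity $x^my^m=a^{mL}$ gives $y^{n-m}\in\langle y\rangle\cap\langle a\rangle=1$, so $m=n$; if $m=0$ then $\gamma$ is the $a$--path and $f(\ell)=\ell$ (which cannot occur when $\ell=L$), while if $m\neq0$ then $p=\ell-mL$, and $|\ell-mL|<L$ with $1\le\ell\le L$ forces $m=1$, giving $|\gamma|=(2+f(1))+(2+f(1))+f(L-\ell)=6+f(L-\ell)$. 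As the $a$--path has length $\ell$ and $a^\ell=(sas^{-1})(tat^{-1})a^{\ell-L}$ has length $6+f(L-\ell)$, this yields
\[f(\ell)=\min\bigl(\ell,\,6+f(L-\ell)\bigr)\qquad(1\le\ell\le L).\]
This is $\le\min(\ell,6+L-\ell)$ because $f(L-\ell)\le L-\ell$; conversely, if some $\ell\in[1,L]$ had $f(\ell)<\min(\ell,6+L-\ell)$ then $f(\ell)<\ell$ would force $f(\ell)=6+f(L-\ell)<6+L-\ell$, hence $f(L-\ell)<L-\ell$, and rerunning the step on $L-\ell$ would give $f(L-\ell)=6+f(\ell)$ and thus $f(\ell)=12+f(\ell)$, absurd. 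So $f(\ell)=\min(\ell,6+L-\ell)$ on $[1,L]$, which splits into Parts~\eqref{item:shorty} and~\eqref{item:secondbasin} at $\ell=3+L/2$.

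It remains to get Parts~\eqref{item:adjacent} and~\eqref{item:adjacentmultiples}. I would first note $f(\ell)\equiv\ell\pmod2$ for all $\ell$: for $0\le\ell\le L$ this is clear from the closed form (both $6$ and $L$ are even), and for $\ell>L$, writing $\ell=qL+r$ with $q\ge1$, $0\le r<L$, Lemma~\ref{guards} makes $f(\ell)$ equal $f(qL)+f(r)$ or $f((q+1)L)+f(L-r)$, where $f(qL)=4+2f(q)$ and $f((q+1)L)=4+2f(q+1)$ are even and $f(r)\equiv r\equiv L-r\equiv f(L-r)\pmod2$ by the $[0,L]$ case, so $f(\ell)\equiv r\equiv\ell\pmod2$. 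Then Part~\eqref{item:adjacent} follows since $|f(\ell+1)-f(\ell)|\le1$ (one $a$--edge joins $a^\ell$ to $a^{\ell+1}$) and the two values have opposite parity, and Part~\eqref{item:adjacentmultiples} follows from Part~\eqref{item:powers}: $|f(qL)-f((q+1)L)|=2\,|f(q)-f(q+1)|=2$ for $q>0$.

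I expect the genuinely delicate step to be extracting the recursion $f(\ell)=\min(\ell,6+f(L-\ell))$ on $[1,L]$ from Proposition~\ref{3piecegeodesic}---in particular checking that $|p|<L$ really does eliminate every escape multiplicity other than $m=1$, and making sure the $L=6$ corner cases (where small powers of $a$ are treated specially) are absorbed rather than lost. The rest---solving the recursion, the parity remark, and Parts~\eqref{item:adjacent}--\eqref{item:adjacentmultiples}---is routine.
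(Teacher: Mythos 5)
Your proof is correct and follows essentially the same route as the paper: item~(3) via the $x^qy^q$ decomposition from Proposition~\ref{3piecegeodesic}, items~(1)--(2) by comparing the pure $a$--path with the path through $a^{L}$ (the paper reads this off directly from Lemma~\ref{guards} rather than re-deriving and then solving the recursion $f(\ell)=\min(\ell,6+f(L-\ell))$ as you do), and item~(5) from items~(3) and~(4). The only cosmetic difference is item~(4), where the paper replaces your explicit parity computation with the one-line observation that all relators have even length, so the Cayley graph is bipartite and two adjacent vertices cannot be equidistant from $1$.
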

  \begin{proof}
    If $q\neq 0$ then $a^{qL}=x^qy^q$ and 
    by Proposition~\ref{3piecegeodesic} there is a geodesic from 1 to
$a^{qL}$ consisting of a single $x$--escape and a single $y$--escape,
both of exponent $q$.
A geodesic $x$--escape of exponent $q$ is of the form
$s+\underline{a^q}+s^{-1}$, for some geodesic $\underline{a^q}$ whose
endpoints differ by $a^q$, and similarly for the $y$--escape, so
Item~\eqref{item:powers} follows.

By Lemma~\ref{guards}, if $0<\ell<L$ then either there is a geodesic from 1 to
$a^{\ell}$ that is an $a$--path of length $\ell$, or there is a
geodesic from 1 to $a^\ell$ that consists of a geodesic from 1 to $a^L$
followed by an $a$--path of length $L-\ell$. The length of the latter is
$6+L-\ell$.
When $\ell<3+L/2$ the former is shorter, when $\ell>3+L/2$ the latter is
shorter, and when $\ell=3+L/2$ they have the same length.
This proves Item~\eqref{item:shorty} and Item~\eqref{item:secondbasin}.

    Clearly $||a^\ell|-|a^{\ell+1}||\leq 1$, so to  prove
    Item~\eqref{item:adjacent} we only need to show it is
    not 0. It cannot be 0 because the relators all have even length, so two neighboring vertices cannot be equidistant to a third vertex.

    Item~\eqref{item:adjacentmultiples} follows from Item~\eqref{item:powers} and
    Item~\eqref{item:adjacent}.
  \end{proof}

We can use Lemma~\ref{guards} and its corollary to construct geodesics from $1$ to powers of $a$ with
nonzero exponent  inductively:
Suppose that we know a geodesic from $1$ to $a^{\ell'}$ for all
$0<\ell'<\ell$.
By Corollary~\ref{lem:alengths}, this is true for $\ell\leq L+1$, so we may
assume $\ell\geq L+1$.
Write $\ell=qL+r$ for $0\leq r<L$, so $q>0$.
By the induction hypothesis, we know a geodesic $\gamma_{qL}$ from 1
to $a^{qL}$ and a geodesic $\gamma_{q+1}$ from $1$ to $a^{q+1}$.
By Lemma~\ref{guards}, there is a geodesic from $1$ to $a^\ell$ that consists of a geodesic
through either $a^{qL}$ or $a^{(q+1)L}$, followed by an $a$--path.
By Proposition~\ref{3piecegeodesic} there exists a geodesic from 1 to $a^{(q+1)L}$ consisting of an
$x$--escape whose endpoints differ by $x^{q+1}$ and a $y$--escape
whose endpoints differ by $y^{q+1}$.
The former has the form
$s+\underline{a^{q+1}}+s^{-1}$ for some geodesic $\underline{a^{q+1}}$
whose endpoints differ by $a^{q+1}$, and similarly for the latter.
But since we know one such geodesic, $\gamma_{q+1}$, we know a
geodesic $\gamma_{(q+1)L}:=s+\gamma_{q+1}+s^{-1}+t+\gamma_{q+1}+t^{-1}$
from 1 to $a^{(q+1)L}$.
Therefore, the shorter of  $\gamma_{qL}+a^{r}$ and $\gamma_{(q+1)L}+a^{L-r}$ is
a geodesic from 1 to $a^{qL+r}$.

We wrote a computer program to compute $|a^\ell|$ inductively as
described. The results for $L=10$ are shown in
Figure~\ref{fig:alengths}.
The main takeaway from this experiment is that the function
$\ell\mapsto |a^\ell|$ is not monotonic, and in fact monotonicity
fails at arbitrarily large scales.
This essentially comes down to the fact that $|a^\ell|$ is relatively
shorter when $\ell$ is close to being a power of $L$, and is somewhat
larger if $\ell$ can only be expressed as a combination of several
different powers of $L$.
In the rest of this subsection we make the relationship between $|\ell|$
and $|a^\ell|$ as precise as possible.
\begin{figure}[h]
  \centering
  \includegraphics{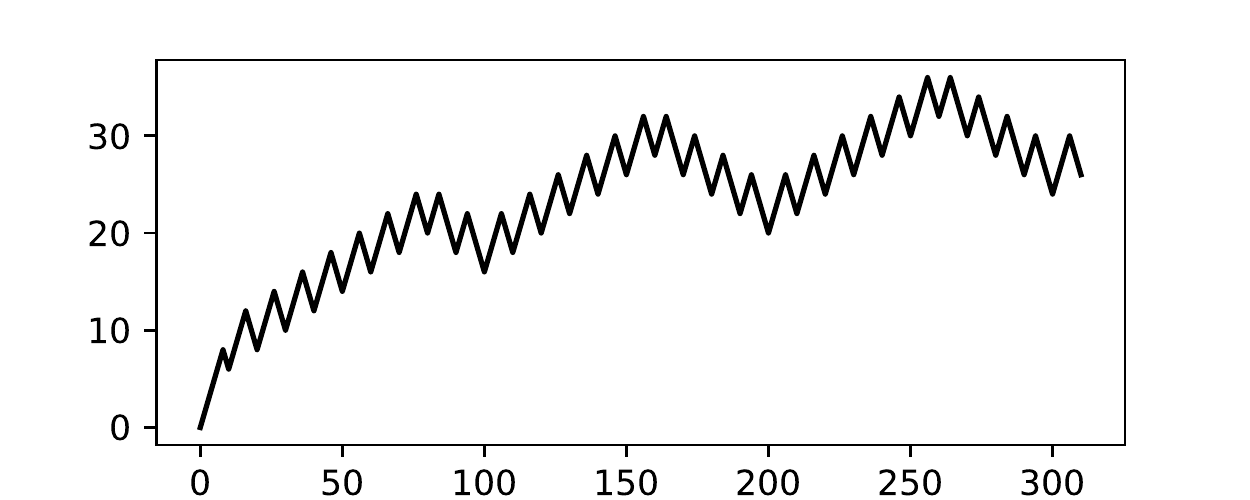}
  \caption{$|a^\ell|$ versus $\ell$ for $L=10$. Local minima occur
    when $L|\ell$, second order minima occur when $L|\ell^2$, etc.}
  \label{fig:alengths}
\end{figure}

Consider an expression $0<m=\sum_{i=0}^j m_iL^i$.
Inductively associate to such an expression a path
$\alpha+\beta+\gamma$ where $\gamma$ is $m_0$--many $a$--edges, either
forward or backward corresponding to the sign of $m_0$, and 
$\alpha=s+\delta+s^{-1}$  and $\beta=t+\delta+t^{-1}$ where $\delta$ is
the path corresponding to $\sum_{i=0}^{j-1} m_{i+1}L^i$.
This path has length:
\begin{equation}
  \label{eq:4}
  \sum_{i=0}^j|m_i|\cdot 2^i+4(2^j-1)
\end{equation}
Call $m=\sum_{i=0}^j m_iL^i$ a \emph{geodesic expression} of $m$ if
the corresponding path is a geodesic.
By our inductive argument, every integer admits a geodesic expression.
We derive constraints on the coefficients:
\begin{lemma} \label{lem:a_geodesic}\hfill
  \begin{itemize}
  \item For $0\leq \ell\leq 2+L/2$ there is a geodesic from $1$ to
    $a^\ell$ consisting of an $a$--path of length at most $2+L/2$.
    \item For $\ell>2+L/2$ there exists a geodesic from $1$ to
      $a^\ell$ consisting of one $x$--escape, one $y$--escape, and an
      $a$--path of length at most $L/2$.
  \end{itemize}
\end{lemma}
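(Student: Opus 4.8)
The first bullet is immediate: Corollary~\ref{lem:alengths}\eqref{item:shorty} gives $|a^\ell|=\ell$ for $0<\ell\le 3+L/2$, so for $0\le\ell\le 2+L/2$ the $a$-path of length $\ell$ from $1$ to $a^\ell$ is already a geodesic, and $\ell\le 2+L/2$.

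For the second bullet I would write $\ell=qL+r$ with $0\le r<L$ and, since $\ell>0$, $q\ge 0$. The structural ingredient is Proposition~\ref{3piecegeodesic}: for each $k\ge 1$ it supplies a geodesic $\gamma_{kL}$ from $1$ to $a^{kL}=x^ky^k$ consisting of one $x$-escape and one $y$-escape, both of exponent $k$; prepending such a $\gamma_{qL}$ or $\gamma_{(q+1)L}$ to an $a$-path produces a path of exactly the shape the lemma demands. Combining the triangle inequality with Lemma~\ref{guards} yields
\[|a^\ell|=\min\{\,|a^{qL}|+r,\ |a^{(q+1)L}|+(L-r)\,\}\]
(with the convention $|a^0|=0$ when $q=0$). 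Hence $\gamma_{qL}$ followed by $r$ forward $a$-edges is a geodesic whenever $|a^{qL}|+r\le|a^{(q+1)L}|+(L-r)$, and $\gamma_{(q+1)L}$ followed by $L-r$ backward $a$-edges is a geodesic whenever $|a^{(q+1)L}|+(L-r)\le|a^{qL}|+r$; their $a$-paths have lengths $r$ and $L-r$ respectively. So everything reduces to checking that, when $\ell>2+L/2$, one can always select the candidate whose $a$-path has length at most $L/2$.

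That last step is the only nontrivial one, and it is where the non-monotonicity of $\ell\mapsto|a^\ell|$ has to be reined in: a brief detour to the neighbouring multiple of $L$ can genuinely shorten a path, so I cannot just say ``use the smaller of $r$ and $L-r$''. The key input is Corollary~\ref{lem:alengths}\eqref{item:adjacentmultiples}, which bounds the length gap between consecutive multiples of $L$: for $q\ge 1$, $d:=|a^{(q+1)L}|-|a^{qL}|\in\{-2,2\}$. Rewriting the two geodesicity conditions above, $\gamma_{qL}+a^r$ is geodesic as soon as $2r-L\le d$, i.e.\ $r\le(L+d)/2$, and $\gamma_{(q+1)L}+a^{-(L-r)}$ is geodesic as soon as $r\ge(L+d)/2$. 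Since $L\ge 6$ is even I then split three ways: if $r<L/2$ then $r\le L/2-1\le(L+d)/2$ because $d\ge-2$, so the first candidate works with $a$-path length $r<L/2$; if $r>L/2$ then $r\ge L/2+1\ge(L+d)/2$ because $d\le 2$, so the second candidate works with $a$-path length $L-r<L/2$; and if $r=L/2$ then one of the two candidates is geodesic (whichever side of the minimum is attained, which depends only on the sign of $d$) and its $a$-path has length exactly $L/2$. The case $q=0$ I would treat separately, since there is no ``$\gamma_{0}$'': then $\ell=r<L$, and $\ell>2+L/2$ together with $L$ even and $|a^L|=6$ forces $\ell\ge(L+6)/2$, hence $|a^L|+(L-\ell)\le\ell$, so $\gamma_L$ followed by $L-\ell$ backward $a$-edges is a geodesic with $a$-path length $L-\ell\le L/2-3<L/2$ (one can also read $|a^\ell|=6+L-\ell$ off Corollary~\ref{lem:alengths}\eqref{item:secondbasin} directly).

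The hard part, as indicated, is purely the bookkeeping over which of the two routes --- through $a^{qL}$ or through $a^{(q+1)L}$ --- is the geodesic one; it all works because the gap of Corollary~\ref{lem:alengths}\eqref{item:adjacentmultiples} is $2$, which is small relative to $L$, so the crossover value of $r$ lies within $1$ of $L/2$. Everything else is the triangle inequality plus Proposition~\ref{3piecegeodesic} and Lemma~\ref{guards}.
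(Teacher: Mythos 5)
Your proof is correct and takes essentially the same approach as the paper: reduce to Lemma~\ref{guards} so the only question is whether the detour goes through $a^{qL}$ or $a^{(q+1)L}$, and then use Corollary~\ref{lem:alengths}\eqref{item:adjacentmultiples} to pin the crossover value of $r$ to within $1$ of $L/2$. The only cosmetic difference is bookkeeping: you fold the $L\mid\ell$ case into the general $q\ge 1$ argument via $r=0$, whereas the paper peels it off separately, but the substance is identical.
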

\begin{proof}
  The first item and the second item for $L|\ell$ follow immediately
  from Corollary~\ref{lem:alengths} Items~\eqref{item:shorty} and
  \eqref{item:powers}, respectively.
  The second item for $3+L/2\leq \ell\leq L$ follows from the proof of Corollary~\ref{lem:alengths}\eqref{item:secondbasin}.

  Now consider $a^\ell$ for  $\ell\in (qL,(q+1)L)$ with $q\geq 1$.
  By Lemma~\ref{guards}, there is a geodesic from 1 to $a^\ell$
  passing through either $a^{qL}$ or $a^{(q+1)L}$.
  By Corollary~\ref{lem:alengths}, $||a^{qL}|-|a^{(q+1)L}||=2$.
  So if $|a^{qL}|<|a^{(q+1)L}|$ then for $\ell=qL+r$ with $r\leq L/2$ there is a
  geodesic from 1 to $a^\ell$ passing through $a^{qL}$ and then
  proceeding to $a^\ell$ via an $a$--path of length $r\leq L/2$, while
  for $r\geq 1+L/2$ there is a geodesic from 1 to $a^\ell$ passing
  through $a^{(q+1)L}$ and then proceeding to $a^\ell$ via an
  $a$--path of length $L-r\leq L/2 -1$.
The argument for $|a^{qL}|>|a^{(q+1)L}|$ is similar. 
\end{proof}
 \begin{corollary}
   For all $m>0$ there is a geodesic expression
   $m=\sum_{i=0}^j m_iL^i$ satisfying: \begin{equation}
  \label{eq:1}
  0<m_j\leq L/2+2
  \quad\text{and} \quad |m_i|\leq L/2  \text{ for }i<j
\end{equation}
  \end{corollary}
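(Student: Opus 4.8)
The plan is to extract, from any geodesic expression $m=\sum_{i=0}^j m_iL^i$, a new geodesic expression in which every coefficient obeys the stated bounds, using the two items of Lemma~\ref{lem:a_geodesic} to control the coefficients one ``digit'' at a time while descending through the recursive structure of the inductive construction of geodesics from $1$ to powers of $a$. Concretely, I would argue by induction on $m$ (equivalently, on $j$, the number of digits). For the base case $0<m\leq L+1$ one simply invokes Corollary~\ref{lem:alengths} and Lemma~\ref{lem:a_geodesic}: when $m\leq 2+L/2$ take the single-digit expression $m=m_0$ with $0<m_0\leq L/2+2$; when $3+L/2\leq m\leq L$ one has a geodesic through $a^L$, giving the two-digit expression $m=1\cdot L+(m-L)$ with leading coefficient $1\leq L/2+2$ and $|m_0|=|m-L|\leq L/2$ (for $L\geq 6$); and $m=L+1$ gives $1\cdot L+1$.

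For the inductive step, take $m>L+1$ and write $m=qL+r$ with $0\leq r<L$ and $q\geq 1$. By Lemma~\ref{lem:a_geodesic} (via Lemma~\ref{guards} and the fact that $||a^{qL}|-|a^{(q+1)L}||=2$), there is a geodesic from $1$ to $a^m$ that passes through $a^{q'L}$ for $q'\in\{q,q+1\}$ and then runs along an $a$--path of length $|m-q'L|\leq L/2$; choose $q'$ accordingly and set $m_0:=m-q'L$, so $|m_0|\leq L/2$. The geodesic through $a^{q'L}$ is, by Proposition~\ref{3piecegeodesic} and the discussion preceding the corollary, of the form $s+\gamma_{q'}+s^{-1}+t+\gamma_{q'}+t^{-1}$ for some geodesic $\gamma_{q'}$ from $1$ to $a^{q'}$; in other words, if $q'=\sum_{i=0}^{j'} m_i'L^i$ is a geodesic expression for $q'$ (which exists since $q'<m$, possibly after shifting to account for $q'$ vs.\ $m$), then $m=m_0+\sum_{i=0}^{j'} m_i' L^{i+1}$ is a geodesic expression for $m$. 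Apply the induction hypothesis to $q'$ to assume $0<m_{j'}'\leq L/2+2$ and $|m_i'|\leq L/2$ for $i<j'$; then the resulting expression for $m$ has leading coefficient $m_{j'}'\in(0,L/2+2]$, and all lower coefficients (namely $m_0$ and the $m_i'$ for $i<j'$, which become the coefficients of $L^1,\dots,L^{j'}$) are bounded by $L/2$ in absolute value, as required. One small bookkeeping point: I must confirm $q'<m$, which holds since $q'\leq q+1\leq m/L+1<m$ for $m>L+1$ and $L\geq 6$; another is that when $r=0$ and $|a^{qL}|<|a^{(q+1)L}|$ the ``$a$--path'' of length $0$ simply means $m_0=0$, and the expression for $m$ is literally the shift of the one for $q$.

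The main obstacle — and really the only nontrivial point — is ensuring that the geodesic through $a^{q'L}$ genuinely comes from a geodesic expression for $q'$ \emph{of the constrained form}, i.e.\ that the recursion ``geodesic for $a^m$'' $\leadsto$ ``geodesic for $a^{q'}$'' is exactly the recursion ``geodesic expression for $m$'' $\leadsto$ ``geodesic expression for $q'$,'' so that the inductive hypothesis can be fed back in cleanly. This is precisely the content of the inductive construction of geodesics carried out between Lemma~\ref{lem:alengths} and this corollary, together with the length formula \eqref{eq:4}: a geodesic for $a^m$ of the form $s+\gamma_{q'}+s^{-1}+t+\gamma_{q'}+t^{-1}+a^{m_0}$ has length $2|\gamma_{q'}|+4+|m_0|$, which matches \eqref{eq:4} for the expression $m=m_0+L\cdot q'$ exactly when $\gamma_{q'}$ realizes the length predicted by \eqref{eq:4} for the chosen expression of $q'$ — i.e.\ exactly when that expression is geodesic. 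So the correspondence is length-preserving in both directions, the minimality passes through, and the induction closes.
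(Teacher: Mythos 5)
Your proof is correct and takes precisely the route the paper intends: the paper states this as an unproved corollary, relying on the inductive construction of geodesics (described between Corollary~\ref{lem:alengths} and Lemma~\ref{lem:a_geodesic}) together with the two items of Lemma~\ref{lem:a_geodesic} to control the digit bounds; your write-up simply makes that induction explicit, including the length-matching via \eqref{eq:4} that ensures the recursion ``geodesic for $a^m$ $\leadsto$ geodesic for $a^{q'}$'' transports geodesicity of expressions in both directions.
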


Note the leading coefficient has to be positive when $m$ is because
$\sum_{i=0}^{j-1}\frac{L}{2}\cdot L^i<L^j$.
The conditions of \eqref{eq:1} are not sufficient to conclude that the corresponding
expression is geodesic, but they give useful general constraints.

\begin{lemma}\label{lem:log}
  If $0<m=\sum_{i=0}^j
    m_iL^i$ is a geodesic expression satisfying \eqref{eq:1} with
    $j\geq 1$ then $|j-\log_Lm|<1$.
  \end{lemma}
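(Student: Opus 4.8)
The plan is to observe that the geodesic hypothesis is actually not needed: I will show that \emph{any} expression $m=\sum_{i=0}^j m_iL^i$ with $j\geq 1$ satisfying the digit bounds \eqref{eq:1} automatically obeys $L^{j-1}<m<L^{j+1}$, and then $|j-\log_Lm|<1$ follows immediately by applying $\log_L$. So the entire lemma reduces to an elementary estimate on the value of a base-$L$-style expression with bounded coefficients.

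The two bounds come from the geometric-series estimate $\sum_{i=0}^{j-1}L^i=\frac{L^j-1}{L-1}<\frac{L^j}{L-1}$ together with the fact that $\frac{L}{2(L-1)}\leq\frac35$ for all $L\geq 6$ (the left-hand side is decreasing in $L$ and equals $\frac35$ at $L=6$). For the upper bound, $m_j\leq L/2+2$ and $|m_i|\leq L/2$ give
\[
  m\leq\Bigl(\tfrac L2+2\Bigr)L^j+\tfrac L2\cdot\frac{L^j-1}{L-1}
  <\Bigl(\tfrac L2+2+\tfrac35\Bigr)L^j\leq L\cdot L^j=L^{j+1},
\]
the last step being $\tfrac L2+\tfrac{13}{5}\leq L$, valid since $L\geq 6>\tfrac{26}{5}$. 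For the lower bound, $m_j\geq 1$ gives
\[
  m\geq L^j-\tfrac L2\cdot\frac{L^j-1}{L-1}
  >\Bigl(1-\tfrac{L}{2(L-1)}\Bigr)L^j\geq\tfrac25 L^j>L^{j-1},
\]
the last step being $\tfrac25 L>1$. Taking $\log_L$ of $L^{j-1}<m<L^{j+1}$ yields $j-1<\log_Lm<j+1$, which is the claim.

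There is no substantial obstacle here; the argument is pure bookkeeping. The only points to watch are (i) confirming that $L\geq 6$ is enough to make both the geometric tail bounded by $\tfrac35 L^j$ and $\tfrac25 L>1$, and (ii) noting that the hypothesis $j\geq 1$ is used only to ensure the tail $\sum_{i=0}^{j-1}L^i$ is a nonempty geometric sum (so that $\frac{L^j-1}{L-1}<\frac{L^j}{L-1}$ is strict) and to make $L^{j-1}\geq 1$ a meaningful lower bound.
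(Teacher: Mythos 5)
Your argument is correct and is essentially identical to the paper's: both bound the tail $\bigl|\sum_{i=0}^{j-1}m_iL^i\bigr|$ by $\frac{L}{2}\cdot\frac{L^j-1}{L-1}<\frac{3}{5}L^j$ using \eqref{eq:1} and $L\geq 6$, then sandwich $L^{j-1}<\frac{2}{5}L^j<m<\bigl(\frac{L}{2}+2+\frac{3}{5}\bigr)L^j<L^{j+1}$ and take $\log_L$. Your observation that only the digit bounds \eqref{eq:1}, not geodesicity, are used is correct and is already implicit in the paper's proof.
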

  \begin{proof}
    \[\big|\sum_{i=0}^{j-1}m_iL^i\big|\leq
      \sum_{i=0}^{j-1}\frac{L}{2}L^i=\frac{1}{2}\frac{L}{L-1}(L^j-1)<L^j\left(\frac{L}{2(L-1)}\right)\leq\frac{3}{5}L^j\]
    So:
    \[L^{j-1}<\frac{2}{5}L^j<m<L^j\left( L/2+2+3/5\right)<L^{j+1}\qedhere\]
  \end{proof}
  \begin{lemma}\label{lem:alengthboundgeodesicexpression}
      If $0<m=\sum_{i=0}^j
    m_iL^i$ is a geodesic expression satisfying \eqref{eq:1} with
    $j\geq 1$ then $5\cdot 2^j-4\leq |a^m|\leq (L+6)\cdot
2^j-4$.
  \end{lemma}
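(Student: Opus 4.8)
The plan is to use directly the fact that a \emph{geodesic expression} of $m$ is, by definition, one whose associated path $\alpha+\beta+\gamma$ is a geodesic from $1$ to $a^m$. Hence $|a^m|$ is exactly the length of that path, which the formula \eqref{eq:4} computes to be $\sum_{i=0}^j|m_i|\cdot 2^i+4(2^j-1)$. Both bounds are then read off from the coefficient constraints \eqref{eq:1}.

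For the lower bound I would first note that $m_j\ge 1$ and $|m_i|\ge 0$ for $i<j$, so $\sum_{i=0}^j|m_i|\cdot 2^i\ge m_j 2^j\ge 2^j$, and therefore $|a^m|\ge 2^j+4(2^j-1)=5\cdot 2^j-4$. The extremal case $m=L^j$ (all other coefficients zero) shows this is sharp and agrees with \eqref{eq:2}.

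For the upper bound I would bound the same sum using $|m_i|\le L/2$ for $i<j$ and $m_j\le L/2+2$: a finite geometric series gives $\sum_{i=0}^{j-1}\tfrac L2\cdot 2^i+\bigl(\tfrac L2+2\bigr)2^j=(L+2)2^j-\tfrac L2$, and adding $4(2^j-1)$ yields $|a^m|\le (L+6)2^j-\tfrac L2-4\le(L+6)2^j-4$.

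There is no serious obstacle here. The only point that needs to be noticed is that the hypothesis ``geodesic expression'' is precisely what promotes \eqref{eq:4} from a mere upper bound for $|a^m|$ to an exact value; after that the lemma is just arithmetic with \eqref{eq:1}. (Note there is even some slack, namely $L/2$, in the upper bound, which is why $j\ge 1$ is not actually essential for this particular statement, though it is imposed to match the hypotheses of the surrounding lemmas.)
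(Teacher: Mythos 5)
Your proof is correct and follows essentially the same route as the paper: observe that for a geodesic expression the path-length formula \eqref{eq:4} gives $|a^m|$ exactly, then substitute the extreme coefficient values permitted by \eqref{eq:1} (the paper simply states the extremizing choices without writing out the arithmetic, which you do). Your parenthetical observations — that there is $L/2$ of slack in the upper bound, and that $j\ge 1$ is not actually needed here — are also accurate.
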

  \begin{proof}
    By \eqref{eq:4}, $|a^m|=\sum_{i=0}^j|m_i|\cdot 2^i+4(2^j-1)$. The
    lower bound is achieved by taking $m_j=1$ and $m_i=0$ for $0\leq
    i<j$.
    The upper bound is achieved by taking $m_j=L/2+2$ and $m_i=L/2$
    for all $0\leq i<j$.
  \end{proof}
We can now show that
the distortion of the subgroup $\langle a\rangle$ (and therefore also
of the
conjugate subgroups $\langle x\rangle$ and $\langle y\rangle$) is a
power function:

\begin{thm}[Power-law distortion]\label{powerdistortion}
There is a constant $C$ depending on $L$ such that for 
$g\in\{a,x,y\}$, 
$|m|>0$, and $\alpha:=\log_2L$ we have: 
\begin{equation*}
  1\leq\frac{|g^m|}{|m|^{1/\alpha}} < C 
\end{equation*}  
Moreover, the first inequality is strict except in the case $g=a$ and $|m|=1$.
\end{thm}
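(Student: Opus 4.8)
The plan is to reduce to the case $g=a$, $m>0$, and then read the result off the geodesic-expression estimates already established.

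First I would dispose of the easy reductions. Since $|a^{-m}|=|a^m|$ and $|m|^{1/\alpha}$ depends only on $|m|$, we may assume $m>0$. For $g\in\{x,y\}$, Proposition~\ref{3piecegeodesic} tells us that every geodesic whose endpoints differ by a nonzero power of $x$ (resp.\ $y$) is a single $x$--escape $s+\delta+s^{-1}$ (resp.\ $t$--escape $t+\delta+t^{-1}$) with $\delta$ a path whose endpoints differ by $a^m$; since a geodesic escape must have geodesic interior, homogeneity of $X$ shows $\delta$ is a translate of a geodesic for $a^m$, so $|x^m|=|y^m|=|a^m|+2$ for all $m\neq 0$. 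Consequently, if we prove $1\le |a^m|/m^{1/\alpha}<C_0$ with the left inequality strict unless $m=1$, then $|x^m|/m^{1/\alpha}=|a^m|/m^{1/\alpha}+2/m^{1/\alpha}$ is strictly larger than $1$ in every case (equal to $3$ when $m=1$) and is less than $C_0+2$ since $m^{1/\alpha}\ge 1$; so $C:=C_0+2$ finishes the theorem.

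For $g=a$ and $m>0$ I would fix a geodesic expression $m=\sum_{i=0}^j m_iL^i$ satisfying the coefficient bounds~\eqref{eq:1}, which exists by the corollary following Lemma~\ref{lem:a_geodesic}, and use throughout the change of base $m^{1/\alpha}=2^{\log_Lm}$ (valid because $\alpha=\log_2L$). If $j=0$ then $m=m_0$ with $0<m\le L/2+2$, so $|a^m|=m$ by Corollary~\ref{lem:alengths}\eqref{item:shorty}; since $L\ge 6$ forces $1/\alpha<1$, we get $m^{1/\alpha}\le m=|a^m|$ with equality exactly when $m=1$, and $|a^m|/m^{1/\alpha}=m^{1-1/\alpha}\le (L/2+2)^{1-1/\alpha}$, bounded in terms of $L$. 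If $j\ge 1$ then Lemma~\ref{lem:log} gives $j-1<\log_Lm<j+1$, hence $2^{j-1}<m^{1/\alpha}<2^{j+1}$, while Lemma~\ref{lem:alengthboundgeodesicexpression} gives $5\cdot 2^j-4\le |a^m|\le (L+6)2^j-4$. For the lower bound, $5\cdot 2^j-4\ge 2^{j+1}$ because $3\cdot 2^j\ge 4$ when $j\ge 1$, so $|a^m|\ge 2^{j+1}>m^{1/\alpha}$, strictly. For the upper bound, $|a^m|<(L+6)2^j=2(L+6)2^{j-1}<2(L+6)m^{1/\alpha}$. Thus $C_0:=\max\{(L/2+2)^{1-1/\alpha},\,2(L+6)\}$ works, and the only equality case is $g=a$, $m=1$.

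There is no real obstacle; the argument is a bookkeeping corollary of Lemmas~\ref{lem:log} and~\ref{lem:alengthboundgeodesicexpression}. The one point worth isolating is the identity $m^{1/\alpha}=2^{\log_Lm}$: it is exactly what makes the base-$L$ logarithmic control of the degree $j$ in Lemma~\ref{lem:log} line up with the factor-of-$2^j$ control of $|a^m|$ coming from the geodesic expression, so that the two bounds pinch $|a^m|/m^{1/\alpha}$ between constants depending only on $L$.
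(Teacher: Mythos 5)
Your proof is correct and follows essentially the same route as the paper: reduce to $g=a$, $m>0$; handle small $m$ directly; and for larger $m$ pinch $|a^m|/m^{1/\alpha}$ between constants by combining Lemma~\ref{lem:log} with Lemma~\ref{lem:alengthboundgeodesicexpression}, using the observation $m^{1/\alpha} = 2^{\log_L m}$; then append $+2$ for $g\in\{x,y\}$. The only difference is where you draw the case boundary: you split on whether the chosen geodesic expression has $j=0$ or $j\geq 1$, which absorbs the intermediate range $L/2+3 \leq m < L^{3/2}$ into the $j\geq 1$ case (where the paper handles it separately via Lemma~\ref{lem:a_geodesic}); this is a touch cleaner and yields the slightly different but equally valid constant $(L/2+2)^{1-1/\alpha}$ in place of the paper's $(L/2)^{3/2}$.
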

\begin{proof}
  Suppose $g=a$.
  By Corollary~\ref{lem:alengths}\eqref{item:shorty}, if $0<m<L/2+3$ then $|a^m|=m\geq m^{1/\alpha}\geq 1$, with equality
  only if $m=1$.
  If $L/2+3\leq m < L^{3/2}$ then by Lemma~\ref{lem:a_geodesic},
  there is a geodesic from 1 to $a^m$ that contains an $x$--escape and a
  $y$--escape, so has length at least 6.
  However, $m^{1/\alpha}< (L^{3/2})^{1/\alpha}=2^{3/2}<3$, so
  $|a^m|/m^{1/\alpha}>2$.
  In the other direction, $|a^m|/m^{1/\alpha}\leq m^{1-1/\alpha}$, so 
  the upper bound for $0< m<L^{3/2}$ is satisfied if $C\geq (L^{3/2})^{1-1/\alpha}=(L/2)^{3/2}$.

  Consider the case  $m\geq L^{3/2}$.
  There is a geodesic expression
$m=\sum_{i=0}^jm_iL^i$ for some
$j\geq 1$ satisfying \eqref{eq:1}.
By Lemma~\ref{lem:log}, $L^{j-1}<m<L^{j+1}$, so $2^{j-1}<m^{1/\alpha}<2^{j+1}$.

Lemma~\ref{lem:alengthboundgeodesicexpression} gives:
\[m^{1/\alpha}+\frac{3}{2}m^{1/\alpha}-4=\frac{5}{2}m^{1/\alpha}-4<5\cdot 2^j-4\leq|a^m|\leq (L+6)\cdot 2^j-4<2(L+6)m^{1/\alpha}\]
But $m\geq L^{3/2}$ implies $\frac{3}{2}m^{1/\alpha}-4>0$, so 
\[m^{1/\alpha}<|a^m|<2(L+6)m^{1/\alpha}\]

If $g\in\{x,y\}$ and $m\neq 0$ then $|g^m|=2+|a^m|$ and the result
follows from the result for $a$ by taking
$C:=2+\max\{2(L+6),\,(L/2)^{3/2}\}$.
\end{proof}

In light of the claim in the introduction that the constant of
Theorem~\ref{powerdistortion} overwhelms the constant of the reverse
H\"older inequality, and that this necessitates the labor
of this paper, one might wonder whether
this phenomenon is real or if it is an artifact of a suboptimal
statement, or proof, of Theorem~\ref{powerdistortion}.
Proposition~\ref{this_whole_paper_is_not_a_waste_of_time} shows that the
phenomenon is real; even if we pass to asymptotic bounds, the gap
between $C_{lo}$ and $C_{hi}$ grows with $L$. 

\begin{proposition}\label{this_whole_paper_is_not_a_waste_of_time}
  For even $L\geq 6$ consider $a\in G_L$; let $\alpha(L):=\log_2L$,
  $C_{lo}(L):=\liminf_{m\to\infty}\frac{|a^m|}{m^{1/\alpha(L)}}$, and
  $C_{hi}(L):=\limsup_{m\to\infty}\frac{|a^m|}{m^{1/\alpha(L)}}$.
  Then $\frac{C_{hi}(L)}{C_{lo}(L)}>(L+6)/10$.

  When $L\geq 10$, $\frac{C_{lo}(L)}{C_{hi}(L)}\cdot 2^{1-1/\alpha(L)}<1$.
\end{proposition}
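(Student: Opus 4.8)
The plan is to pin $C_{lo}$ from above and $C_{hi}$ from below by explicit sequences, and then check the two numerical inequalities. Throughout I use that $\alpha=\log_2L$ means $m^{1/\alpha}=2^{\log_Lm}$, so in particular $(L^k)^{1/\alpha}=2^k$.

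First I would bound $C_{lo}(L)\le 5$. Taking $m=L^k$, equation~\eqref{eq:2} gives $|a^{L^k}|=5\cdot2^k-4$, so $|a^{L^k}|/(L^k)^{1/\alpha}=5-4\cdot2^{-k}\to5$, whence $C_{lo}(L)=\liminf_{m\to\infty}|a^m|/m^{1/\alpha}\le5$. (It does not matter whether this is sharp.)

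Next I would bound $C_{hi}(L)$ from below using the ``maximal digit'' sequence $m_n$ defined by $m_0:=L/2+2$ and $m_n:=Lm_{n-1}+L/2$, i.e.\ $m_n=(L/2+2)L^n+\frac L2\cdot\frac{L^n-1}{L-1}$; this is the integer whose balanced base‑$L$ expansion has leading digit $L/2+2$ and all $n$ remaining digits equal to $L/2$. The key claim is that for all $n\ge0$ this expansion is geodesic, with $|a^{m_n}|=(L+6)2^n-(L/2+4)$, and moreover $|a^{m_n}|<|a^{m_n+1}|$. I would prove both by induction on $n$. For $n=0$ this is Corollary~\ref{lem:alengths}\eqref{item:shorty}: $|a^{L/2+2}|=L/2+2$ and $|a^{L/2+3}|=L/2+3$. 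For the inductive step, the hypothesis $|a^{m_{n-1}}|<|a^{m_{n-1}+1}|$ with Corollary~\ref{lem:alengths}\eqref{item:adjacent} gives $|a^{m_{n-1}+1}|=|a^{m_{n-1}}|+1$, so by Corollary~\ref{lem:alengths}\eqref{item:powers} we get $|a^{m_{n-1}L}|=4+2|a^{m_{n-1}}|$ and $|a^{(m_{n-1}+1)L}|=6+2|a^{m_{n-1}}|$; now applying Lemma~\ref{guards} to $m_n=m_{n-1}L+L/2$ and to $m_n+1=m_{n-1}L+(L/2+1)$, and noting that the relevant geodesic $a$‑paths have lengths $L/2,L/2$ and $L/2+1,L/2-1$ respectively, yields $|a^{m_n}|=\tfrac L2+4+2|a^{m_{n-1}}|$ and $|a^{m_n+1}|=\tfrac L2+5+2|a^{m_{n-1}}|$. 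Unwinding $|a^{m_n}|=(L/2+4)+2|a^{m_{n-1}}|$ from $|a^{m_0}|=L/2+2$ gives the closed form. Set $c_L:=\tfrac2L+\tfrac12+\tfrac1{2(L-1)}$; the elementary inequality $L^2-6L+4>0$ (valid for $L\ge6$) shows $c_L<1$, and from the closed form for $m_n$ one gets $m_n<c_LL^{n+1}$, hence $m_n^{1/\alpha}<c_L^{1/\log_2L}\,2^{n+1}$. Therefore $|a^{m_n}|/m_n^{1/\alpha}>\bigl((L+6)2^n-(L/2+4)\bigr)/\bigl(c_L^{1/\log_2L}2^{n+1}\bigr)\to (L+6)/(2c_L^{1/\log_2L})$, so $C_{hi}(L)\ge(L+6)/(2c_L^{1/\log_2L})$.

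Combining, $C_{hi}(L)/C_{lo}(L)\ge(L+6)/(10\,c_L^{1/\log_2L})>(L+6)/10$, the last step strict since $c_L<1$ forces $c_L^{1/\log_2L}<1$; this is the first assertion. For the second, since $\frac{C_{lo}}{C_{hi}}\le 10\,c_L^{1/\log_2L}/(L+6)$ and $2^{1-1/\alpha}=2\cdot2^{-1/\log_2L}$, it suffices to check $20\,(c_L/2)^{1/\log_2L}<L+6$; as $(c_L/2)^{1/\log_2L}<1$ the left side is below $20$, so this holds for all $L\ge14$, and for $L=10$ and $L=12$ it is verified by a direct computation (the left side being about $14.9$ and $15.0$ against $16$ and $18$). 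The one substantial step is the induction showing that the maximal‑digit expansion $m_n$ is geodesic with the stated length: this is where one must re‑run the kind of case analysis with Lemma~\ref{guards} and Corollary~\ref{lem:alengths} used elsewhere in the paper; everything after that is arithmetic with $m^{1/\alpha}=2^{\log_Lm}$ and the two elementary inequalities $c_L<1$ and $20(c_L/2)^{1/\log_2L}<L+6$ for $L\ge10$.
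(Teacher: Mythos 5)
Your argument is correct, and it follows the same overall strategy as the paper: pick an explicit sequence of exponents $m_n$ whose length $|a^{m_n}|$ can be computed in closed form by an induction on Lemma~\ref{guards} and Corollary~\ref{lem:alengths}, use it as a witness for a lower bound on $C_{hi}$, and combine it with the bound $C_{lo}\le 5$ coming from $m=L^n$. The difference is in the choice of sequence. The paper takes $m_n = M L^n - M\sum_{i=0}^{n-1}L^i$ with $M=(L-2)/2$ (leading digit $M$, all trailing digits $-M$), getting the asymptotic ratio $(L+2)\bigl/\bigl(\tfrac{(L-2)^2}{2(L-1)}\bigr)^{\log_L 2}$ and then asserting without detail that this exceeds $L/2+3$ for all $L\geq 6$; that assertion involves the transcendental exponent $\log_L 2$ and is not entirely routine to check. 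You instead take the maximal positive-digit expansion (leading digit $L/2+2$, trailing digits $L/2$), compute $|a^{m_n}|=(L+6)2^n-(L/2+4)$ by the same type of induction, and observe $m_n<c_L L^{n+1}$ with $c_L<1$ equivalent to the quadratic fact $L^2-6L+4>0$. This makes the last step cleaner, since $c_L^{1/\alpha}<1$ immediately gives $C_{hi}>(L+6)/2$ and hence $C_{hi}/C_{lo}>(L+6)/10$, and your witness happens to give a numerically sharper lower bound on $C_{hi}$ as well. Both approaches handle $L\in\{10,12\}$ in the second claim by direct arithmetic. One small caveat about presentation: what your induction establishes directly is the value of $|a^{m_n}|$ (via Lemma~\ref{guards} and Corollary~\ref{lem:alengths}\eqref{item:adjacent},\eqref{item:powers}), not, in the first instance, that the digit expansion is geodesic; that it is geodesic is a consequence, since the computed length matches the path length $\sum|m_i|2^i+4(2^n-1)$ of \eqref{eq:4}, but the proof does not depend on framing it that way.
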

\begin{proof}
For fixed $L\geq 6$ let $\alpha:=\alpha(L)$ and $M:=(L-2)/2$.
For $n\geq 0$ let $m_n:=ML^n-M\sum_{i=0}^{n-1}L^i$.
By Corollary~\ref{lem:alengths}\eqref{item:shorty}, $|a^{m_0}|=m_0=M$.
Notice that $m_{n+1}=Lm_n-M$.
It follows from Lemma~\ref{guards} and Corollary~\ref{lem:alengths} parts
\eqref{item:adjacentmultiples} and \eqref{item:powers} that
$|a^{m_{n+1}}|=M+4+2|a^{m_n}|$.
Induction then shows $|a^{m_n}|=(2^{n+1}-1)M+2^{n+2}-4$.
Thus:
\begin{align}
  \frac{|a^{m_n}|}{m_n^{1/\alpha}}&=\frac{(2^{n+1}-1)M+2^{n+2}-4}{\left(M\left(\frac{L^{n+1}-2L^n+1}{L-1}\right)\right)^{1/\alpha}}\notag\\
                                  &=\frac{(2^{n+1}-1)\left(\frac{L-2}{2}\right)+2^{n+2}-4}{\left(\left(\frac{L-2}{2}\right)\left(\frac{L^{n+1}-2L^n+1}{L-1}\right)\right)^{1/\alpha}}\notag\\
                                  &=\frac{2^n\left(\left(1-2^{-n-1}\right)(L-2)+4-2^{2-n}\right)}{L^{n/\alpha}\left(\frac{1}{2}\cdot\frac{L-2}{L-1}\cdot(L-2+L^{-n})\right)^{1/\alpha}}\notag\\
                                  &=\frac{\left(1-2^{-n-1}\right)(L-2)+4-2^{2-n}}{\left(\frac{1}{2}\cdot\frac{L-2}{L-1}\cdot(L-2+L^{-n})\right)^{1/\alpha}}\notag\\
                                  &\stackrel{n\to\infty}{\longrightarrow}\frac{L+2}{\left(\frac{1}{2}\cdot\frac{(L-2)^2}{L-1}\right)^{1/\alpha}}\notag\\
  &=\frac{L+2}{\left(\frac{1}{2}\cdot\frac{(L-2)^2}{L-1}\right)^{\log_L
    2}}\label{eq:42}
\end{align}
As a function of $L$, this result is greater than $L/2+3$ on $[6,\infty)$, so:
\[C_{hi}(L)\geq \lim_{n\to\infty}\frac{|a^{m_n}|}{m_n^{1/\alpha}}>L/2+3\]

In the other direction, induction with
Corollary~\ref{lem:alengths}\eqref{item:powers} gives:
\[C_{lo}(L)\leq
  \lim_{n\to\infty}\frac{|a^{L^n}|}{(L^n)^{1/\alpha}}=\lim_{n\to\infty}\frac{5\cdot
    2^n-4}{2^n}=5\] This proves the first claim. The second claim is
easy to see for $L\geq 14$ using the bounds
$C_{hi}(L)/C_{lo}(L)>(L+6)/10$ and $2^{-1/\alpha}<1$.  It turns out
also to be true for $L\in\{10,12\}$ using \eqref{eq:42} and explicit
computation.
\end{proof}

\begin{remark}\label{rk:fixedconstants}
For the rest of the paper $L$, and consequently, $\alpha$ and $C$ are
fixed, and these names are only used for the constants of Theorem~\ref{powerdistortion}.
\end{remark}

\subsection{Geodesics between elements of $H$}\label{hgeodesics}
Recall that $x$ and $y$ edges in $X$ have length $L$, so geodesics
consist only of $a$, $s$, and $t$ edges. 
\begin{lemma}\label{lem:ag_bound}
  Let $g\in\{x,y\}$. Define $\sign(q)$ to be $-1$, $0$, or $+1$ if $q$ is
  negative, zero, or positive, respectively. 
  \begin{enumerate}
  \item For $m\in\mathbb{N}$, $|g^m|=2+|a^m|$.\label{item:glength}
  \item For $m\in\mathbb{N}$,
    $||g^m|-|g^{m+1}||=1$. \label{item:adjacentg}
    \item If $\ell=qL+r$ with $0\leq |r|<L$ and $m,n\in\mathbb{Z}$ then:
      \[|a^\ell x^my^n|=\min\{|r|+|x^{m+q}|+|y^{n+q}|,\,
        L-|r|+|x^{m+q+\sign(r)}|+|y^{n+q+\sign(r)}|\}\]\label{item:planelength}
    \item   There exists a $K>1$ such 
  that for all $\ell$ and $m$, 
  $K|a^{\ell}g^m| \ge |a^{\ell}| + |g^m|$.\label{item:ag_bound}
  \end{enumerate}
\end{lemma}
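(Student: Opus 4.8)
The plan is to establish the four parts in order; the first two are quick, the third is an exact formula, and the fourth carries the weight. For parts~\eqref{item:glength} and \eqref{item:adjacentg}, fix $m \ge 1$ and, using the $x/y$--symmetry, take $g = x$. Since $x^m = x^m y^0$ is nontrivial and is not of the exceptional form in Proposition~\ref{3piecegeodesic} (which would require the $y$--exponent to be $\pm 1$), every geodesic from $1$ to $x^m$ is a single geodesic $x$--escape of exponent $m$, that is, a path $s + \delta + s^{-1}$ whose middle subpath $\delta$ has endpoints differing by $a^m$. A subpath of a geodesic is geodesic, so $|\delta| = |a^m|$ and hence $|x^m| = 2 + |a^m|$, which is \eqref{item:glength}; then \eqref{item:adjacentg} follows from Corollary~\ref{lem:alengths}\eqref{item:adjacent}, since $\bigl||x^m| - |x^{m+1}|\bigr| = \bigl||a^m| - |a^{m+1}|\bigr| = 1$.

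For part~\eqref{item:planelength}, write $h := a^\ell x^m y^n$ and $\sigma := \sign(r)$. The inequality ``$\leq$'' is easy: since $a^{qL} = x^q y^q$, for each $k \in \{q, q + \sigma\}$ we have $h = a^{\ell - kL} x^{m+k} y^{n+k}$ with $|\ell - kL| \in \{|r|, L - |r|\}$, and prefixing an $a$--path of that length to shortest $x$-- and $y$--escapes of the indicated exponents (whose lengths are $|x^{m+k}|$, $|y^{n+k}|$ by part~\eqref{item:glength}) gives a path from $1$ to $h$ whose length is one of the two quantities in the minimum; hence $|h|$ is at most their minimum (when $\sigma = 0$ the second quantity is automatically the larger, so only the first is needed). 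For ``$\geq$'', take a geodesic $\gamma$ from $1$ to $h$; by Proposition~\ref{3piecegeodesic}, after consolidating it is, in some order, at most one $x$--escape of exponent $m'$, at most one $y$--escape of exponent $n'$, and a geodesic $a$--path of exponent $p$ with $|p| < L$. Equating endpoints gives $a^{\ell - p} = x^{m' - m} y^{n' - n} \in \langle x, y \rangle$; since $\langle a \rangle \cap \langle x, y \rangle = \langle a^L \rangle$ and $\langle x \rangle \cap \langle y \rangle = 1$, there is an integer $k$ with $m' = m + k$, $n' = n + k$ and $p = (q - k)L + r$. The constraints $|p| < L$ and $0 \le |r| < L$ force $q - k \in \{0, -\sigma\}$, so $|p| \in \{|r|, L - |r|\}$ and $|\gamma| = |x^{m+k}| + |y^{n+k}| + |p|$ equals one of the two quantities in the minimum. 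Combining the two directions yields the formula. (The subcase $r < 0$ reduces to $r > 0$ by replacing $h$ with $h^{-1}$.)

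For part~\eqref{item:ag_bound}, the $x/y$--symmetry lets us take $g = x$, and the substitution $(\ell, m) \mapsto (-\ell, -m)$ (which preserves $|a^\ell|$, $|x^m|$ and $|a^\ell x^m|$) lets us take $\ell \ge 0$; the cases $\ell = 0$, $m = 0$, and $a^\ell x^m = 1$ are immediate, so assume none of these and write $\ell = qL + r$ with $0 \le r < L$, $q \ge 0$, $\sigma = \sign(r) \in \{0, 1\}$. The idea is to bound both sides in terms of $q^{1/\alpha} + |m|^{1/\alpha}$. Upward: $|a^\ell| \le |a^{qL}| + r \le (4 + 2|a^q|) + L$ by Corollary~\ref{lem:alengths}\eqref{item:powers}, and $|x^m| = 2 + |a^m|$ by part~\eqref{item:glength}, so $|a^\ell| + |x^m| \le L + 6 + 2|a^q| + |a^m| \le L + 6 + 2C\bigl(q^{1/\alpha} + |m|^{1/\alpha}\bigr)$ by Theorem~\ref{powerdistortion}. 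Downward: part~\eqref{item:planelength} with $n = 0$ writes $|a^\ell x^m|$ as the minimum of $A := r + |x^{m+q}| + |y^q|$ and $B := (L - r) + |x^{m+q+\sigma}| + |y^{q+\sigma}|$; using $|x^j| \ge |j|^{1/\alpha}$ and $|y^j| \ge j^{1/\alpha}$ (Theorem~\ref{powerdistortion}), the triangle inequality $|m| \le |m + q| + q$, and the reverse H\"older inequality (Fact~\ref{fact:reverse_holder}), one gets $A \ge |m + q|^{1/\alpha} + q^{1/\alpha} \ge \max\{|m|^{1/\alpha}, q^{1/\alpha}\} \ge \tfrac12\bigl(|m|^{1/\alpha} + q^{1/\alpha}\bigr)$, and the same bound for $B$ (using $q + \sigma \ge q$). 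Hence $|a^\ell| + |x^m| \le L + 6 + 4C\,|a^\ell x^m|$. If $|a^\ell x^m| \ge L + 6$ this gives $|a^\ell| + |x^m| \le (1 + 4C)\,|a^\ell x^m|$; if $|a^\ell x^m| < L + 6$, then inspecting whichever of $A$, $B$ realizes the minimum bounds $q$, and then $|m|$, and hence $|a^\ell| + |x^m|$, by constants depending only on $L$, so the inequality holds after enlarging $K$. Taking $K$ to be the maximum of the finitely many constants produced completes the proof.

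The main obstacle is part~\eqref{item:ag_bound}. The naive picture -- that the ``one long hop followed by a short correction'' path $1 \to a^\ell \to a^\ell g^m$ should be within a bounded factor of geodesic -- is threatened by the non-monotonicity of $\ell \mapsto |a^\ell|$ from Figure~\ref{fig:alengths}: one cannot reason by comparing exponents. The remedy is to push the estimate through the power-law distortion of Theorem~\ref{powerdistortion}, which swallows the oscillation into a multiplicative constant, after which the reverse H\"older inequality handles the exponents $q$ and $m$. Because Theorem~\ref{powerdistortion} only yields two-sided \emph{power} bounds, the resulting inequality is merely additive-up-to-a-constant, so the bounded-distance cases must be absorbed into $K$ by the separate finiteness argument above.
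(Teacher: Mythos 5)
Your parts~\eqref{item:glength}--\eqref{item:planelength} follow the same route as the paper: \eqref{item:glength} and \eqref{item:adjacentg} read off Proposition~\ref{3piecegeodesic} and Corollary~\ref{lem:alengths}\eqref{item:adjacent}, and \eqref{item:planelength} is the analogue of Lemma~\ref{guards} with the minimum coming from which of $x^{m+q}y^{n+q}$ or $x^{m+q+\sign(r)}y^{n+q+\sign(r)}$ the geodesic passes through. (One nit of phrasing in your ``$\geq$'' argument: not \emph{every} geodesic consolidates to an $a$--path of exponent $<L$ in the $L=6$ corner case, but Proposition~\ref{3piecegeodesic} guarantees \emph{some} geodesic does, which is all you need; you should pick that one.)

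Your argument for part~\eqref{item:ag_bound} is correct but takes a genuinely different route from the paper. The paper rewrites $K|a^\ell x^m|-(|a^\ell|+|x^m|)$ as $T'+T$ and runs a fine case analysis on the ratio $K'=\frac{3|x^{q'}|+|x^m|}{|x^{q'}|+|x^{m+q'}|}$ according to the sign and relative size of $q'$ and $m$ (the subcases $q'=0$; $-q'=m$; $q'=-1$, $m\neq 0,1$; $m<-q'<0$; $-q'<m<0$; $m>0$), each yielding an explicit bound using Theorem~\ref{powerdistortion} and reverse H\"older in a case-adapted way. You instead prove, from part~\eqref{item:planelength} with $n=0$ plus Theorem~\ref{powerdistortion}, concavity, and the triangle inequality, the \emph{uniform} lower bound $|a^\ell x^m|\geq \tfrac12(q^{1/\alpha}+|m|^{1/\alpha})$ --- essentially the $n=0$ slice of Corollary~\ref{cor:planebound}, which the paper only records \emph{after} this lemma --- and pair it with the matching upper bound $|a^\ell|+|x^m|\leq L+6+2C(q^{1/\alpha}+|m|^{1/\alpha})$ obtained from Corollary~\ref{lem:alengths}\eqref{item:powers}. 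This collapses the argument to the single inequality $|a^\ell|+|x^m|\leq L+6+4C|a^\ell x^m|$, with the bounded range of $(\ell,m)$ absorbed into $K$ by finiteness. Your route is more uniform and shorter to verify; the trade-off is that the paper's case analysis tracks sharper constants in each regime (e.g.\ $K'\leq 12/5$ when $q'=-1$), whereas yours produces a single, cruder bound on the order of $1+4C$ --- which is all the lemma asserts, so nothing is lost.
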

\begin{proof}
Item~\eqref{item:glength}
follows immediately from Proposition~\ref{3piecegeodesic}, since a geodesic
from $1$ to $x^m$ is a single $x$--escape.
Item~\eqref{item:adjacentg} follows from this and
Lemma~\ref{lem:alengths}\eqref{item:adjacent}.
For Item~\eqref{item:planelength} the $\ell=0$ case follows
immediately from Proposition~\ref{3piecegeodesic}, while the general case
is the analogue of Lemma~\ref{guards}:
there exists a geodesic from $1$ to $a^\ell x^my^n$ passing though
either $x^{m+q}y^{n+q}$ or $x^{m+q+\sign(r)}y^{n+q+\sign(r)}$, and
then proceeding to $a^\ell x^my^n$ via a geodesic $a$--path.

To prove Item~\eqref{item:ag_bound}, it suffices to consider $\ell\geq
0$ and $g=x$.

If $\ell=0$ or $m=0$ then any $K>1$ will suffice, so assume not.

Applying  Item~\eqref{item:planelength}:
\begin{align*}
  |a^\ell x^m|&=\min\{|r|+|x^{m+q}|+|y^{q}|,\,
                L-|r|+|x^{m+q+\sign(r)}|+|y^{q+\sign(r)}|\}\\
   |a^\ell|+|x^m|&=|x^m|+\min\{|r|+|x^{q}|+|y^{q}|,\,
        L-|r|+|x^{q+\sign(r)}|+|y^{q+\sign(r)}|\} 
\end{align*}
Thus, there are four possibilities for
$K|a^\ell x^m|-(|a^\ell|+|x^m|)$, depending on which terms realize the
two minima.  Rewrite $K|a^\ell x^m|-(|a^\ell|+|x^m|)$ as $T'+T$ where
$T'$ contains the $|r|$ and $L$ terms, and $T$ contains everything
else.
If $r=0$ the minima are clear and $T'=(K-1)|r|=0$. In general for $r\neq 0$: 
\[T'\in\{(K-1)|r|,\,(K+1)|r|-L,\, KL-(K+1)|r|,\,(K-1)(L-|r|)\}\]
All of these are
  non-negative once $K\geq L-1$, so we assume this lower bound on $K$
and then it suffices to find $K$ such that the remaining part $T$ is
non-negative.  The inequality $T\geq 0$ is equivalent to:
\[K\geq \frac{2|x^{q''}|+|x^m|}{|x^{q'}|+|x^{m+q'}|}\]
for some $q',q''\in\{q,q+\sign(r)\}$.
Notice $|q'-q''|\leq 1$.

If $q'=0$ then $|q''|\leq 1$, so $|x^{q''}|\leq 3$, and it suffices to
take $K\geq 7$.

If $0\neq q'$ then by Item~\eqref{item:adjacentg} we have
$|x^{q''}|/|x^{q'}|\leq (|x^{q'}|+1)/|x^{q'}|\leq 4/3$, so:
\[K':=\frac{3|x^{q'}|+|x^m|}{|x^{q'}|+|x^{m+q'}|}>\frac{2|x^{q''}|+|x^m|}{|x^{q'}|+|x^{m+q'}|}\]

Thus, it suffices to take $K\geq K'$, in the
case that $\ell>0$, $m\neq 0$, and
$q'\neq 0$.
We deduce upper bounds on $K'$, hence, sufficient lower bounds on $K$,
according to several subcases.

When $-q'=m$ we have $|x^m|=|x^{q'}|$, so $K'=4$.

If $q'=-1$ and $m\neq 0,1$ then:
\[K'=\frac{9+|x^m|}{3+|x^{m-1}|}\leq \frac{9+|x^m|}{3+|x^m|-1}\leq
  12/5\]

Since $\ell>0$, we have $q'\geq -1$, so it remains to consider cases
where $q'>0$ and $m+q'\neq 0$.
Suppose that $m<-q'<0$.
Then $\frac{|x^{q'}|}{|x^m|}<\frac{C(q')^{1/\alpha}}{|m|^{1/\alpha}}=C\bigl|\frac{q'}{m}\bigr|^{1/\alpha}<C$ and, using reverse H\"older and Theorem~\ref{powerdistortion}: 
\[|x^{m+q'}|>|m+q'|^{1/\alpha}=(-m-q')^{1/\alpha}\geq
  (-m)^{1/\alpha}-(q')^{1/\alpha}>\frac{1}{C}|x^m|-|x^{q'}|\]
Thus:
\[K'<\frac{3|x^{q'}|+|x^m|}{|x^{q'}|+\frac{1}{C}|x^m|-|x^{q'}|}=3C\frac{|x^{q'}|}{|x^m|}+C<C(3C+1)\]

Now suppose $-q'<m<0$, so $|x^{m}|/|x^{q'}|<C$ and:
\[K'=\frac{3+|x^m|/|x^{q'}|}{1+|x^{m+q'}|/|x^{q'}|}<\frac{3+C}{1}\]

Finally, we consider $m>0$. In this case:
\[|x^{m+q'}|>(m+q')^{1/\alpha}>2^{-1+1/\alpha}(m^{1/\alpha}+(q')^{1/\alpha})>C^{-1}\cdot
  2^{-1+1/\alpha}(|x^m|+|x^{q'}|)\]
The corresponding estimate for $K'$ is then:
\begin{align*}
  K'&<\frac{3|x^{q'}|+|x^m|}{|x^{q'}|+C^{-1}\cdot
      2^{-1+1/\alpha}(|x^m|+|x^{q'}|)}\\
  &=\frac{3|x^{q'}|+|x^m|}{(1+C^{-1}\cdot
  2^{-1+1/\alpha})|x^{q'}|+C^{-1}\cdot
    2^{-1+1/\alpha}|x^m|}\\
  &<\frac{3}{(1+C^{-1}\cdot
    2^{-1+1/\alpha})}+\frac{1}{C^{-1}\cdot
    2^{-1+1/\alpha}}\\
    &=C\cdot
    2^{1-1/\alpha}\left(\frac{3}{1+C\cdot
    2^{1-1/\alpha}}+1\right)
\end{align*}
\end{proof}

\begin{corollary}\label{cor:planebound}
  If $\ell=qL+r$ with $0\leq |r|<L$ and $m,n\in\mathbb{Z}$ then the
  following inequality holds.
  \[|a^\ell x^my^n| \ge |m+q|^{1/\alpha} + |n+q|^{1/\alpha} - 5 \]
\end{corollary}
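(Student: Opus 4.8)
The plan is to read off the distance formula for $|a^\ell x^m y^n|$ from Lemma~\ref{lem:ag_bound}\eqref{item:planelength}, which expresses it as the minimum of two nonnegative quantities, and to bound each of the two terms below by $|m+q|^{1/\alpha}+|n+q|^{1/\alpha}-5$. In fact each term will be bounded below by $|m+q|^{1/\alpha}+|n+q|^{1/\alpha}-2$, so the corollary will come out with room to spare.

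For the first term, $|r|+|x^{m+q}|+|y^{n+q}|$, I would simply discard the nonnegative summand $|r|$ and apply the lower bound of Theorem~\ref{powerdistortion} to the other two summands, getting $|x^{m+q}|\ge |m+q|^{1/\alpha}$ and $|y^{n+q}|\ge |n+q|^{1/\alpha}$; the exponent-zero cases are trivial since $|x^0|=|y^0|=0=0^{1/\alpha}$. Hence this term is already at least $|m+q|^{1/\alpha}+|n+q|^{1/\alpha}$.

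For the second term, $L-|r|+|x^{m+q+\sign(r)}|+|y^{n+q+\sign(r)}|$, the summand $L-|r|$ is nonnegative because $|r|<L$, and Theorem~\ref{powerdistortion} gives $|x^{m+q+\sign(r)}|\ge |m+q+\sign(r)|^{1/\alpha}$ and similarly for $y$. The only point needing a word of justification is that shifting the exponent by $\sign(r)\in\{-1,0,1\}$ costs at most $1$ in the $1/\alpha$-power: since $\bigl||m+q+\sign(r)|-|m+q|\bigr|\le 1$ and $t\mapsto t^{1/\alpha}$ is increasing, the right-hand inequality of Fact~\ref{fact:reverse_holder} applied to $|m+q+\sign(r)|$ and $1$ gives $|m+q|^{1/\alpha}\le |m+q+\sign(r)|^{1/\alpha}+1$, i.e. $|m+q+\sign(r)|^{1/\alpha}\ge |m+q|^{1/\alpha}-1$, and likewise with $n$ in place of $m$. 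So the second term is at least $|m+q|^{1/\alpha}+|n+q|^{1/\alpha}-2$.

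Taking the minimum of the two bounds yields $|a^\ell x^m y^n|\ge |m+q|^{1/\alpha}+|n+q|^{1/\alpha}-2$, which is stronger than the claimed inequality. There is no genuine obstacle here: the statement is a slack consequence of the power-law distortion estimate, and the generous constant $-5$ comfortably absorbs the crude treatment of the $\sign(r)$-shift and leaves margin for applications.
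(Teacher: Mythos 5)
Your proof is correct and yields a slightly stronger bound ($-2$ rather than $-5$). The decomposition is the same as the paper's (both invoke Lemma~\ref{lem:ag_bound}\eqref{item:planelength} and bound each of the two terms of the minimum), but the treatment of the $\sign(r)$--shift in the second term differs. The paper works at the level of word lengths: it uses Lemma~\ref{lem:ag_bound}\eqref{item:adjacentg} together with $|x|=|y|=3$ to conclude $|g^{k\pm1}|\ge|g^k|-3$ for $g\in\{x,y\}$ (the $3$ rather than $1$ handles the boundary case $k\in\{0,\pm1\}$ cleanly, since $|g^0|=0$ while $|g^{\pm1}|=3$), and then combines this with $L-|r|\ge 1$ to recover the $-5$. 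You instead shift at the level of exponents, using subadditivity of $t\mapsto t^{1/\alpha}$ (the right-hand side of Fact~\ref{fact:reverse_holder}) to get $|m+q+\sign(r)|^{1/\alpha}\ge|m+q|^{1/\alpha}-1$ and likewise for $n$, and only then pass to lengths via Theorem~\ref{powerdistortion}. This neatly sidesteps the edge case at exponent $0$ and produces the sharper constant. Either route is fine; the $-5$ in the statement is just generous slack.
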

\begin{proof}
  By Lemma~\ref{lem:ag_bound}(\ref{item:planelength}):
  \[|a^\ell x^my^n|=\min\{|r|+|x^{m+q}|+|y^{n+q}|,\,
    L-|r|+|x^{m+q+\sign(r)}|+|y^{n+q+\sign(r)}|\}\]
  By
  Theorem~\ref{powerdistortion},  $|m+q|^{1/\alpha} + |n+q|^{1/\alpha} -
  5\leq |x^{m+q}|+|y^{n+q}|-5$, which is clearly less than the first
  term of the minimum, so it suffices to prove it is also a lower
  bound for  the second.
  But $L - |r| \ge 1$ and, by
  Lemma~\ref{lem:ag_bound}(\ref{item:adjacentg}) and the fact that
  $|x| = |y| = 3$, we have $|g^{k\pm 1}| \ge |g^k| - 3$ for any
  $k \in \mathbb{Z}$ and $g \in \{x,y\}$.
\end{proof}

\subsection{Lines, intersection, projections}
The following lemmas are needed in Section~\ref{sec:constructfillings}.
\begin{lemma}\label{axintersection}
  For every $h\in H-\langle a\rangle$ there
  are exactly two closest points of $\langle a\rangle$. They are the
  unique element of $\langle a\rangle\cap h\langle x\rangle$ and the
  unique element of $\langle a\rangle\cap h\langle y\rangle$. 
  \end{lemma}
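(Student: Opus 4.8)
The plan is to reduce the whole statement to one explicit minimization. Since $H\cong\mathbb{Z}^2$ is abelian, $\langle a\rangle\cap\langle x\rangle=1$, and $y=a^Lx^{-1}$, we have $H=\langle a\rangle\oplus\langle x\rangle$, so every $h\in H$ has a unique expression $h=a^ix^m$, with $h\in H-\langle a\rangle$ precisely when $m\neq 0$. For $\ell\in\mathbb{Z}$ the distance from $a^\ell$ to $h$ is $|a^{-\ell}a^ix^m|=|a^{i-\ell}x^m|=\phi(\ell-i)$, where $\phi(k):=|a^{-k}x^m|$. So it suffices to prove that $\phi$ attains its minimum value $|x^m|=2+|a^m|$ at exactly the two integers $k=0$ and $k=Lm$ (which are distinct, as $Lm\neq 0$); the closest points of $\langle a\rangle$ to $h$ are then $a^i=hx^{-m}$ and $a^{i+Lm}=a^i(xy)^m=hy^m$. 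These are the unique elements of $\langle a\rangle\cap h\langle x\rangle$ and $\langle a\rangle\cap h\langle y\rangle$: the first since $h\langle x\rangle=a^i\langle x\rangle$ meets $\langle a\rangle$ only in $a^i$ by $\langle a\rangle\cap\langle x\rangle=1$; the second since $a^j\in h\langle y\rangle$ forces $a^{j-i}=x^my^t$ for some $t$, hence $a^{j-i}\in\langle a\rangle\cap\langle x,y\rangle=\langle a^L\rangle=\langle xy\rangle$ (using $H=\coprod_{i=0}^{L-1}a^i\langle x,y\rangle$), which gives $t=m$ and $j=i+Lm$.

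For the minimization, write $-k=qL+r$ with $0\le|r|<L$, set $\sigma:=\sign(r)$, and write $X(n):=|x^n|=|y^n|$, so that $X(0)=0$, $X(-n)=X(n)$, and $X(n)=2+|a^n|$ for $n\neq 0$ by Lemma~\ref{lem:ag_bound}(\ref{item:glength}). Lemma~\ref{lem:ag_bound}(\ref{item:planelength}) then gives
\[
\phi(k)=\min\bigl\{\,|r|+X(m+q)+X(q),\ \ (L-|r|)+X(m+q+\sigma)+X(q+\sigma)\,\bigr\}.
\]
The only nontrivial ingredient is the elementary inequality: for $m\neq 0$ and any $p\in\mathbb{Z}$, $X(m+p)+X(p)\ge X(m)$, with equality iff $p\in\{0,-m\}$. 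When $p=0$ or $p=-m$ one summand is $X(0)=0$ and the other is $X(m)$; when $p\notin\{0,-m\}$ both $m+p$ and $p$ are nonzero, so $X(m+p)+X(p)=4+|a^{m+p}|+|a^p|\ge 4+|a^m|>X(m)$ by the triangle inequality $|a^m|\le|a^{m+p}|+|a^{-p}|$.

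Granting this inequality the conclusion is immediate. If $r\neq 0$ then the first bracketed term is $\ge|r|+X(m)\ge 1+X(m)$ and the second is $\ge(L-|r|)+X(m)\ge 1+X(m)$, so $\phi(k)>X(m)$. If $r=0$ then $\sigma=0$, the second term equals the first plus $L$, so $\phi(k)=X(m+q)+X(q)$; by the inequality this equals $X(m)$ exactly when $q\in\{0,-m\}$, i.e.\ when $k=-qL\in\{0,Lm\}$, and it is strictly larger otherwise. This proves the minimization claim, and with it the lemma. I do not expect a genuine obstacle here: all the irregular behaviour of the function $k\mapsto|a^k|$ has already been absorbed into the exact formula of Lemma~\ref{lem:ag_bound}(\ref{item:planelength}), so the main care needed is purely organizational, namely fixing the normal form $h=a^ix^m$ and verifying the two coset identifications.
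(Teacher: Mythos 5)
Your proof is correct, and it takes a genuinely different route from the paper's. The paper argues qualitatively using Proposition~\ref{3piecegeodesic}: a geodesic from $h$ to a closest point of $\langle a\rangle$ has the form $\gamma_x+\gamma_y+\gamma_a$; the toral part $\gamma_a$ must be trivial (else truncating it gives a closer $a$--point), and if $\gamma_x,\gamma_y$ are both nontrivial one uses the $x/y$--symmetry to replace $\gamma_y$ by an $x$--escape of opposite exponent ending on $\langle a\rangle$, producing two consecutive $x$--escapes whose shared $s^{-1}s$ cancels---a shortening that contradicts minimality. So the paper reaches the conclusion via the global geodesic normal form plus a concatenation/cancellation trick. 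You instead pick the coordinate decomposition $H=\langle a\rangle\oplus\langle x\rangle$, reduce the whole statement to minimizing the one-variable function $\phi(k)=|a^{-k}x^m|$ using the exact distance formula of Lemma~\ref{lem:ag_bound}\eqref{item:planelength}, and then settle the minimization with the elementary observation that $|x^{m+p}|+|x^p|\ge|x^m|$ with equality iff $p\in\{0,-m\}$ (a two-unit gap because each nontrivial $x$--escape costs a fixed $+2$ over $|a^n|$). Your computation yields the strict inequality and hence exactness of the two minimizers immediately, whereas the paper's argument handles uniqueness via the cancellation contradiction. Both are sound; yours is more ``quantitative'' and self-contained given the distance formula, while the paper's is shorter once Proposition~\ref{3piecegeodesic} is in hand and illustrates the $x/y$--symmetry technique reused elsewhere. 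Minor note: the equality $X(n)=2+|a^n|$ you invoke requires $n\neq 0$, and you correctly split off the cases $p\in\{0,-m\}$ where one argument vanishes, so no gap there.
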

  
  \begin{proof}
    The isometric action by $a$ stabilizes $\langle a\rangle$, and
    $H=\coprod_{i=0}^{L-1}a^i\langle x,y\rangle$, so it suffices to
    assume $h=x^my^n\in\langle x,y\rangle-\langle a\rangle$.  Consider
    a closest point $a^p$ of $\langle a\rangle$ to $h$.  By
    Proposition~\ref{3piecegeodesic}, there is a geodesic from $h$ to
    $a^p$ of the form $\gamma_x+\gamma_y+\gamma_a$, but $\gamma_a$ is
    clearly trivial because $\gamma_x+\gamma_y$ ends at a point of
    $\langle a\rangle$.  Suppose $\gamma_x$ and $\gamma_y$ are both
    nontrivial.  By $x/y$--symmetry there is a geodesic $x$--escape
    $\gamma'_x$ with the same exponent as $\gamma_y$ such that
    $\gamma' = \gamma_x+\gamma'_x$ ends on $\langle a\rangle$.  Then
    $|\gamma'|=|\gamma_x|+|\gamma_y|$, but $\gamma'$ consists of
    consecutive $x$--escapes, so can be shortened, contradicting the
    fact that $\gamma_x+\gamma_y$ was a shortest path to
    $\langle a\rangle$.  Thus, either
    $a^p=hx^{n-m}\in H\langle x\rangle$ or
    $a^p=hy^{m-n}\in h\langle y\rangle$, and by $x/y$--symmetry, both
    of these choices are the same distance from $h$.
  \end{proof}
  
\begin{lemma}\label{notallxylinesintersect}
  For all $h\in H$ there exists $\ell\in\mathbb{Z}$
    with $|\ell|\leq L/2$ such that $\langle x\rangle\cap ha^\ell\langle
    y\rangle\neq\emptyset$, in which case the intersection is a unique
    element. 
  \end{lemma}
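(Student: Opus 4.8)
The plan is to reduce everything to elementary algebra in the free abelian group $H\cong\mathbb{Z}^2$. The key observation I would start from is that the decomposition $H=\coprod_{i=0}^{L-1}a^i\langle x,y\rangle$ says precisely that $\langle x,y\rangle$ has index $L$ in $H$ and that, writing $\pi\colon H\to H/\langle x,y\rangle$ for the quotient map, $\pi(a)$ has order exactly $L$; hence $H/\langle x,y\rangle\cong\mathbb{Z}/L$ with $\pi(a)$ a generator. I would also note at the outset that, since $\langle x,y\rangle$ is abelian, $\langle x\rangle\cdot\langle y\rangle=\langle x,y\rangle$ as a set, so for any $g\in H$ and any $\ell\in\mathbb{Z}$,
\[
\langle x\rangle\cap ga^\ell\langle y\rangle\neq\emptyset
\iff ga^\ell\in\langle x,y\rangle
\iff \pi(g)+\ell\,\pi(a)=0 \text{ in }\mathbb{Z}/L .
\]

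With this in hand, given $h\in H$ I would pick $i\in\{0,\dots,L-1\}$ with $h\in a^i\langle x,y\rangle$, i.e.\ $\pi(h)=i\,\pi(a)$. The displayed equivalence shows the intersection condition holds for exactly those $\ell$ with $\ell\equiv -i\pmod L$. Choosing $\ell$ to be the representative of $-i$ of least absolute value — so $\ell=-i$ when $0\le i\le L/2$ and $\ell=L-i$ when $L/2<i<L$ — gives $|\ell|\le L/2$, and then $ha^\ell\in\langle x,y\rangle$, say $ha^\ell=x^py^q$; the element $x^p=ha^\ell y^{-q}$ lies in $\langle x\rangle\cap ha^\ell\langle y\rangle$, establishing the existence clause.

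For the uniqueness clause, suppose $g_1,g_2\in\langle x\rangle\cap ha^\ell\langle y\rangle$. Since both lie in the $\langle y\rangle$-coset $ha^\ell\langle y\rangle$, the element $g_1g_2^{-1}$ lies in $\langle y\rangle$; since both also lie in $\langle x\rangle$, it lies in $\langle x\rangle$ as well. As $\langle x\rangle\cap\langle y\rangle=1$ (recorded earlier from $H\cong\mathbb{Z}^2$), we conclude $g_1=g_2$, so the intersection, when nonempty, is a single point.

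I do not expect a serious obstacle here; the only point that needs any care is the bound $|\ell|\le L/2$, where the evenness of $L$ is convenient and the boundary case $i=L/2$ simply admits the two choices $\ell=\pm L/2$. If I wanted to say a little more, I could observe as a byproduct of the first paragraph that for a fixed $h$ the admissible $\ell$ form exactly one residue class mod $L$, so the $\ell$ with $|\ell|\le L/2$ is unique unless $i=L/2$.
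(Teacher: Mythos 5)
Your proof is correct and follows essentially the same route as the paper's: the paper reduces to $h=a^p$, observes that $\langle x\rangle\cap a^{\ell+p}\langle y\rangle\neq\emptyset$ iff $L\mid(\ell+p)$ (using $\langle x,y\rangle\cap\langle a\rangle=\langle a^L\rangle$), and picks $\ell$ of least absolute value in that residue class, which is precisely your argument phrased through the quotient $H/\langle x,y\rangle\cong\mathbb{Z}/L$. Your explicit $\langle x\rangle\cap\langle y\rangle=1$ uniqueness step is a small clarification of what the paper leaves implicit.
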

  \begin{proof}
    It suffices to consider the case $h=a^p$.
    Since $\langle x,y\rangle\cap\langle a\rangle=\langle a^L\rangle$, 
    we have $\langle x\rangle\cap a^{\ell+p}\langle y\rangle\neq \emptyset$
    if and only if $L$ divides $\ell+p$, in which case the intersection
    is the element $x^{(\ell+p)/L}=a^{\ell+p}y^{-(\ell+p)/L}$.
Choose $\ell$ to be an 
    integer of smallest absolute value for which $L|(\ell+p)$.
  \end{proof}

  \begin{lemma}\label{intersectingxylinesprojectnicely}
    For all $n\in\mathbb{Z}$, the unique
    closest point of $y^n\langle x\rangle$ to $1$ is $y^n$.
 \end{lemma}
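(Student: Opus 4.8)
The plan is to reduce this to a length computation in the vertex group $H=\langle a,x,y\rangle$. A point of the coset $y^n\langle x\rangle$ has the form $y^nx^k$ with $k\in\Z$, and $d(1,y^nx^k)=|y^nx^k|$, so it suffices to show that $|y^nx^k|>|y^n|$ for every nonzero $k$. Since $H$ is abelian we may rewrite $y^nx^k=x^ky^n$, which puts us in the setting of Proposition~\ref{3piecegeodesic} and Lemma~\ref{lem:ag_bound}.

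The crux is the additivity identity $|x^ky^n|=|x^k|+|y^n|$, valid for all $k,n\in\Z$. The quickest route is Lemma~\ref{lem:ag_bound}(\ref{item:planelength}) applied with $\ell=0$ (so $q=r=0$ and $\sign(r)=0$): it gives $|x^ky^n|=\min\{|x^k|+|y^n|,\,L+|x^k|+|y^n|\}=|x^k|+|y^n|$. Equivalently, by the last clause of Proposition~\ref{3piecegeodesic} every geodesic from $1$ to $x^ky^n$ consists of an $x$--escape of exponent $k$ together with a $y$--escape of exponent $n$, so has length at least $|x^k|+|y^n|$, while concatenating a geodesic $x$--escape with a geodesic $y$--escape realizes this bound; the lone corner case ($L=6$, $k=n=\pm1$, where $x^ky^n=a^{\pm6}$ also admits a toral geodesic) still satisfies the identity since $|a^{\pm6}|=6=|x^{\pm1}|+|y^{\pm1}|$.

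With the identity in hand we are done: for $k\neq0$ the element $x^k$ is nontrivial in $H\cong\Z^2$, so $|x^k|>0$ --- in fact $|x^k|=2+|a^{|k|}|\geq3$ by Lemma~\ref{lem:ag_bound}(\ref{item:glength}) --- whence $|y^nx^k|=|x^k|+|y^n|>|y^n|=|y^nx^0|$. Thus $y^n$ is the unique point of $y^n\langle x\rangle$ at minimal distance from $1$. I do not expect a real obstacle here; the only subtlety is checking that additivity survives the small-$L$, small-exponent corner case of Proposition~\ref{3piecegeodesic}, and that is dispatched by the direct length computation above (or, more uniformly, by the already-established Lemma~\ref{lem:ag_bound}).
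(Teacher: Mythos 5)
Your proof is correct, and it takes a slightly different (and slightly more computational) route than the paper's.  The paper argues by contradiction directly from the geodesic structure in Proposition~\ref{3piecegeodesic}: if $y^nx^m$ with $m\neq0$ were a closest point, pick a geodesic from $1$ to $x^my^n$ with the $y$--escape first; it then passes through $y^n\in y^n\langle x\rangle$ as an interior point, so $y^n x^m$ was not closest after all.  You instead invoke the additivity $|x^ky^n|=|x^k|+|y^n|$, read off Lemma~\ref{lem:ag_bound}(\ref{item:planelength}) with $\ell=0$, and then observe $|x^k|\geq 3>0$ for $k\neq0$.  Both arguments ultimately rest on Proposition~\ref{3piecegeodesic} (Lemma~\ref{lem:ag_bound}(\ref{item:planelength}) is itself derived from it), so the logical dependencies are essentially identical; the paper's is quicker and avoids invoking the length formula, while yours makes the strict inequality transparent and, as you note, disposes of the $L=6$ corner case without a separate gloss.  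Your treatment of that corner case is also correct: additivity holds there too, since $|a^{\pm6}|=6=|x^{\pm1}|+|y^{\pm1}|$.
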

 \begin{proof}
   The claim is trivial if $n=0$, so suppose not.  Suppose $y^nx^m$ is
   a closest point of $y^n\langle x\rangle$ to $1$.  If $m\neq 0$ then
   by Proposition~\ref{3piecegeodesic} there is a geodesic from 1 to
   $x^my^n$ consisting of a nontrivial $y$--escape of exponent $n$
   followed by a nontrivial $x$--escape.  This is contradictory, since
   such a geodesic contains an interior point in
   $y^n\langle x\rangle$, so $m=0$.
  \end{proof}

  \begin{lemma}\label{projectionatox}
    If $L|p$ then $x^{p/L}$ is the unique closest point of $\langle
    x\rangle$ to $a^p$.
  \end{lemma}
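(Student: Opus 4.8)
The plan is to reduce the statement about $a^p$ and $\langle x\rangle$ to the already-established structure theory of geodesics in $H$. Write $p=qL$ so that $a^p=a^{qL}=x^qy^q$, and observe that $x^{p/L}=x^q$ is an obvious candidate for the closest point: the path from $a^p=x^qy^q$ to $x^q$ of the form $\gamma_y$ (a $y$--escape of exponent $-q$) realizes the distance $|y^q|$, so we certainly have $d(a^p,\langle x\rangle)\leq |y^q|$. The task is to show that no point $x^k$ with $k\neq q$ is as close, and that $x^q$ is strictly closer than all of them.

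First I would suppose $x^k$ is a closest point of $\langle x\rangle$ to $a^p=x^qy^q$ and consider a geodesic $\gamma$ from $a^p$ to $x^k$; after translating, this is a geodesic from $1$ to $x^{k-q}y^{-q}$, an element of $H$. If $k=q$ this element is $y^{-q}$ and $\gamma$ is a single $y$--escape of length $|y^q|=2+|a^q|$ by Proposition~\ref{3piecegeodesic} (or Lemma~\ref{lem:ag_bound}\eqref{item:glength}). If $k\neq q$, then $x^{k-q}y^{-q}$ has both a nonzero $x$--coordinate and a nonzero $y$--coordinate, so by Proposition~\ref{3piecegeodesic} (the $h=x^my^n$ case) every geodesic from $1$ to it is of the form $\gamma_x+\gamma_y$ or $\gamma_y+\gamma_x$ with $\gamma_x$ an $x$--escape of exponent $k-q\neq 0$ and $\gamma_y$ a $y$--escape of exponent $-q$ — except in the corner case $L=6$, $k-q=-q=\pm 1$, which forces $q=\mp 1$ and $k=0$, and there the $a$--path is also geodesic; I will handle that case separately below. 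In the generic case, $\gamma_x$ contributes $|x^{k-q}|\geq |x|=3>0$ to the length, so $|\gamma|=|x^{k-q}|+|y^q|>|y^q|$, showing $x^q$ is the unique closest point.

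For the corner case $L=6$, $q=\pm 1$, $k=0$: here $a^p=a^{\pm 6}$ and we are comparing $|a^{\pm 6}|$ (the distance from $a^p$ to $x^0=1$) against $|y^q|=2+|a^q|=2+3=5$; since $|a^{\pm 6}|=6$ by Corollary~\ref{lem:alengths}\eqref{item:secondbasin} (or Lemma~\ref{lem:alengths}\eqref{item:adjacent}), and $6>5$, the point $x^q=x^{\pm 1}$ is still strictly closer. More generally one should double-check that the corner case geodesics listed in Proposition~\ref{3piecegeodesic} for small powers of $a$ do not sneak in a shorter route to $\langle x\rangle$; but any such route ends at some $x^k$ and the above dichotomy (either $k=q$, giving length $|y^q|$, or $k\neq q$, giving strictly more) still applies, with the only subtlety being whether a geodesic to $x^{k-q}y^{-q}$ might fail to have the claimed $\gamma_x+\gamma_y$ form — which Proposition~\ref{3piecegeodesic} confirms it does not, outside the explicitly enumerated exception that we have just checked.

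The main obstacle is simply bookkeeping the corner cases of Proposition~\ref{3piecegeodesic} carefully: one must be sure that when $k\neq q$ the element $x^{k-q}y^{-q}$ is genuinely of "both coordinates nonzero" type and hence has only $\gamma_x+\gamma_y$-type geodesics, and that the single exceptional family ($L=6$ and both exponents $\pm 1$) is disposed of by the explicit length computation above. Everything else is an immediate application of the already-proved structure of geodesics between elements of $H$, together with the identity $a^{qL}=x^qy^q$ and $x/y$--symmetry.
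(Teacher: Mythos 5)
Your argument is correct, and it reaches the conclusion by a mildly different organization of the same underlying facts. The paper starts from a geodesic $\gamma_a+\gamma_y+\gamma_x$ from $a^p$ to a closest point $x^m$ (the general-position configuration of Proposition~\ref{3piecegeodesic}) and shows separately that $\gamma_x$ is trivial (a closest-point argument) and that $\gamma_a$ is trivial (the divisibility argument $L\mid(p+r)$, $L\mid p$, $|r|<L\Rightarrow r=0$). You instead bake the divisibility in at the outset by writing $a^p=x^qy^q$, translate to a geodesic from $1$ to the element $x^{k-q}y^{-q}\in\langle x,y\rangle$, and invoke the $h=x^my^n$ clause of Proposition~\ref{3piecegeodesic}, which immediately yields the forced $\gamma_x+\gamma_y$ structure and a direct length comparison; this is, if anything, a touch more economical than the paper's route and makes uniqueness more visibly a strict inequality. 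Two small cleanups are needed. First, dispatch $p=0$ explicitly: then $q=0$, so $x^{k-q}y^{-q}=x^k$ has zero $y$--coordinate and the $x^my^n$ clause does not apply as stated, but of course $x^0=1$ is trivially the unique closest point to $a^0=1$. Second, in the corner case your arithmetic $|y^q|=2+|a^q|=2+3=5$ is off: with $q=\mp 1$ we have $|a^q|=1$, so $|y^q|=3$, and the inequality you actually need is $|a^{\pm 6}|=6>3$, which still holds, so the conclusion survives the slip.
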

  \begin{proof}
    If $p=0$ the claim is trivial, so suppose not.
    Let $x^m$ be a closest point of $\langle x\rangle$ to $a^p$.
    By Proposition~\ref{3piecegeodesic} there is a geodesic from $a^p$
    to $x^m$ of the form $\gamma_a+\gamma_y+\gamma_x$.

    First, notice $\gamma_x$ is trivial, because otherwise
    $\gamma_a+\gamma_y$ is a shorter path from $a^p$ to $\langle
    x\rangle$.

    Next, observe that if $\gamma_a$ has exponent $r$ then $\gamma_y$ is a $y$--escape with endpoints
    $a^{p+r}$ and $x^m$, so $a^{p+r}y^n=x^m$ for some $n$.
    Since $\langle x,y\rangle\cap\langle a\rangle=\langle a^L\rangle$,
    this implies $L|(p+r)$.
    By hypothesis, $L|p$, so $L|r$, but $|r|<L$, so $r=0$.
This leaves us with $a^p=x^my^{-n}$, which is true if and only if $m=p/L=-n$.
  \end{proof}

\section{The snowflake Cayley graph is not strongly shortcut}\label{sec:notstrongshortcut}
In this section we prove the following theorem.

\begin{thm}\label{thm:geodesicloop}
  The snowflake graph $X_L$ contains arbitrarily large geodesic loops.
\end{thm}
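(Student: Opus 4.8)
The loops I will use are the snowflake loops $\sigma_{n,s}+\bar\sigma_{n,t}$. Since the recursive construction gives $|\sigma_{n+1,s}|=2|\sigma_{n,s}|+4$ with $|\sigma_{1,s}|=6$, each such loop has length $2(5\cdot 2^n-4)$, which grows without bound, so it suffices to show that every snowflake loop is a geodesic loop, that is, that each of its subpaths of length at most half the total length is a geodesic of $X_L$. The first step is to check that $\sigma_{n,s}$ and $\sigma_{n,t}$ are geodesics from $1$ to $a^{L^n}$. That they run from $1$ to $a^{L^n}$ is an immediate induction using $sa^{L^{n-1}}s^{-1}=x^{L^{n-1}}$, $ta^{L^{n-1}}t^{-1}=y^{L^{n-1}}$ and $x^{L^{n-1}}y^{L^{n-1}}=a^{L^n}$; and by Corollary~\ref{lem:alengths}\eqref{item:powers} (with $q=L^{n-1}$) one has $|a^{L^n}|=2|a^{L^{n-1}}|+4$ and $|a^{L}|=6$, which is exactly the recursion and initial value for $|\sigma_{n,s}|$. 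Hence $|\sigma_{n,s}|=|\sigma_{n,t}|=|a^{L^n}|=5\cdot 2^n-4$, which both proves \eqref{eq:2} and shows that the two half-paths are geodesics. Consequently any subpath of the loop lying in a single half-path is geodesic, and it remains only to estimate $d_{X_L}(u,v)$ when $u$ lies on $\sigma_{n,s}$ and $v$ lies on $\sigma_{n,t}$.

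For this cross-distance estimate I would induct on $n$, proving that such $u,v$ satisfy $d_{X_L}(u,v)=\min\{d_{X_L}(u,1)+d_{X_L}(1,v),\ d_{X_L}(u,a^{L^n})+d_{X_L}(a^{L^n},v)\}$; combined with the first step this is exactly the statement that $d_{X_L}(u,v)$ equals the shorter arc length. The inequality $\le$ is automatic. For $\ge$ I would pass to the Bass--Serre tree $T$ of $G_L$ as a double HNN extension over $\mathbb{Z}$ with vertex group $H$. The tree supports of $\sigma_{n,s}$ and $\sigma_{n,t}$ meet the coset $H$ exactly in $\{1,x^{L^{n-1}},a^{L^n}\}$ and $\{1,y^{L^{n-1}},a^{L^n}\}$, they hang from the vertex of $T$ stabilized by $H$ along four distinct edges, and they share only the coset $H$; in particular the snowflake loop is embedded, and the tree geodesic from $uH$ to $vH$ runs through the vertex stabilized by $H$. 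By the corridor analysis of Section~\ref{sec:preliminaries}, any $X_L$-geodesic from $u$ to $v$ therefore passes through some vertex $h$ of the coset $H$, so $d_{X_L}(u,v)=\min_{h\in H}\bigl(d_{X_L}(u,h)+d_{X_L}(h,v)\bigr)$. Now, using the self-similarity $\sigma_{n,s}=s+\sigma_{n-1,s}+s^{-1}+t+\sigma_{n-1,s}+t^{-1}$ and its $s\leftrightarrow t$ analogue, together with Proposition~\ref{3piecegeodesic}, Lemma~\ref{guards} and the length estimates of Corollary~\ref{lem:alengths}, I would rewrite $d_{X_L}(u,h)$ and $d_{X_L}(h,v)$ as sums of toral $a$-distances and corridor crossings indexed by the tree segments $[uH,H]$ and $[H,vH]$, and then argue that the total is minimized by $h=1$ (or, in the complementary range of $u,v$, by $h=a^{L^n}$), with minimum value equal to the corresponding arc of the loop. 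The case $h=a^{L^n}$ can be transported to the case $h=1$ by the isometry $g\mapsto a^{L^n}\iota(g)$ of $X_L$, where $\iota$ is the automorphism of $G_L$ inverting $a,x,y$ and fixing $s,t$: one checks that this isometry carries the snowflake loop to itself, interchanging its two half-paths and its endpoints $1$ and $a^{L^n}$.

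The hard part will be the minimization over $h$ in the previous paragraph. As the introduction stresses, $m\mapsto|a^m|$ is badly non-monotone, so the crude comparisons afforded by Fact~\ref{fact:powerdistortion} and the reverse H\"older inequality (Fact~\ref{fact:reverse_holder}) are not enough to see that no competing value of $h$ wins; instead I expect one must carry along the geodesic expressions of Section~\ref{ageodesics} for the $a$-exponents appearing on $[uH,H]$ and $[H,vH]$, and exploit that consolidating toral segments across two consecutive corridors (or re-routing through a different branch of $T$) never shortens the snowflake arc. Once this estimate is in place, every snowflake loop is isometrically embedded, their lengths tend to infinity, and the theorem follows.
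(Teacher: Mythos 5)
Your first step---showing $\sigma_{n,s}$ and $\sigma_{n,t}$ are geodesics and computing $|\sigma_{n,s}| = |a^{L^n}| = 5\cdot 2^n - 4$ via Corollary~\ref{lem:alengths}\eqref{item:powers}---matches the paper and is fine. Your Bass--Serre reduction is also essentially correct: since $u$ and $v$ hang from the $H$--vertex of the tree along distinct branches, any geodesic $\gamma$ between them does pass through $H$, so it suffices to control $d(u,h)+d(h,v)$ over the candidates $h\in H$ that $\gamma$ could pass through. But at this point you explicitly hand-wave the core of the theorem: you say you ``would rewrite $d_{X_L}(u,h)$ and $d_{X_L}(h,v)$ as sums\dots'' and ``would argue that the total is minimized by $h=1$ (or $a^{L^n}$).'' That minimization over all of $H$ is precisely the combinatorial problem your own last paragraph flags as intractable with the tools you name---the non-monotonicity of $m\mapsto|a^m|$ means that no consolidation-and-compare argument over arbitrary $h$ is straightforward, and you give no mechanism for ruling out competing $h\in\langle a\rangle$ (or worse, $h\in H\smallsetminus\langle a\rangle$, which you do not discuss at all). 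As written, the proof has a gap exactly where the difficulty lies.

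The paper avoids this global minimization by decomposing $\gamma$ more carefully. Instead of a single pinch point $h\in H$, it records the \emph{first} and \emph{last} times $\gamma$ meets $H$---which by the corridor structure and the position of $u,v$ on the snowflake loop must occur in the cosets $\langle x\rangle$ and $y^{L^{p-1}}\langle x\rangle$---and writes $\gamma=\gamma_1+\gamma_2+\gamma_3$ accordingly. Proposition~\ref{3piecegeodesic} together with an $s$--cancellation argument then forces the middle piece $\gamma_2$ to be a single $y$--escape of exponent $L^{p-1}$, so its length is completely determined. The outer pieces $\gamma_1'=s\cdot\gamma_1$ and $\gamma_3'$ are translated by $s^{-1}$ and $s^{-1}y^{-L^p}$ into paths between antipodal points on the smaller loop $\sigma_{p,s}+\bar\sigma_{p,t}$, and the induction hypothesis closes the argument. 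This is not just a cleaner phrasing of your plan: it replaces an optimization you would have to solve from scratch with a structural fact about the middle of the geodesic, and it makes the induction on $p$ actually bite by exhibiting the outer pieces as paths on a smaller snowflake. You would need the same, or an equivalent, idea to complete your argument.
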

\begin{corollary}\label{cor:notstrongshort}
 The snowflake graph $X_L$ is not shortcut or strongly shortcut.
\end{corollary}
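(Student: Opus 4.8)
The corollary is immediate from Theorem~\ref{thm:geodesicloop}: a strongly shortcut graph has a uniform bound on the lengths of its $K$--biLipschitz cycles for some $K>1$, and every geodesic (i.e.\ $1$--biLipschitz) cycle is $K$--biLipschitz for every $K\geq 1$, so the same bound would control geodesic cycles; hence a graph containing arbitrarily large geodesic cycles is neither shortcut nor strongly shortcut. So the content is Theorem~\ref{thm:geodesicloop}, and the plan is to prove that the snowflake loops $c_n:=\sigma_{n,s}+\bar\sigma_{n,t}$, whose lengths $\ell(c_n)=2(5\cdot 2^n-4)$ are unbounded, are isometrically embedded cycles.

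\textbf{Step 1: the snowflake paths are geodesic.} From the recursion, $\sigma_{n,s}$ and $\sigma_{n,t}$ are paths from $1$ to $a^{L^n}$ of length $\ell_n:=5\cdot 2^n-4$. By induction on $n$, $|a^{L^n}|=5\cdot 2^n-4$, using Corollary~\ref{lem:alengths}\eqref{item:powers} in the form $|a^{L^{n+1}}|=|a^{L\cdot L^n}|=4+2|a^{L^n}|$ with base case $|a^L|=|xy|=6$. Hence $\sigma_{n,s}$ and $\sigma_{n,t}$ are geodesics, so all of their subpaths are geodesics; in particular $d(1,\sigma_{n,s}(i))=i$, $d(a^{L^n},\sigma_{n,s}(i))=\ell_n-i$, and likewise for $\sigma_{n,t}$. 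Therefore $d(p,q)$ equals the arc distance along $c_n$ whenever $p,q$ lie on the same half of $c_n$.

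\textbf{Step 2: the two halves do not shortcut each other.} It remains to prove, for $p=\sigma_{n,s}(i)$ and $q=\sigma_{n,t}(j)$, the lower bound $d(p,q)\geq\min(i+j,\,2\ell_n-i-j)$; the reverse inequality holds since this is the length of the shorter arc. I would induct on $n$ using the Bass--Serre tree $T$ of the double HNN extension $G_L$ over $\Z$. The recursions $\sigma_{n,s}=s+\sigma_{n-1,s}+s^{-1}+t+\sigma_{n-1,s}+t^{-1}$ and $\sigma_{n,t}=t+\sigma_{n-1,t}+t^{-1}+s+\sigma_{n-1,t}+s^{-1}$ display $c_n$ as four ``excursions'', each an isometric copy of a level-$(n-1)$ snowflake path, attached to the central plane $H=\langle a,x,y\rangle$ at the four points $1$, $x^{L^{n-1}}$, $a^{L^n}=x^{L^{n-1}}y^{L^{n-1}}$, $y^{L^{n-1}}$ (the four corridors leaving the central diamond of Figure~\ref{fig:snowflake}), with the interiors of the four excursions lying in four distinct components of $T$ minus the vertex $H$. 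Thus $p$ and $q$ lie in different excursions, or one of them is a junction point already in $H$, and in either case every path in $X_L$ from $p$ to $q$ must pass through the plane $H$; so $d(p,q)=\min_{h\in H}\big(d(p,h)+d(h,q)\big)$. One then shows that no relay point $h$ beats the two poles $1,a^{L^n}$: $d(p,h)$ is bounded below by the distance from $p$ out to the ``portal'' of its excursion --- an arc of an $\langle x\rangle$-- or $\langle y\rangle$--line of $H$ whose endpoints are among the four junction points --- together with the distance inside $H$ from that portal to $h$, estimated by Lemma~\ref{lem:ag_bound} and Corollary~\ref{cor:planebound}, while a strengthened inductive hypothesis (also bounding the distance from a point of a level-$(n-1)$ snowflake path to the $\langle x\rangle$-- and $\langle y\rangle$--lines it springs from) handles detours that remain inside one excursion. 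Since $d(p,1)+d(1,q)=i+j$ and $d(p,a^{L^n})+d(a^{L^n},q)=2\ell_n-i-j$ by Step~1, the minimum over $h$ is exactly the arc distance. The eight connector edges $s^{\pm1},t^{\pm1}$ and the base case $n=1$ (here $\ell(c_1)=12$) are checked by hand.

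\textbf{Main obstacle.} The delicate point is the lower bound of Step~2, in particular for geodesics from $p$ to $q$ that make \emph{deep} escapes --- escapes nested inside escapes of the word they carry. Since the distortion of $\langle a\rangle$ is only roughly a power law (the constants $C_{lo},C_{hi}$ of Proposition~\ref{this_whole_paper_is_not_a_waste_of_time} genuinely differ, the more so as $L$ grows), a single-pass reverse-H\"older estimate is insufficient, and it is the induction on $n$ --- comparing a deep detour to the already-proved geodesicity of lower snowflake loops rather than to a power function --- that makes the estimate go through. This, together with the corner cases of Proposition~\ref{3piecegeodesic}, is why $L\geq 6$ is required.
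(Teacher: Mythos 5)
Your reduction of the corollary to Theorem~\ref{thm:geodesicloop} is exactly the paper's (unstated) proof: a geodesic loop is an isometrically embedded cycle, hence a $K$-biLipschitz cycle for every $K\geq1$, so an unbounded family of geodesic loops rules out both the shortcut and strong shortcut properties. The only substance is therefore in your sketch of Theorem~\ref{thm:geodesicloop}, and there your plan has a gap.

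Step~1 matches the paper. For Step~2 you set up $d(p,q)=\min_{h\in H}\bigl(d(p,h)+d(h,q)\bigr)$ and try to rule out relay points $h$ by lower-bounding $d(p,h)$ as ``distance out to the portal, plus distance in $H$ from the portal to $h$''. With both terms minimized separately this is a valid but generally loose estimate --- the exit point of a geodesic from $p$ need not be the portal point closest to $h$ --- and you give no reason it should be tight enough here. The paper avoids the relay minimization entirely. It fixes a geodesic $\gamma$ from $u$ to $v$ and decomposes it as $\gamma_1+\gamma_2+\gamma_3$ where $\gamma_1$ ends at the first vertex of $\gamma$ on $\langle x\rangle$ and $\gamma_3$ begins at the last vertex on $y^{L^{n-1}}\langle x\rangle$; then Proposition~\ref{3piecegeodesic}, together with reducedness (no $s$-cancellation against $\gamma_1,\gamma_3$) and $\langle a\rangle\cap\langle x,y\rangle=\langle a^L\rangle$, forces the middle piece $\gamma_2$ to be a single geodesic $y$-escape of exponent exactly $L^{n-1}$, i.e.\ of fixed length $2+|a^{L^{n-1}}|$ regardless of the exit and entry points. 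What then closes the induction is not a distance-to-lines hypothesis but the auxiliary inductive claim, carried together with geodesicity, that every geodesic between antipodal points of a snowflake loop passes through one of the loop's two poles $1$ or $a^{L^p}$; this is what identifies the translated pieces $s^{-1}.\gamma_1'$ and $s^{-1}y^{-L^{n-1}}.\gamma_3'$ as a path between genuinely antipodal points of the level-$(n-1)$ loop and tells you that path hits a pole there, which in turn locates $\gamma$ at a pole of the level-$n$ loop. Your proposed strengthened hypothesis about distances to $\langle x\rangle$- and $\langle y\rangle$-lines is neither what the paper proves nor clearly enough to close your relay argument, and Lemma~\ref{lem:ag_bound} and Corollary~\ref{cor:planebound} are not used in the paper's proof of the theorem.
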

\begin{corollary}\label{cor:isometriccircle}
  There is an asymptotic cone of $X_L$ that contains an
  isometrically embedded unit length circle.
\end{corollary}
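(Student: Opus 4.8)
The plan is to run the standard argument that produces an isometrically embedded circle in an asymptotic cone out of a sequence of arbitrarily long geodesic loops; Theorem~\ref{thm:geodesicloop} supplies exactly such a sequence. First I would invoke Theorem~\ref{thm:geodesicloop} to fix geodesic loops $c_n$ in $X_L$ whose lengths $\ell_n$ tend to infinity. Since $X_L$ is homogeneous --- left multiplication by $G_L$ permutes edges preserving labels and orientations, hence preserves the modified metric in which $x$-- and $y$--edges have length $L$ --- I may translate each loop so that it passes through the identity vertex, and parametrize it by arc length as a map $c_n\from\R/\ell_n\Z\to X_L$ with $c_n(0)=1$. Unwinding the definition of a geodesic loop (a $1$--biLipschitz cycle): every subarc of $c_n$ of length at most $\ell_n/2$ is a geodesic of $X_L$, so for all $u,v\in\R/\ell_n\Z$,
\[
  d_{X_L}\bigl(c_n(u),c_n(v)\bigr)=d_{\R/\ell_n\Z}(u,v),
\]
the right-hand side being the length metric on the circle of circumference $\ell_n$ --- indeed the shorter of the two arcs of $c_n$ between $c_n(u)$ and $c_n(v)$ has length $d_{\R/\ell_n\Z}(u,v)\le\ell_n/2$ and is therefore a geodesic of $X_L$.

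Next I would fix a non-principal ultrafilter $\omega$, take scaling sequence $\mathbf{s}:=(\ell_n)$ and the constant sequence $\mathbf{o}:=(1)$ as observation points, and define
\[
  \bar c\from\R/\Z\longrightarrow\Cone_\omega(X_L,1,\mathbf{s}),\qquad
  \bar c(\theta):=\bigl(c_n(\ell_n\theta)\bigr)_n ,
\]
which is well defined on $\R/\Z$ since $\theta\equiv\theta'\!\pmod{\Z}$ forces $\ell_n\theta\equiv\ell_n\theta'\!\pmod{\ell_n\Z}$. This lands in the cone: by the displayed equality $d_{X_L}(c_n(\ell_n\theta),1)=\ell_n\,d_{\R/\Z}(\theta,0)\le\ell_n/2$, so $d_{\omega,\mathbf{s}}(\bar c(\theta),\mathbf{o})\le 1/2<\infty$. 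Then for $\theta,\phi\in\R/\Z$ the cone metric is computed directly from its definition, using the displayed equality together with the fact that scaling the circle of circumference $1$ by the factor $\ell_n$ scales its length metric by $\ell_n$:
\[
  d_{\Cone}\bigl(\bar c(\theta),\bar c(\phi)\bigr)
  =\lim_\omega\frac{d_{X_L}\bigl(c_n(\ell_n\theta),c_n(\ell_n\phi)\bigr)}{\ell_n}
  =\lim_\omega\frac{\ell_n\,d_{\R/\Z}(\theta,\phi)}{\ell_n}
  =d_{\R/\Z}(\theta,\phi).
\]
Thus $\bar c$ is an isometric embedding of the unit length circle (the circle of circumference $1$ with its length metric) into $\Cone_\omega(X_L,1,\mathbf{s})$, proving Corollary~\ref{cor:isometriccircle}.

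I do not expect a genuine obstacle here: the argument is soft, and its entire substance is imported from Theorem~\ref{thm:geodesicloop}. The one point that must be handled with care is that the loops $c_n$ are honestly geodesic, i.e.\ $1$--biLipschitz, not merely locally geodesic or $K$--biLipschitz for some fixed $K>1$; this exactness is what forces the middle equality in the last display (no ambient path beats the shorter arc), hence what guarantees that the limiting loop is isometrically --- and not merely topologically --- embedded, and in particular does not collapse to a circle of smaller circumference or to a point. With only a uniform $K>1$ one would still get a Lipschitz circle in some cone, but not an isometric one; the force of Theorem~\ref{thm:geodesicloop} is precisely that $K=1$ is attained at unbounded scale. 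This also makes visible, through the asymptotic-cone characterization of strongly shortcut spaces recalled in the introduction, why $X_L$ fails to be shortcut, reproving Corollary~\ref{cor:notstrongshort}.
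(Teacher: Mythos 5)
Your proposal is correct and follows the same approach as the paper's one-line proof: choose the observation point at the identity and the scaling sequence to match the lengths of the geodesic loops from Theorem~\ref{thm:geodesicloop}. You have simply spelled out the details that the paper leaves implicit.
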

\begin{proof}[Proof of Corollary~\ref{cor:isometriccircle}]
  Choose the observation point to be fixed at $1$ and choose the
  scaling sequence to match the lengths of the sequence of geodesic
  loops of Theorem~\ref{thm:geodesicloop}.
\end{proof}

The existence of an asymptotic cone containing an isometrically
embedded unit length circle is in fact equivalent to not being
strongly shortcut \cite[Theorem~A]{Hodstrongshortcut}.

\begin{proof}[Proof of Theorem~\ref{thm:geodesicloop}]
Recall for each $p\geq 1$ we inductively construct snowflake paths:
\begin{align*}
  \sigma_{1,s}&:=sas^{-1}tat^{-1}\\
  \sigma_{1,t}&:=tat^{-1}sas^{-1}\\
  \sigma_{p+1,s}&:=s+\sigma_{p,s}+s^{-1}+t+\sigma_{p,s}+t^{-1}\\
  \sigma_{p+1,t}&:=t+\sigma_{p,t}+t^{-1}+s+\sigma_{p,t}+s^{-1}
\end{align*}
We aim to show snowflake loops are geodesic; that is, for all $p\geq 1$, for every pair of antipodal
 points on the loop $\sigma_{p,s}+\bar\sigma_{p,t}$ the
 two complementary segments of the loop are geodesics.
 It suffices to show that a geodesic between two antipodal points has
 length at least $|\sigma_{p,s}|=|\sigma_{p,t}|=|\sigma_{p,s}+\bar\sigma_{p,t}|/2$.
 Furthermore, we show that every geodesic between a pair of antipodal points on the
 loop passes through either $1$ or
 $a^{L^{p}}$.

 By Corollary~\ref{lem:alengths}\eqref{item:powers} and induction on
 $p$, we have $|a^{L^p}|=5\cdot 2^p-4=|\sigma_{p,s}|=|\sigma_{p,t}|$,
 so the snowflake paths themselves are geodesic.
 
 Next, consider the points $u=x^{L^{p-1}}$ and
 $v=y^{L^{p-1}}$, which are the only pair of antipodal points other
 than $\{1,a^{L^p}\}$ that belong to $H$.
 By Proposition~\ref{3piecegeodesic}, every geodesic $\gamma$ between $u$ and
 $v$ consists of a single geodesic $x$--escape of exponent $L^{p-1}$ and a
 single geodesic $y$--escape of exponent $L^{p-1}$, so $|u^{-1}v|=|x^{L^{p-1}}|+|y^{L^{p-1}}|=|\sigma_{p,s}|$.

 For the general case, by symmetry it suffices to consider antipodal
 points $u,v\notin H$ such that
  $u$ lies
  on the first half of $\sigma_{p,s}$, which is an $x$--escape with
  trace in  $\langle x\rangle$, and $v$ lies on the second half of
  $\sigma_{p,t}$, which is an $x$--escape with trace in 
 $a^{L^p}\langle x\rangle=y^{L^{p-1}}\langle x\rangle$.
 Let $\gamma$ be a geodesic from $u$ to $v$.
 Decompose $\gamma$ as $\gamma=\gamma_1+\gamma_2+\gamma_3$,
 where $\gamma_1$ is the shortest subpath of $\gamma$ connecting $u$
 to $\langle x \rangle$, and $\gamma_3$ is the shortest subpath of
 $\gamma$ connecting $v$ to $y^{L^{p-1}} \langle x \rangle$.
 Note since $u,v\notin H$, $\gamma_1=\gamma_1 ' + s^{-1}$
 and $\gamma_3=s + \gamma_3'$.

 As $\gamma_2$ is a geodesic between elements of $H$, by
 Proposition~\ref{3piecegeodesic} we can rearrange escapes and toral
 subpaths of $\gamma_2$ to obtain a geodesic $\gamma_2'$
 with the same endpoints that 
 consists of at most one $x$--escape, at most one $y$--escape, and at
 most one toral subpath of exponent less than $L$, in that order.
 First, we observe that $\gamma_2'$ cannot contain an  $x$--escape as
 it would start with an $s$--edge, which would given an $s$--cancellation with the final
 edge of $\gamma_1$, contradicting the fact that $\gamma$ is
 geodesic. 

 Next, observe that there is no toral subpath in $\gamma_2'$, because $\gamma_2'$ is a path between $\langle x\rangle$ and
$a^{L^p}\langle x\rangle$, so if the toral path has exponent $\ell$ and
the $y$--escape has exponent $n$ then we would have $a^\ell
y^n=a^{L^p}x^m$ for some $m$. 
This implies $a^{L^p-\ell}=x^{-m}y^n$, but $\langle a\rangle\cap\langle
x,y\rangle=\langle a^L\rangle$, so $\ell$ is divisible by $L$.
But $0\leq \ell<L$, so $\ell=0$.
Thus, $\gamma_2'$ is a single geodesic $y$--escape of exponent
$L^{p-1}$, which
implies that $\gamma_2=\gamma_2'$ and $|\gamma_2|=|y^{L^{p-1}}|=2+|a^{L^{p-1}}|$,  by Lemma~\ref{lem:ag_bound}\eqref{item:glength}.

We proceed by induction on $p$.

In the case $p=1$ the snowflake paths are $\sigma_{1,s}=sas^{-1}tat^{-1}$ and
$\sigma_{1,t}=tat^{-1}sas^{-1}$, so the pairs of antipodal points we are
considering are
$\{u=s,\,v=tat^{-1}sa=a^Ls\}$ and $\{u=sa=xs,\,v=tat^{-1}s=ys\}$.
Observe that in both cases the points $us^{-1}$ and $vs^{-1}$ do not differ by a power of $y$, so
it is not the case that both $\gamma_1'$ and $\gamma_3'$ are trivial. 
Thus $|\gamma_2|=|\gamma|-|\gamma_1'|-1-|\gamma_3'|-1\leq 6-3=3$.
The only length 3 $y$--escapes  from $\langle x\rangle$ to $a^L\langle
x\rangle$ are the ones labelled $tat^{-1}$, so we have
$\gamma=\gamma_1'+s^{-1}tat^{-1}s+\gamma_3'$.
Since $|\gamma| \le 6$, one of
$\gamma_1'$ or $\gamma_3'$ is trivial and the other consists of a
single edge.
If $\{u,\,v\}=\{s,a^Ls\}$ then we can choose either $\gamma_1'=1$ and
$\gamma_3'=a$ to get $\gamma$ to be the path from $u$ labelled $s^{-1}tat^{-1}sa$, which goes
through 1, or $\gamma_1'=a$ and $\gamma_3'=1$ to get
$\gamma$ to be the path from $u$ labelled $as^{-1}tat^{-1}s$, which goes through $a^L$.
These are precisely the two subsegments of the snowflake loop between
$u$ and $v$.
The calculation is similar for $\{u,\,v\}=\{xs,\,ys\}$.

Now we suppose that the claims are true up to some power $p$, and
extend to the case of $\sigma_{p+1,s}+\bar\sigma_{p+1,t}$.  With the
setup as above, $\gamma_1'$ begins at $u$ and ends at $sa^m$, for some
$m$, so $s^{-1}.\gamma_1'$ is a path from $s^{-1}u$ to $a^m$.  Since
$\gamma_2$ is a single $y$--escape of exponent $L^p$, $\bar\gamma_3'$
begins at $sa^ms^{-1}y^{L^p}s=x^my^{L^p}s=y^{L^p}x^ms=y^{L^p}sa^m$ and
ends at $v$, hence $s^{-1}y^{-L^p}.\bar\gamma_3'$ begins at $a^m$ and
ends at $s^{-1}y^{-L^p}v$.  Thus
$s^{-1}.\gamma_1'+s^{-1}y^{-L^p}.\bar\gamma_3'$ is a path from
$s^{-1}u\in\sigma_{p,s}$ to $s^{-1}y^{-L^p}v\in\sigma_{p,t}$.
Furthermore, since $u$ and $v$ were antipodal, we claim that $s^{-1}u$
and $s^{-1}y^{-L^p}v$ are antipodal in
$\sigma_{p,s}+\bar\sigma_{p,t}$.  To see this, let $\alpha$ be the
segment of $\sigma_{p+1,s}$ from $u$ to $a^{L^{p+1}}$, and let $\beta$
be the segment of $\bar\sigma_{p+1,t}$ from $a^{L^{p+1}}$ to $v$, so
that $|\alpha|+|\beta|=|\sigma_{p+1,s}|$.  The path $\alpha$
decomposes as $\alpha'+s^{-1}+t+\sigma_{p,s}+t^{-1}$, where
$\alpha'+s^{-1}$ is the subsegment of $\sigma_{p+1,s}$ from $u$ to
$x^{L^p}$.  We have that $s^{-1}.\alpha'$ is the subsegment of
$\sigma_{p,s}$ from $s^{-1}u$ to $s^{-1}x^{L^p}s=a^{L^{p}}$.  On the
other hand, $\beta=s+\beta'$, where $\beta'$ is the subsegment of
$\bar\sigma_{p+1,t}$ from $a^{L^{p+1}}s$ to $v$, and
$s^{-1}y^{-L^p}.\beta'$ is the subsegment of $\bar\sigma_{p,t}$ from
$s^{-1}y^{-L^p}a^{L^{p+1}}s=a^{L^p}$ to $s^{-1}y^{-L^p}v$.  Thus,
$s^{-1}.\alpha'+s^{-1}y^{-L^p}.\beta'$ is the subsegment of
$\sigma_{p,s}+\bar\sigma_{p,t}$ from $s^{-1}u$ to $s^{-1}y^{-L^p}v$
going through $a^{L^p}$, and it has length:
\begin{align*}
  |\alpha'|+|\beta'|&=|\alpha|-3-|\sigma_{p,s}|+|\beta|-1\\
                    &=|\sigma_{p+1,s}|-|\sigma_{p,s}|-4\\
                    &=5\cdot 2^{p+1}-4-(5\cdot 2^p-4)-4\\
                    &=5\cdot 2^p-4=|\sigma_{p,s}|
\end{align*}

Since $s^{-1}u$ and $s^{-1}y^{-L^p}v$  are
   antipodal in $\sigma_{p,s}+\bar\sigma_{p,t}$,
   the induction hypothesis says that the subsegment $\delta$ of
   $\sigma_{p,s}+\bar\sigma_{p,t}$ from $s^{-1}u$ to $s^{-1}y^{-L^p}v$
   passing through $a^{L^p}$ is geodesic.
Split $\delta$ at  $a^{L^p}$ as a concatenation $\delta=\delta_1+\delta_3$, and define
$\delta':=s.\delta_1+s^{-1}+t+\sigma_{p,s}+t^{-1}+s+\bar\delta_3$.
Then $\delta'$ is the subsegment of
$\sigma_{p+1,s}+\bar\sigma_{p+1,t}$ from $u$ to $v$ that goes through $a^{L^{p+1}}$.
We must show $\delta'$ is geodesic.
We have:
\begin{align*}
  |\delta'|-|\gamma|&=|\delta_1|+|\delta_3|+4+|\sigma_{p,s}|-\left(|\gamma_1'|+|\gamma_3'|+2+|\gamma_2|\right)\\
  &=|\delta_1|+|\delta_3|+4+|a^{L^p}|-\left(|\gamma_1'|+|\gamma_3'|+4+|a^{L^p}|\right)\\
  &=|\delta_1|+|\delta_3|-|\gamma_1'|-|\gamma_3'|
\end{align*}
On the one hand this quantity is nonnegative, because $\delta'$ is a
path from $u$ to $v$ and $\gamma$ is a geodesic with the same
endpoints. On the other hand it is nonpositive, since
$s^{-1}.\gamma_1'+s^{-1}y^{-L^p}.\gamma_3'$ is a path from $s^{-1}u$ to
$s^{-1}y^{-L^p}v$ and $\delta=\delta_1+\delta_3$ is a geodesic with the same endpoints.
Thus, $|\delta'|=|\gamma|$, so $\delta'$  is geodesic. Since the
complement to $\delta$ in $\sigma_{p+1,s}+\bar\sigma_{p+1,t}$ has the
same length and the same endpoints, it is geodesic too.
We also get that  $s^{-1}.\gamma_1'+s^{-1}y^{-L^p}.\gamma_3'$ is a geodesic
from $s^{-1}u$ to $s^{-1}y^{-L^p}v$, which are antipodal points on $\sigma_{p,s}+\bar\sigma_{p,t}$, so by  the induction hypothesis
$s^{-1}.\gamma_1'+s^{-1}y^{-L^p}.\gamma_3'$  passes through either $1$ or $a^{L^p}$, necessarily at the
concatenation point.
This means that either $\gamma_1$ ends at $1$ or $\gamma_3$ begins at
$a^{L^{p+1}}$, so $\gamma$ passes through either $1$ or $a^{L^{p+1}}$.
That completes the induction.
\end{proof}

\section{The idea of the proof that asymptotic cones are simply
  connected}\label{sec:idea}

\subsection{The snowflake case}\label{sec:idea:snowflake}
Let us sketch the argument that snowflake loops can be subdivided in a way that satisfies
Theorem~\ref{rileycriteria}.
The idea in the general case will be that, for $K$ sufficiently close
to 1, $K$--biLipschitz loops look sufficiently similar to snowflake
loops that the same strategy works.

Consider a snowflake loop $\gamma$ for $a^{L^p}$ with HNN
diagram as in Figure~\ref{fig:snowflake}.
We will subdivide each region and replace it by regions with
relatively short boundaries.
The diagram has a central diamond, the sides of which are labelled
$x^{L^{p-1}}$ and $y^{L^{p-1}}$.
From \eqref{eq:2}, the length of $\gamma$ is $2(5\cdot2^p-4)$.
We choose a subdivision constant $\Lambda$, which in this case we can
take to be $\Lambda=L$.
Subdivide the central diamond into $\Lambda^2$ small diamonds whose
sides are labelled $x^{L^{p-2}}$ and $y^{L^{p-2}}$.
If we take one of these small diamonds and replace its sides by
geodesics we get a small snowflake loop whose length is $2(5\cdot
2^{p-1}-4)<\frac{1}{2}|\gamma|$.
Propagate the subdivision into the branches of the snowflakes to get
subdivisions of the neighboring triangular regions, as in Figure~\ref{fig:subdividedsnowflake}.
This gives us $\sim \Lambda^2$ more short loops.

\begin{figure}[h]
  \centering
  \includegraphics[height=4cm]{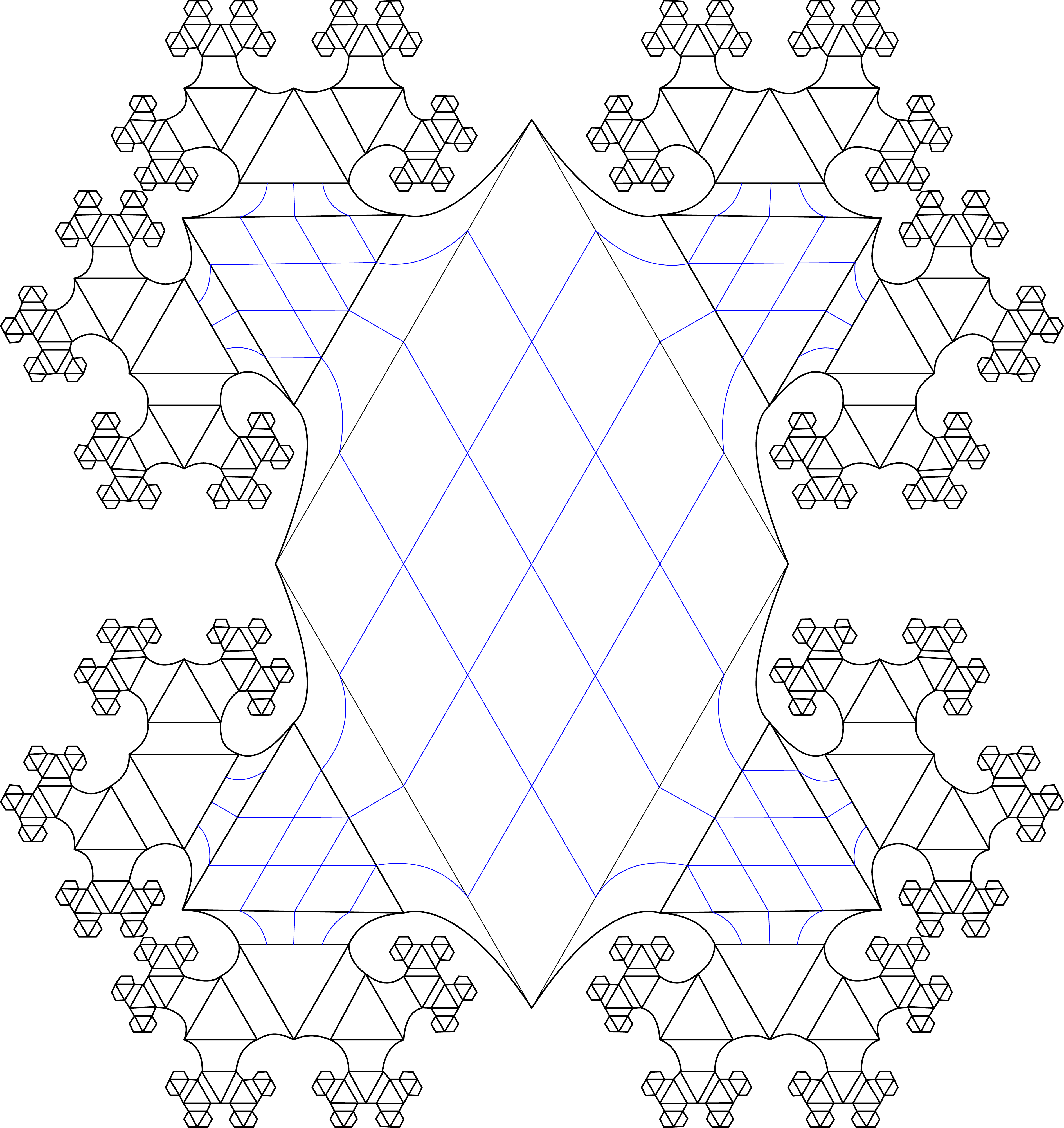}
  \caption{Subdivided snowflake to depth 1.}
  \label{fig:subdividedsnowflake}
\end{figure}

Obviously, we cannot continue this propagation indefinitely, since the depth of the snowflake depends on the
boundary word, but to apply Theorem~\ref{rileycriteria} we need the
number of short loops to be uniform.
Consider the situation depicted in Figure~\ref{fig:cap}---we have
subdivided some number of levels deeply into the snowflake, and are
faced with the remaining branch.
\begin{figure}[h]
  \centering
  \includegraphics[height=2.5cm]{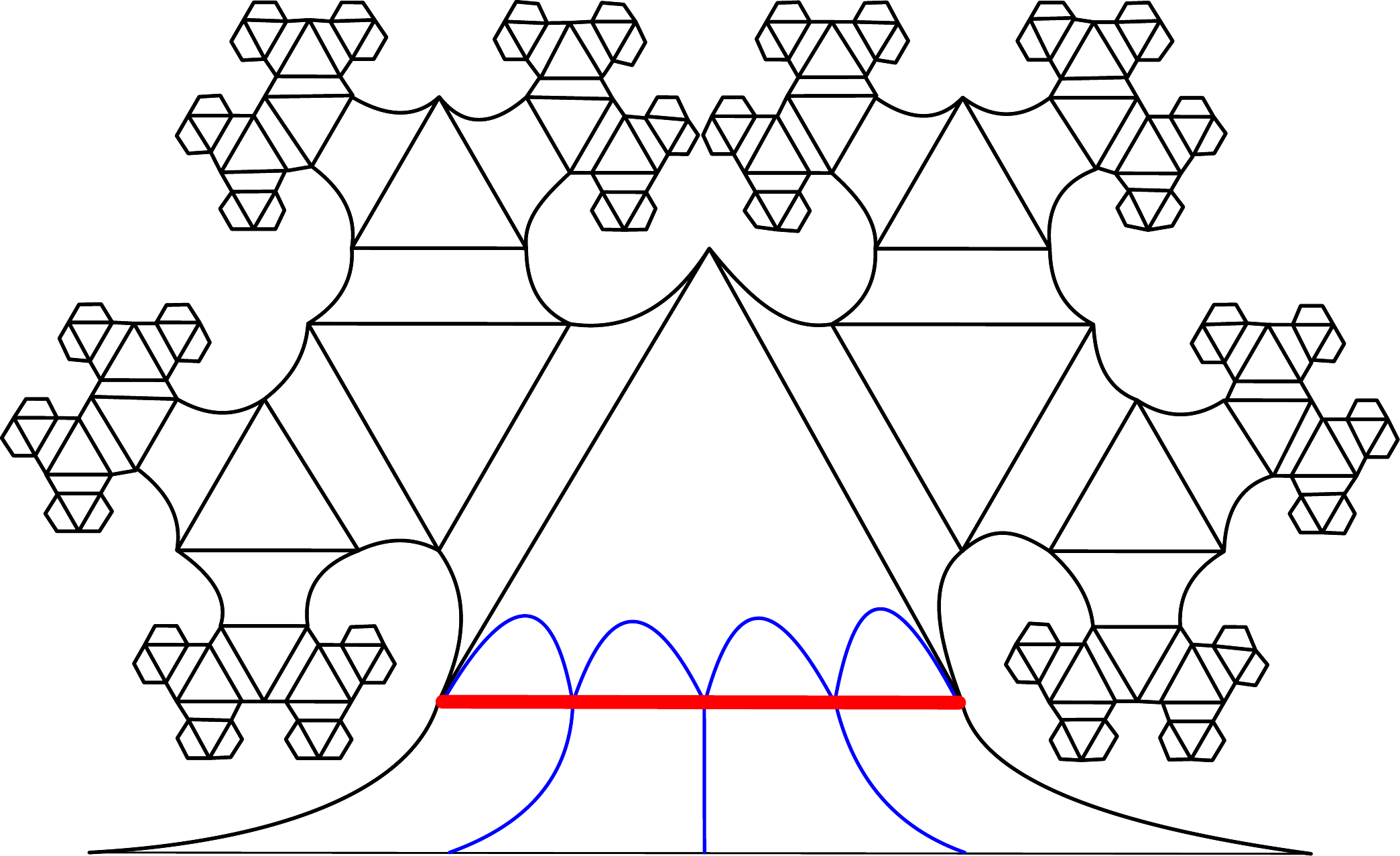}
  \caption{Capping off a branch}
  \label{fig:cap}
\end{figure}
The incoming corridor has been subdivided into $\Lambda$--many pieces.
These pieces have been replaced by short geodesic segments.
The idea is to make one short loop consisting of these $\Lambda$--many
geodesics, plus the outer boundary of the branch.
The incoming edge of the triangle, the red edge in
Figure~\ref{fig:cap}, is labelled $a^{L^{p-m}}$ if we
are at depth $m$.
Thus, the length of the outer boundary of the branch is
$|a^{L^{p-m}}|$, while the length of each of the
$\Lambda$--many short geodesics is $|a^{L^{p-m-1}}|$, since
$\Lambda=L$.
Given our formula \eqref{eq:2}, we can solve to see that
$|a^{L^{p-m}}|+\Lambda |a^{L^{p-m-1}}|\leq \frac{1}{2}|\gamma|$ once
$m\sim\log_2L$.
So at some depth $m$, \emph{independent of $p$}, the paths have become
short enough that we can cap off a branch
with a single short loop. 

This argument decomposes the snowflake loop $\gamma$ into  $\sim L^3$ loops of length at most $\frac{1}{2}|\gamma|$.
If a similar bound were to hold for biLipschitz loops, then, by
\cite[Theorem~4.3.3]{brady2007geometry}, this would imply that the Dehn function of $G_L$ is bounded above by $n^{3\log_2L}$.
This is consistent with the true Dehn function, which is known to be $n^{2\log_2L}$.
We have made no effort to optimize the decomposition, and do not know
if it is possible to subdivide into only $\sim L^2$--many short loops.

\subsection{The general case}\label{sec:generalcase}
We must show that for all $\lambda\in (0,1)$ there exist $M$ and $A$ such that every
loop $\gamma$ in $X$ can be filled by a diagram with area at most $A$
and mesh at most $\lambda|\gamma|+M$.

{\bf Step 1, Reduce to biLipschitz loops:}
 In Section~\ref{sec:circletightening} we show that if $\gamma$ fails to be biLipschitz at a large scale, in a sense to be made 
  precise, then it is easy to
  fill: introduce a geodesic segment between two far apart points
  where the biLipschitz condition fails and take an area 2 filling of
  the $\Theta$ graph with the 1--skeleton mapping to the union of
  $\gamma$ and this geodesic segment.

  Otherwise, if $\gamma$ fails to be biLipschitz only at smaller
  scales then Theorem~\ref{thm:circle_tightening} says it can be
  tightened to a $K$--biLipschitz loop without changing it very much,
  where we are free to choose $K>1$ as small as we like.
  Corollary~\ref{cor:reduction} says if we can find the desired
  filling of the biLipschitz loop then we can extend it to the
  original loop without changing the area or the mesh too much. Thus, we have reduced the problem to finding some
  $K>1$ for which we can fill all $K$--biLipschitz loops. 

  {\bf Interlude; how different are biLipschitz loops from snowflakes?:}
Figure~\ref{fig:jankysnowflake} shows a schematic of an HNN diagram
for a biLipschitz loop. 
\begin{figure}[h]
  \centering
  \includegraphics[angle=90,width=.5\textwidth]{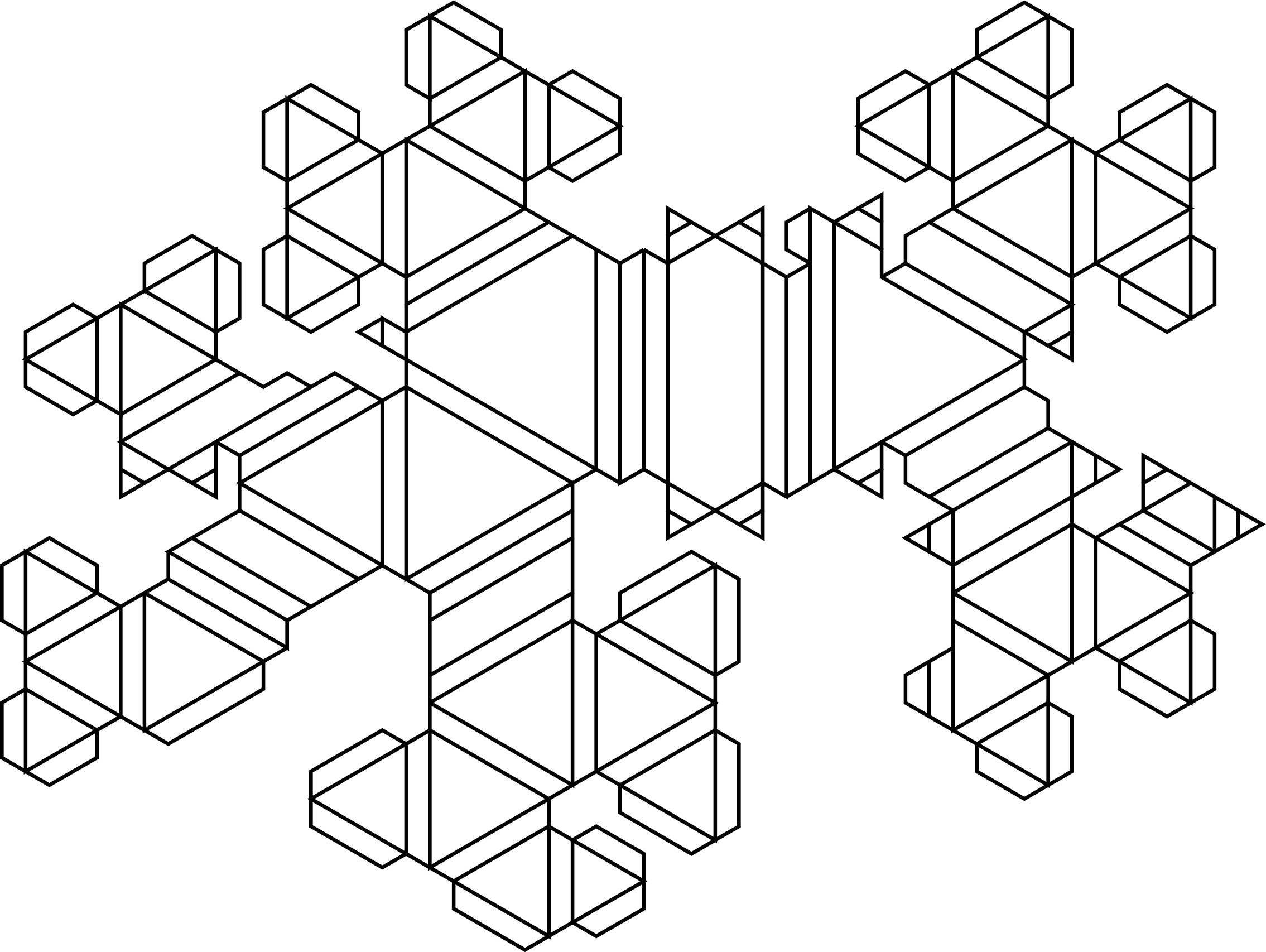}
  \caption{biLipschitz snowflake}
  \label{fig:jankysnowflake}
\end{figure}
We point out some differences between
Figure~\ref{fig:jankysnowflake} and Figure~\ref{fig:snowflake}:
\begin{enumerate}
\item Figure~\ref{fig:jankysnowflake} has no central diamond.
      \item There can be adjacent corridors with no vertex region in between.
  \item Not all of the noncentral vertex regions are triangles.
  \end{enumerate}

  We claim these differences are not severe.
  It turns out there is a way to choose a central region.
  Moreover, although regions may have any number of sides, we will
  show that most of them are negligibly small and are easy to account
  for in our filling diagram without affecting the mesh very much. 
  More specifically, the central region has at most four long sides,
  and the other regions have either two or three long sides.
  It will take some work to make these statements precise.
  
  Branching of the diagram played an important role in the snowflake
  case, so the presence of bigon regions may be concerning.
  We assemble maximal chains of corridor and bigon regions into
  subdiagrams that we call \emph{enfilades} in Section~\ref{sec:noncentralregions}.

  {\bf Step 2, all diagrams have a central `region':} Consider an HNN
  diagram $D\to X$ for an embedded loop $\gamma$ in $X$.  Since
  $\gamma$ is embedded, $D$ is a disk.  If $D$ is a single region then
  call that one the central region.  If not, $D$ contains corridors.
  Recall that corridors have disjoint interiors and bisect $D$, and
  contain two disjoint edges in $\partial D$.  Furthermore, corridors
  are 2--cells with boundary label of the form $sa^ns^{-1}x^{-n}$ or
  $ta^nt^{-1}y^{-n}$. We parameterize such a 2--cell as a rectangle
  $[0,n]\times [0,1]$ and define a map $\phi\from D\to |T|$ from $D$ to
  the geometric realization $|T|$ of the dual tree $T$ to the corridor
  decomposition of $D$, such that $\phi$ restricted to a corridor
  parameterized $[0,n]\times [0,1]$ is projection to the second
  coordinate followed by a linear isometry to the geometric
  realization of an edge.

  Since corridors contain disjoint edges on $\partial D$, for every $x\in
  |T|$, $\phi^{-1}(x)$ contains at least two points of $\partial D$, so
  $\gamma-\phi^{-1}(x)$ consists of at least two components.
  Define a function $f$ on $|T|$ by:
    \[x\mapsto (\text{maximum length of a component of $\gamma-\phi^{-1}(x)$})-|\gamma|/2\]

    The function $f$ is typically not continuous.  We claim that there
    is a unique point at which $f$ achieves a non-positive value.

    First, suppose that $f$ achieves a non-positive value at $x$. Then
    $\gamma-\phi^{-1}(x)$ consists of finitely many segments
    $\gamma_i$, each of length at most $|\gamma|/2$.
    For any other $y\in |T|$, there is a component $\gamma'$ of
    $\gamma-\phi^{-1}(y)$ and an $i_0$ such that $\gamma-\gamma'$ is
    contained in $\gamma_{i_0}$.
    Furthermore, $\gamma'$ crosses corridors corresponding to the
    edges of $T$ on the geodesic between $x$ and $y$.
    Thus, $|\gamma'|\geq
    |\gamma|-|\gamma_{i_0}|+2d_T(x,y)>|\gamma|-|\gamma_{i_0}|\geq
    |\gamma|/2$, so  $f(y)\geq |\gamma'|-|\gamma|/2>0$.
    So if $f$ achieves a non-positive value, it does so at a unique
    point of $|T|$. 

    Next, suppose that $v$ is a vertex of $T$ with $f(v)>0$. Then
    there is a component $\gamma'$  of $\gamma-\phi^{-1}(v)$ of length strictly
    greater than $|\gamma|/2$.
    The closure $\cl(\gamma')$ of $\gamma'$ maps via $\phi$ to  a loop in $T$ at $v$ that begins and ends
    by crossing some edge $e$ incident to $v$.
    Suppose this edge is parameterized $[0,1]$ with $0$ at $v$.
    Then for $x\in (0,1)$ there is a component of
    $\gamma-\phi^{-1}(x)$ whose closure is a subsegment of $\gamma'$
    of length $|\gamma'|-2x$.
    So either $f(v)>1$ and $f$ is linearly decreasing and positive on
    $[0,1)$, or $f(v)=1$ and $f$ achieves value 0 at the midpoint of
    $e$.
    In both cases, $f(v)>0$ implies that $f(v)$ is not a local
    minimum. 
    
    Finally, suppose that $f$ is positive and decreasing on some edge
    parameterized $[0,1]$.  Then for each $x\in (0,1)$ there is a
    component $\gamma_x$ of $\gamma-\phi^{-1}(x)$ such that
    $|\gamma_x| > |\gamma|/2$ and as $x\to 1$ the closed segments
    $\cl(\gamma_x)$ form a decreasing nested family all containing
    $\phi^{-1}(1)\cap \partial D$. The limit $\gamma'$ of this family
    as $x\to 1$ is a closed subsegment of $\gamma$ of length at least
    $|\gamma|/2$ containing $\phi^{-1}(1) \cap \partial D$.  There are
    two possibilities: either $\phi^{-1}(1) \cap \partial D$ is
    exactly two points and $f(1)=|\gamma'|-|\gamma|/2$, so
    $f|_{(0,1]}$ is continuous at 1, or $\phi^{-1}(1) \cap \partial D$
    has more than two points and the components of
    $\gamma-\phi^{-1}(1)$ are either $\gamma-\gamma'$ or proper
    subsegments of $\gamma'$.  In this case
    $f(1) \le |\gamma'|-|\gamma|/2$, so $f(1)$ is a strict lower bound for
    $f$ on $(0,1)$.

   Since there are finitely many vertices, we conclude that $f$
   achieves a minimum value.
   If $f(x)>0$
   then $x$ is not a local minimum, so the minimum value achieved by
   $f$ is non-positive.

    If $f$ achieves value 0 on the midpoint of an edge then we call
    the corresponding corridor the central region $R$ of $D$.
    If $f$ achieves its minimum value at a vertex $v$ then we define
    $R=\phi^{-1}(v)$ to be the central region of $D$.
    This includes a degenerate possibility that $R$ is not really a
    region; it can be that $\phi^{-1}(v)$ is a tree in
    the 1-skeleton of $D$ with leaves on $\partial D$.
    The important point is that the map $D\to X$
    sends $\phi^{-1}(v)$ into a single coset of $H$. 

\begin{remark}
  There is a natural map from the dual tree in the above argument into
  the Bass-Serre tree of $G$, so to every vertex $v\in T$ there is
  associated a coset $g_vH$.
  However, we cannot assume this map is injective.
  In particular, if $v$ is a vertex of $T$, we cannot conclude that
the closures of components of $\gamma-\phi^{-1}(v)$ are \emph{escapes} from $g_vH$.
  One of these components could return to $g_vH$ in its interior. 
We need a more general definition to describe these
 paths. 
\end{remark}
\begin{definition}\label{def:compoundescape}
  Let $\gamma$ be a path in $X$ that intersects a coset $gH$.
  Define a \emph{compound escape} of $\gamma$ from $gH$ to be a
  subpath $\gamma'$ of
  $\gamma$ with both endpoints in $gH$ such that one of the following
  holds:
  \begin{itemize}
  \item $\gamma'$ begins with $s$, ends with $s^{-1}$, and has
    endpoints that differ by a power of $x$.
    \item $\gamma'$ begins with $t$, ends with $t^{-1}$, and has
      endpoints that differ by a power of $y$.
      \item $\gamma'$ begins with $s^{-1}$, ends with $s$, and has
        endpoints that differ by  a power of $a$.
        \item $\gamma'$ begins with $t^{-1}$, ends with $t$, and has
          endpoints that differ by a power of $a$.
        \end{itemize}
 The first two cases we call compound $x$ and $y$--escapes,
 respectively, and the last two compound $a$--escapes.
\end{definition}
So our usual, simple, escapes are also compound escapes, but a compound escape
may return to $gH$ in its interior. 
\medskip

In the case where $\gamma$ is a $K$-biLipschitz (and hence embedded)
loop, we gain two things by identifying the central region:
\begin{itemize}
\item If $R$ is the central region, each component of $\gamma-R$ is a
  $K$-biLipschitz path with endpoints in a coset of $H$. 
\item If $R$ is a non-central region then at most one component $\gamma'$ of
  $\gamma-R$ can fail to be $K$--biLipschitz, and $\gamma-\gamma'$ is
  a single $K$--biLipschitz path with endpoints in a coset of $H$.
\end{itemize}
We analyzed \emph{geodesics} between elements of $H$ in
Section~\ref{hgeodesics}, and now we must:

{\bf Step 3, Analyze biLipschitz paths between elements of $H$:}
This is Section~\ref{sec:bilippaths}.
The underlying idea is that for such a path to be efficient it should
make as much use of the distortion of $H$ as possible, and the way
that that happens is when the path is almost entirely either one long
escape or two long escapes that are an $x$--escape and a $y$--escape. 

{\bf Step 4, Apply the previous step to analyze central and
  non-central regions of the diagram:}
This is done in 
Section~\ref{sec:escapecentralregion}
and
Section~\ref{sec:noncentralregions}.
The conclusions, as alluded to above, are that the central region has
at most four long sides, and they are configured in one of 6 possible
ways, and every other region has at most 3 long sides, and in the case
of 3 they approximately form a triangle of $a$, $x$, and $y$--escapes
just like in the snowflake case.
Furthermore, we show that when we lump together adjacent  bigon regions and corridors
into enfilades, the enfilades are metrically thin.

{\bf Step 5, Construct a new filling diagram:} We have established
that the structure of the diagram is a central region with at most 4
adjacent enfilades, each of which leads to a triangular region. Each
of these has two outgoing sides with adjacent enfilades leading to
more triangular regions, etc.  The strategy now will be the same as in
the snowflake case: choose a way to subdivide the central region and
propagate this subdivision out across the original diagram in a way
that the mesh is controlled and such that after a uniform number of
steps we can cap off the remaining branches with single 2--cells.
This is done in Section~\ref{sec:constructfillings}.

\section{Reduction to biLipschitz cycles}\label{sec:circletightening}
The following theorem is a specialized form of a result of Hoda \cite{Hodstrongshortcut}.
\begin{thm}[{Circle Tightening Lemma
    \cite[Theorem~G]{Hodstrongshortcut}}]
  \label{thm:circle_tightening}
  Let $N > 1$ and $K > 1$, and let $\theta > 1$ be small enough (depending on
  $N$ and $K$) and let $M > 0$ be large enough (depending on $N$, $K$,
  and $\theta$).  Let $\alpha \from S \to X$ be a $1$-Lipschitz map from a
  Riemannian circle $S$ to a geodesic metric space $X$.  If
  \[ d_X\bigl(\alpha(p), \alpha(\bar p)\bigr) \ge \frac{1}{\theta} d_S(p,
    \bar p) \] for every antipodal pair $p, \bar p \in S$ and
  $|S| > M$ then there exists a countable collection $\{Q_i\}_i$ of
  pairwise disjoint closed segments in $S$ such that the following
  conditions hold.
  \begin{enumerate}
  \item $\sum_i|Q_i| < \frac{|S|}{N}$
  \item For each $i$, we have
    $d_X\bigl(\alpha(p_i), \alpha(q_i)\bigr) < |Q_i|$ where $p_i$ and
    $q_i$ denote the endpoints of $Q_i$.
  \item There is a collection of geodesics
    $\{\gamma_i \from \bar Q_i \to X\}_i$ such that $\gamma_i$ has
    endpoints $\alpha(p_i)$ and $\alpha(q_i)$ and replacing each
    $Q_i \to X$ in $\alpha$ with $\bar Q_i \to X$ results in a
    $K$-biLipschitz embedding $\alpha'$ of a circle that is also
    $1$-Lipschitz.
  \end{enumerate}
\end{thm}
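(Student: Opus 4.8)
The plan is to obtain this as a special case of Hoda's circle-tightening theorem \cite[Theorem~G]{Hodstrongshortcut}, so the real work is to line up the hypotheses and then read off the three conclusions. The one point that deserves an explicit check is that the antipodal hypothesis propagates to \emph{all} sufficiently long chords. If $p,q\in S$ cut $S$ into arcs of lengths $\ell\le|S|-\ell$, let $\bar p$ be the antipode of $p$; then $q$ lies on one of the two arcs from $p$ to $\bar p$, so $d_S(q,\bar p)=|S|/2-\ell$. Using $d_X(\alpha(p),\alpha(\bar p))\ge\frac1\theta d_S(p,\bar p)=\frac{|S|}{2\theta}$, the triangle inequality, and $1$-Lipschitzness of $\alpha$, one gets
\[
  d_X(\alpha(p),\alpha(q))\ \ge\ \frac{|S|}{2\theta}-\Bigl(\frac{|S|}{2}-\ell\Bigr)\ =\ \ell-\frac{|S|(\theta-1)}{2\theta}.
\]
Hence, since $d_S(p,q)=\ell$, whenever $\alpha$ fails to be $K$-biLipschitz at $(p,q)$, i.e.\ $d_X(\alpha(p),\alpha(q))<\tfrac1K\ell$, we must have $\ell<\varepsilon|S|$ with $\varepsilon=\frac{K(\theta-1)}{2\theta(K-1)}$, and $\varepsilon\to0$ as $\theta\to1^+$. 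So for $\theta$ close enough to $1$, every $K$-biLipschitz failure of $\alpha$ is spanned by a very short arc; this is exactly the regime in which \cite[Theorem~G]{Hodstrongshortcut} applies.

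Internally, the mechanism being invoked is a Vitali-type selection. Among all closed arcs $Q\subseteq S$ of length $\le\varepsilon|S|$ whose endpoints $p,q$ admit a genuine shortcut $d_X(\alpha(p),\alpha(q))<\frac1K|Q|$, one takes a maximal pairwise-disjoint subfamily $\{Q_i\}$ (by Zorn's lemma, say), and replaces each $Q_i\to X$ in $\alpha$ by a geodesic $\gamma_i$ with the same endpoints, producing a new $1$-Lipschitz loop $\alpha'$. Conclusion~(2) is built into the selection. Conclusion~(3), that $\alpha'$ is a $K$-biLipschitz embedding, is the assertion that any surviving failure pair for $\alpha'$ can be pushed -- using isometry along each inserted geodesic $\gamma_i$ and the triangle inequality -- to a failure pair for $\alpha$ whose spanning arc is short and disjoint from all $Q_i$, contradicting maximality; embeddedness uses disjointness of the $Q_i$ together with the fact that each $\gamma_i$ replaces an arc that was genuinely short-cuttable. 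Conclusion~(1), $\sum_i|Q_i|<|S|/N$, follows from the complementary estimate: a large total of disjoint short-cuttable arcs forces an almost-antipodal pair whose images are closer than $\frac1\theta$ times their $S$-distance, contradicting the hypothesis once $|S|>M$.

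The main obstacle is precisely conclusion~(1) with $N$ prescribed in advance: the disjoint family must be not merely short but sparse in aggregate, and pinning this down is what dictates the quantifier order in the statement -- $\theta$ small depending on $N$ and $K$, then $M$ large depending on $N,K,\theta$. Since \cite[Theorem~G]{Hodstrongshortcut} already packages exactly this trade-off between the antipodal constant, the output biLipschitz constant, and the bound on total replaced length, the cleanest route is to quote it and match parameters; the only residual point is to confirm that the present setting -- a Riemannian circle $S$ mapping $1$-Lipschitzly into a geodesic metric space -- is the one, or a restriction of the one, treated there, which it is.
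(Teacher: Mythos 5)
Your approach matches the paper's exactly: both treat the statement as a direct citation of \cite[Theorem~G]{Hodstrongshortcut}, with no independent proof given in the paper. The auxiliary calculation you give (showing the antipodal hypothesis forces any $K$-biLipschitz failure to be spanned by an arc of length at most $\tfrac{K(\theta-1)}{2\theta(K-1)}|S|$) is correct and is a useful sanity check on how the hypotheses line up, but it is not part of the paper's argument, which simply quotes Hoda's theorem.
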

\begin{remark}
  \label{rmk:graph_circle_tightening} If $X$ is a graph and $\alpha$
  is a combinatorial map then no $Q_i$ can be contained in a single
  edge, as this would contradict
  $d_X\bigl(\alpha(p_i), \alpha(q_i)\bigr) < |Q_i|$.  Then each $Q_i$
  contains a vertex in its interior and it follows that the family
  $Q_i$ is necessarily finite.  Moreover, since $\alpha'$ is
  bilipschitz, it is also a combinatorial map.
\end{remark}

\begin{corollary}\label{cor:reduction}
  Let $X$ be a Cayley graph. 
  Suppose there exist $K>1$, $0<\lambda<1$, $M$, and $A$ such that every
  $K$--biLipschitz loop $\alpha$ admits a filling of area at most $A$ and mesh
  at most $\lambda|\alpha|+M$. Then there exists $0<\lambda'<1$ and
  $M'$ such
  that every 1--Lipschitz loop $\alpha$ admits a filling of area at
  most $A+3$ and mesh at most $\lambda'|\alpha|+M'$.
\end{corollary}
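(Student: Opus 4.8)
The plan is to split into cases according to whether the 1--Lipschitz loop $\alpha$ is already ``biLipschitz at large scale'' or not, exactly as outlined in Step~1 of Section~\ref{sec:generalcase}. Fix the constant $N$ (say $N=2$, or whatever is convenient) in Theorem~\ref{thm:circle_tightening}, and let $\theta>1$ be the threshold it provides for that $N$ and the given $K$. First I would dispose of the trivial cases: if $|\alpha|\le M$ for the relevant $M$, then $\alpha$ bounds a van Kampen diagram of area bounded in terms of $M$ (finitely many loops up to the action), and we simply fold this bound into the constant $A+3$ and take $M'$ large enough that the mesh bound $\lambda'|\alpha|+M'$ is automatic. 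So assume $|\alpha|>M$.

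Next, the case where $\alpha$ fails the antipodal biLipschitz estimate at large scale: there is an antipodal pair $p,\bar p\in S$ with $d_X(\alpha(p),\alpha(\bar p))<\tfrac1\theta d_S(p,\bar p)=\tfrac1\theta\cdot\tfrac{|\alpha|}2$. Insert a geodesic $\delta$ from $\alpha(p)$ to $\alpha(\bar p)$; it has length $<\tfrac{|\alpha|}{2\theta}$. This splits the loop into two loops $\alpha_1+\bar\delta$ and $\delta+\alpha_2$, where $\alpha_1,\alpha_2$ are the two antipodal halves of $\alpha$, each of length $\tfrac{|\alpha|}2$. Each of these two loops has length $<\tfrac{|\alpha|}2+\tfrac{|\alpha|}{2\theta}=\bigl(\tfrac12+\tfrac1{2\theta}\bigr)|\alpha|$, which is a definite fraction $\mu<1$ of $|\alpha|$ since $\theta>1$. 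Iterating this subdivision a bounded number $k$ of times — where $k$ depends only on $\theta$ and $\lambda$ — produces at most $2^k$ subloops, each either of length $\le M$ (fill with bounded area as above) or still failing the large-scale estimate (recurse) or satisfying it (handled below); after $k$ steps every remaining subloop of length $>M$ has length $\le\mu^k|\alpha|\le\lambda|\alpha|$ once $\mu^k<\lambda$. Actually, it is cleaner to simply subdivide once and recurse, bookkeeping that the total number of inserted geodesics (hence the area overhead) is bounded because the lengths decay geometrically down to the scale $M$; one should phrase this as a single induction on $|\alpha|$, with base case $|\alpha|\le M$.

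In the remaining case, $\alpha$ satisfies $d_X(\alpha(p),\alpha(\bar p))\ge\tfrac1\theta d_S(p,\bar p)$ for every antipodal pair and $|\alpha|>M$, so Theorem~\ref{thm:circle_tightening} applies. It yields disjoint segments $Q_i\subset S$ with $\sum_i|Q_i|<\tfrac{|\alpha|}N$, geodesics $\gamma_i$ replacing the $\alpha|_{Q_i}$, and a $K$--biLipschitz $1$--Lipschitz loop $\alpha'$ with $|\alpha'|\le|\alpha|$. By Remark~\ref{rmk:graph_circle_tightening} the family is finite and $\alpha'$ is combinatorial. By hypothesis $\alpha'$ has a filling $D'$ of area $\le A$ and mesh $\le\lambda|\alpha'|+M\le\lambda|\alpha|+M$. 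To recover a filling of $\alpha$: for each $i$, the concatenation $\gamma_i+\overline{\alpha|_{Q_i}}$ is a loop of length $<2|Q_i|$ (using condition (2), $|\gamma_i|=d_X(\alpha(p_i),\alpha(q_i))<|Q_i|$, plus $|\alpha|_{Q_i}|=|Q_i|$), and $\sum_i 2|Q_i|<\tfrac{2|\alpha|}N$; more importantly each such loop individually has length $<2|Q_i|\le 2\sum_j|Q_j|<\tfrac{2}{N}|\alpha|$. Choosing $N$ large (e.g.\ $N\ge 6$, done at the very start) makes each of these correction loops shorter than $\lambda|\alpha|$. Glue $D'$ to disk diagrams for the loops $\gamma_i+\overline{\alpha|_{Q_i}}$ along the $\gamma_i$ edges. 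The subtlety is that these correction loops need bounded area: but one should not expect that in general, so instead recurse on them — each has length $<\tfrac2N|\alpha|$, strictly smaller than $|\alpha|$ once $N>2$, so the induction on $|\alpha|$ closes, and the ``$A+3$'' should really be read as ``area bounded by a function of $\lambda,K$ alone,'' with the ``$3$'' accounting only for the $\Theta$--graph correction of Step~1. I expect the main obstacle to be exactly this accounting: making sure the total area stays bounded by a constant while one recursively subdivides, which forces the recursion depth to be uniformly bounded (driven by the geometric decay of lengths from $|\alpha|$ down to the fixed scale $M$) rather than depending on $|\alpha|$; the mesh bound is comparatively easy once the depth is bounded, since every leaf loop in the recursion has length either $\le M$ or $\le\lambda|\alpha|$, and the inserted geodesics never create 2--cells longer than their own loops.
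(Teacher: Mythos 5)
Your overall framing is right — split according to whether $\alpha$ already satisfies the large-scale antipodal estimate, handle the failing case by inserting a geodesic across an antipodal pair, and apply Theorem~\ref{thm:circle_tightening} in the other case — but there are two concrete gaps, and both come from not fully using the freedom you have in choosing $\lambda'$ in the conclusion.

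In the case where the antipodal estimate fails, you do not need to iterate the subdivision until the pieces have length below $\lambda|\alpha|$; you get to \emph{choose} $\lambda'$, and a single split already produces two loops of length $<\bigl(\tfrac12 + \tfrac1{2\theta}\bigr)|\alpha|$, which is a definite fraction of $|\alpha|$ since $\theta>1$. Setting $\lambda'\ge\tfrac{1+\theta}{2\theta}$ makes these two 2--cells into a legitimate filling of area $2$ with the required mesh. Your proposed iteration is therefore unnecessary, and would in fact hurt you: $k$ rounds produce up to $2^k$ subloops, so the area of the resulting diagram would scale like $2^k$ rather than being the fixed constant $2$.

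The more serious gap is in the tightening case. After applying Theorem~\ref{thm:circle_tightening}, you try to bridge between $\alpha$ and $\alpha'$ one $Q_i$ at a time, producing one correction loop $\gamma_i + \overline{\alpha|_{Q_i}}$ per $i$. As you yourself note, the number of $Q_i$'s is finite (by Remark~\ref{rmk:graph_circle_tightening}) but not uniformly bounded, so this immediately breaks the claimed area bound $A+3$; your fallback of recursing on each correction loop compounds the problem, since at each level the branching factor is again unbounded, so there is no reason for the total area to be bounded even if the recursion depth is. The paper's proof does something cleverer and strictly combinatorial: pick three points $s_0,s_1,s_2$ in $S\setminus\bigcup_i Q_i^{\circ}$ — i.e.\ points where $\alpha$ and $\alpha'$ already agree — chosen roughly equidistributed (possible because $\sum_i|Q_i|<|S|/N$ with $N$ large). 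Then form only \emph{three} correction loops, each consisting of the arc of $\alpha$ between consecutive $s_i$'s followed by the reversed arc of $\alpha'$ between the same points. Since $\alpha'$ is a tightening of $\alpha$, each such loop has length at most twice its $\alpha$--part, hence at most $2\bigl(\tfrac13+\tfrac2N\bigr)|S|$, which is $\le\lambda'|S|$ once $\lambda'\ge 2\bigl(\tfrac13+\tfrac2N\bigr)$ and $N>12$. This gives exactly $3$ extra $2$--cells and the stated area $A+3$. Without this grouping move, the $A+3$ bound (or indeed any bound independent of $|\alpha|$) does not follow from your argument.
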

\begin{proof}
  Take $N>12$. Let $\theta>1$ be as in
  Theorem~\ref{thm:circle_tightening} with respect to $N$ and $K$.
  Let $M'$ be the larger of $M$ and the $M$ of
  Theorem~\ref{thm:circle_tightening}.
  Define:
  \[\lambda':=\max\left\{\lambda,\,\frac{1+\theta}{2\theta},\, 2\left(\frac{1}{3}+\frac{2}{N}\right)\right\}<1\]

  Consider a 1--Lipschitz loop $\alpha\from S\to X$. 
  If there exist a pair of antipodal points $p$ and $\bar p\in S$ such
  that $d_X(\alpha(p),\alpha(\bar p))<\frac{1}{\theta}\frac{|S|}{2}$
  then there is a diagram of area 2 filling $\alpha$ obtained by
  taking one 2--cell to have boundary the segment of $\alpha$ restricted to the interval of $S$ from $p$ to $\bar
  p$ followed by a geodesic $\delta$ from $\alpha(\bar p)$ to
  $\alpha(p)$, and taking the other to have boundary $\bar\delta$
  followed by the remaining subsegment of $\alpha$.
  These loops have length
  $|S|/2+|\delta|<\frac{1+\theta}{2\theta}|S|\leq \lambda'|S|$.

  If there is no such pair of antipodal points then
  Theorem~\ref{thm:circle_tightening} applies.
  We use notation as in the theorem.
  By hypothesis, there is a filling of $\alpha'$ of area at most $A$
  and mesh at most $\lambda|S'|+M\leq \lambda'|S|+M'$.
  We need to extend this to a filling of $\alpha$.
  Pick points $s_0$, $s_1$, and $s_2$ in
  $S-\cup_i\stackrel{\circ}{Q}_i$ such that for all $i$, subscripts
  mod 3, we have
  $d_S(s_i,s_{i+1})\leq\left(\frac{1}{3}+\frac{2}{N}\right)|S|$.
  Then we get 3 loops of the form: segment of $\alpha$ restricted to
  the interval of $S$ from $s_i$ to $s_{i+1}$ followed by reverse of
  the segment of $\alpha'$ from $s_i$ to $s_{i+1}$.
  The length of such a loop is at most twice the $\alpha$ part, since
  $\alpha'$ is a tightening of $\alpha$, so it is bounded above by
  $2\left(\frac{1}{3}+\frac{2}{N}\right)|S|\leq \lambda'|S|$.
  Adding three 2--cells with these boundary loops to the diagram for
  $\alpha$ gives us the desired diagram for $\alpha$.
\end{proof}

\section{BiLipschitz cycles in snowflake
  groups}\label{sec:bilipschitz}
Recall, Remark~\ref{rk:fixedconstants}, that the group $G_L$ is deemed to be
fixed, so $L$, $C$, and $\alpha$ are constant.
The other `constants' in this section implicitly depend on the choice
of $L$.

In Section~\ref{sec:bilippaths} we analyze biLipschitz paths with
endpoints in $H$.
Sections \ref{sec:escapecentralregion} and \ref{sec:noncentralregions}
are preparatory for Section~\ref{sec:constructfillings}. In those
sections we establish properties for central and non-central regions of
a putative filling of a biLipschitz loop $\gamma$.
Since `region' is a term applied to an already existing filling, we will
phrase these properties in a way that is intrinsic to $\gamma$.

\subsection{BiLipschitz paths with endpoints in $H$}\label{sec:bilippaths}

The main result of this subsection is:
\begin{theorem*}[Theorem~\ref{longescapesinlength}]
    For all $R>1$ there exist $J>1$ and $1<K<2$ such that if
    $\gamma$ is a $K$--biLipschitz path with endpoints in
    $H=\langle a,x,y\rangle$ 
    then either $|\gamma|\leq J$ or $\gamma$ has either one or two long escapes
    that collectively account for at least $\frac{R-1}{R}$ fraction
    of its length.  
    Furthermore, in the case of two long escapes they
    are one $x$--escape and one $y$--escape.
    
    If the
    endpoints of $\gamma$ differ by a power of $x$ or $y$, then there
    is only one long escape, and it is of the corresponding flavor,
    and if the endpoints of $\gamma$ differ by a power of $a$ then
    either there is one long $a$--escape or there is one long
    $x$--escape and one long $y$--escape. 
  \end{theorem*}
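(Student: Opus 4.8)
The strategy is to leverage the distortion estimates from Section~\ref{hgeodesics}, in particular Corollary~\ref{cor:planebound} and Theorem~\ref{powerdistortion}, to show that an efficient path cannot spread its ``work'' across many escapes. The starting point is to consider the trace of $\gamma$ in the coset of $H$ spanned by its endpoints, decompose $\gamma$ into escapes $\gamma_1,\dots,\gamma_n$ and toral subpaths, and consolidate so that we have at most one consolidated $x$--escape block, one $y$--escape block, one $a$--escape block, and one toral $a$--subpath. The length $|\gamma|$ is the sum of the lengths of the individual escapes plus the toral subpath, and since $\gamma$ is $K$--biLipschitz with $K$ close to $1$, we have $|\gamma| \le K\, d(\text{endpoints})$, where the distance between the endpoints is controlled below by Corollary~\ref{cor:planebound} in terms of the \emph{net} exponents.

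First I would record the trivial reduction: if $|\gamma|\le J$ there is nothing to prove, so assume $|\gamma|$ is large; then at least one escape is long (has length comparable to $|\gamma|$), since there are a bounded number of consolidated blocks once we note that a toral subpath in a $K$--biLipschitz path with $K<2$ is an $a$--path and the $a$--escapes, $x$--escapes, $y$--escapes each consolidate to one block. The heart of the argument is a ``concentration'' lemma: if $\gamma$ has two escapes of the same flavor, say two $x$--escapes with exponents $p_1$ and $p_2$ of the same sign and each of non-negligible length, then replacing them by a single $x$--escape of exponent $p_1+p_2$ produces a path whose endpoints agree with those of $\gamma$, and whose length drops by $|x^{p_1}|+|x^{p_2}| - |x^{p_1+p_2}|$. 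By Fact~\ref{fact:reverse_holder} together with Theorem~\ref{powerdistortion}, this drop is bounded below by a positive fraction of $|x^{p_1}|+|x^{p_2}|$ once $p_1/p_2$ is bounded away from $0$ and $\infty$ --- but as the introduction warns, the constant $C$ overwhelms reverse H\"older, so one cannot conclude this for all ratios. The fix is the same as in Lemma~\ref{lem:ag_bound}: one argues separately according to whether the ratio of exponents is bounded or not, getting a uniform savings in the first case, and in the second case the smaller escape is negligible compared to the larger, contributing only a bounded fraction to $|\gamma|$. Either way, after merging and iterating, $\gamma$ is within a controlled factor of being a single long escape of each flavor, plus a bounded-length remainder.

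Next I would rule out the bad combinations of flavors. An $a$--escape is of the form $s^{-1}+\gamma'+s$ (or with $t$); using Proposition~\ref{3piecegeodesic}-style reasoning and Corollary~\ref{cor:planebound}, a long $a$--escape together with a long $x$--escape or $y$--escape is inefficient, because the endpoint displacement they produce is far less than their combined lengths (the exponents partially cancel inside $H\cong\Z^2$, or they compound suboptimally) --- this contradicts $\gamma$ being $K$--biLipschitz for $K$ close to $1$. Similarly a long $a$--escape can only coexist with negligible escapes of other flavors. Combined with the symmetry argument in Proposition~\ref{3piecegeodesic} showing that if the endpoints differ by a power of $x$ then any efficient path is essentially a single $x$--escape (an $x$--escape plus a $y$--escape with nonzero net $y$--exponent would force an $a^L$ relation and, via $x/y$--symmetry, a contradiction as in the proof of Proposition~\ref{3piecegeodesic}), this pins down exactly the allowed configurations: one long escape, or one long $x$--escape plus one long $y$--escape; and in the latter case the endpoints differ by $x^m y^m$ up to a bounded error, i.e.\ essentially a power of $a$, while if the endpoints genuinely differ by a power of $x$ (resp.\ $y$) there is only one long escape of that flavor.

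\textbf{The main obstacle} will be making the ``concentration'' step quantitative and \emph{uniform} in the right way: because the distortion function $\ell\mapsto|a^\ell|$ is non-monotone (Section~\ref{ageodesics}) and the multiplicative gap between $C_{lo}$ and $C_{hi}$ grows with $L$ (Proposition~\ref{this_whole_paper_is_not_a_waste_of_time}), one cannot simply invoke reverse H\"older; the savings from merging two escapes of comparable size is not visible at the level of the crude power-law bound. The resolution is to do the case split on the ratio of exponents honestly --- exactly the style of the proof of Lemma~\ref{lem:ag_bound}\eqref{item:ag_bound} --- and to absorb the ``small ratio'' case by showing the minor escape is of bounded length relative to $|\gamma|$ (so it only costs a $1/R$ fraction, which is what the statement allows) rather than trying to eliminate it. Choosing $K$ close enough to $1$ and $J$ large enough as functions of $R$ (and the fixed constants $L$, $\alpha$, $C$) at the very end makes all the ``bounded fraction'' statements combine into the stated $\frac{R-1}{R}$ bound.
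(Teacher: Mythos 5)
Your outline correctly identifies the central obstruction — that reverse H\"older plus the power-law estimate of Theorem~\ref{powerdistortion} alone cannot certify savings when merging two comparable-size escapes, because $\frac{C_{lo}}{C_{hi}}\cdot 2^{1-1/\alpha}<1$ — but the fix you propose does not actually resolve it. You suggest following ``the same style as Lemma~\ref{lem:ag_bound}\eqref{item:ag_bound},'' namely a case split on whether the ratio of exponents is bounded, and in the bounded case claiming ``a uniform savings.'' But Lemma~\ref{lem:ag_bound}\eqref{item:ag_bound} proves an inequality for \emph{some} $K>1$; it is allowed to be any fixed constant, and its proof in the bounded-ratio subcases does indeed use only Theorem~\ref{powerdistortion} and reverse H\"older, which is exactly what forces $K$ to be large (e.g.\ $K'<C(3C+1)$). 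What you need here is the opposite: for $K$ as close to $1$ as you like, two same-sign, same-flavor, comparable-ratio escapes already fail to be $K$-optimal. No amount of case-splitting into ``bounded ratio'' vs.\ ``unbounded ratio'' rescues this as long as the only input is the two-sided power-law bound; the obstruction in Proposition~\ref{this_whole_paper_is_not_a_waste_of_time} applies to exactly the bounded-ratio regime.

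The paper's actual resolution (Lemma~\ref{lem:triangle_ineq_enhanced}) goes a level deeper into the combinatorics: it writes both exponents as geodesic expressions $m=\sum m_iL^i$, $n=\sum n_iL^i$ satisfying \eqref{eq:1}, and uses the explicit length formula \eqref{eq:4}, $|a^m|=\sum|m_i|2^i + 4(2^j-1)$. When the depths $j,k$ are within a bounded gap (which is where the bounded-ratio case lands via \eqref{eq:5}), merging always recovers the additive $4(2^j-1)$ overhead, and a term-by-term estimate produces $|a^m|+|a^n|-K|a^{m+n}|\ge 2^{j+2}-(K-1)(\cdots)>0$ for $K$ close enough to $1$. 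This kind of ``fixed additive savings proportional to $2^j$'' is invisible at the level of the multiplicative bounds of Theorem~\ref{powerdistortion}; your outline never accesses it. The subsequent steps — propagating from two escapes to many (Corollaries~\ref{cor:thepositiveone}--\ref{cor:one_long_path}), and then mixing flavors (Lemma~\ref{lem:compatible_escapes}, Lemma~\ref{biglemma}) — all lean on this sharper ingredient, so the gap is not peripheral: without it the whole concentration argument collapses. The flavor-elimination step in your plan has the same weakness: Corollary~\ref{cor:planebound} gives a lower bound in terms of $|\cdot|^{1/\alpha}$, but comparing it against $|a^\ell|+|x^m|+|y^n|$ via Theorem~\ref{powerdistortion} reintroduces the $C$-gap, and again one needs the explicit $p=\lfloor\ell/L\rceil$ versus $m$ geodesic-expression comparison as in Lemma~\ref{lem:compatible_escapes} rather than a purely asymptotic one.
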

  We build up to this theorem through a string of auxiliary results.
  Most follow the same template: given a target $R$ that quantifies the
  desired ratio of domination, it is possible to choose $K>1$
  sufficiently close to $1$ so that any $K$--optimal path $\gamma$
  with
  endpoints in $H$ must have one or two escapes that dominate the others. 
The complexity of the assumed structure of
$\gamma$ increases with each lemma.

There is also an issue that in the theorem the domination is phrased
in terms of length, but for most of the auxiliary results it is more
convenient to work with exponents, so we will have to make a
conversion. Because the relationship between exponent and length is
not monotonic, this conversion is nontrivial. 

For the first step, we suppose that $\gamma$ consists of two escapes
of the same flavor:
\begin{lemma}
  \label{lem:triangle_ineq_enhanced}
There exists $J>1$ such that for all $R>1$ there exists
   $1 < K < 2$ such that for all
    $|m|,|n|\geq J$, if $g\in\{a,x,y\}$ and 
\[ |g^m|+|g^n|\leq K|g^{m+n}|\]
then $\max\{\frac{|n|}{|m|},\frac{|m|}{|n|}\} > R$.
\end{lemma}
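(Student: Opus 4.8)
The plan is to sidestep the crude power-law and reverse-H\"older estimates—which, as the introduction explains, lose too much—and instead exploit the precise recursive description of geodesics in $\langle a\rangle$ from Corollary~\ref{lem:alengths} and Lemma~\ref{lem:a_geodesic}. First reduce to $g=a$: by Lemma~\ref{lem:ag_bound}\eqref{item:glength} one has $|x^k|=|y^k|=|a^k|+2$ for $k\neq 0$, so $|x^m|+|x^n|\le K|x^{m+n}|$ rearranges to $|a^m|+|a^n|\le K|a^{m+n}|+(2K-4)\le K|a^{m+n}|$ whenever $K<2$; thus the case $g=a$ implies all three. Using $|a^k|=|a^{-k}|$ and the symmetry of hypothesis and conclusion under $m\leftrightarrow n$, I may then assume $|m|\ge|n|$ and $m>0$, and split on the sign of $n$. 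In each case I argue by contraposition: assuming $\max\{m/n,n/m\}\le R$ I will show the inequality $|a^m|+|a^n|\le K|a^{m+n}|$ fails.

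For the main case $n>0$ (so $m+n>0$), set $D(m,n):=|a^m|+|a^n|-|a^{m+n}|\ge 0$; I want $D(m,n)$ to be a definite fraction of $|a^m|+|a^n|$. The engine is a one-step recursion obtained from Lemma~\ref{lem:a_geodesic}: for $\ell>2+L/2$ a geodesic to $a^\ell$ runs through an $x$--escape and a $y$--escape of a common exponent $q'=q'(\ell)\ge 1$ followed by an $a$--path of length $\rho\le L/2$, so $a^\ell=x^{q'}y^{q'}a^{\pm\rho}$ and $|a^\ell|=2|a^{q'}|+4+\rho$. Applying this to $m$ and $n$ and then \emph{matching the escape exponents}, commutativity of $H$ gives $a^{m+n}=x^{q'_m+q'_n}y^{q'_m+q'_n}a^{R_0}$ with $|R_0|\le\rho_m+\rho_n$, and the path built from this witnesses $|a^{m+n}|\le 2|a^{q'_m+q'_n}|+4+|R_0|$; the $a$--path lengths cancel, leaving
\[
  D(m,n)\ \ge\ 2\bigl(|a^{q'_m}|+|a^{q'_n}|-|a^{q'_m+q'_n}|\bigr)+4\ =\ 2D(q'_m,q'_n)+4 .
\]
Since $q'(\ell)$ lies within $1$ of $\ell/L$, iterating this $k$ times—legitimate as long as all intermediate exponents exceed $2+L/2$, hence for $k\ge\log_L\min(m,n)-C(L)$ steps—and using $D(\cdot,\cdot)\ge 0$ at the bottom yields $D(m,n)\ge 4(2^k-1)\gtrsim\min(m,n)^{1/\alpha}$. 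On the other hand Lemma~\ref{lem:log} and Lemma~\ref{lem:alengthboundgeodesicexpression} give $|a^m|+|a^n|\le 4(L+6)\max(m,n)^{1/\alpha}$, so
\[
  \frac{D(m,n)}{|a^m|+|a^n|}\ \gtrsim\ \Bigl(\tfrac{\min(m,n)}{\max(m,n)}\Bigr)^{1/\alpha}\ \ge\ R^{-1/\alpha},
\]
where the last step uses $\min\ge\max/R$. Hence $|a^{m+n}|\le(1-cR^{-1/\alpha})(|a^m|+|a^n|)$ for some $c=c(L)>0$, and any $K<(1-cR^{-1/\alpha})^{-1}$ works; note $J$ need only exceed $2+L/2$ plus a fixed cushion absorbing the $q'\in\{q,q+1\}$ rounding, so it is independent of $R$.

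For the case $n<0$, write $b:=|n|$ and $c:=m+n\ge 0$, so $m=b+c$; the hypothesis reads $|a^{b+c}|+|a^b|\le K|a^c|$ and the conclusion reads $c>(R-1)b$. If $b\ge(3C/2)^{\alpha}c$ (which in particular covers all bounded $c$), then by a reverse triangle inequality $|a^{b+c}|+|a^b|\ge 2|a^b|-|a^c|>2b^{1/\alpha}-Cc^{1/\alpha}\ge 2Cc^{1/\alpha}>K|a^c|$ once $b\ge J$, so the hypothesis fails. Otherwise $b$ and $c$ are comparable within $\max\{(3C/2)^{\alpha},R-1\}$ and both exceed $2+L/2$, and I run the analogous escape-matching recursion, now on $E(b,c):=|a^{b+c}|+|a^b|-K|a^c|$: matching the escape exponents in $a^{b+c}=a^{b}\cdot a^{c}$ gives $E(b,c)\ge 2E\bigl(q'_b,\,q'_{b+c}-q'_b\bigr)+\bigl(8+L-K(4+L)\bigr)$, and the per-step gain $8+L-K(4+L)$ is positive once $K<\tfrac{8+L}{4+L}$. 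Iterating down to comparable bounded exponents, where $E\ge(1-K)\cdot O_R(1)$ by the triangle inequality and Corollary~\ref{lem:alengths}\eqref{item:adjacent}, and taking $K$ close enough to $1$ (depending on $R$ through that bottom term) forces $E(b,c)>0$, i.e.\ the hypothesis fails. Taking $K$ to be the minimum of the two bounds (and $K<2$) finishes the argument.

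The crux—and the reason the crude estimates are useless—is the recursion step. A naive triangle-inequality comparison of $|a^{m+n}|$ with $|a^m|+|a^n|$ loses an additive $O(L)$ at \emph{every} scale $L^i\le\max(m,n)$, and summing over scales these errors total $O\bigl(L\cdot\max(m,n)^{1/\alpha}\bigr)$, which is comparable to $|a^m|+|a^n|$ itself and swamps the gain. The content of Lemma~\ref{lem:a_geodesic}—that a near-optimal geodesic to $a^\ell$ uses \emph{two escapes of equal exponent} plus a short $a$--path—is exactly what lets the short $a$--paths of the summands be absorbed into the short $a$--path of the sum, collapsing the per-scale error into a clean positive constant. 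Making this matching precise, and in particular controlling the off-by-one discrepancies between $q'(\ell)$ and $\ell/L$ so that the iterated exponents stay comparable and the recursion terminates at bounded values, is the part of the proof I expect to be delicate.
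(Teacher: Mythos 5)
Your proposal is correct in substance, but it takes a genuinely different route from the paper: where the paper manipulates \emph{geodesic expressions} $m=\sum m_iL^i$ and bounds $|a^m|+|a^n|-K|a^{m+n}|$ via a single sum over all scales (splitting on whether the top exponents $j,k$ of the two expressions differ by a lot or a little), you run a scale-by-scale \emph{recursion} on the decomposition $a^\ell=x^{q'}y^{q'}a^{\pm\rho}$ from Lemma~\ref{lem:a_geodesic}. The computational content is similar, but the organizing idea is different. In the same-sign case this is quite clean: writing each of $a^m,a^n$ as $x^{q'}y^{q'}a^{r}$ and multiplying gives a path witnessing $|a^{m+n}|\le 2|a^{q'_m+q'_n}|+4+|r_m|+|r_n|$, so the $a$--path contributions cancel \emph{exactly} and one gets $D(m,n)\ge 2D(q'_m,q'_n)+4$ with no error term at all; iterating gives $D(m,n)\gtrsim\min(m,n)^{1/\alpha}$, and comparing against $|a^{m+n}|\lesssim\max(m,n)^{1/\alpha}$ yields the desired $K<1+cR^{-1/\alpha}$. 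That argument is correct. The reduction to $g=a$ is the same as the paper's.

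The place that needs more care is the opposite-sign case, and you have flagged the right source of delicacy. Your recursion writes $E(b,c)\ge 2E(q'_b,\,q'_{b+c}-q'_b)+\bigl(8+L-K(4+L)\bigr)$, but this per-step constant does not account for the mismatch between $q'_{b+c}-q'_b$ and $q'_c$. Since $q'_b+q'_c-q'_{b+c}=(r_{b+c}-r_b-r_c)/L\in\{-1,0,1\}$ and $|a^{\ell}|$ changes by $\pm1$ per unit, replacing $K|a^{q'_c}|$ with $K|a^{q'_{b+c}-q'_b}|$ costs up to $2K$ in the recursion; after also inserting the worst $|r_{b+c}|+|r_b|-K|r_c|\ge -(K-1)L/2$ (using the congruence constraint to rule out $|r_{b+c}|=|r_b|=0$, $|r_c|=L/2$ happening with $\delta\neq0$), the honest per-step constant is something like $8-6K-(K-1)L/2$ in the $\delta\neq0$ case and $8-4K-(K-1)L/2$ when $\delta=0$. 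Both are still positive for $K$ close enough to $1$ (at $K=1$ they equal $2$ and $4$ respectively), so the recursion does close and the argument survives — but the stated threshold $K<(8+L)/(4+L)$ is not the right one once the shift is accounted for. Also worth noting: your $O_R(1)$ at the bottom of the recursion is indeed $R$--dependent, because in the comparable-exponent regime one parameter can be up to $\sim R$ times the other when both cross the threshold $2+L/2$; this is why $K$, but not $J$, ends up depending on $R$, which matches the quantifier order in the lemma. So: the approach works and is an interesting alternative to the paper's proof by geodesic expressions, but to make the $n<0$ recursion a proof you need to carry the $\pm 1$ shift between $q'_{b+c}-q'_b$ and $q'_c$ through the constant explicitly as above.
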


\begin{proof}
  First assume $g=a$.
  Let $J:=1+(L+3)/2$.
Let $m=\sum_{i=0}^j m_iL^i$ and  $n=\sum_{i=0}^k n_iL^i$ be geodesic  expressions
for $m$ and $n$ satisfying Condition~\eqref{eq:1}.
Since $|m|,|n|\geq J$, we have $j,k>0$.
Without loss of generality, assume $k\geq j$ and $n>0$ so that
$n_k\geq 1$. 

We approximate as follows:
\begin{align*}
  \frac{|n|}{|m|} & =\frac{|\sum_{i=0}^kn_iL^i|}{|\sum_{i=0}^jm_iL^i|} & \\
             &=\frac{|n_kL^k+\sum_{i=0}^{k-1}n_iL^i|}{|m_jL^j+\sum_{i=0}^{j-1}m_iL^i|} &  \\
               &\geq\frac{L^k+\sum_{i=0}^{k-1}-(L/2)L^i}{(L/2+2)L^j+\sum_{i=0}^{j-1}(L/2)L^i} & \textrm{by~\eqref{eq:1}}\\
&=
                                                                                                  \frac{L^k-\frac{1}{2}\frac{L}{L-1}(L^k-1)}{\frac{1}{2}(L+4)L^j+\frac{1}{2}\frac{L}{L-1}(L^j-1)} & \\
                 &=\frac{2(L-1)L^k-L(L^k-1)}{(L-1)(L+4)L^j+L(L^j-1)} & \\
                 &=\frac{L^{k+1}-2L^k+L}{L^{j+2}+4L^{j+1}-4L^j-L} & \\
  &=\frac{(L-2)L^{k-1}+1}{(L+4)L^j-4L^{j-1}-1} &
\end{align*}

Thus, $|n|/|m|>R$ if:
\[(L-2)L^{k-1}+1-R\left((L+4)L^j-4L^{j-1}-1\right)>0\]
The left hand side is bigger than $(L-2)L^{k-1}-R(L+4)L^j$, so the claim holds if:
\begin{equation}
  \label{eq:5}
  k-j>1+\log_L\left(\frac{R(L+4)}{L-2}\right)
\end{equation}

Otherwise, if $0\leq k-j\leq 1+\log_L(R(L+4)/(L-2))$, we choose $K$ sufficiently close to $1$ to make
$|a^m|+|a^n|\leq K|a^{m+n}|$ impossible.
To show that this can be done we use the fact that
$m+n=\sum_{i=0}^k(m_i+n_i)L^i$, where $m_i=0$ for $i>j$, to provide an
upper bound for $|a^{m+n}|$. 

\begin{align*}
  |a^m|+|a^n|&-K|a^{m+n}|\geq
                          |a^m|+|a^n|-K\left(\sum_{i=0}^{k}|m_i+n_i|2^i
                          +4(2^k-1)\right) & \\
                        &=4(2^j-1)+4(2^k-1)-4K(2^k-1) & \\
                        &\quad\quad\quad+\sum_{i=0}^{j}(|m_i|+|n_i|-K|m_i+n_i|)2^i
                                           & \text{by \eqref{eq:4}}\\
                        & \geq 4(2^j-1)-(K-1)4(2^k-1) & \\
                        &\quad\quad\quad+\sum_{i=0}^k-(K-1)2^i(|m_i|+|n_i|)
                                           & \text{by tri.\ ineq.}\\
                          &\geq 4(2^j-1)-(K-1)4(2^k-1) &\\
                        &\quad\quad\quad-(K-1)\sum_{i=0}^k2^iL &
                                                                 \text{by \eqref{eq:1}}\\
                        &\quad\quad\quad-(K-1)(2^{k+1}+2^{j+1}) &\\
                        &=4(2^j-1)-(K-1)\left(4(2^k-1)+L(2^{k+1}-1)+2^{k+1}+2^{j+1}\right) & \\
  &=2^{j+2}-(K-1)\left(2^{k+2}+L(2^{k+1}-1)+2^{k+1}+2^{j+1}\right)-4(2-K) &
\end{align*}
To make this expression positive, it suffices to have:
\begin{equation}
  \label{eq:6}
  1-\frac{2-K}{2^j}>2^{k-j}(K-1)\left(1+\frac{L}{2}+2^{-1}+2^{j-1-k}\right)
\end{equation}
The left hand side is at least $1/2$, and the right
hand side is bounded above by:
\[(1/2)\cdot 2^{k-j}(K-1)(L+4)\]
Thus, applying the bound on $k-j$ from the negation of \eqref{eq:5},
we see that \eqref{eq:6} is satisfied for 
\[
K<1+\frac{1}{2^{k-j}(L+4)}\leq 1+\frac{1}{2(L+4)}\cdot\left(\frac{R(L+4)}{L-2}\right)^{-1/\alpha} 
\]

This completes the proof when $g=a$.

If $g\in\{x,y\}$ then $|g^p|=|a^p|+2$ (by Lemma~\ref{lem:ag_bound}), so $|g^m|+|g^n|\leq K|g^{m+n}|$
implies:
\[|a^m|+|a^n|=|g^m|+|g^n|-4\leq K|g^{m+n}|-4=K|a^{m+n}|+2K-4\leq K|a^{m+n}|\]
Thus, the result follows from the result for $a$.
\end{proof}

\begin{remark} \label{rem:duh}
 In Lemma~\ref{lem:triangle_ineq_enhanced} we provide a condition to
 show that two quantities, let us call them $A$ and $B$, are related by $A/B > R$ for some $R> 0$.
 We will frequently pass without comment between equivalent
 formulations of this relation that we note here to avoid any
 confusion: \[A/B>R\iff A > RB \iff A > \frac{R}{R+1}(A+B) \iff B < \frac{1}{R+1}(A+B)\]
\end{remark}

Next, we consider multiple escapes, all of the same flavor and with
all exponents positive:
\begin{corollary}\label{cor:thepositiveone}
  For all $R>1$ there exist $J>1$ and $1<K<2$ such that if $g \in \{ a, x,y\}$ and
  $m_1 \ge m_2 \ge \cdots \ge m_k > 0$ and $\sum_{i=1}^k m_i \ge J$
  and
  \[ \sum_{i=1}^k|g^{m_i}|\leq K|g^{\sum_{i=1}^k m_i}|\] then
  $m_1 > \frac{R}{R+1}\sum_{i=1}^k m_i$.
\end{corollary}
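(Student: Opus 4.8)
The plan is to bootstrap from the two-term estimate of Lemma~\ref{lem:triangle_ineq_enhanced}, applied to the split $m=m_1$ and $n:=m_2+\cdots+m_k$, using the power-law distortion of Theorem~\ref{powerdistortion} to dispatch the ``many comparable summands'' alternative. Since word length is subadditive, $|g^n|\le\sum_{i=2}^k|g^{m_i}|$, so the hypothesis yields $|g^{m_1}|+|g^n|\le\sum_{i=1}^k|g^{m_i}|\le K|g^{m_1+n}|$. Let $J_0$ be the constant $J$ of Lemma~\ref{lem:triangle_ineq_enhanced} (which does not depend on the ratio parameter), fix an auxiliary $R_1\ge R$ to be enlarged below, let $K_1=K_1(R_1)<2$ be the corresponding constant of that lemma, and set $K:=K_1$. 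The desired conclusion is equivalent to $m_1>Rn$, so I argue by contradiction and assume $m_1\le Rn$.

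Two degenerate regimes are cleared first, using only that $|g^j|$ lies between $j^{1/\alpha}$ and $Cj^{1/\alpha}$. If $n<J_0$, then taking $J\ge(R+1)J_0$ forces $m_1=\bigl(\sum_i m_i\bigr)-n>J-J_0\ge RJ_0>Rn$, a contradiction. If $m_1<J_0$, then every $m_i<J_0$, so $|g^{m_i}|\ge m_i^{1/\alpha}\ge J_0^{1/\alpha-1}m_i$ while $|g^{\sum_i m_i}|<C(\sum_i m_i)^{1/\alpha}$, and the hypothesis forces $\sum_i m_i\le(KCJ_0^{1-1/\alpha})^{\alpha/(\alpha-1)}$, an explicit constant since $K<2$; taking $J$ larger than this again contradicts $\sum_i m_i\ge J$. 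Hence $m_1,n\ge J_0$, and Lemma~\ref{lem:triangle_ineq_enhanced} applied to $(m_1,n)$ with parameter $R_1$ gives $\max\{m_1/n,\,n/m_1\}>R_1$; if $m_1/n>R_1\ge R$ then $m_1>Rn$ and we are done.

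The remaining case $n/m_1>R_1$ is the one requiring an extra idea, and is where I expect the real work. Here $m_1<n/R_1$, so $\sum_i m_i=m_1+n<2n$, and the subfamily $m_2,\dots,m_k$, every term of which is at most $m_1<n/R_1$, satisfies on one hand $\sum_{i\ge2}m_i^{1/\alpha}\le\sum_{i\ge2}|g^{m_i}|\le K|g^{\sum_i m_i}|<KC\,2^{1/\alpha}n^{1/\alpha}$, and on the other hand, since $t\mapsto t^{1/\alpha-1}$ is decreasing, $m_i^{1/\alpha}=m_i^{1/\alpha-1}m_i\ge(n/R_1)^{1/\alpha-1}m_i$, so $\sum_{i\ge2}m_i^{1/\alpha}\ge(n/R_1)^{1/\alpha-1}n=R_1^{1-1/\alpha}n^{1/\alpha}$. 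Comparing, $R_1^{1-1/\alpha}<KC\,2^{1/\alpha}<2^{1+1/\alpha}C$, which fails once $R_1\ge(2^{1+1/\alpha}C)^{\alpha/(\alpha-1)}$. So I fix $R_1$ at least this large and also $\ge R$; this rules out the case and completes the contradiction. The corollary then holds with $K=K_1(R_1)<2$ and $J$ the maximum of the two lower bounds above. Everything is uniform in $g\in\{a,x,y\}$, since only subadditivity of word length and Theorem~\ref{powerdistortion} are used.
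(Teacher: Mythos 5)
Your argument is correct. You open the same way the paper does---apply Lemma~\ref{lem:triangle_ineq_enhanced} to the one-versus-rest split $m=m_1$, $n=m_2+\cdots+m_k$, using subadditivity of word length, and clear the degenerate cases $m_1<J_0$ or $n<J_0$ by taking $J$ large; the observation that the $J$ of Lemma~\ref{lem:triangle_ineq_enhanced} does not depend on its ratio parameter is exactly what makes the quantifier order work. The divergence is in how you rule out the remaining alternative $n/m_1>R_1$. The paper does this by re-splitting the sum at a balanced index $\ell$ (the least with $\sum_{i\le\ell}m_i\ge\frac12\sum m_i$), showing both halves are comparable because every term is small, and applying Lemma~\ref{lem:triangle_ineq_enhanced} a second time to get a contradiction. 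You instead note that when every $m_i\le m_1<n/R_1$ is small, the concavity of $t\mapsto t^{1/\alpha}$ forces $\sum_{i\ge2}m_i^{1/\alpha}\ge R_1^{1-1/\alpha}n^{1/\alpha}$, which clashes with the upper bound $KC(2n)^{1/\alpha}$ coming from the hypothesis and Theorem~\ref{powerdistortion} once $R_1$ is an explicit function of $C$ and $\alpha$. This is a genuinely different and, I think, somewhat cleaner way to handle the hard case: it avoids a second application of Lemma~\ref{lem:triangle_ineq_enhanced} and the bookkeeping around the balanced split, at the cost of a slightly less uniform constant (your $R_1$-threshold $(2^{1+1/\alpha}C)^{\alpha/(\alpha-1)}$ depends explicitly on the distortion constant $C$, whereas the paper just needs $R\ge5$); since $L$ and hence $C,\alpha$ are fixed throughout (Remark~\ref{rk:fixedconstants}), that trade-off is harmless here.
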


\begin{proof}
  Replace $R$ by $\max\{R,5\}$.  Observe that the result for the new
  $R$ implies the result for the original $R$.
  Let $J':=1+(L+3)/2$ as in Lemma~\ref{lem:triangle_ineq_enhanced},
  let $J:=J'(R+1)$,
  and let $K$ be
  a constant satisfying Lemma~\ref{lem:triangle_ineq_enhanced} with
  respect to $R$.
  By Lemma~\ref{lem:triangle_ineq_enhanced}, we have one of the
  following possibilities:
  \begin{itemize}
  \item $\frac{m_1}{m_2 + m_3 + \ldots + m_k} > R$ 
\item $m_2 + m_3 + \ldots + m_k < J'$ 
\item $\frac{m_2 + m_3 + \ldots + m_k}{m_1} > R$ 
  \item$m_1 < J'$
  \end{itemize}
  
    Since
  $\sum_{i=1}^k m_i \ge J$, either of the first two conditions imply
  $m_1 > \frac{R}{R+1}\sum_{i=1}^k m_i$.
  The disjunction of the last two conditions implies
  $m_2 + m_3 + \cdots + m_k > \frac{R}{R+1}\sum_{i=1}^k m_i$.  Thus it
  suffices to rule out this latter possibility.

  Suppose $m_2 + m_3 + \cdots + m_k > \frac{R}{R+1}\sum_{i=1}^k m_i$.
  Let $\ell$ be least such that
  $\sum_{i=1}^{\ell} m_i \ge \frac{1}{2} \sum_{i=1}^k m_i$.  Then, for
  all $i$, we have $m_i \le m_1 < \frac{1}{R} \sum_{i=1}^k m_i$ and so
  \[ \frac{1}{2} \sum_{i=1}^k m_i \le \sum_{i=1}^{\ell} m_i <
    \frac{1}{2} \sum_{i=1}^k m_i + m_{\ell} <
    \frac{R+2}{2R}\sum_{i=1}^k m_i \] holds. 
    Then  $\sum_{i=1}^{\ell} m_i \ge \frac{1}{2} \sum_{i=1}^k m_i \ge
    J/2>J'$ and
  $\sum_{i=\ell+1}^k m_i > \frac{R-2}{2R} \sum_{i=1}^k m_i \ge
  \frac{R-2}{2R}J>J'$, since $R\geq 5$, so by
  Lemma~\ref{lem:triangle_ineq_enhanced}, either (I)
  $\sum_{i=1}^{\ell} m_i > R \sum_{i=\ell+1}^k m_i$ or (II)
  $\sum_{i=\ell+1}^k m_i > R \sum_{i=1}^{\ell} m_i$.  We will show
  that either case results in a contradiction.  In case (I) we have
  \[ \frac{R+2}{2R}\sum_{i=1}^k m_i > \sum_{i=1}^{\ell} m_i > R
    \sum_{\ell+1}^k m_i > \frac{R(R-2)}{2R}\sum_{i=1}^k m_i \] which
  contradicts $R \ge 5$.  In case (II) we have
  \[ \frac{1}{2}\sum_{i=1}^k m_i \ge \sum_{i=\ell+1}^k m_i > R
    \sum_1^{\ell} m_i \ge \frac{R}{2}\sum_{i=1}^k m_i \] which again
  contradicts $R \ge 5$.
\end{proof}

Next we will generalize to the case that $\gamma$ consists of escapes of the same
flavor, this time allowing both positive and negative exponents. First, there is an
auxiliary result, Corollary~\ref{cor:thecomplicatedone}, that says
either the positive exponents or the negative exponents dominate.
Then, in Corollary~\ref{cor:one_long_escape}, we show that a single
escape dominates.
\begin{corollary}\label{cor:thecomplicatedone}
  For all $R >1$ there exist $J>1$
  such that for any sufficiently small $1<K<2$, independent
  of $J$, if
  $g \in \{ a, x, y \}$  and
  $\sum_{i=1}^k |m_i| \geq J$ and 
  \[  \sum_{i=1}^k|g^{m_i}| \leq K|g^{\sum_{i=1}^k m_i}| \] then
  for  $I = \{i : m_i > 0 \}$ and for one of $I^*\in\{I,I^c\}$ we have
  \begin{center}
    \[ \biggl|\sum_{i \in I^*} m_i\biggr| > \frac{R}{R+1}\sum_{i=1}^k
      |m_i| \] and
    \[ \sum_{i\in
        I^*}|g^{m_i}| < \bigl(K + (K-1) CR^{(-1/\alpha)}\bigr) \bigl|g^{\sum_{i\in I^*} m_i}\bigr| \]
  \end{center}
\end{corollary}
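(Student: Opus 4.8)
The plan is to reduce everything to Lemma~\ref{lem:triangle_ineq_enhanced} applied to the \emph{consolidated} positive and negative escapes, together with the power bounds of Theorem~\ref{powerdistortion}. Write $I=\{i:m_i>0\}$; after discarding the indices with $m_i=0$ (which changes none of the quantities involved) set $P:=\sum_{i\in I}m_i$, $N:=-\sum_{i\in I^c}m_i$, and $Q:=\sum_{i=1}^k m_i=P-N$, so that $\sum_{i=1}^k|m_i|=P+N$ and the hypothesis reads $\sum_{i=1}^k|g^{m_i}|\le K|g^{P-N}|$. The symmetry $m_i\mapsto -m_i$ interchanges $I$ and $I^c$, fixes $\sum|m_i|$, and preserves the hypothesis (since $|g^{-m}|=|g^m|$), so it suffices to prove: (a) one of $P>RN$, $N>RP$ holds; and (b) if $P>RN$ then $\sum_{i\in I}|g^{m_i}|<\bigl(K+(K-1)CR^{-1/\alpha}\bigr)|g^P|$. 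Since $P>RN$ is equivalent to $P>\frac{R}{R+1}(P+N)$, this gives the conclusion with $I^*=I$. The cases $P=0$ and $N=0$ are immediate: take $I^*=I^c$, resp.\ $I^*=I$, and use $\sum_{i\in I^*}|g^{m_i}|\le\sum_{i}|g^{m_i}|\le K|g^Q|=K\,\bigl|g^{\sum_{i\in I^*}m_i}\bigr|$. So from now on assume $P,N\ge 1$.

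For (a): the triangle inequality gives $|g^P|\le\sum_{i\in I}|g^{m_i}|$ and $|g^{-N}|\le\sum_{i\in I^c}|g^{m_i}|$, hence $|g^P|+|g^{-N}|\le\sum_{i=1}^k|g^{m_i}|\le K|g^{P-N}|$. Let $J_0>1$ be the (uniform) constant of Lemma~\ref{lem:triangle_ineq_enhanced}, and for the given $R$ let $K_0\in(1,2)$ be the constant that lemma supplies for this $R$; I will require $1<K\le K_0$ and set $J:=(R+1)J_0$, which depends only on $R$. If $P,N\ge J_0$, then Lemma~\ref{lem:triangle_ineq_enhanced} applied to the exponents $P$ and $-N$ gives $\max\{P/N,N/P\}>R$. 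If instead $N<J_0$, then $P=(P+N)-N\ge J-J_0=RJ_0>RN$; the case $P<J_0$ is symmetric, and $P,N<J_0$ is impossible because then $P+N<2J_0\le J$. In every case one of $P>RN$, $N>RP$ holds, and after applying the symmetry I may assume $P>RN$, i.e.\ $N<P/R$.

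For (b): the point is to keep, rather than discard, the contribution of the subdominant escapes. Since $\sum_{i\in I^c}|g^{m_i}|\ge|g^{-N}|=|g^N|$ and, by the triangle inequality, $|g^{P-N}|\le|g^P|+|g^{-N}|=|g^P|+|g^N|$, I obtain
\[
  \sum_{i\in I}|g^{m_i}| \le K|g^{P-N}|-|g^N| \le K|g^P|+K|g^N|-|g^N| = K|g^P|+(K-1)|g^N|.
\]
Now Theorem~\ref{powerdistortion} gives $|g^P|\ge P^{1/\alpha}$ and $|g^N|<CN^{1/\alpha}$, while $N<P/R$ yields $N^{1/\alpha}<R^{-1/\alpha}P^{1/\alpha}$, so $(K-1)|g^N|<(K-1)CR^{-1/\alpha}P^{1/\alpha}\le(K-1)CR^{-1/\alpha}|g^P|$, which proves (b).

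The only step requiring care is the displayed inequality: the naive bound $\sum_{i\in I}|g^{m_i}|\le K|g^{P-N}|\le K|g^P|+K|g^N|$ yields the coefficient $K$ in place of $K-1$, hence the constant $K+KCR^{-1/\alpha}$, whose excess over $1$ does not vanish in proportion to $K-1$ as $K\to 1$; that would be too lossy for the inductive use of this corollary in Corollary~\ref{cor:one_long_escape} and Theorem~\ref{longescapesinlength}. Subtracting the $|g^N|$ coming from $\sum_{i\in I^c}|g^{m_i}|$ cancels exactly one of the two $|g^N|$'s produced by the triangle inequality, and this is what fixes the coefficient. Everything else is the triangle inequality, Lemma~\ref{lem:triangle_ineq_enhanced}, and Theorem~\ref{powerdistortion}; the choice $J=(R+1)J_0$ is dictated precisely by the need to absorb the case in which the smaller exponent sum lies below the threshold $J_0$ of Lemma~\ref{lem:triangle_ineq_enhanced}.
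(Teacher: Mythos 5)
Your proof is correct and follows essentially the same route as the paper's: you consolidate the positive and negative exponents into $P$ and $-N$, apply Lemma~\ref{lem:triangle_ineq_enhanced} to this pair (handling the small-sum cases with the same $J=(R+1)J_0$ trick), and then obtain the refined coefficient by retaining the subdominant contribution $|g^N|\le\sum_{i\in I^c}|g^{m_i}|$ so that one of the two $|g^N|$ terms from the triangle inequality cancels, giving $\sum_{i\in I}|g^{m_i}|\le K|g^P|+(K-1)|g^N|$ — which is exactly the inequality the paper derives (by an algebraically equivalent chain of estimates) before invoking Theorem~\ref{powerdistortion}. Your closing remark on why the $(K-1)$ factor is the whole point is a helpful gloss, but the underlying argument is the paper's.
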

\begin{proof}
Let $J':=1+(L+3)/2$ and $J:=J'(R+1)$ as in the previous corollary. 
  By Lemma~\ref{lem:triangle_ineq_enhanced}, if $K > 1$ is sufficiently small, then whenever
  $K|g^{m+n}| \geq |g^m| +|g^n|$  either we have
  $$|n| > \frac{R}{R+1}\bigl(|m|+|n|\bigr) \; \textrm{ or } \; |m| < J' \; \textrm{ or } \; |m| > \frac{R}{R+1}\bigl(|m|+|n|\bigr) \; \textrm{ or } \; |n| < J'.$$
  When $|m| + |n| \geq J$, the second condition implies the first
  condition and the fourth condition implies the third.
  So, if
  \[ K|g^{\sum_{i=1}^k m_i}| \geq \sum_{i=1}^k|g^{m_i}| \] and
  $\sum_{i=1}^k |m_i| \geq J$ then, since
  \[ \sum_{i=1}^k|g^{m_i}| = \sum_{i\in I}|g^{m_i}| + \sum_{i\notin
      I}|g^{m_i}| \ge |g^{\sum_{i\in I} m_i}| + |g^{\sum_{i\notin I}
      m_i}| \] we can set $n = \sum_{i\in I} m_i$ and
  $m = \sum_{i\notin I} m_i$ to obtain one of the two following
  conditions.
    \[ \biggl|\sum_{i \in I} m_i\biggr| > \frac{R}{R+1}\sum_{i=1}^k
      |m_i| \]
    \[ \biggl|\sum_{i \notin I} m_i\biggr| > \frac{R}{R+1}\sum_{i=1}^k
      |m_i| \] By symmetry, the first condition holds without loss of
    generality so it remains to prove
    \[ \bigl(K + (K-1) CR^{(-1/\alpha)}\bigr) |g^{\sum_{i\in I} m_i}| > \sum_{i\in
        I}|g^{m_i}| \] 
    We
    have:
    \begin{align*}
      |g^{\sum_{i \in I}m_i}|
      &\ge |g^{\sum_{i=1}^k m_i}| - |g^{\sum_{i \notin I}m_i}| \\
      &\ge \frac{1}{K}\sum_{i\in I}|g^{m_i}| + \frac{1}{K}\sum_{i\notin I}|g^{m_i}|
        - |g^{\sum_{i \notin I}m_i}| \\
      &\ge \frac{1}{K}\sum_{i\in I}|g^{m_i}| + \frac{1}{K}|g^{\sum_{i\notin I} m_i}|
        - |g^{\sum_{i \notin I}m_i}| \\
      &= \frac{1}{K}\sum_{i\in I}|g^{m_i}|
        - \frac{K - 1}{K} \cdot |g^{\sum_{i \notin I}m_i}|
    \end{align*}
    So:
    \begin{align*}
      \sum_{i\in I}|g^{m_i}|
      &\le K|g^{\sum_{i \in I}m_i}| +  (K-1)|g^{\sum_{i \notin I}m_i}| \\
      &\le \biggl(K +
        (K-1)\cdot\frac{|g^{\sum_{i \notin I}m_i}|}{|g^{\sum_{i \in I}m_i}|}\biggr)
        |g^{\sum_{i \in I}m_i}|
    \end{align*}
    Finally, since we know from Theorem~\ref{powerdistortion}  that the
    distortion of $\langle g\rangle$ obeys a power law, $\bigl|\sum_{i \notin I}m_i\bigr| < \bigl|\sum_{i \in
      I} m_i\bigr|/R$ implies $\frac{|g^{\sum_{i \notin I}m_i}|}{|g^{\sum_{i \in
          I}m_i}|} < CR^{(-1/\alpha)}$.
\end{proof}

\begin{corollary}\label{cor:one_long_escape}
  For all $R>1$ there exist $J,\,K>1$ such that if
  $g \in \{ a, x, y \}$ and $\sum_{i=1}^k |m_i| \ge J$ and
  \[ \sum_{i=1}^k|g^{m_i}|  \leq K|g^{\sum_{i=1}^k m_i}| \] then
  $\max_i|m_i| > \frac{R}{R+1}\sum_{i=1}^k |m_i|$.
\end{corollary}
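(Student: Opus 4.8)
The plan is to deduce this by composing Corollary~\ref{cor:thecomplicatedone} and Corollary~\ref{cor:thepositiveone}, with the slight inflation of the biLipschitz constant produced by the former absorbed into the hypothesis of the latter. Given $R>1$, I would first fix an auxiliary parameter $R_1>1$ large enough that $\bigl(\tfrac{R_1}{R_1+1}\bigr)^2\ge\tfrac{R}{R+1}$; this is possible since $\tfrac{R_1}{R_1+1}\to 1$. Let $J_1$ be the constant produced by Corollary~\ref{cor:thecomplicatedone} for $R_1$, and let $J_2,K_2>1$ be the constants produced by Corollary~\ref{cor:thepositiveone} for $R_1$. Set $J:=\max\{J_1,\tfrac{R_1+1}{R_1}J_2\}$, and then choose $1<K<2$ small enough that (i) $K$ lies in the range of validity of Corollary~\ref{cor:thecomplicatedone} for $R_1$ and (ii) $K+(K-1)CR_1^{-1/\alpha}\le K_2$; the second is achievable because the left-hand side is continuous in $K$ and equals $1<K_2$ at $K=1$. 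The order of these choices matters: $J_1$, then $J_2,K_2$, then $J$, and only last $K$, so that no constant depends on something chosen after it.

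Now suppose $g\in\{a,x,y\}$ and integers $m_1,\dots,m_k$ satisfy the hypotheses with these $J,K$; we may discard any zero exponents. Since $\sum_i|m_i|\ge J\ge J_1$ and $K$ is in the allowed range, Corollary~\ref{cor:thecomplicatedone} produces a subset $I^*$ (the set of positive-exponent indices or its complement) with $\bigl|\sum_{i\in I^*}m_i\bigr|>\tfrac{R_1}{R_1+1}\sum_i|m_i|$ and $\sum_{i\in I^*}|g^{m_i}|<\bigl(K+(K-1)CR_1^{-1/\alpha}\bigr)\bigl|g^{\sum_{i\in I^*}m_i}\bigr|\le K_2\bigl|g^{\sum_{i\in I^*}m_i}\bigr|$. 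All the exponents indexed by $I^*$ share a common sign, and both displayed inequalities involve these $m_i$ only through the lengths $|g^{m_i}|$ and $\bigl|g^{\sum_{i\in I^*}m_i}\bigr|$ and the absolute value $\sum_{i\in I^*}|m_i|$; since $|g^m|=|g^{-m}|$, I may replace each $m_i$, $i\in I^*$, by $|m_i|$ without disturbing either inequality, so WLOG all exponents in $I^*$ are positive. Writing $N:=\sum_{i\in I^*}m_i$, the first inequality gives $N>\tfrac{R_1}{R_1+1}\sum_i|m_i|\ge\tfrac{R_1}{R_1+1}J\ge J_2$, so Corollary~\ref{cor:thepositiveone} applies to $R_1$ and the positive exponents $\{m_i\}_{i\in I^*}$ listed in decreasing order, yielding $\max_{i\in I^*}m_i>\tfrac{R_1}{R_1+1}N>\bigl(\tfrac{R_1}{R_1+1}\bigr)^2\sum_i|m_i|\ge\tfrac{R}{R+1}\sum_i|m_i|$. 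Since $\max_i|m_i|\ge\max_{i\in I^*}m_i$, this is exactly the claimed conclusion.

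I do not expect any genuine obstacle here: all the analytic content lives in the two prior corollaries, and the proof is a bookkeeping composition. The one point requiring care is that Corollary~\ref{cor:thecomplicatedone} returns the path on the dominant half with the inflated constant $K+(K-1)CR_1^{-1/\alpha}$ rather than the original $K$; this is harmless because the inflated constant still tends to $1$ as $K\to 1$, so shrinking $K$ pushes it below the threshold $K_2$ of Corollary~\ref{cor:thepositiveone}. The other mild subtlety is the sign reduction on $I^*$, which is immediate from the symmetry $|g^m|=|g^{-m}|$ of the word metric.
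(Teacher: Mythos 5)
Your proposal is correct and follows essentially the same route as the paper: both proofs pick an auxiliary $R'>1$ with $\bigl(\tfrac{R'}{R'+1}\bigr)^2\ge\tfrac{R}{R+1}$, invoke Corollary~\ref{cor:thecomplicatedone} to isolate the dominant sign class $I^*$, and then apply Corollary~\ref{cor:thepositiveone} on $I^*$, choosing $K$ small enough that the inflated constant $K+(K-1)CR'^{-1/\alpha}$ stays below $K_{\ref{cor:thepositiveone}}$ (the paper records this as the explicit bound $K\le\tfrac{K_{\ref{cor:thepositiveone}}+C(R')^{-1/\alpha}}{1+C(R')^{-1/\alpha}}$). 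The only cosmetic difference is that you spell out the sign-flip reduction to apply Corollary~\ref{cor:thepositiveone}; the paper treats that step as implicit.
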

\begin{proof}
 We may assume $0<m_1=\max_i m_i$ and $0>m_k=\min_im_i$.
  Let $I:=\{i\mid m_i>0\}.$
We will let $R'\geq 5$ be large enough that
$\big(\frac{R'}{R'+1}\big)^2>\frac{R}{R+1}$.
Let $J_{\ref{cor:thecomplicatedone}}$,
$K_{\ref{cor:thecomplicatedone}}$, and
 $J_{\ref{cor:thepositiveone}}$
and $K_{\ref{cor:thepositiveone}}$ be
the numbers provided by Corollaries~\ref{cor:thecomplicatedone} and \ref{cor:thepositiveone} 
respect to $R'$. (Actually, $J_{\ref{cor:thecomplicatedone}}=J_{\ref{cor:thepositiveone}}$.)
We claim it suffices to take any $J\geq
 \frac{R'+1}{R'}J_{\ref{cor:thepositiveone}}> J_{\ref{cor:thecomplicatedone}}$
and any $1<K\leq \min\{K_{\ref{cor:thecomplicatedone}},\frac{K_{\ref{cor:thepositiveone}}+C(R')^{(-1/\alpha)}}{1+C(R')^{(-1/\alpha)}}\}$.

To see this, we first apply
Corollary~\ref{cor:thecomplicatedone}.
Since $K\leq
K_{\ref{cor:thecomplicatedone}}$, $K$ is sufficiently small for Corollary~\ref{cor:thecomplicatedone}, so, since
\[
 \sum_{i=1}^k |m_i| \geq J> J_{\ref{cor:thecomplicatedone}} \;  \;
 \textrm{ and } \; \;  \sum_{i=1}^k |g^{m_i}| \leq K \big| g^{\sum_{i=1}^k m_i} \big|
\]
there is an $I^* \in \{ I, I^c\}$ such that:
\begin{center}
    \[ \biggl|\sum_{i \in I^*} m_i\biggr| > \frac{R'}{R'+1}\sum_{i=1}^k
      |m_i| \] and
    \[ (K + (K -1) C(R')^{(-1/\alpha)})|g^{\sum_{i\in I^*} m_i}\bigr| > \sum_{i\in
        I^*}|g^{m_i}| \]
  \end{center}

  Now,
  $K\leq\frac{K_{\ref{cor:thepositiveone}}+C(R')^{(-1/\alpha)}}{1+C(R')^{(-1/\alpha)}}$
  if and only if
  $(K + (K -1) C(R')^{(-1/\alpha)})\leq K_{\ref{cor:thepositiveone}}
  $, so
  $ K_{\ref{cor:thepositiveone}} \big| g^{\sum_{i \in I^*} m_i} \big|
  \geq \sum_{i \in I^*} |g^{m_i}|$.  Furthermore:
  \[\sum_{i\in I^*} |m_i| =\biggl|\sum_{i \in I^*} m_i\biggr| >
    \frac{R'}{R'+1}\sum_{i=1}^k|m_i| \geq  \frac{R'}{R'+1}J\geq
    J_{\ref{cor:thepositiveone}}\]
  Thus, the hypotheses of Corollary
  ~\ref{cor:thepositiveone} for $I^*$ with respect to $R'$ are satisfied, and we get:
 \[\max_i|m_i|\geq \max_{i\in I^*}|m_i|>\frac{R'}{R'+1}\biggl|\sum_{i
     \in I^*}
   m_i\biggr|>\bigg(\frac{R'}{R'+1}\bigg)^2\sum_{i=1}^k|m_i|>\frac{R}{R+1}\sum_{i=1}^k|m_i|\qedhere\]
\end{proof}

In the previous results we picked out a `dominant' escape in terms of
exponents. In the next result we show it is still dominant in terms of length.
\begin{corollary}\label{cor:one_long_path}
  For all $R>1$ there exist $J,\,K>1$ such that if
  $g \in \{ a, x, y \}$ and
  $\gamma = \gamma_1\gamma_2 \cdots \gamma_k$ is a concatenation of
  paths such that
  \begin{enumerate}
  \item\label{itm:one_long_path:group} for all $i$, the endpoints of
    $\gamma_i$ differ by $g^{m_i}$,
  \item\label{itm:one_long_path:geom} $J \le |\gamma| \le K|g^{\sum_{i=1}^k m_i}|$
  \end{enumerate}
  then $\max_i|\gamma_i| > \frac{R}{R+1} \cdot |\gamma|$.
\end{corollary}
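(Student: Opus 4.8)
The plan is to upgrade the exponent–domination statement of Corollary~\ref{cor:one_long_escape} to the desired length–domination statement, using the power-law distortion estimate of Theorem~\ref{powerdistortion} as the bridge. The subtlety to respect is the non-monotonicity of the distortion function: one \emph{cannot} argue that because $\sum_i m_i$ is close in exponent to the dominant term $m_{i_0}$, the lengths $|g^{\sum_i m_i}|$ and $|g^{m_{i_0}}|$ are close. Instead I will bound $|g^{\sum_i m_i}|$ by splitting $g^{\sum_i m_i} = g^{m_{i_0}}\,g^{(\sum_i m_i) - m_{i_0}}$ and invoking subadditivity of word length together with the power law; the residual exponent $(\sum_i m_i) - m_{i_0}$ is a tiny fraction of $m_{i_0}$, so its contribution is negligible once the auxiliary domination ratio is chosen large enough to overwhelm the fixed constant $C$.

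Given $R>1$, I would first fix an auxiliary ratio $R'>1$ large enough that $C(R')^{-1/\alpha} < \tfrac{1}{2R}$, and let $J'$ and $K'$ be the constants provided by Corollary~\ref{cor:one_long_escape} for $R'$. I then choose $K$ with $1 < K < \min\bigl\{K',\,\tfrac32\bigr\}$ and small enough that $K\bigl(1+\tfrac{1}{2R}\bigr) < 1+\tfrac1R$, and set $J := 2C(J')^{1/\alpha}$. Write $S := \sum_{i=1}^k m_i$. Now I would verify the hypotheses of Corollary~\ref{cor:one_long_escape}. Each $\gamma_i$ joins two vertices of $X$ whose difference is $g^{m_i}$, so $|\gamma_i| \ge |g^{m_i}|$, and hence $\sum_i|g^{m_i}| \le \sum_i|\gamma_i| = |\gamma| \le K|g^S|$. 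Moreover $S\neq 0$, since $S=0$ would give $|g^S|=0$ and hence $|\gamma|=0$, contradicting $|\gamma|\ge J>0$; so Theorem~\ref{powerdistortion} gives $J \le |\gamma| \le K|g^S| \le KC|S|^{1/\alpha} \le KC\bigl(\sum_i|m_i|\bigr)^{1/\alpha}$, and since $K<2$ and $J = 2C(J')^{1/\alpha}$ this forces $\sum_i|m_i| > J'$. Corollary~\ref{cor:one_long_escape} then supplies an index $i_0$ with $|m_{i_0}| > \tfrac{R'}{R'+1}\sum_i|m_i|$, so $\sum_{i\neq i_0}|m_i| < \tfrac{1}{R'}|m_{i_0}|$ and therefore $|S - m_{i_0}| \le \sum_{i\neq i_0}|m_i| < \tfrac{1}{R'}|m_{i_0}|$.

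Finally, using $g^S = g^{m_{i_0}}\,g^{S-m_{i_0}}$, subadditivity of word length, and Theorem~\ref{powerdistortion} (which gives $|g^{S-m_{i_0}}| \le C|S-m_{i_0}|^{1/\alpha}$ and $|m_{i_0}|^{1/\alpha} \le |g^{m_{i_0}}|$), I obtain
\[
|g^S| \;\le\; |g^{m_{i_0}}| + C|S - m_{i_0}|^{1/\alpha} \;<\; |g^{m_{i_0}}| + C(R')^{-1/\alpha}|m_{i_0}|^{1/\alpha} \;\le\; \bigl(1 + C(R')^{-1/\alpha}\bigr)|g^{m_{i_0}}|.
\]
Combining this with $|\gamma| \le K|g^S|$ and $|g^{m_{i_0}}| \le |\gamma_{i_0}|$ yields
\[
|\gamma| \;\le\; K\bigl(1 + C(R')^{-1/\alpha}\bigr)|\gamma_{i_0}| \;<\; K\bigl(1 + \tfrac{1}{2R}\bigr)|\gamma_{i_0}| \;<\; \Bigl(1 + \tfrac1R\Bigr)|\gamma_{i_0}| \;=\; \tfrac{R+1}{R}\,|\gamma_{i_0}|,
\]
so $\max_i|\gamma_i| \ge |\gamma_{i_0}| > \tfrac{R}{R+1}|\gamma|$, as required. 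The one genuine obstacle is the non-monotonicity issue flagged at the outset: recognising that $|g^S|$ must be estimated through the splitting $g^S = g^{m_{i_0}}g^{S-m_{i_0}}$ rather than via any naive continuity of the distortion function; after that the argument is just careful bookkeeping of the constants $R'$, $K$, and $J$.
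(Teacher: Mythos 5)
Your proof is correct and follows essentially the same approach as the paper: reduce to Corollary~\ref{cor:one_long_escape} to get exponent domination, then convert to length domination via the power-law bounds of Theorem~\ref{powerdistortion} applied to the split $g^{S}=g^{m_{i_0}}g^{S-m_{i_0}}$. Your final estimate is a touch cleaner than the paper's (which routes through an extra triangle-inequality substitution to bound $\bigl|g^{\sum_{i\neq\iota}m_i}\bigr|$ in terms of $\bigl|g^{\sum_i m_i}\bigr|$ and then solves a linear inequality, whereas you use the lower bound $|g^{m_{i_0}}|\ge|m_{i_0}|^{1/\alpha}$ directly), but the underlying argument is the same.
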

\begin{proof}
  Let $R_0 > 1$ and let $J_0, K_0 > 1$ be such that the conclusion of
  Corollary~\ref{cor:one_long_escape} holds for
  $(R,J,K) = (R_0, J_0, K_0)$.  Suppose
  $\gamma = \gamma_1\gamma_2 \cdots \gamma_k$ satisfies conditions
  (\ref{itm:one_long_path:group}) and (\ref{itm:one_long_path:geom})
  for some $K \in (1,K_0)$, some $J > J_0^{1/\alpha}KC$ and some
  $g \in \{a,x,y\}$.  Then we have
  \[ K|g^{\sum_{i=1}^k m_i}| \ge |\gamma| = \sum_{i=1}^k|\gamma_i| \ge
    \sum_{i=1}^k|g^{m_i}| \] and
  \[ J_0^{1/\alpha} < \frac{J}{KC} \le
    \frac{|\gamma|}{KC} \le
    \frac{|g^{\sum_{i=1}^k m_i}|}{C} \le
    \biggl|\sum_{i=1}^k m_i\biggr|^{1/\alpha} \le
    \biggl(\sum_{i=1}^k |m_i|\biggr)^{1/\alpha} \] so, by
  Corollary~\ref{cor:one_long_escape}, we have
  $\max_i|m_i| > \frac{R_0}{R_0+1} \sum_{i=1}^k|m_i|$.  If $i=\iota$
  maximizes $|m_i|$ then this inequality is equivalent to
  $\sum_{i \neq \iota} |m_i| < \frac{1}{R_0} \cdot |m_{\iota}|$ and we have
  \begin{align*}
    \frac{1}{C} \cdot |g^{\sum_{i \neq \iota} m_i}|
    &\le \Bigl|\sum_{i \neq \iota} m_i\Bigr|^{\frac{1}{\alpha}} \\
    &\le \Bigl(\sum_{i \neq \iota} |m_i|\Bigr)^{\frac{1}{\alpha}} \\
    &< \frac{1}{R_0^{1/\alpha}} \cdot
      |m_{\iota}|^{\frac{1}{\alpha}} \\
    &\le \frac{1}{R_0^{1/\alpha}} \cdot |g^{m_{\iota}}| \\
    &\le \frac{1}{R_0^{1/\alpha}} \cdot
    \bigl(|g^{\sum_{i \neq \iota} m_i}| + |g^{\sum_{i = 1}^k m_i}| \bigr)
  \end{align*}
  from which we can obtain
  \[ |g^{\sum_{i \neq \iota} m_i}| < \frac{C}{R_0^{1/\alpha} - C}
    \cdot |g^{\sum_{i = 1}^k m_i}| \] as long as
  $R_0 > C^{\alpha}$.  Thus
  \[ |g^{\sum_{i = 1}^k m_i}| \le |g^{m_1}| + |g^{\sum_{i \neq \iota}
      m_i}| < |\gamma_{\iota}| + \frac{C}{R_0^{1/\alpha} - C} \cdot
    |g^{\sum_{i = 1}^k m_i}| \] and so
  \[ \max_i|\gamma_i| \ge |\gamma_{\iota}| > \frac{R_0^{1/\alpha} - 2C}{R_0^{1/\alpha} - C}
    \cdot |g^{\sum_{i = 1}^k m_i}| \ge \frac{R_0^{1/\alpha} -
      2C}{K(R_0^{1/\alpha} - C)} \cdot |\gamma| \] and the factor
  $\frac{R_0^{1/\alpha} - 2C}{K(R_0^{1/\alpha} - C)}$ tends to $1$ as
  $K$ tends to $1$ and $R_0$ tends to infinity.

  To complete the proof of the corollary, we need to explain how we
  obtain the $J, K > 1$ of the corollary statement.  To obtain
  $J, K > 1$ we pick $R_0 > C^{\alpha}$ large enough that
  $\frac{R_0^{1/\alpha} - 2C}{(R_0^{1/\alpha} - C)} > \frac{R}{R+1}$.
  Next, we obtain $K_0, J_0 > 1$ such that the conclusion of
  Corollary~\ref{cor:one_long_escape} holds for
  $(R,J,K) = (R_0, J_0, K_0)$.  Finally, we choose $K \in (1,K_0)$ so
  that
  $\frac{R_0^{1/\alpha} - 2C}{K(R_0^{1/\alpha} - C)} > \frac{R}{R+1}$
  and then choose $J > J_0^{1/\alpha}KC$.
\end{proof}

We will now start allowing escapes of different flavors.
The first result says if there is exactly one escape of each flavor
and the $a$--escape is not tiny then it dominates.
\begin{lemma}
  \label{lem:compatible_escapes} For all $R > 1$ there exist $J, K > 1$
  such that if $|\ell| + |m| + |n| \ge J$ and
  \[ |a^{\ell}| + |x^m| + |y^n| \leq K|a^{\ell} x^m y^n| \] and
  \[|\ell| > \frac{|\ell|+|m|+|n|}{R} \] then
  $|m|,|n| \le \frac{|\ell|+|m|+|n|}{R}$.
\end{lemma}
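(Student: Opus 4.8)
The plan is to argue by contradiction. By the $x/y$--symmetry of the presentation (Section~\ref{first_geodesic_result}) it suffices to rule out the case $|m| > \frac{S}{R}$, where I abbreviate $S := |\ell|+|m|+|n|$; so suppose $|\ell| > \frac{S}{R}$ and $|m| > \frac{S}{R}$, and aim for a contradiction with the biLipschitz hypothesis, taking $J$ large and $K$ close to $1$, both depending only on $R$. From $S < R|\ell|$ and $S < R|m|$ the exponents $|\ell|$ and $|m|$ are comparable up to the factor $R$, and $S \ge J$ forces both to be large. Write $\ell = qL + r$ with $|r| < L$, so $|q| = |\ell|/L + O(1)$ is large and comparable to $|\ell|$. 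I will use two structural facts. First, $a^\ell x^m y^n = a^r x^{m+q} y^{n+q}$, so Lemma~\ref{lem:ag_bound}(\ref{item:planelength}) gives $|a^\ell x^m y^n| \le |r| + |x^{m+q}| + |y^{n+q}| \le L + |x^{m+q}| + |y^{n+q}|$. Second, Corollary~\ref{lem:alengths}(\ref{item:powers}) with $\bigl||a^\ell|-|a^{qL}|\bigr| \le |a^r| < L$ gives $|a^\ell| = 2|a^q| + O(L)$. Since $|x^p| = |a^p| + O(1)$ by Lemma~\ref{lem:ag_bound}(\ref{item:glength}), the biLipschitz inequality $|a^\ell| + |x^m| + |y^n| \le K|a^\ell x^m y^n|$ reduces to
\[ 2|a^q| + |a^m| + |a^n| \le K\bigl(|a^{m+q}| + |a^{n+q}|\bigr) + O(L). \]

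The crux is a \emph{combination estimate}: if $j_m, j_q \ge 1$ are the degrees of geodesic expressions for $m$ and $q$ satisfying~\eqref{eq:1} (as they are here, since $|m|,|q|$ are large), then
\[ |a^{m+q}| \le |a^m| + |a^q| - 4\bigl(2^{\min(j_m,j_q)} - 1\bigr). \]
To see this, evaluate the path-length formula~\eqref{eq:4} on the (possibly non-reduced) expression $m+q = \sum_i (m_i+q_i)L^i$, taking its top index to be $\max(j_m,j_q)$: its coefficient cost is at most $\sum_i(|m_i|+|q_i|)2^i$ by the triangle inequality, while its single base cost $4(2^{\max(j_m,j_q)}-1)$ undershoots the two base costs $4(2^{j_m}-1) + 4(2^{j_q}-1)$ occurring in $|a^m|+|a^q|$ by exactly $4(2^{\min(j_m,j_q)}-1)$. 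This is where the reverse-Hölder saving of Fact~\ref{fact:reverse_holder} is hiding, extracted combinatorially so as to sidestep the multiplicative constant $C$ of Theorem~\ref{powerdistortion}. Feeding the combination estimate and the trivial bound $|a^{n+q}| \le |a^n| + |a^q|$ into the displayed inequality and rearranging yields
\[ 4K\bigl(2^{\min(j_m,j_q)} - 1\bigr) \le (K-1)\bigl(|a^m| + |a^n| + 2|a^q|\bigr) + O(L). \]

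To finish: the comparability of $|q|$ with $|m|$, and the bound $|n| \le S < R|\ell|$, let Lemma~\ref{lem:log} control $|j_q - j_m|$ and $j_n$ in terms of $j_q$ by a constant $D = D(R,L)$, so $\min(j_m,j_q) \ge j_q - D$, while Lemma~\ref{lem:alengthboundgeodesicexpression} gives $|a^m| + |a^n| + 2|a^q| \le B\,2^{j_q} + O(L)$ for a constant $B = B(R,L)$. The last display then reads $\bigl(4K\,2^{-D} - (K-1)B\bigr)\,2^{j_q} \le O(L)$; choosing $K > 1$ close enough to $1$ (depending on $D$ and $B$) makes the bracketed coefficient a positive constant, which forces $2^{j_q}$ to be bounded in terms of $R$ and $L$. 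But $|q|$ is large, hence $j_q$ is large, so choosing $J$ large enough --- after $K$ --- gives the desired contradiction. The quantifiers are fixed in the order: first $D$ and $B$, then $K$, then $J$.

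The main obstacle is the one flagged in the introduction. One cannot simply pass to $1/\alpha$--powers and invoke Fact~\ref{fact:reverse_holder} directly, because converting exponents to distances through Theorem~\ref{powerdistortion} costs a factor $C$ that already swallows the reverse-Hölder gain; the argument must stay at the level of geodesic expressions, tracking the additive base-cost terms $4(2^j - 1)$, which is precisely what the combination estimate isolates. The remaining labour is bookkeeping of the $O(L)$ error terms --- arising from $r \neq 0$, from $n$ possibly being $0$ or too small for $j_n \ge 1$, and from the case where $m$ or $n$ has sign opposite to that of $q$ (so that forming $m+q$ or $n+q$ can shrink rather than grow an exponent). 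None of this affects the estimate, as the combination estimate and the bound on $|a^{n+q}|$ hold regardless of signs and the small cases contribute only $O(L)$.
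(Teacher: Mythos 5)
Your argument is correct and takes a genuinely different route from the paper's, though both ultimately exploit the same underlying combinatorial phenomenon: the ``base cost'' $4(2^j-1)$ in the path-length formula~\eqref{eq:4}. Where the paper reduces to its auxiliary Lemma~\ref{lem:triangle_ineq_enhanced} applied to $|x^m|+|x^p|>K'|x^{m+p}|$ and then case-splits on whether $|n|$ is comparable to $p$ (chasing through the inequalities labelled~\eqref{eq:future_1}--\eqref{eq:future_3}), you distil the savings into a single explicit \emph{combination estimate}
\[
|a^{m+q}|\le |a^m|+|a^q|-4\bigl(2^{\min(j_m,j_q)}-1\bigr),
\]
valid for any geodesic expressions of degree $\geq 1$ and any signs, derived directly from~\eqref{eq:4} by noting that $4(2^{j_m}-1)+4(2^{j_q}-1)-4(2^{\max(j_m,j_q)}-1)=4(2^{\min(j_m,j_q)}-1)$. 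This is precisely the computation hiding inside the paper's proof of Lemma~\ref{lem:triangle_ineq_enhanced}, but extracted in additive form so that the multiplicative constant $C$ of Theorem~\ref{powerdistortion} never appears on the ``savings'' side --- exactly the concern you correctly identified. The rearranged inequality $4K(2^{\min(j_m,j_q)}-1)\le (K-1)\bigl(|a^m|+|a^n|+2|a^q|\bigr)+O(L)$ is right (the factor $2|a^q|$ from $|a^\ell|\approx 2|a^q|$ matches the two copies of $|a^q|$ coming from the combination estimate on $m+q$ and the triangle inequality on $n+q$), and the comparability bounds from Lemma~\ref{lem:log} plus the upper bound of Lemma~\ref{lem:alengthboundgeodesicexpression} do make both sides scale like $2^{j_q}$, so the final contradiction for $K$ close to $1$ and $J$ large is sound. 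What you buy is a more streamlined argument with no case-split on $|n|$; the cost is that you re-prove a fragment of Lemma~\ref{lem:triangle_ineq_enhanced} rather than invoking it as a black box, and you need to verify a few extra comparability facts about $j_m,j_q,j_n$. Your quantifier order (fix $D,B$; then $K$; then $J$) is correct.
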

\begin{proof}
The result for large $R$ yields $J$ and $K$ for which the same result
is true for all smaller $R$, so we may
assume in the proof that $R>2$.
 We can multiply all three exponents by $-1$ without changing the
 hypothesized inequalities, so we may assume $\ell>0$.
 Further, by the $x$--$y$ symmetry we may assume $|n|\leq |m|$.

There exist integers $p>0$ and $q$ with $|q|\leq L/2$ such that
$\ell=pL+q$ and $|a^\ell|=|q|+|a^{pL}|$, by (\ref{item:planelength}) of Lemma~\ref{lem:ag_bound}.  
The strategy for the proof is to compare the approximation of $|a^{\ell} x^m y^n|$ by $|a^{\ell}| + |x^m| + |y^n|$ (illustrated by the solid line in Figure~\ref{fig:compatible_escapes}), to the approximation given by $|a^q| + |x^{m +p}| +|y^{n+p}|$ (illustrated by the dotted line in Figure~\ref{fig:compatible_escapes}).

\begin{figure}[h!]
  \labellist
  \pinlabel $a^q$  at 26 42
  \pinlabel $x^{p+m}$  at 99 109
  \pinlabel $y^{p+n}$ [l] at 223 112
  \pinlabel $a^\ell$ at 86 14
  \pinlabel $x^m$ at 193 59
  \pinlabel $y^n$   at 235 58 
  \endlabellist
  \centering
  \includegraphics[height=5cm]{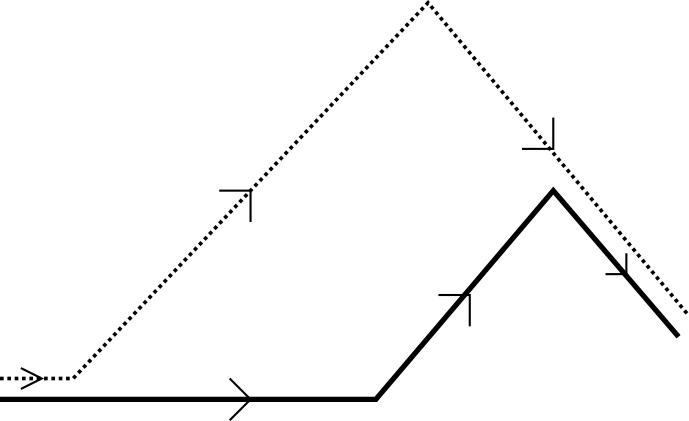}
  \caption{Illustration of the two decompositions of $a^{\ell} x^m y^n$.}
  \label{fig:compatible_escapes}
\end{figure}


Notice that if
$\ell\geq L+1$
then we can verify the following bounds:
\begin{equation} \label{eq:p_l_ratio}
\frac{2}{L+2}\cdot\frac{1}{L}\leq\frac{p}{\ell}\leq\frac{L+2}{2}\cdot\frac{1}{L}
\end{equation}

Let $J'=(L+5)/2$ as in Lemma~\ref{lem:triangle_ineq_enhanced}.  Let
$J:=R(1+LJ')$.  Let $R':=(1/2)L(L+2)(R-1)$, and let $K'$ be as in
Lemma~\ref{lem:triangle_ineq_enhanced} with respect to $R'$.  We will
choose $K > 1$ to be sufficiently small that the below inequalities
labelled~\eqref{eq:future_1} and~\eqref{eq:future_2} are contradicted,
and the below inequality labelled~\eqref{eq:future_3} is satisfied.

Suppose that $|m|>\frac{\ell+|m|+|n|}{R}$. We will derive a contradiction.
First observe that we have the bounds
\begin{equation} \label{eq:l_m_ratio}
1/(R-1)<\ell/|m|<R-1. 
\end{equation}
Thus, by combining equations~\eqref{eq:p_l_ratio} and~\eqref{eq:l_m_ratio} we deduce that 

\begin{equation} \label{eq:p_m_ratio}
\frac{1}{R'}<\frac{p}{|m|}<\frac{(L+2)(R-1)}{2L}<\frac{L(L+2)(R-1)}{2}=R'
\end{equation}

As both $|m|$ and $p$ are larger than $J'$, we can combine
Lemma~\ref{lem:triangle_ineq_enhanced} with Equation~\eqref{eq:p_m_ratio} to see that 
\begin{equation} \label{eq:Kprime_bound}
|x^m|+|x^p|>K'|x^{m+p}|. 
\end{equation}

\noindent We now deduce that 
\begin{align*}
|q|+|x^p|+|x^m|+|y^p|+|y^n| &\leq |a^l|+|x^m|+|y^n| & \textrm{By~\ref{item:planelength} of Lemma~\ref{lem:ag_bound}.}\\ 
                            & \leq K|a^\ell x^my^n| & \textrm{By assumption.} \\ 
                            & = K(|a^qx^{m+p}y^{n+p}|) & \\
                            & \leq K|q|+K(|x^{p+m}|+|y^{p+n}|). & \textrm{Triangle inequality}.
\end{align*}
  Combining this with~\eqref{eq:Kprime_bound} yields
\begin{equation}
  \label{eq:7}
  (1-\frac{K}{K'})(|x^p|+|x^m|)+|y^p|+|y^n|-K|y^{p+n}|< (K-1)L/2
\end{equation}

First, suppose that either $|n|<J'$ or
$|n|<p/R'$.
Both of these conditions imply $|n|<p$, so by
Theorem~\ref{powerdistortion}  we have
$|y^n|<C|n|^{1/\alpha}<Cp^{1/\alpha}<C|y^p|$.
Continuing from \eqref{eq:7}, we have:
\begin{align*}
  (K-1)L/2&>
            (1-\frac{K}{K'})(|x^p|+|x^m|)+|y^p|+|y^n|-K|y^{p+n}|\\
          &\geq (1-\frac{K}{K'})(|x^p|+|x^m|)-(K-1)(|y^p|+|y^n|)\\
          &\geq (1-\frac{K}{K'})(|x^p|+|x^m|)-(K-1)|y^p|(1+C)\\
  &= (1-\frac{K}{K'}-(K-1)(1+C))|x^p|+(1-\frac{K}{K'})|x^m|
\end{align*}

To ensure that the coefficients in this final expression are non-negative we will have assumed that $K$ is chosen to satisfy the inequalities:
\begin{equation} \tag{$\heartsuit$} \label{eq:future_3}
1< K < \min\{K',\,  1+\frac{K'-1}{(C+1)K'+1}\}
\end{equation}
%
%
By Theorem~\ref{powerdistortion}, the lower bounds on $p$ and $|m|$ imply
$|x^p|,|x^m|>(J')^{1/\alpha}$, so:
\[(K-1)L/2>\left(1-\frac{K}{K'}-(K-1)(1+C)+1-\frac{K}{K'}\right)(J')^{1/\alpha}\]
Solving for $K$ gives us:
\[ K   > \frac{LK'+2(J')^{1/\alpha}\cdot (K'(1+C)+2K')}{LK' +
      2(J')^{1/\alpha}\cdot (K'(1+C) + 2)} >1\tag{$\clubsuit$} \label{eq:future_1}\]
Having assumed that $K > 1$ is chosen to be small enough that~\eqref{eq:future_1} does not hold, we must in fact have $|n|\geq J'$ and $|n|\geq p/R'$.
Then Lemma~\ref{lem:triangle_ineq_enhanced} implies $|y^p|+|y^n|> K'|y^{p+n}|$.
In this case, continuing from \eqref{eq:7} we have:
\begin{align*}
  (K-1)L/2&>
            (1-\frac{K}{K'})\left(|x^p|+|x^m|+|y^p|+|y^n|\right)\\
          &> (1-\frac{K}{K'})(4(J')^{1/\alpha})
\end{align*}
Solving for $K$ gives 
\begin{equation} \tag{$\diamondsuit$} \label{eq:future_2} 
 K > \frac{8K'(J')^{1/ \alpha} + LK'}{8(J')^{1/\alpha} + LK'} > 1. 
\end{equation}
By having assumed that $K >1$ is chosen small enough that~\eqref{eq:future_2} does not hold, this cannot happen.
%
Thus, for $J:=R(1+LJ')$ and $K$ as specified, the hypotheses imply $|n|\leq |m|\leq \frac{|\ell|+|m|+|n|}{R}$.
\end{proof}

The next result, Lemma~\ref{biglemma} is a stepping stone. It says
that if we have a mixture of different flavors of escapes, then
whenever one flavor makes up a nontrivial amount of the total, then there is an escape of that flavor that is dominant \emph{among
the other escapes of that flavor}. Moreover, if the $a$--escapes make
up a nontrivial amount of the total then the $x$--escapes and
$y$--escapes are negligible. 

\begin{lemma}\label{biglemma}
For all $R > 1$ there exist $J, K > 1$
  such that if
  $T = \sum_i |\ell_i| + \sum_j |m_j| + \sum_k |n_k| \ge J$ and
  \begin{equation}
    \label{eq:optimal}
    \sum_i
    |a^{{\ell}_i}| + \sum_j |x^{m_j}| + \sum_k |y^{n_k}|\leq
    K|a^{\sum_i {\ell}_i}x^{\sum_j m_j}y^{\sum_k n_k}|  
  \end{equation}
  then the following implications hold.

  \[ \sum_i |\ell_i| > \frac{1}{R} \cdot T \;\; \Longrightarrow \;\;
    \max_i|\ell_i| > \frac{R-1}{R}\sum_i |\ell_i|\]
  \[ \sum_j |m_j| > \frac{1}{R} \cdot T \;\; \Longrightarrow \;\;
    \max_j|m_j| > \frac{R-1}{R}\sum_j |m_j|\]
  \[ \sum_k |n_k| > \frac{1}{R} \cdot T \;\; \Longrightarrow \;\;
    \max_k|n_k| > \frac{R-1}{R}\sum_k |n_k|\]

  Moreover, if $\sum_i |\ell_i| > \frac{1}{R} \cdot T$ then
  $\sum_j |m_j| \le \frac{1}{R} \cdot T$ and
  $\sum_k |n_k| \le \frac{1}{R} \cdot T$.
\end{lemma}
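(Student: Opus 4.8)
The plan is to derive the three implications from Corollary~\ref{cor:one_long_escape}, applied separately to each flavor of escape, and then to obtain the ``Moreover'' clause by contradiction using Lemma~\ref{lem:compatible_escapes}. Write $S_a := \sum_i|\ell_i|$, $S_x := \sum_j|m_j|$, $S_y := \sum_k|n_k|$, so $T = S_a+S_x+S_y$, and let $\ell := \sum_i \ell_i$, $m := \sum_j m_j$, $n := \sum_k n_k$ be the consolidated exponents. Since the conclusions for a given $R$ follow from the same conclusions with $R$ replaced by any larger value (the hypotheses only weaken and the conclusions only strengthen), I may assume $R$ is as large as convenient; in particular $R>2$.

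For the first implication, assume $S_a > T/R$. Consolidating the $x$- and $y$-escapes in \eqref{eq:optimal} with the triangle inequality gives $\sum_i|a^{\ell_i}| + |x^m| + |y^n| \le K|a^\ell x^m y^n| \le K(|a^\ell| + |x^m| + |y^n|)$, hence $\sum_i|a^{\ell_i}| \le K|a^\ell| + (K-1)(|x^m|+|y^n|)$. The crux is to show that $|\ell|$ is a definite fraction of $S_a$, so that $|a^\ell|$ swamps the cross term $(K-1)(|x^m|+|y^n|)$ and the problem genuinely decouples into the $a$-escapes alone. For this I sandwich: on the one hand $\sum_i|a^{\ell_i}| \ge \sum_i|\ell_i|^{1/\alpha} \ge S_a^{1/\alpha}$ by power-law distortion (Theorem~\ref{powerdistortion}) and the reverse H\"older inequality (Fact~\ref{fact:reverse_holder}); on the other hand $|x^m| + |y^n| \le 2CT^{1/\alpha} < 2CR^{1/\alpha}S_a^{1/\alpha}$ using $T < RS_a$ and $|g^p| < C|p|^{1/\alpha}$. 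Substituting these and choosing $K$ close enough to $1$ that $1 - 2C(K-1)R^{1/\alpha} > \tfrac12$, I get $S_a^{1/\alpha} < 2K|a^\ell|$; in particular $\ell \ne 0$, and then $|a^\ell| < C|\ell|^{1/\alpha}$ gives $|\ell| > (2KC)^{-\alpha}S_a$. Feeding $S_a^{1/\alpha} < 2K|a^\ell|$ back into the bound on $\sum_i|a^{\ell_i}|$ yields $\sum_i|a^{\ell_i}| \le K''|a^\ell|$ with $K'' = K''(R,K) \to 1$ as $K \to 1$. Now Corollary~\ref{cor:one_long_escape}, applied to $\{\ell_i\}$ with ratio parameter $R-1$, gives $\max_i|\ell_i| > \tfrac{R-1}{R}S_a$, provided $K$ is small enough that $K''$ is below its constant and $J$ large enough that $S_a > J/R$ clears its threshold. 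The implications for $x$ and $y$ are verbatim after relabeling (consolidate the other two flavors instead), the $y$ case also following from the $x$ case by $x/y$--symmetry.

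For the ``Moreover'' clause, assume $S_a > T/R$ and suppose toward a contradiction that $S_x > T/R$ (the case $S_y > T/R$ being identical by $x/y$--symmetry). Applying the first two implications, there are $\iota,\kappa$ with $|\ell_\iota| > \tfrac{R-1}{R}S_a$ and $|m_\kappa| > \tfrac{R-1}{R}S_x$; the standard estimate $\sum_{i \ne \iota}|\ell_i| < |\ell_\iota|/(R-1)$ then gives $|\ell| > \tfrac{R-2}{R-1}|\ell_\iota| > \tfrac{R-2}{R^2}T$, and similarly $|m| > \tfrac{R-2}{R^2}T$. Consolidating \eqref{eq:optimal} once more gives $|a^\ell| + |x^m| + |y^n| \le K|a^\ell x^m y^n|$, and since $|\ell|+|m|+|n| \le T$ we have $|\ell| > T/R_1 \ge (|\ell|+|m|+|n|)/R_1$ with $R_1 := R^2/(R-2)$. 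Then Lemma~\ref{lem:compatible_escapes} with parameter $R_1$ (its threshold cleared because $|\ell|+|m|+|n| \ge |\ell| > T/R_1 \ge J/R_1$, and its biLipschitz constant met for $K$ small) yields $|m| \le (|\ell|+|m|+|n|)/R_1 \le T/R_1 = \tfrac{R-2}{R^2}T$, contradicting $|m| > \tfrac{R-2}{R^2}T$.

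It remains to choose constants in the right order: given $R$ (enlarged so $R>2$), set $R_1 := R^2/(R-2)$, pull $J_{\ref{lem:compatible_escapes}},K_{\ref{lem:compatible_escapes}}$ from Lemma~\ref{lem:compatible_escapes} for $R_1$ and $J_{\ref{cor:one_long_escape}},K_{\ref{cor:one_long_escape}}$ from Corollary~\ref{cor:one_long_escape} for $R-1$; then take $K > 1$ small enough that $1 - 2C(K-1)R^{1/\alpha} > \tfrac12$, that $K'' \le K_{\ref{cor:one_long_escape}}$, and that $K \le K_{\ref{lem:compatible_escapes}}$; then take $J$ to be a large enough multiple of $\max\{J_{\ref{cor:one_long_escape}}, J_{\ref{lem:compatible_escapes}}\}$. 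The step I expect to be the real obstacle is the decoupling flagged above: extracting a single-flavor estimate $\sum_i|a^{\ell_i}| \le K''|a^\ell|$ from the bundled hypothesis \eqref{eq:optimal}. Because the distortion of $\langle a \rangle$ is only a power law up to multiplicative constants (Proposition~\ref{this_whole_paper_is_not_a_waste_of_time}), the naive inequality $\sum_i|a^{\ell_i}| \le K|a^\ell| + (K-1)(|x^m|+|y^n|)$ is worthless until one first certifies that $|\ell|$ has not collapsed relative to $S_a$, and the reverse-H\"older/power-distortion sandwich is exactly what supplies this; keeping the quantifiers straight (choose $K$ after $R$, then $J$ after both) is the remaining care required.
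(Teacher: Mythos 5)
Your argument is correct, and its overall architecture matches the paper's: you decouple the bundled hypothesis~\eqref{eq:optimal} into a single--flavor inequality $\sum_i|a^{\ell_i}| \le K''|a^{\sum\ell_i}|$ via a reverse-H\"older/power-distortion sandwich and then invoke Corollary~\ref{cor:one_long_escape} for each implication, and you use Lemma~\ref{lem:compatible_escapes} for the ``moreover'' clause. The one place you genuinely diverge from the paper is in the ``moreover'' step, and your route is simpler. The paper isolates the two longest exponents $\ell_1$ and $m_1$, proves lower bounds on $|a^{\ell_1}|$ and $|x^{m_1}|$ relative to $|a^{\sum_{i\ge2}\ell_i}x^{\sum_{j\ge2}m_j}y^{\sum_k n_k}|$, and then has to estimate $|a^{\ell_1}x^{m_1}|$ from below via Lemma~\ref{lem:ag_bound}\eqref{item:ag_bound} before it can feed $(\ell_1,m_1,0)$ into Lemma~\ref{lem:compatible_escapes}. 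You instead consolidate everything into $(\ell,m,n)=(\sum\ell_i,\sum m_j,\sum n_k)$; then the needed inequality $|a^\ell|+|x^m|+|y^n|\le K|a^\ell x^m y^n|$ is an immediate triangle-inequality consequence of~\eqref{eq:optimal}, and the already-proved implications give $|\ell|,|m|>\frac{R-2}{R^2}T$, placing you squarely in the hypothesis of Lemma~\ref{lem:compatible_escapes} with parameter $R_1=R^2/(R-2)$ and yielding a clean contradiction. This avoids the $|a^{\ell_1}x^{m_1}|$ estimate entirely. Your preliminary monotonicity-in-$R$ reduction is also sound: since $\frac{R-1}{R}$ is increasing and $T/R$ is decreasing in $R$, the conclusions for a larger parameter imply those for a smaller one, and the same $(J,K)$ works; this is exactly why the ordering of quantifiers you describe at the end is valid. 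For the first implication, your intermediate step (deriving $S_a^{1/\alpha}<2K|a^{\sum\ell_i}|$ and substituting back) replaces the paper's Claim~\ref{claim:ell_bound} but plays the same role and gives a slightly different, equally serviceable constant $K''=K(1+4(K-1)CR^{1/\alpha})$.
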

\begin{proof}
  We will prove the first implication.  The remaining implications are
  proved similarly.  
  Assume that \eqref{eq:optimal} and the left-hand side of the first implication holds with:
  \begin{equation}
    \label{eq:k_bound}
    1< K < 1 + \Bigl(\frac{1}{2}\Bigr)^{1+\frac{\alpha-1}{\alpha}}
    \cdot \frac{1}{CR^{1/\alpha}}
  \end{equation}

  \begin{claim}
    \label{claim:inequality}
    $|x^{\sum_j m_j} y^{\sum_k n_k}| <
    2^{1-1/\alpha}C\Bigl(\sum_j|m_j|
    + \sum_k|n_k|\Bigr)^{1/\alpha}$
  \end{claim}
  \begin{subproof}[Proof of claim]
    \begin{align*}
      \Bigl(\sum_j|m_j| +
        \sum_k|n_k|\Bigr)^{1/\alpha}
      &\ge \biggl(\Bigl|\sum_j m_j\Bigr| + \Bigl|\sum_k
        n_k\Bigr|\biggr)^{1/\alpha} & \\
      &\ge \Bigl(\frac{1}{2}\Bigr)^{\frac{\alpha-1}{\alpha}}
        \biggl(\Bigl|\sum_j m_j\Bigr|^{1/\alpha}
        + \Bigl|\sum_k n_k\Bigr|^{1/\alpha}\biggr) &
         \textrm{Fact~\ref{fact:reverse_holder}}\\
      &> \frac{1}{C}\Bigl(\frac{1}{2}\Bigr)^{\frac{\alpha-1}{\alpha}}
        \bigl(|x^{\sum_j m_j}| + |y^{\sum_k n_k}|\bigr) &
        \textrm{Theorem~\ref{powerdistortion}} \\
      &\ge \frac{1}{C}\Bigl(\frac{1}{2}\Bigr)^{\frac{\alpha-1}{\alpha}}
        |x^{\sum_j m_j} y^{\sum_k n_k}| &       \qedhere 
    \end{align*}
  \end{subproof}
  
  \begin{claim}
    \label{claim:ell_bound}
    $|\sum_i \ell_i| > \frac{1}{(4C)^{\alpha}R} \cdot T$
  \end{claim}
  \begin{subproof}[Proof of claim]
    For the sake of finding a contradiction, assume
    $|\sum_i \ell_i| \le \frac{1}{(4C)^{\alpha}R} \cdot T$.  Then,
    since $T < R \sum_i|\ell_i|$, we have
    $|\sum_i \ell_i| < \frac{1}{(4C)^{\alpha}} \sum_i |\ell_i|$.
    Then we have
    \begin{align*}
    |a^{\sum_i \ell_i}| & < C \Big|\sum_i \ell_i \Big|^{1/\alpha} & \textrm{ by Theorem~\ref{powerdistortion}} \\
                        & < C\bigl(\frac{1}{(4C)^{\alpha}} \sum_i |\ell_i|\bigr)^{1/\alpha}
                        & \textrm{ by the above }\\
                        & \le \frac{1}{4} \sum_i |\ell_i|^{1/\alpha} 
                        & \textrm{by Fact~\ref{fact:reverse_holder}}\\
                        & < \frac{1}{4} \sum_i |a^{\ell_i}| 
                        & \textrm{ by Theorem~\ref{powerdistortion}}\\
    \end{align*}
    thus
    $\sum_i|a^{\ell_i}| + |x^{\sum_j m_j}y^{\sum_k n_k}| \le
    \sum_i|a^{\ell_i}| + \sum_j|x^{m_j}| + \sum_k|y^{n_k}| \le
    K|a^{\sum_i \ell_i}x^{\sum_j m_j}y^{\sum_k n_k}| \le K|a^{\sum_i
      \ell_i}| + K|x^{\sum_j m_j}y^{\sum_k n_k}| < \frac{K}{4} \sum_i
    |a^{\ell_i}| + K|x^{\sum_j m_j}y^{\sum_k n_k}|$ 
    which implies
    \[ \frac{1}{2}\sum_i|a^{\ell_i}| < (K-1)|x^{\sum_j m_j}y^{\sum_k
        n_k}| \] since \eqref{eq:k_bound} implies $K < 2$. 
        
    On the other hand, by Theorem~\ref{powerdistortion},
    Claim~\ref{claim:inequality} and Fact~\ref{fact:reverse_holder},
    we have
    \begin{align*}
    \sum_i|a^{\ell_i}|  & > \sum_i|\ell_i|^{1/\alpha} & \textrm{by Theorem~\ref{powerdistortion}} \\
                        & \ge \bigl(\sum_i|\ell_i|\bigr)^{1/\alpha} & \textrm{ by Fact~\ref{fact:reverse_holder}} \\
                        & > \frac{1}{R^{1/\alpha}} \cdot T^{1/\alpha} & \textrm{ by assumption}  \\
                        & \ge \frac{1}{R^{1/\alpha}}\bigl(\sum_j|m_j| + \sum_k|n_k|\bigr)^{1/\alpha} &  \\
                        & \ge \frac{1}{CR^{1/\alpha}}\bigl(\frac{1}{2}\bigr)^{\frac{\alpha-1}{\alpha}} |x^{\sum_j m_j} y^{\sum_k n_k}| &
                        \textrm{by Claim~\ref{claim:inequality}}
    \end{align*}
    thus
    \[ \frac{1}{2}\sum_i|a^{\ell_i}| < (K-1)|x^{\sum_j m_j}y^{\sum_k
        n_k}| < 2^{1-1/\alpha}CR^{1/\alpha}(K-1)
      \sum_i|a^{\ell_i}| < \frac{1}{2} \sum_i|a^{\ell_i}| \] where the
    last inequality follows from \eqref{eq:k_bound}.  This is a
    contradiction.
  \end{subproof}
  Then  we have:
  \begin{align*}
  |a^{\sum_i \ell_i}|  & > |\sum_i \ell_i|^{1/\alpha} & \textrm{by Theorem~\ref{powerdistortion}}\\
                       & > \frac{1}{4CR^{1/\alpha}} \cdot T^{1/\alpha} & \textrm{Claim~\ref{claim:ell_bound}} \\
                       & \ge \frac{1}{4CR^{1/\alpha}} \cdot \bigl(\sum_j |m_j| + \sum_k |n_k|\bigr)^{1/\alpha} & \textrm{by assumption} \\
                       & \ge \frac{1}{4C^2R^{1/\alpha}} \cdot
  \bigl(\frac{1}{2}\bigr)^{\frac{\alpha-1}{\alpha}}\cdot |x^{\sum_j m_j} y^{\sum_k n_k}| & \textrm{by Claim~\ref{claim:inequality}} \\
    \end{align*}
    This gives us:
  \[ |x^{\sum_j m_j} y^{\sum_k n_k}| <
    2^{3-1/\alpha}C^2 R^{1/\alpha}|a^{\sum_i
      \ell_i}|. \]
  On the other hand:
  \begin{align*}
    \sum_i|a^{\ell_i}| + |x^{\sum_j m_j}y^{\sum_k n_k}|
    &\le \sum_i|a^{\ell_i}| + \sum_j|x^{m_j}| + \sum_k|y^{n_k}| \\
    &\le K|a^{\sum_i \ell_j}x^{\sum_j m_j}y^{\sum_k n_k}| \\
    &\le K|a^{\sum_i \ell_j}| + K|x^{\sum_j m_j}y^{\sum_k n_k}|
  \end{align*}
  So:
  \begin{align*}
    \sum_i|a^{\ell_i}|
    &\le K|a^{\sum_i \ell_j}| + (K-1)|x^{\sum_j m_j}y^{\sum_k n_k}| \\
    &< K|a^{\sum_i \ell_j}|
      + 2^{3-1/\alpha}C^2 R^{1/\alpha}(K-1)|a^{\sum_i \ell_i}| \\
   \tag{$\dag$} \label{eq:K_restraint} &= \bigl(K
      + 2^{3-1/\alpha}C^2 R^{1/\alpha}(K-1)\bigr)|a^{\sum_i \ell_j}| \\
    &\stackrel{K\to 1}{\longrightarrow} |a^{\sum_i \ell_j}|.
  \end{align*}
  
  Noting that if $T \ge J$ then
  $\sum_i|\ell_i| > \frac{1}{R} \cdot T \ge \frac{J}{R}$.  Thus, by
  Corollary~\ref{cor:one_long_escape}, if $J$ is large enough and
  $K > 1$ is small enough that~\eqref{eq:K_restraint} satisfies the
  requisite assumption --- all depending only on $R$ and $L$ --- then
  $\max_i|\ell_i| > \frac{R-1}{R} \sum_i |\ell_i|$.  

  We now assume $\sum_i|\ell_i| > \frac{1}{R} \cdot T$ to prove the
  ``moreover part.''  We will show that
  $\sum_j|m_j| \le \frac{1}{R} \cdot T$.  A similar argument can be
  used to show that $\sum_k|n_k| \le \frac{1}{R} \cdot T$.  For the
  sake of finding a contradiction, assume that
  $\sum_j|m_j| > \frac{1}{R} \cdot T$.  Without loss of generality, we
  have $\max_i|\ell_i| = |\ell_1|$ and $\max_j|m_j| = |m_1|$.  Then
  \begin{equation} \label{eq:m1_and_l1_bounds}
  |\ell_1| > \frac{R-1}{R}\sum_i|\ell_i| > \frac{R-1}{R^2} \cdot T
   \; \textrm{ and } \; |m_1| > \frac{R-1}{R}\sum_j|m_j| > \frac{R-1}{R^2} \cdot T.
   \end{equation}

  \noindent Then, by Fact~\ref{fact:reverse_holder} and
  Theorem~\ref{powerdistortion}, we have
  \begin{align*}
    &|a^{\ell_1}| \\
    &> |\ell_1|^{1/\alpha} & \textrm{Theorem~\ref{powerdistortion}} \\
    &> \Bigl(\frac{R-1}{R^2}\Bigr)^{1/\alpha} \cdot T^{1/\alpha} & \textrm{Equation~\eqref{eq:m1_and_l1_bounds}}\\
    &\ge \Bigl(\frac{R-1}{R^2}\Bigr)^{1/\alpha} \cdot \biggl(\Bigl|\sum_{i\ge
      2}\ell_i\Bigr| + \Bigl|\sum_{j\ge 2}m_j\Bigr|
      + \Bigl|\sum_kn_k\Bigr|\biggr)^{1/\alpha} & \textrm{assumption} \\
    &\ge \Bigl(\frac{R-1}{R^2}\Bigr)^{1/\alpha} \cdot
      \Bigl(\frac{1}{3}\Bigr)^{\frac{\alpha-1}{\alpha}} \cdot
      \biggl(\Bigl|\sum_{i\ge 2}\ell_i\Bigr|^{1/\alpha} + \Bigl|\sum_{j\ge
      2}m_j\Bigr|^{1/\alpha} + \Bigl|\sum_kn_k\Bigr|^{1/\alpha}\biggr) & \textrm{Fact~\ref{fact:reverse_holder}} \\
    &> \Bigl(\frac{R-1}{R^2}\Bigr)^{1/\alpha} \cdot
      \Bigl(\frac{1}{3}\Bigr)^{\frac{\alpha-1}{\alpha}} \cdot
      \frac{1}{C} \cdot \bigl(|a^{\sum_{i\ge 2}\ell_i}| + |x^{\sum_{j\ge
      2}m_j}| + |y^{\sum_kn_k}|\bigr) & \textrm{Theorem~\ref{powerdistortion}}\\
    &\ge \Bigl(\frac{R-1}{R^2}\Bigr)^{1/\alpha} \cdot
      \Bigl(\frac{1}{3}\Bigr)^{\frac{\alpha-1}{\alpha}} \cdot
      \frac{1}{C} \cdot |a^{\sum_{i\ge 2}\ell_i} x^{\sum_{j\ge 2}m_j}
      y^{\sum_kn_k}|
  \end{align*}
  and, similarly, we have
  $|x^{m_1}| > \bigl(\frac{R-1}{R^2}\bigr)^{1/\alpha} \cdot
  \bigl(\frac{1}{3}\bigr)^{\frac{\alpha-1}{\alpha}} \cdot
  \frac{1}{C} \cdot |a^{\sum_{i\ge 2}\ell_i} x^{\sum_{j\ge 2}m_j}
  y^{\sum_kn_k}|$.
  
  By Lemma~\ref{lem:ag_bound}\eqref{item:ag_bound}, for sufficiently
  small $K'>1$ we have:
  \begin{equation} \label{eq:a_l1_x_m1_bound}
  |a^{\ell_1}x^{m_1}| >\frac{1}{K'}(|a^{\ell_1}|+|x^{m_1}|)= \bigl(\frac{R-1}{R^2}\bigr)^{1/\alpha}
    \cdot \bigl(\frac{1}{3}\bigr)^{\frac{\alpha-1}{\alpha}} \cdot
    \frac{2}{CK'} \cdot |a^{\sum_{i\ge 2}\ell_i} x^{\sum_{j\ge 2}m_j}
    y^{\sum_kn_k}| 
    \end{equation}
    
    But:
    \begin{align*}
  |a^{\ell_1}| + |x^{m_1}| + |a^{\sum_{i\ge 2}\ell_i} x^{\sum_{j\ge 
      2}m_j} y^{\sum_kn_k}| &\le \sum_i |a^{{\ell}_i}| + \sum_j 
                              |x^{m_j}| + \sum_k |y^{n_k}|\\
      &\le K|a^{\sum_i \ell_i} x^{\sum_j m_j}
        y^{\sum_kn_k}|\\
      &\le K|a^{\ell_1} x^{m_1}| + K|a^{\sum_{i \ge 2}
    \ell_i} x^{\sum_{j \ge 2} m_j} y^{\sum_k n_k}|    
    \end{align*}

    so that
  \begin{align*}
    |a^{\ell_1}| + |x^{m_1}|
    &\le K|a^{\ell_1} x^{m_1}|
      + (K-1)|a^{\sum_{i \ge 2} \ell_i} x^{\sum_{j \ge 2} m_j} y^{\sum_k n_k}| \\
    &< K|a^{\ell_1} x^{m_1}| +  3^{\frac{\alpha-1}{\alpha}}
      \Bigl(\frac{R^2}{R-1}\Bigr)^{1/\alpha}\cdot\frac{CK'}{2}(K-1)|a^{\ell_1} x^{m_1}| & \textrm{Equation~\eqref{eq:a_l1_x_m1_bound}}\\
    &= \biggl(K + 3^{\frac{\alpha-1}{\alpha}}
      \Bigl(\frac{R^2}{R-1}\Bigr)^{1/\alpha}
      \cdot\frac{CK'}{2}(K-1)\biggr)|a^{\ell_1} x^{m_1}| \\
    &\stackrel{K\to 1}{\longrightarrow} |a^{\ell_1}x^{m_1}|
  \end{align*}
  Also, if $T \ge J$ then
  $|\ell_1| + |m_1| > \frac{2(R-1)}{R^2} \cdot T \ge
  \frac{2(R-1)}{R^2} \cdot TJ$.  Thus, by
  Lemma~\ref{lem:compatible_escapes}, if $J$ is large enough and
  $K > 1$ is small enough---depending only on $R$, $K'$, $C$ and
  $L$---then $|m_1| < \frac{1}{R}|\ell_1| < \frac{1}{R} \cdot T$,
  which is a contradiction.  Since $K'$ and $C$ depend only on $L$, we
  are done.
\end{proof}

\begin{lemma}\label{lem:largeescape}
  If $R\geq 2$ and $|n_1|>\frac{R-1}{R}\sum_{i=1}^j|n_i|$ then:
  \[\left|\frac{\sum_{i=1}^jn_i}{n_1}-1\right|<\frac{1}{R-1}\]
  and
  \[\frac{R-2}{R}<\frac{|\sum_{i=1}^jn_i|}{\sum_{i=1}^j|n_i|}\leq1\]
\end{lemma}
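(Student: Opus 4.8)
The plan is to reduce everything to two elementary facts about the tail sum $\sum_{i\geq 2}|n_i| = \sum_{i=1}^j|n_i| - |n_1|$. Write $S := \sum_{i=1}^j|n_i|$ and note first that the hypothesis is vacuous unless $S>0$ (if $S=0$ then every $n_i=0$ and $|n_1|>\frac{R-1}{R}\cdot 0$ fails), so we may assume $S>0$ and hence $|n_1|>0$. The case $j=1$ is trivial since then the left-hand side of the first inequality is $0$ and the middle term of the second is $1$, using only $R\geq 2>1$; so assume $j\geq 2$. The first key estimate is obtained directly from the hypothesis: $\sum_{i\geq 2}|n_i| = S - |n_1| < S - \frac{R-1}{R}S = \frac{1}{R}S$. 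The second comes from combining $|n_1|\leq S$ with the hypothesis: $|n_1| > \frac{R-1}{R}S$ gives $S < \frac{R}{R-1}|n_1|$, so $\sum_{i\geq 2}|n_i| = S - |n_1| < \frac{R}{R-1}|n_1| - |n_1| = \frac{1}{R-1}|n_1|$.

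For the first displayed inequality I would write $\frac{\sum_{i=1}^j n_i}{n_1} - 1 = \frac{\sum_{i\geq 2}n_i}{n_1}$ and bound its absolute value via the triangle inequality by $\frac{\sum_{i\geq 2}|n_i|}{|n_1|}$, which is strictly less than $\frac{1}{R-1}$ by the second key estimate. For the second displayed inequality, the bound $\frac{|\sum_i n_i|}{S}\leq 1$ is immediate from the triangle inequality. For the lower bound I would apply the reverse triangle inequality $|\sum_i n_i|\geq |n_1| - \sum_{i\geq 2}|n_i|$ together with $|n_1|>\frac{R-1}{R}S$ and the first key estimate $\sum_{i\geq 2}|n_i|<\frac{1}{R}S$ to get $|\sum_i n_i| > \frac{R-1}{R}S - \frac{1}{R}S = \frac{R-2}{R}S$; dividing by $S>0$ finishes it.

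There is no serious obstacle here — the whole argument is a two-line triangle-inequality manipulation. The only points requiring a sentence of care are the degenerate cases ($S=0$ making the hypothesis vacuous, and $j=1$ making both conclusions immediate) and the observation that the assumption $R\geq 2$ is exactly what guarantees $\frac{R-2}{R}\geq 0$, so that the lower bound in the second inequality is not vacuous (and that $R\geq 2$ comfortably ensures $R-1>0$ so dividing by $R-1$ and writing $\frac{1}{R-1}$ is legitimate).
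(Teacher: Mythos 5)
Your proof is correct and follows essentially the same route as the paper: both key steps rest on extracting the tail bound $\sum_{i\geq 2}|n_i|<\frac{1}{R-1}|n_1|$ (equivalently $\sum_{i\geq 2}|n_i|<\frac{1}{R}S$) from the hypothesis and then applying the triangle and reverse triangle inequalities. The paper states the first and last inequalities as ``easy'' and writes the lower-bound computation as a quotient $\frac{|n_1|-\sum_{i\geq 2}|n_i|}{|n_1|+\sum_{i\geq 2}|n_i|}$ rather than your direct estimate against $S$, but these are the same calculation.
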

\begin{proof}
  The first and last inequalities are easy. For the remaining
  inequality,  we have
  $|n_1|>(R-1)\sum_{i=2}^j|n_i|$, so:
    \[\frac{|\sum_{i=1}^jn_i|}{\sum_{i=1}^j|n_i|}\geq \frac{|n_1|-\sum_{i=2}^j|n_i|}{|n_1|+\sum_{i=2}^j|n_i|}>\frac{|n_1|-\frac{1}{R-1}|n_1|}{|n_1|+\frac{1}{R-1}|n_1|}=\frac{R-2}{R}\qedhere\]
\end{proof}

Now we put it all together. Proposition~\ref{longescapesinexponent}
and Theorem~\ref{longescapesinlength} say basically the same thing:
either there is a single dominant escape or there are two and they are
an $x$--escape and a $y$--escape. The difference in the two results is
that in Proposition~\ref{longescapesinexponent} `dominance' is in terms
of magnitude of exponent and in Theorem~\ref{longescapesinlength} it
is in terms of length.
\begin{proposition}\label{longescapesinexponent}
  Suppose $R\geq 3L+4$ and $J$ and $K$ are as in Lemma~\ref{biglemma} and:
  \[\sum_i
    |a^{{\ell}_i}| + \sum_j |x^{m_j}| + \sum_k |y^{n_k}| \leq K|a^{\sum_i {\ell}_i}x^{\sum_j m_j}y^{\sum_k n_k}|\]
  Suppose  $T:=\sum_i|\ell_i|+\sum_j|m_j|+\sum_k|n_k|\geq J$.

  \begin{enumerate}
    \item If  $\sum_i|\ell_i|>T/R$ or exactly one of $\sum_j|m_j|$ and
      $\sum_k|n_k|$ is strictly greater than $T/R$ then there is one
      maximal term whose absolute value is strictly greater than
      $\frac{(R-1)(R-2)}{R^2}T$.
      \item If $\sum_j|m_j|>T/R$ and $\sum_k|n_k|>T/R$ then $\max_j|m_j|+\max_k|n_k|>\left(\frac{R-1}{R}\right)^2T$.
  \item If  $x^z=a^{\sum_i\ell_i}x^{\sum_jm_j}y^{\sum_kn_k}$, then
$\sum_j|m_j|>T/R$
  and $\sum_i|\ell_i|,\,\sum_k|n_k|\leq T/R$.\label{casex}
  \item\label{casea}
     If  $a^z=a^{\sum_i\ell_i}x^{\sum_jm_j}y^{\sum_kn_k}$, then
     either:
     \begin{itemize}
       \item $\sum_i|\ell_i|>T/R$ and $\sum_j|m_j|,\,\sum_k|n_k|\leq T/R$, 
         or
       \item
       \begin{itemize}
         \item $\sum_j|m_j|,\,\sum_k|n_k|> T/R$,
         \item$\sum_i|\ell_i|\leq T/R$,
             \item
           $\max_j|m_j|,\,\max_k|n_k|>\frac{1}{2}\cdot \frac{(R-1)^2(R-2)}{R^3}\cdot
             T$, and
         \item
           $\left(\frac{R-2}{R+2}\right)^2<\frac{\max_j|m_j|}{\max_k|n_k|}<\left(\frac{R+2}{R-2}\right)^2$.
       
       \end{itemize}
     \end{itemize}
  \end{enumerate}
\end{proposition}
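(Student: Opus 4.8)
The plan is to derive the proposition entirely from Lemma~\ref{biglemma} (including its ``moreover'' clause), Lemma~\ref{lem:largeescape}, Theorem~\ref{powerdistortion}, and the $\mathbb{Z}^2$--structure of $H=\langle a,x\mid[a,x]\rangle$ obtained by setting $y=a^Lx^{-1}$. The hypothesis $R\geq 3L+4$ is used only to validate a handful of crude numerical inequalities, chiefly $L(R-2)>R$, $(R-2)^2>LR$, and $(R-2)^2>R$. The accounting fact used repeatedly is that the ``moreover'' clause of Lemma~\ref{biglemma} says: once $\sum_i|\ell_i|>T/R$, the $x$-- and $y$--masses are both $\leq T/R$; combined with $T=\sum_i|\ell_i|+\sum_j|m_j|+\sum_k|n_k|$ this forces a dominant mass to be at least $\frac{R-2}{R}T$.

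For items (1) and (2) no $\mathbb{Z}^2$--structure is needed. If $\sum_i|\ell_i|>T/R$, the ``moreover'' clause gives $\sum_j|m_j|,\sum_k|n_k|\leq T/R$, hence $\sum_i|\ell_i|\geq\frac{R-2}{R}T$, and the first implication of Lemma~\ref{biglemma} gives $\max_i|\ell_i|>\frac{R-1}{R}\sum_i|\ell_i|\geq\frac{(R-1)(R-2)}{R^2}T$. If instead exactly one of $\sum_j|m_j|,\sum_k|n_k|$ exceeds $T/R$, say $\sum_j|m_j|>T/R\geq\sum_k|n_k|$, then $\sum_i|\ell_i|\leq T/R$ (else the ``moreover'' clause forces $\sum_j|m_j|\leq T/R$), so again $\sum_j|m_j|\geq\frac{R-2}{R}T$ and the second implication gives the same bound; this is item (1), and since $\frac{2(R-1)(R-2)}{R^2}>1$ for $R\geq 3L+4$ the maximal term exceeding the threshold is unique. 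If both $\sum_j|m_j|>T/R$ and $\sum_k|n_k|>T/R$ then $\sum_i|\ell_i|\leq T/R$ by the ``moreover'' clause again, so $\sum_j|m_j|+\sum_k|n_k|\geq\frac{R-1}{R}T$ and the second and third implications give $\max_j|m_j|+\max_k|n_k|>\frac{R-1}{R}\bigl(\sum_j|m_j|+\sum_k|n_k|\bigr)\geq\bigl(\frac{R-1}{R}\bigr)^2T$, which is item (2).

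For items (3) and (4) I would write elements of $H$ as $a^ex^f$, so $y^n=a^{Ln}x^{-n}$ and $a^{\sum\ell_i}x^{\sum m_j}y^{\sum n_k}=a^{\sum\ell_i+L\sum n_k}x^{\sum m_j-\sum n_k}$. Thus $x^z=a^{\sum\ell_i}x^{\sum m_j}y^{\sum n_k}$ forces $\sum\ell_i=-L\sum n_k$, and $a^z=a^{\sum\ell_i}x^{\sum m_j}y^{\sum n_k}$ forces $\sum m_j=\sum n_k$. For item (3): if $\sum_i|\ell_i|>T/R$ then $\sum_i|\ell_i|\geq\frac{R-2}{R}T$ as above, and Lemma~\ref{biglemma} with Lemma~\ref{lem:largeescape} give $|\sum_i\ell_i|>\frac{R-2}{R}\sum_i|\ell_i|\geq\frac{(R-2)^2}{R^2}T$; but $|\sum_i\ell_i|=L|\sum_k n_k|\leq LT/R$, contradicting $(R-2)^2>LR$. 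So $\sum_i|\ell_i|\leq T/R$; if moreover $\sum_k|n_k|>T/R$, then $\max_k|n_k|>\frac{R-1}{R}\sum_k|n_k|$ and Lemma~\ref{lem:largeescape} give $|\sum_k n_k|>\frac{R-2}{R^2}T$, so $\sum_i|\ell_i|\geq L|\sum_k n_k|>\frac{L(R-2)}{R^2}T>T/R$ by $L(R-2)>R$, again a contradiction; hence $\sum_k|n_k|\leq T/R$ and $\sum_j|m_j|=T-\sum_i|\ell_i|-\sum_k|n_k|\geq\frac{R-2}{R}T>T/R$. For item (4) I would split on whether $\sum_i|\ell_i|>T/R$: if yes, the ``moreover'' clause is precisely the first alternative; if no, the identity $\sum m_j=\sum n_k$ plays the role of $\sum\ell_i=-L\sum n_k$ --- were one of $\sum_j|m_j|,\sum_k|n_k|$ at most $T/R$, say $\sum_k|n_k|\leq T/R$, then the other is $\geq\frac{R-2}{R}T$, so Lemma~\ref{lem:largeescape} gives $|\sum_k n_k|=|\sum_j m_j|>\frac{(R-2)^2}{R^2}T>T/R$ by $(R-2)^2>R$, a contradiction --- so both $x$-- and $y$--masses exceed $T/R$ while $\sum_i|\ell_i|\leq T/R$.

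Finally, in the second alternative of item (4) one extracts the precise bounds from the single quantity $S:=|\sum_j m_j|=|\sum_k n_k|$. Lemma~\ref{biglemma} produces dominant $x$-- and $y$--escapes, and Lemma~\ref{lem:largeescape} then yields $\frac{R-1}{R}S<\max_j|m_j|,\ \max_k|n_k|<\frac{R-1}{R-2}S$ together with $\sum_j|m_j|,\ \sum_k|n_k|<\frac{R}{R-2}S$. Hence $\max_j|m_j|/\max_k|n_k|\in\bigl(\frac{R-2}{R},\frac{R}{R-2}\bigr)$, which lies inside $\bigl((\frac{R-2}{R+2})^2,(\frac{R+2}{R-2})^2\bigr)$ because $(R+2)^2>R(R-2)$; and from $\sum_j|m_j|+\sum_k|n_k|<\frac{2R}{R-2}S$ and $\sum_j|m_j|+\sum_k|n_k|\geq\frac{R-1}{R}T$ (valid since $\sum_i|\ell_i|\leq T/R$) we get $S>\frac{(R-1)(R-2)}{2R^2}T$, so $\max_j|m_j|>\frac{R-1}{R}S>\frac12\cdot\frac{(R-1)^2(R-2)}{R^3}T$, and symmetrically for $\max_k|n_k|$. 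The main obstacle is exactly this last step: the relation $\sum m_j=\sum n_k$ is the only link between the $x$-- and $y$--sides, and both the comparability of the dominant escapes and their individual lower bounds must be squeezed out of it through Lemma~\ref{lem:largeescape}; keeping the numerous case distinctions straight, and confirming that every ``impossible'' branch is closed by an inequality that holds for $R\geq 3L+4$ but fails for small $R$, is where the care lies.
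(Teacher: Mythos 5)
Your proof is correct and follows essentially the same route as the paper's: items (1) and (2) from Lemma~\ref{biglemma} together with its ``moreover'' clause, and items (3) and (4) by combining that lemma with Lemma~\ref{lem:largeescape} and the relations $\sum_i\ell_i=-L\sum_k n_k$ (resp.\ $\sum_j m_j=\sum_k n_k$) forced by the $\mathbb{Z}^2$--structure of $H$, closing each impossible branch with an inequality valid for $R\geq 3L+4$. The only deviations are cosmetic: you establish both $\sum_j|m_j|>T/R$ and $\sum_k|n_k|>T/R$ by contradiction before extracting the bounds symmetrically from $S=|\sum_j m_j|=|\sum_k n_k|$, which in fact yields a slightly sharper ratio estimate $\bigl(\tfrac{R-2}{R},\tfrac{R}{R-2}\bigr)$ than the paper's, while the paper instead takes $\sum_j|m_j|\geq\sum_k|n_k|$ without loss of generality and chains the estimates through.
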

\begin{proof}
  The first case follows from Lemma~\ref{biglemma}, since if two of
  the sums are bounded above by $T/R$ then the third is bounded below
  by $\frac{R-2}{R}T$.

  The second case also follows immediately from Lemma~\ref{biglemma},
  since:
  \[\max_j|m_j|+\max_k|n_k|>\frac{R-1}{R}(\sum_j|m_j|+\sum_k|n_k|)\geq
  \frac{R-1}{R}(T-\sum_i|\ell_i|)\geq \frac{R-1}{R}(1-\frac{1}{R})T\]
  
  In Case~\eqref{casex}, we have $\sum_i\ell_i=pL$ and $z=p+\sum_jm_j$
  and $0=p+\sum_kn_k$.
  Suppose that $\sum_i|\ell_i|>T/R$.
  By Lemma~\ref{biglemma}, $$\max_i|\ell_i|>\frac{R-1}{R}\sum_k|\ell_k| \;\;\;\; \textrm{
and } \;\;\;\; \sum_j|m_j|,\,\sum_k|n_k|\leq T/R,$$ the latter implying
$\sum_i|\ell_i|\geq \frac{R-2}{R}T$.
Applying Lemma~\ref{lem:largeescape}:
\[\frac{T}{R} \geq \sum_k|n_k|
              \geq |\sum_kn_k|
              = |p|
              = \frac{|\sum_i \ell_i|}{|L|}
              > \frac{1}{L}\cdot\frac{R-2}{R}\cdot\sum_i|\ell_i|
              \geq \frac{1}{L}\cdot\left(\frac{R-2}{R}\right)^2T
              \]
This gives a contradiction if $R\geq 3L+4$.

  Suppose that $\sum_k|n_k|>T/R$.
  By Lemma~\ref{biglemma}, $\sum_i|\ell_i|\leq T/R$ and $\max_k|n_k|>\frac{R-1}{R}\sum_k|n_k|$ so, applying Lemma~\ref{lem:largeescape}:
  \[\frac{T}{R} \geq \sum_i|\ell_i|
                \geq|\sum_i\ell_i|=L|p|=L|\sum_kn_k|>L\cdot\frac{R-2}{R}\cdot\sum_k|n_k|>L\cdot\frac{R-2}{R}\cdot\frac{T}{R}\]
  This implies $R<\frac{2L}{L-1}\leq 12/5$, which is a contradiction.

  Since $R>3$, we are left with the possibility that only
  $\sum_j|m_j|$ is strictly greater than $T/R$.

  In Case~\eqref{casea}, if $\sum_i|\ell_i|>T/R$ then
  Lemma~\ref{biglemma} lets us revert to Case~(1), so assume
  $\sum_i|\ell_i|\leq T/R$.
  Without loss of generality, assume $\sum_j|m_j|\geq \sum_k|n_k|$, so
  that $\sum_j|m_j|\geq\frac{R-1}{2R}\cdot T>\frac{T}{R}$.
  By
  Lemma~\ref{biglemma}
  $\max_j|m_j|>\frac{R-1}{R}\sum_j|m_j|\geq\frac{1}{2}\cdot\left(\frac{R-1}{R}\right)^2\cdot
  T$.
  
  The constraint $a^z=a^{\sum_i\ell_i}x^{\sum_jm_j}y^{\sum_kn_k}$
  implies $\sum_j m_j=\sum_kn_k=(z-\sum_i\ell_i)/L$, so, applying Lemma~\ref{lem:largeescape}:
  \[\sum_k|n_k|\geq |\sum_kn_k|=|\sum_j m_j|\geq
    \frac{R-2}{R}\sum_j|m_j|\geq \frac{(R-1)(R-2)}{2R^2}\cdot T\]
  The given lower bound on $R$ is enough to imply that this quantity
  is greater than $T/R$, so  Lemma~\ref{biglemma} applies to give:
  \[\max_k|n_k|>\frac{R-1}{R}\sum_k|n_k|\geq
    \frac{(R-1)^2(R-2)}{2R^3}\cdot T\]
  It remains to estimate $\max_j|m_j|/\max_k|n_k|$:
  \begin{align*}
    \frac{\max_j|m_j|}{\max_k|n_k|}&\leq
                                     \frac{\sum_j|m_j|}{\frac{R-1}{R}\sum_k|n_k|}\\
    &\leq
                                     \frac{T-\sum_k|n_k|}{\frac{R-1}{R}\sum_k|n_k|}\\
                                   &\leq \frac{R}{R-1}\left(\frac{2R^2}{(R-1)(R-2)}-1\right)\\
    &<\left(\frac{R+2}{R-2}\right)^2
  \end{align*}
  The last inequality is true, in particular, when $R>2$.
  A similar computation gives the lower bound.
  \end{proof}

  \begin{thm}\label{longescapesinlength}
    For any $R'>1$ there exist $J'>1$ and $2>K>1$ such that if
    $\gamma$ is a $K$--biLipschitz path with endpoints in
    $\langle a,x,y\rangle$ 
    then either $|\gamma|\leq J'$ or $\gamma$ has either one or two long escapes
    that collectively account for at least $\frac{R'-1}{R'}$ fraction
    of its length.  
    Furthermore, in the case of two long escapes $\epsilon_0$ and
    $\epsilon_1$ they
    are one $x$--escape and one $y$--escape.
    
    If the
    endpoints of $\gamma$ differ by a power of $x$ or $y$, then there
    is only one long escape, and it is of the corresponding flavor,
    and if the endpoints of $\gamma$ differ by a power of $a$ then
    either there is one long $a$--escape or there is one long
    $x$--escape and one long $y$--escape. 
  \end{thm}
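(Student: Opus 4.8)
The plan is to deduce the theorem from its exponent analogue, Proposition~\ref{longescapesinexponent}, using Theorem~\ref{powerdistortion}, Fact~\ref{fact:reverse_holder} and Lemma~\ref{lem:largeescape} to pass between exponents and word lengths. First I would decompose $\gamma$ into escapes and toral subpaths of $H$ as in Section~\ref{first_geodesic_result}. Since $\gamma$ is $K$--biLipschitz with $K<2$, every toral subpath is an $a$--path, and, being a $K$--biLipschitz subpath of $\gamma$, an $a$--path of exponent $\ell$ obeys $|\ell|\le K|a^\ell|<KC|\ell|^{1/\alpha}$, so such subpaths have bounded exponent; likewise every escape of flavour $g$ and exponent $e$ has length between $|g^e|$ and $K|g^e|$. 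Collect the exponents of the $a$--escapes and toral subpaths into $\{\ell_i\}$, those of the $x$--escapes into $\{m_j\}$, and those of the $y$--escapes into $\{n_k\}$. Summing escape lengths and using that $\gamma$ itself is $K$--biLipschitz yields
\[
  \sum_i|a^{\ell_i}|+\sum_j|x^{m_j}|+\sum_k|y^{n_k}|\ \le\ |\gamma|\ \le\ K\bigl|a^{\sum_i\ell_i}x^{\sum_jm_j}y^{\sum_kn_k}\bigr|.
\]
Put $T=\sum_i|\ell_i|+\sum_j|m_j|+\sum_k|n_k|$. If $T$ is below the threshold $J$ of Lemma~\ref{biglemma}, then all three coordinate sums are $<J$, so the triangle inequality and Lemma~\ref{lem:ag_bound}\eqref{item:glength} bound $|\gamma|$ by an explicit constant, placing $\gamma$ in the ``$|\gamma|\le J'$'' alternative. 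Otherwise $T\ge J$ and the displayed inequality is exactly the hypothesis of Proposition~\ref{longescapesinexponent}, which I would apply with a large $R=R(R')$ fixed at the end.

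Next I would use Proposition~\ref{longescapesinexponent} to locate the dominant escapes \emph{in exponent}. Since $R>3$, at least one of $\sum_i|\ell_i|,\sum_j|m_j|,\sum_k|n_k|$ exceeds $T/R$, and the ``moreover'' clause of Lemma~\ref{biglemma} prevents an $a$--sum and an $x$-- or $y$--sum from both being large. If only the $a$--sum, or exactly one of the $x$-- and $y$--sums, is large, then Proposition~\ref{longescapesinexponent} produces a single escape $\epsilon^*$ whose exponent $\ell_1$ satisfies $|\ell_1|>\frac{(R-1)(R-2)}{R^2}T$; the flavour of $\epsilon^*$ is that of the large sum, and $\epsilon^*$ is genuinely an escape (not a short toral subpath) because $T\ge J$ is large. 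If both the $x$-- and $y$--sums are large, it produces an $x$--escape $\epsilon^*_x$ and a $y$--escape $\epsilon^*_y$ with exponents $m_1,n_1$ satisfying $|m_1|+|n_1|>c_1(R)T$ with $c_1(R)\to1$. Cases~\eqref{casex} and \eqref{casea} of Proposition~\ref{longescapesinexponent} supply the refinements in the statement: a power-of-$x$ (resp.\ $y$) endpoint difference forces the $x$--sum (resp.\ $y$--sum) to be the unique large one, hence a single long escape of that flavour; a power-of-$a$ endpoint difference forces either the $a$--sum alone, or both of the $x$-- and $y$--sums, to be large. In particular, two dominant escapes are always one $x$-- and one $y$--escape.

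The main obstacle is upgrading domination in exponent to domination in length, and the obstruction is the non-monotonicity of $\ell\mapsto|a^\ell|$ stressed in the introduction: although $\sum_i\ell_i$ and $\ell_1$ are nearly equal, the crude estimate of Theorem~\ref{powerdistortion} only shows $|a^{\sum_i\ell_i}|$ and $|a^{\ell_1}|$ agree up to the multiplicative constant $C$, which is useless. I would circumvent this by never comparing those two quantities. Write $\gamma=\alpha+\epsilon^*+\beta$; then $\alpha$ and $\beta$ are $K$--biLipschitz paths with both endpoints in $H$, so each has length at most $K$ times the distance between its endpoints, and that endpoint difference is a product of powers of $a,x,y$ whose exponents are all \emph{non-dominant}: by Lemma~\ref{biglemma} the $a$--exponent has absolute value at most $\frac{1}{R-1}|\ell_1|\le\frac{1}{R-1}T$, and the $x$-- and $y$--exponents have absolute value at most $T/R$. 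The triangle inequality together with Theorem~\ref{powerdistortion} and Lemma~\ref{lem:ag_bound}\eqref{item:glength} then bounds $|\alpha|,|\beta|$ by $\varepsilon(R)\,T^{1/\alpha}+O(1)$ with $\varepsilon(R)\to0$, whereas $|\epsilon^*|\ge|a^{\ell_1}|\ge|\ell_1|^{1/\alpha}\ge c(R)\,T^{1/\alpha}$ with $c(R)\to1$ by Theorem~\ref{powerdistortion}. Hence $|\gamma|\le\bigl(1+2K\varepsilon(R)/c(R)\bigr)|\epsilon^*|+O(1)$, and taking $R$ large, then $K>1$ small enough (in particular small enough for Lemma~\ref{biglemma}), then $J'$ large enough to absorb the $O(1)$ term and the cases with $|\gamma|$ bounded, forces $|\epsilon^*|\ge\frac{R'-1}{R'}|\gamma|$. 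The two-escape case is structurally identical: write $\gamma=\alpha+\epsilon^*_x+\mu+\epsilon^*_y+\nu$ in whichever of the two orders occurs; the non-dominant pieces $\alpha,\mu,\nu$ again have $H$--endpoints with only non-dominant exponents, so their total length is $\varepsilon(R)T^{1/\alpha}+O(1)$, while by reverse H\"older (Fact~\ref{fact:reverse_holder}) and Proposition~\ref{longescapesinexponent} one has $|\epsilon^*_x|+|\epsilon^*_y|\ge|x^{m_1}|+|y^{n_1}|\ge|m_1|^{1/\alpha}+|n_1|^{1/\alpha}\ge(|m_1|+|n_1|)^{1/\alpha}\ge c(R)T^{1/\alpha}$, and the same choice of constants completes the argument.
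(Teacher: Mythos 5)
Your proposal is correct and follows essentially the same route as the paper: both use Lemma~\ref{biglemma} / Proposition~\ref{longescapesinexponent} to locate the dominant escape(s) in exponent, then upgrade to a length statement by applying the $K$--biLipschitz bound to the complementary subpaths (which have both endpoints in $H$ and only non-dominant exponent content) and comparing with a Theorem~\ref{powerdistortion} / reverse-Hölder lower bound on the dominant escape lengths. The paper packages this final comparison as a single ratio $\bigl(|\delta_1|+|\delta_2|+|\delta_3|\bigr)/|\gamma|$ that tends to $0$ as $R\to\infty$, whereas you bound numerator and denominator separately, but this is only a cosmetic difference.
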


  \begin{proof}
    Since $\gamma$ is $K$--biLipschitz for $K<2$, $\gamma$ does not
    use any $x$ or $y$ edges, so toral subpaths are just powers of $a$.
    Thus, the $a$--escapes and/or maximal toral subpaths have
    endpoints that differ by $a^{\ell_1},a^{\ell_2},\dots$ and the
    $x$--escapes and $y$--escapes have endpoints that differ by
    $x^{m_1},x^{m_2},\dots$ and $y^{n_1},y^{n_2},\dots$, respectively.
    
    Given $R'$ we will determine in the course of the proof a bound
    such that the rest of the argument works provided $R$ is larger
    than that bound and $K$ and $J$ are chosen as in Lemma~\ref{biglemma} with
    respect to $R$.
    Since we are allowed to enlarge $R$ we may assume $R\geq 3L+4$ so
    that Proposition~\ref{longescapesinexponent} applies. 
    Let $T$ be the usual sum of absolute
    values.
    Let $J':=JK$.
    If $T<J$ then $|\gamma|\leq KT<KJ$, so it suffices to take
    $|\gamma|\geq J'$ to guarantee $T\geq J$ so that we may apply Lemma~\ref{biglemma}.

    By Lemma~\ref{biglemma} there are either one or two long escapes.
    Proposition~\ref{longescapesinexponent} bounds these in terms of the
    exponent of the difference between their endpoints.
    It remains only to recast these bounds in terms of path length.
    
    Suppose there are two long escapes $\epsilon_1$ and $\epsilon_2$,
    the strategy in the other cases being
    similar.
    By Proposition~\ref{longescapesinexponent},
    $\max_j|m_j|+\max_k|n_k|>\left(\frac{R-1}{R}\right)^2T$.
    The complement of these two escapes in $\gamma$ consists of at
    most three subpaths, $\delta_1$, $\delta_2$, and $\delta_3$.
    Let $I_{a,1}$ be the set of indices $i$ such that the escape of $\gamma$
    corresponding to the term $a^{\ell_i}$ belongs to $\delta_1$.
    Similarly, define  $I_{a,2}$, $I_{a,3}$, $I_{x,1}$, etc.
    \begin{align*}
      \frac{|\delta_1|+|\delta_2|+|\delta_3|}{|\gamma|}&\leq\frac{K\sum_{h=1}^3|a^{\sum_{i\in
                                                         I_{a,h}}\ell_i}x^{\sum_{j\in
                                                         I_{x,h}}m_j}y^{\sum_{k\in
                                                         I_{y,h}}n_k}|}{|\delta_1|+|\epsilon_1|+|\delta_2|+|\epsilon_2|+|\delta_3|}\\
      &\leq\frac{K\sum_{h=1}^3|a^{\sum_{i\in
                                                         I_{a,h}}\ell_i}|+|x^{\sum_{j\in
                                                         I_{x,h}}m_j}|+|y^{\sum_{k\in
        I_{y,h}}n_k}|}{|\epsilon_1|+|\epsilon_2|}\\
      &\leq\frac{KC\sum_{h=1}^3|{\sum_{i\in
                                                         I_{a,h}}\ell_i}|^{1/\alpha}+|{\sum_{j\in
                                                         I_{x,h}}m_j}|^{1/\alpha}+|{\sum_{k\in
        I_{y,h}}n_k}|^{1/\alpha}}{(\max_j|m_j|)^{1/\alpha}+(\max_k|n_k|)^{1/\alpha}}\\
      &\leq\frac{KC\cdot 9^{1-1/\alpha}(\sum_{h=1}^3|{\sum_{i\in
                                                         I_{a,h}}\ell_i}|+|{\sum_{j\in
                                                         I_{x,h}}m_j}|+|{\sum_{k\in
        I_{y,h}}n_k}|)^{1/\alpha}}{(\max_j|m_j|+\max_k|n_k|)^{1/\alpha}}\\
       &\leq\frac{KC\cdot 9^{1-1/\alpha}(\sum_{h=1}^3{\sum_{i\in
                                                         I_{a,h}}|\ell_i|}+{\sum_{j\in
                                                         I_{x,h}}|m_j|}+{\sum_{k\in
         I_{y,h}}|n_k|})^{1/\alpha}}{(\max_j|m_j|+\max_k|n_k|)^{1/\alpha}}\\
      &<\frac{KC\cdot
        9^{1-1/\alpha}\left(\left(1-\left(\frac{R-1}{R}\right)^2\right)T\right)^{1/\alpha}}{\left(\left(\frac{R-1}{R}\right)^2T\right)^{1/\alpha}}\\
      &=KC\cdot
        9^{1-1/\alpha}\left(\frac{2R-1}{(R-1)^2}\right)^{1/\alpha}\\
      &<2C\cdot
        9^{1-1/\alpha}\left(\frac{2R-1}{(R-1)^2}\right)^{1/\alpha}\stackrel{R\to\infty}{\longrightarrow}0
    \end{align*}
    Thus, given $R'>1$, for sufficiently large $R$ we have
    $\frac{|\delta_1|+|\delta_2|+|\delta_3|}{|\gamma|}<\frac{1}{R'}$,
    or, conversely, $\frac{|\epsilon_1|+|\epsilon_2|}{|\gamma|}>
    \frac{R'-1}{R'}$.
%
  \end{proof}

\subsection{The central `region' of a biLipschitz cycle}\label{sec:escapecentralregion}

The goal of this subsection is to show, as promised in Section~\ref{sec:idea:snowflake},
that the central region has at most four long sides.

\newcommand{\gvert}{\node[vertex]}
\newcommand{\dgedge}{%
  \draw[thick,%
  postaction={decorate},%
  decoration={markings,mark=at position 1/2 with {\arrow{>}}}]}
\newcommand{\dgedgedec}[1]{%
  \draw[thick,%
  postaction={decorate},%
  decoration={markings,mark=at position 1/2 with {\arrow{>},#1}}]}

\newcommand{\squig}{%
  \draw [decorate, decoration={snake, segment length=3pt, amplitude=1pt}]}

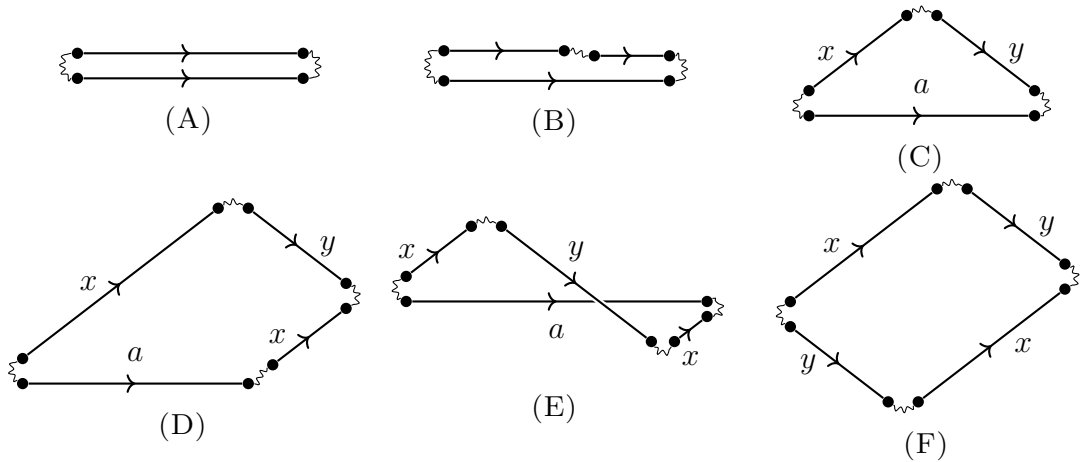
\begin{figure}
  \begin{subfigure}[c]{0.3\linewidth}
    \centering
    \begin{tikzpicture}[scale=1]
      \gvert (a1) at (0,0) {};
      \gvert (a2) at (3,0) {};
      \gvert (b1) at (0,1/3) {};
      \gvert (b2) at (3,1/3) {};
      \dgedge (a1) -- (a2);
      \dgedge (b1) -- (b2);
      
      \squig (a1) to[out=180,in=180] (b1);
      \squig (b2) to[out=0,in=0] (a2);
    \end{tikzpicture}
    \caption{}
    \label{fig:central_region:two_escapes}
  \end{subfigure}
  \begin{subfigure}[c]{0.3\linewidth}
    \centering
    \begin{tikzpicture}[scale=1]
      \gvert (a1) at (0,0) {};
      \gvert (a2) at (3,0) {};
      \gvert (b1) at (0,2/5) {};
      \gvert (b2) at (8/5,2/5) {};
      \gvert (b3) at (2,1/3) {};
      \gvert (b4) at (3,1/3) {};
      \dgedge (a1) -- (a2);
      \dgedge (b1) -- (b2);
      \dgedge (b3) -- (b4);
      
      \squig (a1) to[out=180,in=180] (b1);
      \squig (b4) to[out=0,in=0] (a2);
      \squig (b2) to[out=0,in=180] (b3);
    \end{tikzpicture}
    \caption{}
    \label{fig:central_region:three_escapes:slim}
  \end{subfigure}
  \begin{subfigure}[c]{0.3\linewidth}
    \centering
    \begin{tikzpicture}[scale=1]
      \gvert (a1) at (0,0) {};
      \gvert (a2) at (3,0) {};
      \gvert (b1) at (0,1/3) {};
      \gvert (b2) at (13/10,4/3) {};
      \gvert (b3) at (17/10,4/3) {};
      \gvert (b4) at (3,1/3) {};
      \dgedgedec{\node[label={above:$a$}] {};} (a1) -- (a2);
      \dgedgedec{\node[label={left:$x$}] {};} (b1) -- (b2);
      \dgedgedec{\node[label={right:$y$}] {};} (b3) -- (b4);
      
      \squig (a1) to[out=180,in=210] (b1);
      \squig (b4) to[out=-30,in=0] (a2);
      \squig (b2) to[out=30,in=150] (b3);
    \end{tikzpicture}
    \caption{}
    \label{fig:central_region:three_escapes:fat}
  \end{subfigure}
  \\
  \begin{subfigure}[c]{0.3\linewidth}
    \centering
    \begin{tikzpicture}[scale=1]
      \gvert (a1) at (0,0) {};
      \gvert (a2) at (3,0) {};
      \gvert (a3) at (133/40,1/4) {};
      \gvert (a4) at (43/10,1) {};
      \gvert (b1) at (0,1/3) {};
      \gvert (b2) at (13/5,7/3) {};
      \gvert (b3) at (3,7/3) {};
      \gvert (b4) at (43/10,4/3) {};
      \dgedgedec{\node[label={above:$a$}] {};} (a1) -- (a2);
      \dgedgedec{\node[label={left:$x$}] {};} (a3) -- (a4);
      \dgedgedec{\node[label={left:$x$}] {};} (b1) -- (b2);
      \dgedgedec{\node[label={right:$y$}] {};} (b3) -- (b4);
      
      \squig (a1) to[out=150,in=210] (b1);
      \squig (b4) to[out=-30,in=30] (a4);
      \squig (b2) to[out=30,in=150] (b3);
      \squig (a3) to[out=210,in=0] (a2);
    \end{tikzpicture}
    \caption{}
    \label{fig:central_region:four_escapes:a_embed}
  \end{subfigure}
  \begin{subfigure}[c]{0.31\linewidth}
    \centering
    \begin{tikzpicture}[scale=1]
      \gvert (a1) at (0,0) {};
      \gvert (a2) at (4,0) {};
      \gvert (a3) at (4,-1/5) {};
      \gvert (a4) at (4-13/30,-1/5-1/3) {};
      \gvert (b1) at (0,1/3) {};
      \gvert (b2) at (104/120,1) {};
      \gvert (b3) at (152/120,1) {};
      \gvert (b4) at (1956/600,-1/5-1/3) {};
      \dgedgedec{\node[label={below:$a$}] {};} (a1) -- (a2);
      \dgedgedec{\node[label={below:$x$}] {};} (a4) -- (a3);
      \dgedgedec{\node[label={left:$x$}] {};} (b1) -- (b2);
      \draw[thickeraser] (b3) -- (b4);
      \dgedgedec{\node[label={above:$y$}] {};} (b3) -- (b4);
      
      \squig (a1) to[out=150,in=210] (b1);
      \squig (b4) to[out=-60,in=239] (a4);
      \squig (b2) to[out=30,in=150] (b3);
      \squig (a3) to[out=0,in=30] (a2);
    \end{tikzpicture}
    \caption{}
    \label{fig:central_region:four_escapes:a_cross}
  \end{subfigure}
  \begin{subfigure}[c]{0.3\linewidth}
    \centering
    \begin{tikzpicture}[scale=1]
      \gvert (a1) at (0,0) {};
      \gvert (a2) at (13/10,-1) {};
      \gvert (a3) at (17/10,-1) {};
      \gvert (a4) at (73/20,1/2) {};
      \gvert (b1) at (0,1/3) {};
      \gvert (b2) at (39/20,11/6) {};
      \gvert (b3) at (47/20,11/6) {};
      \gvert (b4) at (73/20,5/6) {};
      \dgedgedec{\node[label={left:$y$}] {};} (a1) -- (a2);
      \dgedgedec{\node[label={right:$x$}] {};} (a3) -- (a4);
      \dgedgedec{\node[label={left:$x$}] {};} (b1) -- (b2);
      \dgedgedec{\node[label={right:$y$}] {};} (b3) -- (b4);
      
      \squig (a1) to[out=150,in=210] (b1);
      \squig (b4) to[out=-30,in=30] (a4);
      \squig (b2) to[out=30,in=150] (b3);
      \squig (a3) to[out=220,in=-30] (a2);
    \end{tikzpicture}
    \caption{}
    \label{fig:central_region:four_escapes:rectangle}
  \end{subfigure}
  \caption{The possible configurations, up to symmetry, of the long
    escapes of a central region of the minimal area disk diagram of a
    biLipschitz cycle.  The directed segments represent the oriented
    corridors of the long escapes with labels indicating the flavor of
    the escape.  (In the configurations without labels, all long
    escapes must have the same flavor but each of the three flavors is
    possible.)  The squiggly paths represent the complement of the
    long escapes in the cycle.}
  \label{fig:central_region_cases}
\end{figure}

\begin{defn}
  Let $\gamma \from S \to X$ be a cycle such that $\gamma\cap
  H\neq\emptyset$ and such that all of the escapes of $\gamma$ from
  $H$ have length at most $|\gamma|/2$.
  Let $\{\gamma|_{P_1},\,\gamma|_{P_2},\dots,\,\gamma|_{P_k}\}$ be a
  set of  distinct escapes from $H$.
The set is \emph{manageable} if one of the following conditions referencing
  Figure~\ref{fig:central_region_cases} holds.
  \begin{itemize}
  \item $k = 2$ and
    \begin{enumerate}
    \item[\eqref{fig:central_region:two_escapes}] $\gamma|_{P_1}$ and
      $\gamma|_{P_2}$ are of the same flavor with opposite sign.
    \end{enumerate}
   \item $k = 3$ and
    \begin{enumerate}
    \item[\eqref{fig:central_region:three_escapes:slim}] the
      $\gamma|_{P_i}$ are all of the same flavor but are not all of
      the same sign, or
    \item[\eqref{fig:central_region:three_escapes:fat}] there is
      exactly one escape of each flavor among the $\gamma|_{P_i}$ and
      the $a$-escape has sign opposite to that of the $x$-escape and
      the $y$-escape.
    \end{enumerate}
  \item $k = 4$ and three consecutive escapes $\gamma|_{P_{i_1}}$,
    $\gamma|_{P_{i_2}}$ and $\gamma|_{P_{i_3}}$, alternate between $x$
    and $y$ flavors and the fourth escape $\gamma|_{P_{i_4}}$ has sign
    opposite to $\gamma|_{P_{i_2}}$ and either
    \begin{enumerate}
    \item[\eqref{fig:central_region:four_escapes:a_embed}]
      $\gamma|_{P_{i_4}}$ is an $a$-escape and the escapes
      $\gamma|_{P_{i_1}}$ and $\gamma|_{P_{i_3}}$ have opposite sign,
      or
    \item[\eqref{fig:central_region:four_escapes:a_cross}]
      $\gamma|_{P_{i_4}}$ is an $a$-escape and the escapes
      $\gamma|_{P_{i_1}}$ and $\gamma|_{P_{i_3}}$ have the same sign,
      or
    \item[\eqref{fig:central_region:four_escapes:rectangle}]
      $\gamma|_{P_{i_4}}$ is of the same flavor as $\gamma|_{P_{i_2}}$
      and the escapes $\gamma|_{P_{i_1}}$ and $\gamma|_{P_{i_3}}$ have
      opposite sign.
    \end{enumerate}
  \end{itemize}
\end{defn}

\begin{prop}\label{prop:manage_or_simplify}
  For any  $M \geq 24C+1$ and $R > M-1$ there are
  $J > 0$ and a $K \in (1,2)$ such that the following holds.

  Let $\gamma \from S \to X$ be a $K$-biLipschitz cycle of length
  $|\gamma| \ge J$ such that $\gamma\cap H\neq \emptyset$ and every
  escape of $\gamma$ from $H$ has length at most $|\gamma|/2$.
  Let $\gamma|_{P_1}$,
  $\gamma|_{P_2}$, \ldots, $\gamma|_{P_k}$ be distinct escapes from $H$
  such that $k \le 5$ and
  $\sum_{i=1}^k\bigl|\gamma|_{P_i}\bigr| \ge \frac{R-1}{R} \cdot
  |\gamma|$.  Then:
  \begin{enumerate}
  \item[(I)] $k > 1$
  \item[(II)] If $k \in \{2,3,4,5\}$ then either the $\gamma|_{P_i}$
    are manageable or
    $\bigl|\gamma|_{P_i}\bigr| \le \frac{M-1}{R} \cdot |\gamma|$ for
    some $i$.
  \end{enumerate}
\end{prop}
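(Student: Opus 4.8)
The plan is to prove (I) directly and to prove (II) by contradiction: assume that the $\gamma|_{P_i}$ are \emph{not} manageable and that $|\gamma|_{P_i}| > \tfrac{M-1}{R}|\gamma|$ for every $i$ (the ``no small escape'' case), and derive a contradiction. Part (I) is immediate: a single escape of length $\ge \tfrac{R-1}{R}|\gamma| > \tfrac12|\gamma|$ (here $R > M-1 \ge 24C > 2$) would violate the hypothesis that every escape has length at most $|\gamma|/2$. Also, if $R \le 2(M-1)$ then every escape, having length $\le|\gamma|/2 \le \tfrac{M-1}{R}|\gamma|$, already makes (II) hold, so in proving (II) we may assume $R \ge 2(M-1)$. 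Write $\gamma = E_1\delta_1E_2\delta_2\cdots E_k\delta_k$ cyclically, where $E_i := \gamma|_{P_i}$ has flavor $f_i \in \{a,x,y\}$ and exponent $e_i$, and the fillers $\delta_i$ are the intervening sub-arcs, which have both endpoints in $H$ and satisfy $\sum_i|\delta_i| \le \tfrac1R|\gamma|$, while each $|E_i| \le |\gamma|/2$ and each $|E_i| > \tfrac{M-1}{R}|\gamma|$.

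\textbf{Two structural ingredients.} First, any sub-arc of $\gamma$ of length at most $|\gamma|/2$ with endpoints in $H$ is a $K$--biLipschitz path with endpoints in $H$ --- for two of its points the sub-arc realizes the circle distance --- so Theorem~\ref{longescapesinlength} applies to it. Second, since $H \cong \Z^2$ is abelian and $\gamma$ is a loop meeting $H$, collapsing every escape of $\gamma$ to its trace makes $\gamma$ a null-homotopic loop in $H$; identifying $H$ with $\Z^2$ by $a\mapsto(1,0)$, $x\mapsto(0,1)$ (so $y=a^Lx^{-1}\mapsto(L,-1)$) this says $\sum_i v_i + \sum_i\tau(\delta_i)=0$ in $\Z^2$, where $v_i := \tau(E_i)$ lies on the $a$--, $x$--, or $y$--line $\langle(1,0)\rangle$, $\langle(0,1)\rangle$, $\langle(L,-1)\rangle$, and $\tau(\delta_i)$ is the trace vector of $\delta_i$. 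Using Theorem~\ref{powerdistortion} and the $K$--biLipschitz condition, $|e_i| > \bigl(\tfrac{|E_i|}{K(C+2)}\bigr)^{\alpha} > D\mu$, where $\mu := (|\gamma|/R)^{\alpha}$ and $D := \bigl(\tfrac{M-1}{K(C+2)}\bigr)^{\alpha}$; and since $|g^{e'}| \le \ell'$ forces $|e'| \le \ell'^{\alpha}$ for an escape of length $\ell'$, applying this to the pieces of each $\delta_i$ together with the superadditivity of $t\mapsto t^{\alpha}$ (Fact~\ref{fact:reverse_holder}) gives $\bigl\|\sum_i\tau(\delta_i)\bigr\|_1 \le (L+1)\bigl(\sum_i|\delta_i|\bigr)^{\alpha} \le (L+1)\mu$. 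Because $M \ge 24C+1$ and $K$ is close to $1$ we have $D > L+1$, so the $v_i$ are $k\le 5$ vectors on three distinct lines, each of $\ell^1$--norm $> D\mu$, whose sum has norm $\le (L+1)\mu < D\mu$. (Two such large vectors on distinct lines cannot nearly cancel: one reads this off coordinate by coordinate using that the three lines are $\langle(1,0)\rangle,\langle(0,1)\rangle,\langle(L,-1)\rangle$.)

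\textbf{Two sub-arc lemmas.} From Theorem~\ref{longescapesinlength} applied to short sub-arcs we extract, after choosing the target $R'$ of that theorem large (in particular $R' \ge \tfrac{R}{2(M-1)}$), then $K := K'(R')$ and $J$ large:
\begin{itemize}
\item[(L1)] if a sub-arc $A$ of $\gamma$ with $|A|\le|\gamma|/2$ contains exactly two of the chosen escapes, then those two are one $x$--escape and one $y$--escape;
\item[(L2)] no sub-arc $A$ of $\gamma$ with $|A|\le|\gamma|/2$ contains three of the chosen escapes.
\end{itemize}
Indeed, Theorem~\ref{longescapesinlength} gives at most two ``long escapes'' of $A$ whose total length is $\ge \tfrac{R'-1}{R'}|A|$, so the remaining escapes and toral pieces of $A$ total at most $\tfrac1{R'}|A|$; since each chosen escape of $A$ has length $> \tfrac{M-1}{R}|\gamma| \ge \tfrac{2(M-1)}{R}|A| > \tfrac1{R'}|A|$, each chosen escape of $A$ must be one of the two long escapes, which forces $k_A \le 2$; and when $k_A = 2$ the theorem is in its two-escape case, so the flavors are one $x$ and one $y$.

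\textbf{Case analysis on $k$ and the main obstacle.} For $k=2$: $v_1+v_2$ is tiny and both $v_i$ are large, so they lie on the same line, hence $E_1,E_2$ have the same flavor, and $v_1\approx-v_2$ gives opposite sign --- configuration \ref{fig:central_region:two_escapes}. For $k=3$: the three large vectors have tiny sum; they cannot span exactly two lines (the net on the third line would be forced tiny but equal to a large vector), so either they are collinear --- same flavor, and not all of one sign, configuration \ref{fig:central_region:three_escapes:slim} --- or one of each flavor, in which case the coordinates of $v_a+v_x+v_y\approx 0$ force the $a$--escape to have sign opposite to the $x$-- and $y$--escapes, configuration \ref{fig:central_region:three_escapes:fat}. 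For $k=4$: if a cyclically adjacent pair $E_i,E_{i+1}$ is not an $\{x,y\}$--pair then by (L1) the arc $E_i\delta_iE_{i+1}$ has length $>|\gamma|/2$, so its complement (of length $<|\gamma|/2$) contains the other two chosen escapes, which by (L1) must then be an $\{x,y\}$--pair; running this over all flavor multisets and cyclic orders eliminates every multiset except $\{a,x,x,y\}$, $\{a,x,y,y\}$, $\{x,x,y,y\}$ and pins the cyclic order, and the $\Z^2$ identity then forces the sign conditions, landing in one of configurations \ref{fig:central_region:four_escapes:a_embed}--\ref{fig:central_region:four_escapes:rectangle}. For $k=5$: some cyclically adjacent pair is not an $\{x,y\}$--pair (five escapes cannot alternate between $x$ and $y$), so as above its arc has length $>|\gamma|/2$ and the complementary short arc contains the other three chosen escapes, contradicting (L2). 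Hence $k\le 4$ and the $\gamma|_{P_i}$ are manageable, contradicting our assumption; this proves (II). The main obstacle is the bookkeeping in the $k=4$ and $k=5$ cases --- keeping track of all flavor multisets, signs, and cyclic orders while correctly combining the sub-arc lemmas (L1), (L2) (which require $R'$ large relative to $R$) with the $\Z^2$--cancellation estimate (which requires $D>L+1$, i.e.\ $M\ge 24C+1$) --- together with making precise the reductions underlying the two structural ingredients.
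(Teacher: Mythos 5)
Your proof is essentially correct but takes a genuinely different route from the paper's, and it is worth recording the difference. In the paper the ``cannot nearly cancel'' step is carried out entirely in the word metric of $X$: the filler $\beta$ obtained by concatenating translates of the $\gamma|_{Q_i}$ is bounded below via Corollary~\ref{cor:planebound} ($|a^\ell x^m y^n|\ge |m+q|^{1/\alpha}+|n+q|^{1/\alpha}-5$), and each unmanageable flavour/sign configuration is rejected by producing such a lower bound in terms of one escape's exponent. You instead abelianize: identify $H$ with $\Z^2$, observe that the trace vectors of the $E_i$'s and the $\delta_i$'s sum to zero, bound each $\|v_i\|_1$ from below by the power-law distortion applied to the $K$-biLipschitz estimate, and bound $\|\sum_i\tau(\delta_i)\|_1$ from above by $|e|\le\ell^\alpha$ together with superadditivity of $t\mapsto t^\alpha$. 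This replaces Corollary~\ref{cor:planebound} by a transparent $\ell^1$ cancellation statement in $\Z^2$ and avoids the additive constants in that corollary; both proofs use Theorem~\ref{longescapesinlength} in the same way (your (L1), (L2)) to control cyclic structure. The plan is sound and the two ingredients — (L1)/(L2) for cyclic order and $\Z^2$-cancellation for flavours and signs — really do combine to force manageability.

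Two local issues should be corrected. First, in the $k=4$ step you assert that (L1) alone ``eliminates every multiset except $\{a,x,x,y\},\{a,x,y,y\},\{x,x,y,y\}$''; this is false. The multiset $\{x,x,x,y\}$ in cyclic order $xxxy$ satisfies both (L1) constraints (each of the two opposite-pair partitions contains an $\{x,y\}$ pair), so (L1) does not kill it. It is killed only by the $\Z^2$ identity: the first coordinate of the sum is $Ln$, so $|n|\le\frac{L+1}{L}\mu<D\mu$, contradicting $|n|>D\mu$. Since you already invoke the $\Z^2$ identity in this paragraph, the gap is not structural, but as written the sentence is wrong and the case analysis is incomplete. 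Second, the stated sufficient condition $D>L+1$ is slightly too weak for a full coordinate-by-coordinate case analysis (e.g.\ the $\{y,y,x\}$ subcase with $k=3$ needs $D>(L+1)^2/L$): with $M\ge 24C+1$ and $K<2$ one in fact has $D=\bigl(\tfrac{M-1}{K(C+2)}\bigr)^\alpha$ far larger than this, so the conclusion is safe, but you should either state the sharper threshold or simply note that $D$ is much larger than any fixed polynomial in $L$. With these repairs the argument is a valid alternative proof.
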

\begin{proof}
  Given $M$ and $R$ as in the statement, let $J$ and $K$ be large
  enough with respect to $R$ so that Theorem~\ref{longescapesinlength}
  is satisfied, and so that $J\geq R$.
  
  We have
  $\sum_{i=1}^k\bigl|\gamma|_{P_i}\bigr| \ge \frac{R-1}{R} \cdot
  |\gamma| > \frac{1}{2} \cdot J > 0$ so $k > 0$.  If $k=1$ then by hypothesis
  $\frac{|\gamma|}{2} \ge \bigl|\gamma|_{P_1}\bigr| \ge \frac{R-1}{R}
  \cdot |\gamma| > \frac{1}{2} \cdot |\gamma|$, a contradiction.  This
  establishes (I).

  Let $\{Q_i\}_{i=1}^k$ be the set of closures of components of
  $S \setminus \bigcup_{i=1}^k P_i$ and let $\beta \from Q \to X$ be
  the path obtained by concatenating translates of the
  $\gamma|_{Q_i}$.
  Note that if the endpoints of the concatenation of translates of the $\gamma|_{P_i}$ differ by $g \in G_L$ then the
  endpoints of $\beta$ differ by $g^{-1}$.

  For the cases $k\in \{2,3,4\}$, it will suffice to prove that if the
  $\gamma|_{P_i}$ are not manageable then $|\beta| \ge \frac{1}{2}
  |r|^{1/\alpha} - 5$ where $g^r$ is the difference between the endpoints of
  some $\gamma|_{P_i}$ with $g \in \{a, x, y\}$.  Indeed, then we have
  \begin{align*}
    \frac{|\gamma|}{R}
    &\ge |\gamma| - \sum_{i=1}^k\bigl|\gamma|_{P_i}\bigr| \\
    &=|\beta| \\
    &\ge \frac{1}{2} |r|^{1/\alpha} - 5 \\
    &\ge \frac{1}{2C} \cdot |g^r| - 5 \\
    &\ge \frac{1}{2KC} \cdot \bigl|\gamma|_{P_i}\bigr| - 5
  \end{align*}
  so that, as long as $J \ge R$, we have
  \begin{align*}
    |\gamma|_{P_i}\bigr|
    &\le 2KC \cdot \biggl(\frac{|\gamma|}{R} + 5 \biggr) \\
    &< 4C \cdot \biggl(\frac{1}{R} + \frac{5}{J} \biggr)
      \cdot |\gamma| \\
    &\le \frac{24C}{R} \cdot |\gamma|
  \end{align*}
  and so we need only ensure that $M \ge 24C + 1$ to
  obtain $|\gamma|_{P_i}\bigr| \le \frac{M-1}{R} \cdot |\gamma|$.

  Consider the case $k=2$.  If the $\gamma|_{P_i}$ are unmanageable
  then either they differ in flavor or they have the same sign.
  Suppose that $\gamma|_{P_1}$ and $\gamma|_{P_2}$ have different
  flavors.  Without loss of generality, $\gamma|_{P_1}$ is an
  $x$-escape and $\gamma|_{P_2}$ is either a $y$-escape or an
  $a$-escape.  Let $x^m$ be the difference between the endpoints of
  $\gamma|_{P_1}$ and let $y^n$ and $a^{\ell}$ be the same for
  $\gamma|_{P_2}$ in each of the two cases.  In the first case,
  Corollary~\ref{cor:planebound} tells us that
  \[ |\beta| \ge |x^my^n| \ge |m|^{1/\alpha} - 5 \] so we are done.
  In the second case, choose $p$ and $q$ so that $\ell=pL+q$ with
  $0 \leq |q| < L$ and either $q=0$ or $q$ and $\ell$ have opposite
  sign.  Corollary~\ref{cor:planebound} tells us that
  \[|\beta| \ge |a^\ell x^m| \ge |p|^{1/\alpha} - 5 \] but our choice
  of $p$ and $q$ ensures that
  $|p| = \frac{|\ell-q|}{L} = \frac{|\ell|+|q|}{L} \ge
  \frac{|\ell|}{L}$ and so we have
  $|\beta| \ge \frac{|\ell|^{1/\alpha}}{L^{1/\alpha}} - 5=\frac{1}{2}|\ell|^{1/\alpha} - 5$.  It
  remains to consider the case where $\gamma|_{P_1}$ and
  $\gamma|_{P_2}$ have the same flavor $g \in \{x,y,a\}$ and the same
  sign.
  Suppose the endpoints of $\gamma|_{P_1}$ differ by $g^{m_1}$
  and those of $\gamma|_{P_2}$ differ by $g^{m_2}$.  Then
  $|\beta| \ge |g^{m_1+m_2}| \ge |m_1 + m_2|^{1/\alpha} \ge
  |m_1|^{1/\alpha}$.  This completes the proof for $k=2$.

  Consider the case $k=3$.  We split into subcases based on the number
  of flavors that appear among the the $\gamma|_{P_i}$: (i) exactly
  one flavor, (ii) exactly two flavors or (iii) exactly three flavors.
  If the $\gamma|_{P_i}$ are unmanageable then in case~(i) they all
  have the same sign, in case~(ii) up to symmetry the flavors that
  appear are $(x,a,a)$, $(x,x,a)$ or $(x,x,y)$ and in case~(iii) up to
  symmetry the $x$-escape and the $a$-escape have the same sign.
  Case~(i) is treated similarly to the case $k=2$ with all escapes
  having the same flavor and sign.  In case~(iii), let the endpoints
  of the escapes differ by $a^{\ell}$, $x^m$ and $y^n$.  If $p$ and
  $q$ so that $\ell=pL+q$ with $0 \leq |q| < L$ then $\ell$ and $p$
  have the same sign so that, by Corollary~\ref{cor:planebound}, we
  have
  \[|\beta| \ge |a^\ell x^my^n| \ge |m+p|^{1/\alpha} - 5 =
    \bigl(|m|+|p|\bigr)^{1/\alpha} - 5 \ge |m|^{1/\alpha} - 5 \] so we
  are done.  For case~(ii), the $(x,x,a)$ and $(x,x,y)$ subcases are
  treated by again applying Corollary~\ref{cor:planebound} to obtain
  \[|\beta| \ge |m_1 + m_2 + p|^{1/\alpha} + |p|^{1/\alpha} - 5 \ge
    |p|^{1/\alpha} - 5 \ge\frac{|\ell|^{1/\alpha}}{L^{1/\alpha}} -
    5=\frac{|\ell|^{1/\alpha}}{2} -
    5 \] (where the last inequality is obtained as in the $(x,a)$ case
  of $k=2$ above) and
  \[|\beta| \ge |m_1 + m_2|^{1/\alpha} + |n|^{1/\alpha} - 5 \ge
    |n|^{1/\alpha} - 5 \] respectively.  It remains to consider the
  $(x,a,a)$ subcase for which Corollary~\ref{cor:planebound} gives us
  the following inequality.
  \[|\beta| \ge |m + p_1 + p_2|^{1/\alpha} + |p_1 + p_2|^{1/\alpha} -
    5 \] If $|m| \le |p_1+p_2|$ we are done so assume that
  $|p_1+p_2| \le |m|$.  Then, by the triangle inquality we have
  $|m+p_1+p_2| \ge |m|-|p_1+p_2| \ge 0$ and so
  $|m+p_1+p_2|^{1/\alpha} \ge \bigl(|m|-|p_1+p_2|\bigr)^{1/\alpha} \ge
  |m|^{1/\alpha} - |p_1+p_2|^{1/\alpha}$ by
  Fact~\ref{fact:reverse_holder}.  Thus we have
  \[ |\beta| \ge |m|^{1/\alpha} - |p_1+p_2|^{1/\alpha} + |p_1 +
    p_2|^{1/\alpha} - 5 = |m|^{1/\alpha} - 5 \] as required.

  Consider the case $k=4$.  Assume that the $\gamma|_{P_i}$ are
  unmanageable.  Renumber the segments so that $P_1$, $P_2$, $P_3$ and
  $P_4$ appear consecutively along $S$.  Then either $P_1 \cup P_2$ is
  contained in a segment of length at most $\frac{|S|}{2}$ of $S$ or
  $P_3 \cup P_4$ is contained in such a segment and similarly for
  $P_2 \cup P_3$ and $P_1 \cup P_4$.  We again renumber the segments
  so that the first possibility holds in each case.
  Apply
  Theorem~\ref{longescapesinlength} to the minimal geodesic
  segments respectively containing $P_1 \cup P_2$ and $P_2 \cup P_3$
  to conclude that either
  $\bigl|\gamma|_{P_i}\bigr| \le \frac{M-1}{R} \cdot |\gamma|$ for
  some $i$ or that $\gamma|_{P_1}$, $\gamma|_{P_2}$ and
  $\gamma|_{P_3}$ alternate between $x$ and $y$ flavors.  Then, up to
  symmetry, we can assume that $\gamma|_{P_1}$ and $\gamma|_{P_3}$ are
  $x$-escapes and $\gamma|_{P_2}$ is a $y$-escape.  If $\gamma|_{P_4}$
  were an $x$-escape then, applying Corollary~\ref{cor:planebound} we
  would obtain:
  \[ |\beta| \ge |m_1+m_3+m_4|^{1/\alpha} + |n_2|^{1/\alpha} - 5 \ge
    |n_2|^{1/\alpha} - 5 \] as needed.  It remains to consider the
  cases where $\gamma|_{P_4}$ is a (i) $y$-escape or an (ii)
  $a$-escape.  In case~(i), since the $\gamma|_{P_i}$ are
  unmanageable, either the escapes $\gamma|_{P_1}$ and $\gamma|_{P_3}$
  have the same sign or the escapes $\gamma|_{P_2}$ and
  $\gamma|_{P_4}$ have the same sign.  Up to symmetry we can assume
  the former so that Corollary~\ref{cor:planebound} gives us
  \[|\beta| \ge |m_1+m_3|^{1/\alpha} + |n_2+n_4|^{1/\alpha} - 5 \ge
    |m_1|^{1/\alpha} - 5 \] as needed.  In case~(ii), since the
  $\gamma|_{P_i}$ are unmanageable, the escapes $\gamma|_{P_2}$ and
  $\gamma|_{P_4}$ must have the same sign but then
  Corollary~\ref{cor:planebound} gives us
  \[|\beta| \ge |m_1+m_3+p_4|^{1/\alpha} + |n_2+p_4|^{1/\alpha} - 5
    \ge |n_2|^{1/\alpha} - 5 \] as needed.

  Consider the case $k=5$.  The $\gamma|_{P_i}$ are unmanageable so we
  need to show that
  $\bigl|\gamma|_{P_i}\bigr| \le \frac{M-1}{R} \cdot |\gamma|$ for
  some $i$.  Suppose this does not hold. 
  Apply
  Theorem~\ref{longescapesinlength} to conclude that no three
  consecutive escapes among the $\gamma|_{P_i}$ are contained in a
  subsegment of $\gamma$ of length at most $\frac{|\gamma|}{2}$ of
  $\gamma$.  Then any two consecutive segments among the
  $\gamma|_{P_i}$ are contained in a subsegment of $\gamma$ of length
  at most $\frac{|\gamma|}{2}$.  Then
  Theorem~\ref{longescapesinlength} implies that, for any two
  consecutive escapes, one of them is an $x$-escape and the other one
  is a $y$-escape.  Since $k$ is odd this is a contradiction.
\end{proof}

\begin{corollary}\label{cor:manageable_subset}
  For all $M\geq 24C+1$ and $R > M-1$ there exist
  $J > 0$ and $1<K<2$ such that the following holds.

  Let $\gamma \from S \to X$ be a $K$-biLipschitz cycle of length
  $|\gamma| \ge J$ such that $\gamma\cap H\neq\emptyset$ and every
  escape of $\gamma$ from $H$ has length at most $|\gamma|/2$.
  Let $\gamma|_{P_1}$,
  $\gamma|_{P_2}$, \ldots, $\gamma|_{P_k}$ be distinct escapes from $H$
  such that $k \le 5$ and
  $\sum_{i=1}^k\bigl|\gamma|_{P_i}\bigr| \ge \frac{R-1}{R} \cdot
  |\gamma|$.  Then a subset
  $T \subseteq \bigl\{\gamma|_{P_i}\bigr\}_{i=1}^{k}$ of
  $|T| \in \{2,3,4\}$ longest escapes is manageable and satisfies
  $\sum_{\gamma|_P \in T}\bigl|\gamma|_P\bigr| \ge \frac{R-M^3}{R}
  \cdot |\gamma|$ and
  $\min_{\gamma|_P \in T}\bigl|\gamma|_P\bigr| \ge (M-1) \max
  \Bigl\{\frac{|\gamma|}{R}, |\gamma| - \sum_{\gamma|_P \in
    T}\bigl|\gamma|_P\bigr| \Bigr\}$.
\end{corollary}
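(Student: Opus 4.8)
The strategy is to iterate Proposition~\ref{prop:manage_or_simplify}, discarding the shortest escape from the current collection whenever it fails to be manageable, while maintaining a running lower bound on the total length of what remains. It will be convenient to assume $R$ is large relative to $M$, say $R > 2M^4$; this is the range needed below (and the only range in which the asserted bounds carry content). Order the escapes so that $\bigl|\gamma|_{P_1}\bigr| \ge \cdots \ge \bigl|\gamma|_{P_k}\bigr|$, and for $2 \le \ell \le k$ let $T_\ell := \{\gamma|_{P_1}, \ldots, \gamma|_{P_\ell}\}$, the $\ell$ longest escapes. Put $R_j := R/M^j$ for $0 \le j \le 3$; since $R > 2M^4 > (M-1)M^3$ we have $R_j > M-1$, so Proposition~\ref{prop:manage_or_simplify} applies with parameters $(M, R_j)$, yielding constants $J_j$ and $K_j$. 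Set $J := \max_{0\le j\le 3} J_j$ and $K := \min_{0\le j\le 3} K_j \in (1,2)$, so that every application below is legitimate once $|\gamma|\ge J$ and $\gamma$ is $K$-biLipschitz. Note $k\ge 2$, since a single escape would have length $\ge \frac{R-1}{R}|\gamma| > \frac{|\gamma|}{2}$, contrary to hypothesis (this is Proposition~\ref{prop:manage_or_simplify}(I)).

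We shall exhibit, for some $\ell \in \{2,3,4\}$, a collection $T_\ell$ that is manageable, satisfies the \emph{invariant}
\[ \sum_{\gamma|_P \in T_\ell}\bigl|\gamma|_P\bigr| \;\ge\; \Bigl(1 - \tfrac{M^{k-\ell}}{R}\Bigr)|\gamma| \;=\; \tfrac{R_{k-\ell}-1}{R_{k-\ell}}\,|\gamma|, \]
and has $\min_{\gamma|_P \in T_\ell}\bigl|\gamma|_P\bigr| \ge \tfrac{(M-1)M^{k-\ell}}{R}|\gamma|$. This is enough: since $k-\ell\le 3$, the invariant gives $\sum_{\gamma|_P\in T_\ell}\bigl|\gamma|_P\bigr| \ge \tfrac{R-M^3}{R}|\gamma|$ and $|\gamma| - \sum_{\gamma|_P\in T_\ell}\bigl|\gamma|_P\bigr| \le \tfrac{M^{k-\ell}}{R}|\gamma|$, whence $\max\bigl\{\tfrac{|\gamma|}{R},\,|\gamma| - \sum_{\gamma|_P\in T_\ell}\bigl|\gamma|_P\bigr|\bigr\} \le \tfrac{M^{k-\ell}}{R}|\gamma|$ and the minimum-length bound is at least $(M-1)$ times this, as required.

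To produce such an $\ell$, start at $\ell = k$, where the invariant is exactly the hypothesis $\sum_{i=1}^k\bigl|\gamma|_{P_i}\bigr| \ge \tfrac{R-1}{R}|\gamma|$, and decrease $\ell$. If $T_\ell$ is not manageable (automatic when $\ell = 5$, as no five-escape configuration appears in the definition of manageable), apply Proposition~\ref{prop:manage_or_simplify}(II) to $T_\ell$ with $(M, R_{k-\ell})$ to get an escape of length $\le \tfrac{M-1}{R_{k-\ell}}|\gamma| = \tfrac{(M-1)M^{k-\ell}}{R}|\gamma|$; the shortest escape of $T_\ell$ is no longer, so discarding it yields $T_{\ell-1}$, which satisfies the invariant at level $\ell-1$ since $M^{k-\ell} + (M-1)M^{k-\ell} = M^{k-\ell+1}$. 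If instead $T_\ell$ is manageable (now $\ell\le 4$), inspect its shortest escape $\gamma|_{P^*}$: if $\bigl|\gamma|_{P^*}\bigr| \ge \tfrac{(M-1)M^{k-\ell}}{R}|\gamma|$ we are done; otherwise $T_{\ell-1} := T_\ell\setminus\{\gamma|_{P^*}\}$ again satisfies the invariant at level $\ell-1$ by the same telescoping, and we continue the process from $T_{\ell-1}$. The process can never reach $\ell = 1$: a single escape obeying the invariant at level $1$ would have length $> \bigl(1 - \tfrac{M^{k-1}}{R}\bigr)|\gamma| \ge \bigl(1 - \tfrac{M^4}{2M^4}\bigr)|\gamma| = \tfrac{|\gamma|}{2}$, strictly since $R > 2M^4$, contradicting that every escape has length at most $\tfrac{|\gamma|}{2}$. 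As $\ell$ strictly decreases while remaining $\ge 2$, the process halts at a manageable $T_\ell$ with $\ell \in \{2,3,4\}$ meeting both requirements; this $T_\ell$ consists of the longest escapes, so it is a valid choice of $T$.

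The substantive input is all in Proposition~\ref{prop:manage_or_simplify}; what remains is bookkeeping: tracking that the running total stays above $\bigl(1 - \tfrac{M^{k-\ell}}{R}\bigr)|\gamma|$ through at most $k-2\le 3$ removals — which is precisely why the exponent $3$ appears in $\tfrac{R-M^3}{R}$ — and noting that one cannot peel below two escapes without producing an escape longer than $\tfrac{|\gamma|}{2}$. The one delicate point is that the first manageable collection reached by the naive discarding need not have its shortest escape long enough; the remedy above is simply to keep peeling, which either terminates in a contradiction (so the shortest escape was in fact long enough) or replaces the collection by a strictly smaller manageable collection of the longest escapes. I expect this simultaneous certification of manageability, the total-length bound, and the minimum-length bound to be the fiddliest aspect, though it calls for no estimate beyond those already proved.
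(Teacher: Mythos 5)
Your proof is correct and takes essentially the same route as the paper: peel away shortest escapes against a geometric schedule $R_j = R/M^j$ using Proposition~\ref{prop:manage_or_simplify}, keep peeling when the first manageable collection's shortest escape is too short, and stop the descent above a single escape because its forced length would exceed $|\gamma|/2$. The only cosmetic difference is that the paper packages the impossibility of a singleton as an application of Proposition~\ref{prop:manage_or_simplify}(I) at one further level $R_4$, while you argue it directly from the escape-length cap, which is the same computation.
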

\begin{proof}
  Let $R_0>M^4(M-1)$, and let $R_i:=R_0M^{-i}$ for $i\leq 4$, so that
  for all $i$, $M$ and $R_i$
  satisfy the hypotheses of Proposition~\ref{prop:manage_or_simplify}.
  Let $K \in (1,2)$ be close enough to $1$ and
  let $J > 0$ be large enough that 
  Proposition~\ref{prop:manage_or_simplify} holds with respect to 
  $M$ and $R_i$, for all $0\leq i\leq 4$.

  Let $\gamma \from S \to X$ be a $K$-biLipschitz cycle of length
  $|\gamma| \ge J$ such that $\gamma\cap H\neq\emptyset$ and every
  escape of $\gamma$ from $H$ has length at most $|\gamma|/2$.
  Let $\gamma|_{P_1}$,
  $\gamma|_{P_2}$, \ldots, $\gamma|_{P_k}$ be distinct escapes from $H$
  such that $k \le 5$ and
  $\sum_{i=1}^k\bigl|\gamma|_{P_i}\bigr| \ge \frac{R_0-1}{R_0} \cdot
  |\gamma|$.

  If every escape of $T_0 = \bigl\{\gamma|_{P_i}\bigr\}_{i=1}^{k}$ has
  length greater than $\frac{M-1}{R_0}\cdot|\gamma|$ then applying
  Proposition~\ref{prop:manage_or_simplify} with $R = R_0$ the escapes
  of $T_0$ must be manageable.  Moreover, in this case we would have
  \[ \min_{\gamma|_P \in T_0}\bigl|\gamma|_P\bigr| > (M-1) \cdot
    \biggl(|\gamma| - \frac{R_0-1}{R_0} \cdot |\gamma|\biggr) \ge
    (M-1) \cdot \biggl(|\gamma| - \sum_{\gamma|_P \in
      T_0}\bigl|\gamma|_P\bigr|\biggr) \] so we would have the
  required conclusion with $R = R_0$ and $T = T_0$.  Hence we may
  assume that some escape of $T_0$ has length at most
  $\frac{M-1}{R_0}\cdot|\gamma|$.

  Let $T_1$ be obtained from $T_0$ by removing a shortest escape.  We
  have
  \[ \sum_{\gamma|_P \in T_1}\bigl|\gamma|_P\bigr| \ge
    \biggl(\frac{R_0 - 1}{R_0} - \frac{M - 1}{R_0}\biggr) \cdot
    |\gamma| = \frac{R_0 - M}{R_0} \cdot |\gamma| = \frac{R_1 -
      1}{R_1} \cdot |\gamma| \] so, applying
  Proposition~\ref{prop:manage_or_simplify} with $R = R_1$, we can
  conclude that $2 \le |T_1| = |T_0| - 1 \le 4$.  Moreover, if every
  escape of $T_1$ had length greater than
  $\frac{M-1}{R_1}\cdot|\gamma|$ then $T_1$ would be manageable and,
  as argued above, we would have
  $\min_{\gamma|_P \in T_1}\bigl|\gamma|_P\bigr| \ge (M-1) \cdot
  \Bigl(|\gamma| - \sum_{\gamma|_P \in
    T_1}\bigl|\gamma|_P\bigr|\Bigr)$.  Thus we would have the required
  conclusion with $R = R_0$ and $T = T_1$.  Hence we may assume that
  some escape of $T_1$ has length at most
  $\frac{M-1}{R_1}\cdot|\gamma|$.

  Continuing in this way, we obtain $T_2$ from $T_1$ by removing a
  shortest escape and if $T = T_2$ is not as needed then we again
  remove a shortest escape to obtain $T_3$.  By the same arguments, we
  have
  $\sum_{\gamma|_P \in T_3}\bigl|\gamma|_P\bigr| \ge \frac{R_3 -
    1}{R_3} \cdot |\gamma| = \frac{R_0 - M^3}{R_0} \cdot |\gamma|$ and
  $|T_3| = 2$.  If the shorter escape of $T_3$ had length at most
  $\frac{M-1}{R_3}\cdot|\gamma|$ then the remaining escape would have
  length at least $\frac{R_4 - 1}{R_4} \cdot |\gamma|$ which, by
  Proposition~\ref{prop:manage_or_simplify} with $R=R_4$, is a
  contradiction.  Thus the escapes of $T_3$ must all have length
  greater than
  $\frac{M-1}{R_3}\cdot|\gamma| \ge (M-1) \cdot \Bigl(|\gamma| -
  \sum_{\gamma|_P \in T_3}\bigl|\gamma|_P\bigr|\Bigr)$.  Applying
  Proposition~\ref{prop:manage_or_simplify} with $R=R_3$ the escapes
  of $T_3$ must also be manageable, so we are done with $R = R_0$ and
  $T = T_3$.
\end{proof}

\begin{corollary}\label{cor:very_manageable}
  For all $R>0$ there exist $J>0$ and $1<K<2$ such that if $\gamma
  \from S \to X$  is a 
  $K$-biLipschitz cycle of length
  $|\gamma| \ge J$ with $\gamma\cap H\neq\emptyset$ and all escapes of
  $\gamma$ from $H$ have length at most $|\gamma|/2$
then there is a manageable set $T$ of longest
  escapes of $\gamma$ from $H$ that satisfies
  $\sum_{\gamma|_P \in T}\bigl|\gamma|_P\bigr| \ge \frac{R-1}{R} \cdot
  |\gamma|$ and
  $\min_{\gamma|_P \in T}\bigl|\gamma|_P\bigr| \ge R \cdot \max
  \Bigl\{\frac{|\gamma|}{J}, |\gamma| - \sum_{\gamma|_P \in
    T}\bigl|\gamma|_P\bigr| \Bigr\}$.
\end{corollary}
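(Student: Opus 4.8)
The plan is to derive Corollary~\ref{cor:very_manageable} from Corollary~\ref{cor:manageable_subset} by bootstrapping the parameters. Beyond the hypotheses stated here, Corollary~\ref{cor:manageable_subset} also requires a family of at most five escapes of $\gamma$ that already covers a $\frac{R-1}{R}$-fraction of $\gamma$, so the first task is to produce such a family using Theorem~\ref{longescapesinlength}, and the second is to choose the constant $M$ and the auxiliary ratio we feed to Corollary~\ref{cor:manageable_subset} large enough, as functions of the target $R$, that the conclusion there upgrades to the one asked for here.

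For the first task, note that since $\gamma$ is biLipschitz it is an embedding, and as a finite combinatorial loop based at a point of $H$ it decomposes into finitely many escapes from $H$ and toral subpaths. The hypothesis that every escape has length at most $|\gamma|/2$ already forces $\gamma^{-1}(H)$ to contain more than one point, so we may fix $p_0$ with $\gamma(p_0)\in H$ and consider its $S$-antipode $\bar p_0$. Either $\bar p_0$ also maps into $H$, and we cut $S$ at $p_0$ and $\bar p_0$ into two half-circle arcs; or $\bar p_0$ lies in the interior of a unique escape $\gamma|_E$, and removing the interior of $E$ and then cutting at $p_0$ produces arcs $A_1$, $A_2$ together with $E$. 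In either case every resulting arc has $S$-length at most $|\gamma|/2$: for $E$ this is the standing hypothesis, and if $A_i$ were longer than $|\gamma|/2$ then travelling from $p_0$ along $A_i$ one would reach $\bar p_0$ before the far endpoint, putting $\bar p_0$ in the interior of $A_i$, which is disjoint from $\bar E\ni\bar p_0$. Consequently each of these arcs carries a $K$-biLipschitz path with both endpoints in $H$ --- within an arc of length at most $|S|/2$ the $S$-metric coincides with the intrinsic metric --- and the piece $\gamma|_E$, when present, is already a single escape. Feeding the biLipschitz sub-paths to Theorem~\ref{longescapesinlength}, with ratio parameter $2R_0$ for an $R_0$ to be fixed below, yields at most two long escapes from each; together with $\gamma|_E$ this is at most five escapes of $\gamma$. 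A sub-path shorter than the threshold of Theorem~\ref{longescapesinlength} contributes nothing but has constant length, so once $|\gamma|$ exceeds a suitable $J$ these escapes still cover at least $\frac{R_0-1}{R_0}|\gamma|$, and each of them has length at most $|\gamma|/2$ since it is an escape of $\gamma$.

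For the second task, given the target $R$ set $M:=\max\{24C+1,R+1\}$ and $R_0:=RM^3$, so $M\ge 24C+1$ and $R_0>M-1$; let $J_0,K_0$ be the constants of Corollary~\ref{cor:manageable_subset} for the pair $(M,R_0)$, let $J_1,K_1$ be the constants of Theorem~\ref{longescapesinlength} for ratio $2R_0$, put $K:=\min\{K_0,K_1\}$, and let $J$ be any number dominating $J_0$, $J_1$, $RR_0/(M-1)$ and large enough to absorb the finitely many constant-length sub-paths above. Applying Corollary~\ref{cor:manageable_subset} to the family of at most five escapes we obtained gives a manageable subset $T$ of $|T|\in\{2,3,4\}$ longest escapes with $\sum_{\gamma|_P\in T}\bigl|\gamma|_P\bigr|\ge\frac{R_0-M^3}{R_0}|\gamma|=\frac{R-1}{R}|\gamma|$ and $\min_{\gamma|_P\in T}\bigl|\gamma|_P\bigr|\ge (M-1)\max\{|\gamma|/R_0,\,|\gamma|-\sum_{\gamma|_P\in T}\bigl|\gamma|_P\bigr|\}$. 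Since $M-1\ge R$ the second entry of the maximum already gives $\min_T\bigl|\gamma|_P\bigr|\ge R(|\gamma|-\sum_T\bigl|\gamma|_P\bigr|)$, and since $J(M-1)\ge RR_0$ the first entry gives $\min_T\bigl|\gamma|_P\bigr|\ge (M-1)|\gamma|/R_0\ge R|\gamma|/J$; combining these is exactly the required lower bound on $\min_T\bigl|\gamma|_P\bigr|$. Finally, every escape of $\gamma$ outside our family of five has length at most $\frac{1}{R_0}|\gamma|$, which is strictly less than $\min_T\bigl|\gamma|_P\bigr|$ (here $M>2$), so $T$ is a set of longest escapes of $\gamma$ from $H$ and not merely of the chosen subfamily.

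The delicate step is the cutting construction in the second paragraph: one must replace a biLipschitz \emph{cycle} meeting $H$ by a bounded number of biLipschitz \emph{paths} with endpoints in $H$ whose escapes still exhaust almost all of $\gamma$, and it is precisely here that the hypothesis that every escape has length at most $|\gamma|/2$ is used --- both to bound $|E|$ and to keep each half-arc within $|S|/2$ so that the pieces remain biLipschitz. The remainder is parameter bookkeeping.
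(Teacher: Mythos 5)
Your proof follows the same route as the paper's: cut the cycle at a point of $H$ and its antipode, trim off the escape containing the antipode if necessary, apply Theorem~\ref{longescapesinlength} to each resulting half-arc to obtain at most five escapes covering most of $\gamma$, and then feed that family into Corollary~\ref{cor:manageable_subset} with parameters $(M, R')$ chosen large enough that its conclusion upgrades to the required one. The extra care you take at the end — checking that escapes outside the chosen five are each shorter than $\min_T\bigl|\gamma|_P\bigr|$, so that $T$ really consists of longest escapes of $\gamma$ and not merely of the five — is a useful explicit verification of something the paper leaves implicit, but it does not change the structure of the argument.
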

\begin{proof}
  Let $M \ge R+1$ be large enough that we can apply
  Corollary~\ref{cor:manageable_subset} and let $R'$ be large enough
  that $\frac{R'-M^3}{R'} \ge \frac{R - 1}{R}$ and large enough that
  we can apply Corollary~\ref{cor:manageable_subset} with $M$ and $R'$.
  Assume that $J \ge R'$ is large enough and $K > 1$ is close enough to
  $1$ that the second paragraph of Corollary~\ref{cor:manageable_subset} holds.

  Let $\gamma \from S \to X$ be a $K$-biLipschitz cycle of length $|\gamma| \ge J$.
  Let $v$ be a vertex of $S$ that maps to $H$.
  Let $\bar v$ be the antipode of $v$ in $S$.
  Since the relators of $G_L$ all have even length so then does $S$ and so $\bar v$ is a vertex of $S$.
  Let $Q'_1$ and $Q'_2$ be the  two geodesic segments of $S$ between $v$ and $\bar v$.
  If $\bar v \in P$ for some escape $\gamma|_P$ of $A$ then, for each
  $i$, let $Q_i = Q'_i \setminus P^{\circ}$ where $P^{\circ}$ is the
  interior of $P$; otherwise, for each $i$, let $Q_i = Q'_i$.
  If $J$ is large enough and $K$ is close enough to $1$, Theorem~\ref{longescapesinlength} implies that at most four
  escapes occupy an $\frac{R'-1}{R'}$ proportion of $\gamma|_{Q_1}$
  and $\gamma|_{Q_2}$.
  Thus a set $T'$ of five longest escapes of $A$
  have length at least $\frac{R'-1}{R'} \cdot |\gamma|$.
  Applying Corollary~\ref{cor:manageable_subset} to $T'$ we obtain a
  manageable set $T$ of longest escapes of $A$ such that:
  \[ \sum_{\gamma|_P \in T}\bigl|\gamma|_P\bigr| \ge \frac{R'-M^3}{R'}
    \cdot |\gamma| \ge \frac{R-1}{R} \cdot |\gamma| \] and
  \[ \min_{\gamma|_P \in T}\bigl|\gamma|_P\bigr| \ge (M-1) \cdot
    \Bigl(|\gamma| - \sum_{\gamma|_P \in T}\bigl|\gamma|_P\bigr|
    \Bigr) \ge R \cdot \Bigl(|\gamma| - \sum_{\gamma|_P \in
      T}\bigl|\gamma|_P\bigr| \Bigr) \] and
  \[ \min_{\gamma|_P \in T}\bigl|\gamma|_P\bigr| \ge (M-1) \cdot
    \frac{|\gamma|}{R'} \ge R \cdot \frac{|\gamma|}{J} \qedhere\]
\end{proof}

 \subsection{Non-central `regions'}\label{sec:noncentralregions}
 Recall compound escapes from Definition~\ref{def:compoundescape}.
Fix $R$ and let $J$ and $K$ be the corresponding constants from
Theorem~\ref{longescapesinlength}, and suppose that $\gamma\from
S\to X$ is a $K$--biLipschitz cycle.
Suppose that $\gamma\cap gH\neq\emptyset$ and suppose that $\gamma$ has
a designated compound escape $\gamma|_P$ from $gH$ that has length greater than
$|\gamma|/2$.
Call $\gamma|_P$ the ``central escape''.
We wish to describe the configuration of non-central escapes of
$\gamma$ from $gH$.
This is easier than in the previous subsection because $\bigl|\gamma\bigr| -
\bigl|\gamma|_{P}\bigr|<|\gamma|/2$, so all of the non-central escapes
are contained in a common $K$--biLipschitz path.
This fact eliminates most of the cases considered
in  Figure~\ref{fig:central_region_cases}; only the
analogues of cases \eqref{fig:central_region:two_escapes} and
\eqref{fig:central_region:three_escapes:fat} can occur. 
Specifically, according to Theorem~\ref{longescapesinlength}, there are
only three possibilities:
\begin{enumerate}
\item $\bigl|\gamma\bigr| -
\bigl|\gamma|_{P}\bigr|<J$. In this case say \emph{the non-central
  part of $\gamma$ at $gH$
  is very small}.
  \item The non-central part is not very small, the central escape is
    a compound $a$--escape,
    and there are a non-central $x$--escape and non-central $y$--escape whose
    lengths sum to at least $\frac{R-1}{R}\left(\bigl|\gamma\bigr| -
\bigl|\gamma|_{P}\bigr|\right)$. In this case say that the non-central
part of $\gamma$ is
\emph{branching at $gH$}.

\item  The non-central part is not very small and there is exactly one escape of length at least $\frac{R-1}{R}\left(\bigl|\gamma\bigr| -
\bigl|\gamma|_{P}\bigr|\right)$, and it is of the same flavor as the
central escape. 
\end{enumerate}

Instances of the third case can be chained together to form
super-regions that we now describe:

\begin{definition}\label{def:renfilade}
  Let $R>2$.
  Let $\gamma$ be a $g$-escape for some $g\in\{a,x,y\}$.
  The  \emph{$R$--enfilade decomposition} of $\gamma$ is the
  factoring of $\gamma$ as:
  \[ \gamma_0:=\gamma = \epsilon_0+\alpha_1+\epsilon_1 +\cdots+\alpha_n+ \epsilon_n+\gamma'_n+\epsilon_n^{-1}+
    \beta_n +\cdots+\epsilon_1^{-1}+ \beta_1 +\epsilon_0^{-1} \] such
  that the following conditions hold (see Figure~\ref{fig:enfilade}).
  \begin{itemize}
  \item Each $\alpha_i$ or $\beta_i$ is a path whose endpoints differ
    by an element of $H$.
  \item For each $i$, we have
    $\epsilon_i \in \{s^{\pm 1}, t^{\pm 1}\}$ and $\gamma_i'$ is
    defined to be the path such that $\gamma_i=\epsilon_i+\gamma_i'+\epsilon_i^{-1}$.
    \item For each $i \in \{0, 1, \ldots, n-1\}$, we have
    $|\gamma_{i+1}| \ge \frac{R-1}{R} \cdot |\gamma'_i|$.
  \item For each $i \in \{0,1, 2, \ldots, n\}$
    \[ \gamma_i = \epsilon_i+\alpha_{i+1} +\cdots+ \epsilon_n+\gamma'_n+\epsilon_n^{-1}+ \cdots +\beta_{i+1}+
      \epsilon_i^{-1} \] is a $g_i$-escape, for some
    $g_i\in\{a,x,y\}$, whose endpoints differ by 
    $g_i^{m_i}$ for some $m_i \in \Z$.
    \item $n$ is maximal such that $\gamma$ admits a decomposition
      satisfying these conditions. 
    \end{itemize}
    Call the subsegment $\gamma_n'$ the \emph{end} of the enfilade.
\end{definition}
Note that since $R>2$ there is at most one candidate for
$\gamma_{i+1}$ for each $i$, so the decomposition is unique. 
\begin{figure}
  \centering
  \labellist
  \tiny
  \pinlabel $g_0^{m_0}$ [r] at 15 310
  \pinlabel $\epsilon_0$ [u] at 62 560
  \pinlabel $g_1^{m_0}$ [l] at 110 310
  \pinlabel $\alpha_1$ [u] at 225 650
  \pinlabel $\beta_1$ [d] at 225 -20
  \pinlabel $g_1^{m_1}$ [r] at 340 310
  \pinlabel $\epsilon_1$ [u] at 385 560
  \pinlabel $g_2^{m_1}$ [l] at 430 310
  \pinlabel $\alpha_2$ [u] at 545 650
  \pinlabel $\beta_2$ [d] at 545 -20
  \pinlabel $g_2^{m_2}$ [r] at 660 310
  \pinlabel $\epsilon_2$ [u] at 710 560
  \pinlabel $g_3^{m_2}$ [l] at 760 310
  \pinlabel $g_n^{m_n}$ [r] at 1110 310
  \pinlabel $g_{n+1}^{m_n}$ [l] at 1200 310
  \pinlabel $\gamma'_n$ [l] at 1480 310
  \pinlabel $\epsilon_n$ [u] at 1150 550
  \pinlabel $\alpha_n$ [u] at 1050 575
  \pinlabel $\beta_n$ [d] at 1050 40
  \pinlabel $\epsilon_n$ [d] at 1150 70
  \pinlabel $\epsilon_0$ [d] at 60 70
  \pinlabel $\epsilon_1$ [d] at 385 70
    \pinlabel $\epsilon_2$ [d] at 710 70
  \endlabellist
  \includegraphics[width=.7\textwidth]{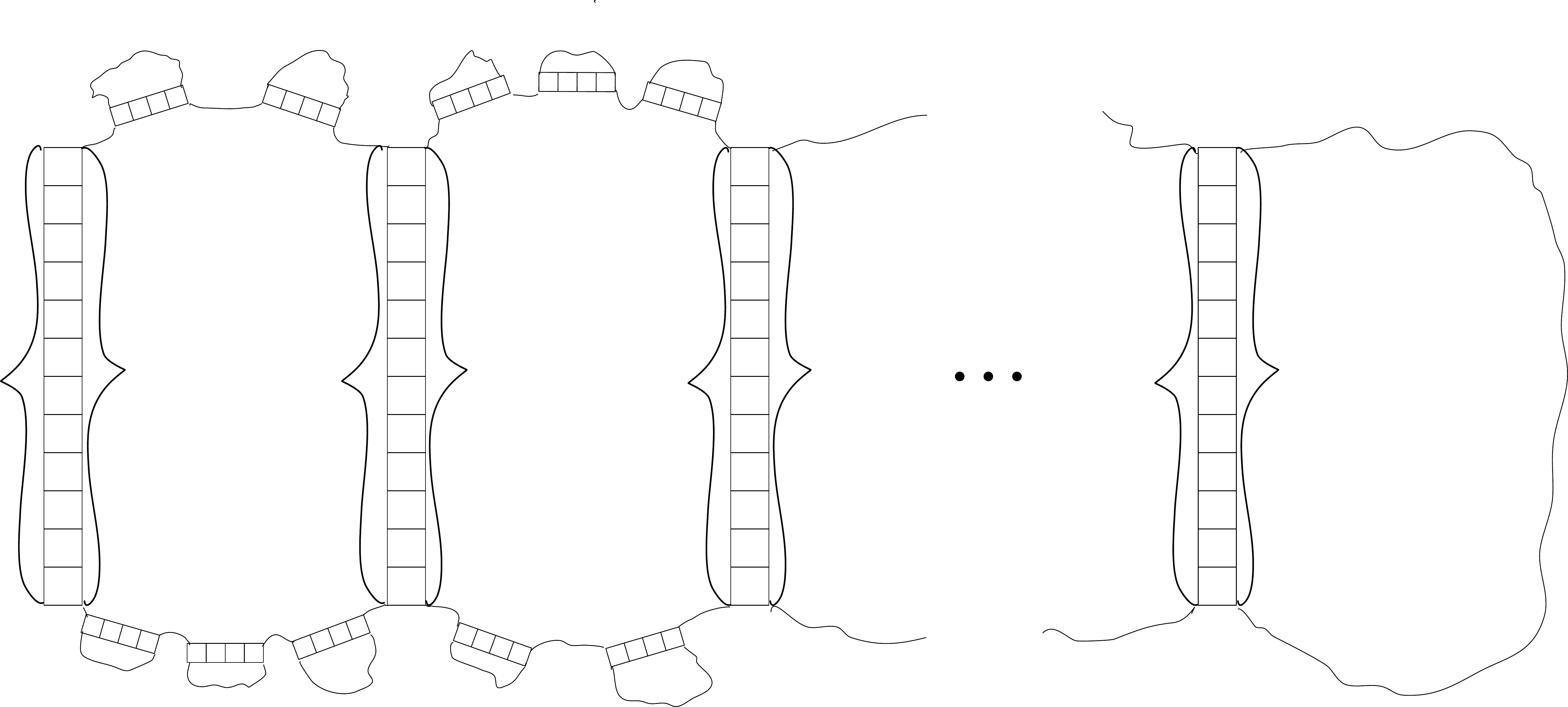}
  \caption{Enfilade of length $n$.}\label{fig:enfilade}
\end{figure}

\begin{lemma}\label{enfiladesamesign}
  If $R>2$ and $1<K<2$ and $\gamma$ is a $K$--biLipschitz  $g$--escape for some
  $g\in\{a,x,y\}$ then in the $R$--enfilade decomposition of $\gamma$
  all of the exponents $m_i$ have the same sign. 
\end{lemma}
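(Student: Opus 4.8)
The plan is to reduce to showing that $m_i$ and $m_{i+1}$ have the same sign for every $i\in\{0,\dots,n-1\}$; since these pairs chain through all of $m_0,\dots,m_n$, this is equivalent to the assertion. So fix $i$. Writing $\delta(\cdot)$ for the difference of endpoints of a path, recall $\gamma_i=\epsilon_i+\gamma_i'+\epsilon_i^{-1}$ with $\epsilon_i\in\{s^{\pm1},t^{\pm1}\}$ and $\delta(\gamma_i)=g_i^{m_i}$; conjugating by $\epsilon_i^{-1}$ and using the defining relations shows $\delta(\gamma_i')=u_i^{m_i}$, where $u_i=a$ if $\epsilon_i\in\{s,t\}$, $u_i=x$ if $\epsilon_i=s^{-1}$, and $u_i=y$ if $\epsilon_i=t^{-1}$ --- crucially the exponent is unchanged. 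From $\gamma_i'=\alpha_{i+1}+\gamma_{i+1}+\beta_{i+1}$, together with $\delta(\alpha_{i+1}),\delta(\beta_{i+1})\in H$ and $H$ abelian, taking $\delta$ of both sides gives $\delta(\alpha_{i+1})\,\delta(\beta_{i+1})=u_i^{m_i}\,g_{i+1}^{-m_{i+1}}$. Also, every subpath of the $K$-biLipschitz path $\gamma$ is $K$-biLipschitz, so no $m_j$ is zero (a biLipschitz path with coinciding endpoints is constant, while escapes are nontrivial), and the sign of each $m_j$ makes sense.

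Next I would record the size estimates. Since $\gamma_i'$ is $K$-biLipschitz with $\delta(\gamma_i')=u_i^{m_i}$, and a path is at least as long as the distance between its endpoints, $|u_i^{m_i}|\le|\gamma_i'|\le K|u_i^{m_i}|$, and likewise $|g_{i+1}^{m_{i+1}}|\le|\gamma_{i+1}|\le K|g_{i+1}^{m_{i+1}}|$. The enfilade inequality $|\gamma_{i+1}|\ge\frac{R-1}{R}|\gamma_i'|$ then yields two facts: first, $g_{i+1}^{m_{i+1}}$ is comparable in length to $u_i^{m_i}$, with ratio in $[\tfrac{R-1}{RK},K]$; second, $|\delta(\alpha_{i+1})\delta(\beta_{i+1})|\le|\alpha_{i+1}|+|\beta_{i+1}|=|\gamma_i'|-|\gamma_{i+1}|\le\tfrac1R|\gamma_i'|\le\tfrac{K}{R}|u_i^{m_i}|$, so the element $u_i^{m_i}g_{i+1}^{-m_{i+1}}$ is strictly shorter than $u_i^{m_i}$ (note $\tfrac{K}{R}<1$ since $K<2<R$).

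The first substantive step is to deduce $g_{i+1}=u_i$. If not, then $u_i^{m_i}g_{i+1}^{-m_{i+1}}$ is a nonzero power of $a$ times a nonzero power of $x$ or $y$, or a nonzero power of $x$ times a nonzero power of $y$; in the first case Lemma~\ref{lem:ag_bound}\eqref{item:ag_bound} bounds its length below by a fixed fraction of $|u_i^{m_i}|+|g_{i+1}^{m_{i+1}}|$, and in the second Proposition~\ref{3piecegeodesic} gives $|x^py^q|=|x^p|+|y^q|$; either way $u_i^{m_i}g_{i+1}^{-m_{i+1}}$ is comparable in length to $u_i^{m_i}$, contradicting the second fact above. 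Granting $g_{i+1}=u_i$, so that $\delta(\alpha_{i+1})\delta(\beta_{i+1})=u_i^{m_i-m_{i+1}}$, suppose $m_i$ and $m_{i+1}$ had opposite signs. Then $|m_i-m_{i+1}|=|m_i|+|m_{i+1}|\ge|m_{i+1}|$, so power-law distortion (Theorem~\ref{powerdistortion}) gives $|u_i^{m_i-m_{i+1}}|\ge|m_i-m_{i+1}|^{1/\alpha}\ge|m_{i+1}|^{1/\alpha}\ge\tfrac1C|u_i^{m_{i+1}}|$, which by the comparability of the first fact is a fixed fraction of $|u_i^{m_i}|$ --- contradicting that $u_i^{m_i-m_{i+1}}=\delta(\alpha_{i+1})\delta(\beta_{i+1})$ has length at most $\tfrac{K}{R}|u_i^{m_i}|$.

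The hard part will be the constant bookkeeping: with the crude triangle-inequality estimates sketched here, each of the two contradictions reduces to an inequality of the shape $R\le\mathrm{const}(L)$, so to cover the whole range $R>2$, $1<K<2$ one must sharpen the intermediate inequalities --- for instance by routing the comparison $|\gamma_{i+1}|\ge\frac{R-1}{R}|\gamma_i'|$ through the reverse H\"older inequality (Fact~\ref{fact:reverse_holder}) together with the two-sided bounds of Theorem~\ref{powerdistortion}, and by exploiting that $\delta(\alpha_{i+1})$ and $\delta(\beta_{i+1})$ are the endpoint displacements of short $K$-biLipschitz subpaths rather than arbitrary short elements of $H$.
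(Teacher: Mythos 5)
Your identification of the key quantity is right: $\delta(\alpha_{i+1})\delta(\beta_{i+1})$ is short (of length at most $|\gamma_i'|-|\gamma_{i+1}|\le\frac1R|\gamma_i'|$), and opposite signs should force this element to have a large exponent. But the gap you flag at the end is a genuine one, and the sharpening you propose does not close it.

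When you translate the exponent bound $|m_i-m_{i+1}|=|m_i|+|m_{i+1}|$ back into a length bound via Theorem~\ref{powerdistortion}, you pay the constant $C$, and so the contradiction you extract holds only when roughly $R>CK$. The constant $C$ is not close to $1$: Proposition~\ref{this_whole_paper_is_not_a_waste_of_time} is devoted to showing that the gap between $C_{lo}$ and $C_{hi}$ grows linearly in $L$ and already makes the naive reverse-H\"older comparison useless for $L\ge 10$. So the slack you are losing here is exactly the slack the introduction warns is unrecoverable by invoking Fact~\ref{fact:powerdistortion} alone. Consequently your argument proves the lemma only for $R$ above an $L$-dependent threshold, not for all $R>2$ as stated, and routing part of it through reverse H\"older (which costs its own factor $2^{1-1/\alpha}$) does not help.

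The paper's argument avoids exponents entirely. Working purely with arclengths, it uses only that $\gamma_{i-1}'$ and $\gamma_i$ are $K$-biLipschitz, so each has arclength at most $K$ times its endpoint displacement, and combines this with the opposite-sign structure to derive $|\gamma_i|\le\frac{K-1}{K+1}|\gamma_{i-1}'|$. Since $\frac{K-1}{K+1}<\frac13$ when $K<2$ while $\frac{R-1}{R}>\frac12$ when $R>2$, this contradicts the enfilade inequality throughout the whole stated parameter range, with no distortion constant ever appearing. The rule of thumb: when the conclusion does not mention $C$ or $\alpha$, look for an argument that never passes from arclengths to exponents and back.
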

\begin{proof}
  If $m_{i-1}$ and $m_{i}$ have different signs for some $i$ then
  $|\beta_i|\geq |g_i^{m_{i-1}}|-|\alpha_i|+|g_i^{m_i}|$, but
  $\gamma'_{i-1}=\alpha_i+\gamma_{i}+\beta_i$, so:
  \[|\gamma_{i-1}'|-|\gamma_i|=|\alpha_i|+|\beta_i|\geq
    |g_i^{m_{i-1}}|+|g_i^{m_i}|\geq
    \frac{1}{K}(|\gamma_{i-1}'|+|\gamma_i|)\implies
    |\gamma_i|\leq\frac{K-1}{K+1}|\gamma_{i-1}'|\]
But   $|\gamma_i|>\frac{R-1}{R}|\gamma_{i-1}'|$, so $\frac{1}{2}<\frac{R-1}{R}<\frac{K-1}{K+1}<\frac{1}{3}$,
 and we have a contradiction.
\end{proof}
\begin{lemma}\label{enfilade_group_action}
  Assume that $\gamma$ begins at 1 and that we have an enfilade
  decomposition of $\gamma$ with notation as above.
  Let $h_i\in H$ be the difference between the endpoints of $\alpha_i$.
 For any $k\in\mathbb{Z}$, we have that $g_0^k.(\epsilon_0+\sum_{i=1}^n\alpha_i+\epsilon_i)$ is a path from
 $g_0^k$ to $(\epsilon_0\prod_{i=1}^nh_i\epsilon_i)g_{n+1}^k$.
\end{lemma}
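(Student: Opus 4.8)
The plan is to reduce the statement to a single conjugation identity in $G_L$ and then verify that identity by a telescoping computation. First I would compute the endpoint of the path $\epsilon_0+\alpha_1+\epsilon_1+\cdots+\alpha_n+\epsilon_n$ read starting at the identity: each $\alpha_i$ contributes the factor $h_i$ (the difference of its endpoints) and each $\epsilon_i$ contributes itself, so this path ends at the group element $w:=\epsilon_0\prod_{i=1}^n h_i\epsilon_i$. Left-translating by $g_0^k$ then produces a path from $g_0^k$ to $g_0^kw$, so the lemma amounts to the identity $g_0^kw=w\,g_{n+1}^k$; since this would follow by raising to the $k$-th power, it suffices to prove $w^{-1}g_0w=g_{n+1}$.

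The structural fact I would use is that conjugation by $\epsilon_i$ carries $g_i$ to $g_{i+1}$, i.e.\ $\epsilon_i^{-1}g_i\epsilon_i=g_{i+1}$ for each $i\in\{0,1,\dots,n\}$. Indeed, $\gamma_i=\epsilon_i+\gamma_i'+\epsilon_i^{-1}$ is a compound $g_i$-escape in the sense of Definition~\ref{def:compoundescape}, so $\epsilon_i$ is forced by $g_i$: it equals $s$ when $g_i=x$, equals $t$ when $g_i=y$, and is one of $s^{-1},t^{-1}$ when $g_i=a$. In each of these cases the relations $sas^{-1}=x$ and $tat^{-1}=y$ put $\epsilon_i^{-1}g_i\epsilon_i$ inside $H$; and since the difference of the endpoints of $\gamma_i'$ is $\epsilon_i^{-1}g_i^{m_i}\epsilon_i=(\epsilon_i^{-1}g_i\epsilon_i)^{m_i}$ and is labelled $g_{i+1}^{m_i}$ in Figure~\ref{fig:enfilade}, uniqueness of roots in $H\cong\Z^2$ forces $\epsilon_i^{-1}g_i\epsilon_i=g_{i+1}$ when $m_i\neq 0$ (the degenerate case $m_i=0$ is handled by simply taking $g_{i+1}:=\epsilon_i^{-1}g_i\epsilon_i$). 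For $i=n$ this is precisely the flavor $g_{n+1}$ of the difference of the endpoints of the end $\gamma_n'$ of the enfilade.

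With this in hand the computation telescopes, using repeatedly that each $g_j\in\{a,x,y\}\subseteq H$ commutes with each $h_j\in H$:
\[
w^{-1}g_0w=\epsilon_n^{-1}h_n^{-1}\cdots\epsilon_1^{-1}h_1^{-1}\,(\epsilon_0^{-1}g_0\epsilon_0)\,h_1\epsilon_1\cdots h_n\epsilon_n
=\epsilon_n^{-1}h_n^{-1}\cdots\epsilon_1^{-1}\,g_1\,\epsilon_1h_2\epsilon_2\cdots h_n\epsilon_n,
\]
where the second equality uses $\epsilon_0^{-1}g_0\epsilon_0=g_1$ and then $h_1^{-1}g_1h_1=g_1$. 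Iterating — at each stage replacing $\epsilon_j^{-1}g_j\epsilon_j$ by $g_{j+1}$ and cancelling the adjacent $h_{j+1}^{-1}$ and $h_{j+1}$ since $g_{j+1}\in H$ — peels off all of the $h_i\epsilon_i$ and leaves $w^{-1}g_0w=\epsilon_n^{-1}g_n\epsilon_n=g_{n+1}$, as required.

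I do not expect a genuine obstacle: this is a bookkeeping lemma whose only real content is the structural identity $\epsilon_i^{-1}g_i\epsilon_i=g_{i+1}$, which is immediate from the definition of compound escape together with the HNN relations. The only points requiring care are the order of factors in $w$ and its inverse, and the harmless degenerate possibility $m_i=0$.
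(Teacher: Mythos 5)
Your proof is correct and takes essentially the same approach as the paper's: the paper's one-line proof by induction on $n$ rests on exactly the two facts you isolate, namely $\epsilon_{i}^{-1}g_{i}\epsilon_{i}=g_{i+1}$ and the commutativity of $g_i$ and $h_i$ in $H$, and your telescoping conjugation is just the unrolled form of that induction.
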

\begin{proof}
  This follows by induction on $n$ using that facts that 
$g_i =
  \epsilon_{i-1}^{-1}g_{i-1}\epsilon_{i-1} $ and $g_i$ and
  $h_i$ commute.
\end{proof}

\begin{remark}
  Let $R > 2$, let $2>K > 1$, let $\gamma$ be a $K$-biLipschitz  $g$--escape for some $g \in \{a,x,y\}$ and let
  \[ \gamma = \epsilon_0+\alpha_1+\cdots+\epsilon_n+\gamma'_n+\epsilon_n^{-1}
   +\cdots+\beta_1 +\epsilon_0^{-1} \] be the 
 $R$-enfilade decomposition of $\gamma$.
 By
  Theorem~\ref{longescapesinlength}, there exists a $J > 1$ such
  that if $K$ is close enough to $1$ then either $|\gamma'_n| \le J$ or
  the endpoints of $\gamma'_n$ differ by a power of $a$ and
  $\gamma'_n$ has an $x$--escape and a $y$--escape that collectively
  account for an at least $\frac{R-1}{R}$ fraction of its length.
\end{remark}

\begin{prop}[Enfilades are short]\label{enfilade_width}
  Let $R > 3$.  There exists a $2>K > 1$ and $J > 2$ such that if $\gamma$ is a $K$-biLipschitz
  $g$-escape for some $g \in \{a,x,y\}$ and $|\gamma| \ge J$ and
    \[ \gamma = \epsilon_0+\alpha_1+\cdots+\epsilon_n+\gamma'_n+\epsilon_n^{-1}
      +\cdots+\beta_1 +\epsilon_0^{-1} \] is the
  $R$-enfilade decomposition of $\gamma$ then $|\gamma'_n| \ge \frac{R-3}{R}\cdot |\gamma|$.
\end{prop}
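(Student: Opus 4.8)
The plan is to show that the enfilade shrinks geometrically at a controlled rate, so the ``head'' pieces $\epsilon_0+\alpha_1+\cdots+\epsilon_n$ and their mirror images $\beta_n+\cdots+\beta_1+\epsilon_0^{-1}$ together contribute only a small fraction of $|\gamma|$, leaving the end $\gamma'_n$ to carry the bulk. The key input is Definition~\ref{def:renfilade}, which gives $|\gamma_{i+1}|\ge\frac{R-1}{R}|\gamma'_i|$ for each $i$, together with the decomposition $\gamma'_i=\alpha_{i+1}+\gamma_{i+1}+\beta_{i+1}$. From these two facts one gets immediately
\[
|\alpha_{i+1}|+|\beta_{i+1}| = |\gamma'_i|-|\gamma_{i+1}| \le \tfrac{1}{R}|\gamma'_i|.
\]
Since also $|\gamma'_i|\le|\gamma_{i+1}|=|\epsilon_{i+1}|+|\gamma'_{i+1}|+|\epsilon_{i+1}^{-1}|+|\alpha_{i+1}|+|\beta_{i+1}|$, a little rearrangement should give that $|\gamma'_{i+1}|$ is at least a fixed fraction (something like $\frac{R-1}{R}$, up to the additive $\epsilon$-edge terms) of $|\gamma'_i|$; equivalently the head cost $|\alpha_{i+1}|+|\beta_{i+1}|+|\epsilon_{i+1}|+|\epsilon_{i+1}^{-1}|$ is small relative to $|\gamma'_i|$. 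Summing the geometric series $\sum_i(|\alpha_{i+1}|+|\beta_{i+1}|)\le\frac{1}{R}\sum_i|\gamma'_i|$ and using $|\gamma'_i|\le|\gamma'_0|=|\gamma|$ at the top, with geometric decay thereafter, should bound the total head contribution by roughly $\frac{C'}{R}|\gamma|$ for an absolute constant $C'$.

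The one genuinely new wrinkle is the stable-letter edges $\epsilon_i$: each contributes length $1$ on each side, and there are $2n$ of them, so a priori they cost $2n$, which is not obviously $O(|\gamma|/R)$ unless we also control $n$. Here I would use that $|\gamma'_i|$ decays geometrically (ratio bounded away from $1$ once $R>3$ and $K$ is close to $1$, so that the $\epsilon$-edges cannot constitute a large fraction of $\gamma_{i+1}$) and that $|\gamma'_n|\ge J>2$ by the maximality clause of Definition~\ref{def:renfilade} forces $n$ to be bounded by something like $\log_{R/(R-1)}|\gamma|$; more carefully, each step strictly decreases length by at least a fixed amount once lengths are bounded below, and the maximality of $n$ (Definition~\ref{def:renfilade}: $n$ is maximal) together with $|\gamma'_n|\ge J$ means the process stopped precisely because one cannot extend — so either $|\gamma'_n|$ is small, contradicting $|\gamma|\ge J$ large via the geometric bound running the other way, or $\gamma'_n$ has the $x/y$-branching structure of Theorem~\ref{longescapesinlength}. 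In either reading, $\sum_{i=0}^{n-1}1 \le \sum_{i=0}^{n-1}\frac{1}{2}(|\alpha_{i+1}|+|\beta_{i+1}|+2)$ can be absorbed once we know each individual $\gamma_i'$ has length at least, say, $3$, since then the $\epsilon$-edges are a bounded fraction of each $|\gamma'_i|$ and the same geometric summation applies. Concretely I expect the bookkeeping to yield $|\gamma|-|\gamma'_n| = \sum_{i=0}^{n-1}\bigl(|\alpha_{i+1}|+|\beta_{i+1}|+|\epsilon_{i+1}|+|\epsilon_{i+1}^{-1}|\bigr) \le \frac{3}{R}|\gamma|$ for $K$ close enough to $1$ and $|\gamma|\ge J$ large enough, which is exactly the claimed bound $|\gamma'_n|\ge\frac{R-3}{R}|\gamma|$.

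The main obstacle I anticipate is not the geometric-series estimate itself but making the induction self-consistent: the bound $|\gamma'_{i+1}|\ge(\text{ratio})|\gamma'_i|$ that drives the geometric decay is only useful if the ratio is bounded away from $1$, and getting that requires knowing that the $\epsilon$-edges and the $\alpha_{i+1},\beta_{i+1}$ pieces are jointly a small fraction of $|\gamma'_i|$ — but $|\alpha_{i+1}|+|\beta_{i+1}|\le\frac{1}{R}|\gamma'_i|$ only controls them relative to $|\gamma'_i|$, not in absolute terms, so the additive $+2$ from the two $\epsilon$-edges needs a separate lower bound on $|\gamma'_i|$. I would handle this by first establishing, using $|\gamma|\ge J$ with $J$ chosen large and the fact that the decay ratio is at least some fixed $\rho<1$, that all the $\gamma'_i$ with $i<n$ have length bounded below by a constant depending only on $R$ (since $|\gamma'_i|\ge|\gamma_{i+1}|\ge\frac{R-1}{R}|\gamma'_i|$ gives a lower bound cascading down from $|\gamma'_n|\ge J$... actually the cleaner route is that $|\gamma'_i| \ge |\gamma_{i+1}| - |\alpha_{i+1}|-|\beta_{i+1}| \ge \frac{R-2}{R}|\gamma'_i|$-type chains force $|\gamma'_i|\ge|\gamma'_{i+1}|$, so $|\gamma'_i|\ge|\gamma'_n|\ge J$ for all $i$); with $J\ge 3R$ say, the $+2$ per step is then $\le\frac{2}{3R}|\gamma'_i|$ and folds into the geometric sum. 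Once that lower bound is in hand the rest is the routine summation sketched above, and the conclusion $|\gamma'_n|\ge\frac{R-3}{R}|\gamma|$ follows.
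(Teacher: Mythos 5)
There is a genuine gap in your argument, and it is located precisely at the step you flagged as the "routine summation." The inequality $|\alpha_{i+1}|+|\beta_{i+1}|\le\frac{1}{R}|\gamma'_i|$ and the resulting bound $\sum_i(|\alpha_{i+1}|+|\beta_{i+1}|)\le\frac{1}{R}\sum_i|\gamma'_i|$ are correct, but the right-hand side is not $O(|\gamma|/R)$. The enfilade definition only gives the \emph{lower} bound $|\gamma_{i+1}|\ge\frac{R-1}{R}|\gamma'_i|$, which translates to $|\gamma'_{i+1}|\ge\frac{R-1}{R}|\gamma'_i|-2$: a lower bound on the decay, i.e.\ the decay is \emph{slow}, with ratio at least $\approx 1-\frac1R$. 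There is no useful upper bound on the ratio at all (each step can lose as little as the two $\epsilon$--edges), so $n$ can a priori grow linearly in $|\gamma|$, and $\sum_i|\gamma'_i|$ can be $\Theta(R)\cdot|\gamma|$ or even $\Theta(|\gamma|^2)$. In the first regime $\frac1R\sum_i|\gamma'_i|\approx|\gamma|$, not $|\gamma|/R$; in the second it is worse. So the ``geometric series'' does not converge to a small multiple of $|\gamma|$, and the claimed constant $C'$ is not absolute — it is of order $R$ (or larger), exactly the factor you cannot afford. Your remark that the decay ratio is ``bounded away from~$1$'' is the unjustified assumption: nothing in Definition~\ref{def:renfilade} supplies it, and it is false in general.

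The constraint you need to bring in, and essentially do not use, is that $\gamma$ itself is $K$--biLipschitz, which forces the enfilade to be short in a way that the local definition alone does not. The paper's proof does this by a rearrangement: pair up consecutive levels of the enfilade into ``diagonal'' pieces $\delta_{2i}$, so that each $\delta_{2i}$ is a path whose endpoints differ by a power of $a$ (via Lemma~\ref{enfilade_group_action}), and $\delta_2+\delta_4+\cdots+\delta_{2k}+\gamma_{2k}$ has the same length and endpoints as $\gamma$. Then Corollary~\ref{cor:one_long_path} — applied because $|\gamma|\le K|a^{m_0}|$ — says one piece of this concatenation of same-flavor pieces must dominate, and a direct computation shows each $\delta_{2i}$ has length at most $\frac{2}{R}|\gamma|+4$, so $\gamma_{2k}$ must be the dominant one. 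The telescoping/geometric approach cannot substitute for this because it never brings the global biLipschitz condition to bear on the cumulative junk; you would have to rediscover the rearrangement trick (or something with equivalent content) to close the argument.
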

\begin{proof}
  Since $R>3$, $\frac{R-1}{R}\cdot\frac{R-2}{R}>\frac{R-3}{R}$.
  Let $R_0 > 1$ be large enough to satisfy
  $\frac{R_0}{R_0 + 1} > \frac{2}{R}$ and
  $\frac{R-1}{R} \cdot \frac{R_0}{R_0 + 1} > \frac{R-2}{R}$.  Let
  $J>2$ , $2>K > 1$ be such that the conclusion of
  Corollary~\ref{cor:one_long_path} holds with $(R,J,K) = (R_0,J,K)$
  and so that all of the following hold:
  \begin{itemize}
   \item$\frac{2}{R} + \frac{4}{J} < \frac{R_0}{R_0 + 1}$ 
  \item$\frac{R-1}{R} \cdot \bigl(\frac{R_0}{R_0 + 1} - \frac{2}{J}\bigr) >
  \frac{R-2}{R}$.
  \item$\frac{R-1}{R}\cdot\frac{R_0}{R_0 +
       1}-\frac{4}{J}
   \geq \frac{R-2}{R}$
 \item $\frac{R-1}{R}\cdot\frac{J-2}{J}\geq \frac{R-2}{R}$
   \item $\frac{R-1}{R}\cdot\frac{R-2}{R}\cdot\frac{J-2}{J}>\frac{R-3}{R}$
  \end{itemize}
 
  Let $\gamma$ be a $K$-biLipschitz  $g$-escape for some $g
  \in \{a,x,y\}$, let $|\gamma| \ge J$ and consider the $R$-enfilade
  decomposition of $\gamma$ with the same notation as in
  Definition~\ref{def:renfilade}.

  First assume that $g_0=a$, and notice that then $g_i = a$ whenever $i$ is even.
  We have $\alpha_i+g_i^{m_i}+\beta_i = g_i^{m_{i-1}}$, for each $i \in
  \{1, 2, \ldots n\}$.
  By commutativity of $H$, the endpoints of
  $\alpha_i+\beta_i$ differ by $g_i^{m_{i-1} - m_i}$.
Define $\delta_{2i}:=\epsilon_{2i-2}+\alpha_{2i-1}+\epsilon_{2i-1}+\alpha_{2i}+
\beta_{2i}+\epsilon_{2i-1}^{-1}+\beta_{2i-1}+\epsilon_{2i-2}^{-1}$.
By the previous observation and an argument similar to
Lemma~\ref{enfilade_group_action}, the endpoints of $\delta_{2i}$
differ by $a^{m_{2i-2} - m_{2i}}$, so the endpoints of  
  $\delta_{2i}+\gamma_{2i}$ differ by $a^{m_{2i-2} -
    m_{2i}}=a^{m_{2i-2}}$, which is the same as the difference between
  the endpoints of $\gamma_{2i-2}$.
  Moreover, we have:
  \begin{align*}
    |\delta_{2i}+\gamma_{2i}|
    &= |\epsilon_{2i-2}+\alpha_{2i-1}+\epsilon_{2i-1}+\alpha_{2i}+
      \beta_{2i}+\epsilon_{2i-1}^{-1}+\beta_{2i-1}+\epsilon_{2i-2}^{-1}| +
      |\gamma_{2i}| \\
    &= |\epsilon_{2i-2}+\alpha_{2i-1}+\epsilon_{2i-1}+\alpha_{2i}+\gamma_{2i}
      +\beta_{2i}+\epsilon_{2i-1}^{-1}+\beta_{2i-1}+\epsilon_{2i-2}^{-1}| \\
    &= |\gamma_{2i-2}|
  \end{align*}
  By induction, the path
  $\delta_2+\delta_4+\cdots+\delta_{2i}+\gamma_{2i}$ has the same
  length and the same endpoints as $\gamma$.
  Let $k \in \Z$ be defined by $n-1 \le 2k \le n$ and
  consider the path $\delta_2+\delta_4+\cdots+\delta_{2k}+\gamma_{2k}$.
  Since $\gamma$ is $K$-biLipschitz, we have
  $J \le |\gamma| = |\delta_2+\delta_4+\cdots+\delta_{2k}+\gamma_{2k}| =
  |\gamma| \le K|a^{m_0}|$.
  Also the endpoints of $\delta_{2i}$ differ by $a^{m_{2i-2} - m_{2i}}$, for each $i$, and
  the endpoints of $\gamma_{2k}$ differ by $a^{m_{2k}}$, so, by
  Corollary~\ref{cor:one_long_path}, either
  $|\delta_{2i}| > \frac{R_0}{R_0 + 1} \cdot |\gamma|$, for some
  $i \in \{1, 2, \ldots, k\}$, or
  $|\gamma_{2k}| > \frac{R_0}{R_0 + 1} \cdot |\gamma|$. But
  \begin{align*}
    |\delta_{2i}|
    &= |\epsilon_{2i-2}+\alpha_{2i-1}+\epsilon_{2i-1}+\alpha_{2i}+
      \beta_{2i}+\epsilon_{2i-1}^{-1}+\beta_{2i-1}+\epsilon_{2i-2}^{-1}| \\
    &= |\epsilon_{2i-2}+\alpha_{2i-1}+\beta_{2i-1}+\epsilon_{2i-2}^{-1}|
      + |\epsilon_{2i-1}+\alpha_{2i}+\beta_{2i}+\epsilon_{2i-1}^{-1}| \\
    &= \bigl(2 + |\gamma'_{2i-2}| - |\gamma_{2i-1}|\bigr) +
      \bigl(2 + |\gamma'_{2i-1}| - |\gamma_{2i}|\bigr) \\
    &\le |\gamma'_{2i-2}| - \frac{R-1}{R}\cdot|\gamma'_{2i-2}| +
      |\gamma'_{2i-1}| - \frac{R-1}{R}\cdot|\gamma'_{2i-1}| + 4 \\
    &= \frac{1}{R}\cdot|\gamma'_{2i-2}|
      + \frac{1}{R}\cdot|\gamma'_{2i-1}| + 4 \\
    &\le \frac{2}{R}\cdot|\gamma| + 4 \\
    &= \biggl(\frac{2}{R} + \frac{4}{|\gamma|}\biggr) \cdot|\gamma| \\
    &\le \biggl(\frac{2}{R} + \frac{4}{J}\biggr) \cdot|\gamma| \\
    &< \frac{R_0}{R_0 + 1} \cdot|\gamma|
  \end{align*}
  where the first inequality follows from the definition of
  $R$-enfilade decomposition.
  Thus:
  \[ \frac{R_0}{R_0 + 1}\cdot|\gamma| < |\gamma_{2k}| = |\gamma'_{2k}|
    + 2 \le \frac{R}{R-1}\cdot|\gamma_n| + 2 \]
  We conclude:
   \begin{align*}
    |\gamma'_n|&=|\gamma_n|-2\\
    &> \frac{R-1}{R}\cdot\biggl(\frac{R_0}{R_0 + 1}\cdot|\gamma| -
      2\biggr)-2 \\
     &= \frac{R-1}{R}\cdot\frac{R_0}{R_0 + 1}\cdot|\gamma| -
       2\cdot\frac{2R-1}{R} \\
     &> \frac{R-1}{R}\cdot\frac{R_0}{R_0 + 1}\cdot|\gamma| -
       4\\
    &=\left( \frac{R-1}{R}\cdot\frac{R_0}{R_0 +
      1}-\frac{4}{|\gamma|}\right)|\gamma| \\
     &\geq\left( \frac{R-1}{R}\cdot\frac{R_0}{R_0 +
       1}-\frac{4}{J}\right)|\gamma| \\
     &\geq \frac{R-2}{R}\cdot|\gamma|
   \end{align*}

   If $g_0\neq a$ then $g_0\in\{x,y\}$.
   If $n=0$ we are done, so suppose not.
   Since $g_0\neq a$, $g_1=a$ and $g_i=a$ for all odd $i$. 
   Apply the previous argument to $\gamma_1$ to get:
  
   \begin{align*}
     |\gamma_n'|&\geq  \frac{R-2}{R}\cdot |\gamma_1|\\
                &\geq  \frac{R-2}{R}\cdot\frac{R-1}{R}\cdot|\gamma_0'|\\
                &= \frac{R-2}{R}\cdot\frac{R-1}{R}\cdot(|\gamma|-2)\\
                &= \frac{R-2}{R}\cdot\frac{R-1}{R}\cdot\frac{J-2}{J}\cdot|\gamma|\\
     &\geq \frac{R-3}{R}\cdot|\gamma|\qedhere 
   \end{align*}
\end{proof}

\section{Constructing fillings}\label{sec:constructfillings}
We will show that for all $\lambda\in (0,1)$ we have that for all sufficiently
large $R$ and $\Lambda$ there exist $1<K<2$ and $A,J,M>0$ such that every $K$--biLipschitz loop
$\gamma$ can be filled by a diagram of mesh at most
$\lambda|\gamma|+M$ of area at most $A$.
We do so inductively.
An overview of the inductive argument is given in Section~\ref{sec:overview}.
In Section~\ref{approximatepolygons} we establish basic filling
results for the approximate polygons that will occur as vertex
regions. 
In Section~\ref{sec:base_case} we handle the base case: identifying
and filling a central region.
In Section~\ref{sec:inductive_steps} we cover three possible types of
induction steps: small enough parts, enfilades, and branching
regions.
In many of these steps we will need to make assumptions on the relationships
between the various constants.
These are boxed for emphasis.
In Theorem~\ref{maintheorem} we put all of the pieces of the proof
together, including observing that all of the boxed relations can be
satisfied simultaneously, so that we can actually, quantitatively,
perform the induction described in Section~\ref{sec:overview}.

\subsection{Overview: Constructing a filling inductively}\label{sec:overview}
In Lemma~\ref{lem:subdividecentral} we identify a coset $gH$ from
which $\gamma$ has a manageable collection of at most 4 long escapes. 
Call these \emph{the $0$-th level escapes}.
We resect those escapes and replace each with its trace in $gH$.
We then subdivide each of the traces into segments of bounded exponent
and replace each segment by a geodesic with the same endpoints.
We find a filling diagram for the resulting loop $\gamma'$ of
controlled mesh and area.
Call the filling diagram for $\gamma'$ the \emph{$0$-th shell}.

It remains to take each of the $0$-th level escapes, complete it to a
loop using the same concatenation of geodesic subsegments as used to
build $\gamma'$, and construct a filling diagram of that loop.

For the induction, 
suppose we have an $n$-th level escape $\gamma|_P$ of $\gamma$ and an $n$-th
shell that includes a filling diagram with a boundary segment that is a subdivision of the trace of $\gamma|_P$. 
If $\gamma|_P$ is ``small enough'' we can fill it with a single
2--cell.
This is quantified explicitly in \eqref{small_enough} below.
We will see in Section~\ref{sec:inductive_steps} that very small
parts are small enough.
If $\gamma|_P$ is not small enough then we look at its enfilade
decomposition.
Let $\gamma|_{P'}$ be the end of the enfilade, and let $\delta$ be the
toral segment between the endpoints of $\gamma|_{P'}$. 
In Section~\ref{sec:inductive_steps} we describe how to fill the
enfilade up to $\delta$, which also yields a subdivision of $\delta$.
Then according to Section~\ref{sec:noncentralregions} there are two possibilities: either $\gamma|_{P'}$ is short
enough to cap off with a single $2$--cell, or it is branching.
If it is branching then $\gamma|_{P'}$ has two long escapes and their
traces make an approximate triangle with $\delta$.
We fill that triangle. The fillings of the enfilade part and the
short/branching part together make up a component of the $(n+1)$-st
shell.
In the case of branching, the two long escapes are added to the list
of $(n+1)$-st level escapes. 

We finish the argument by showing that after finitely many steps, independent of $\gamma$, every remaining
branch is small enough that it can be capped off with a single
2--cell, so that no escapes get added to a deeper level of the induction.
The diagram filling $\gamma$ is the union of the shells, identified
along common segments. 
The induction hypothesis is:
\begin{equation}
  \label{induction_hypothesis}
  \parbox{.85\textwidth}{For an $n$--th level escape that is not
    small enough to cap off, the $n$--th shell provides a subdivision
    of its trace into at most
    $2(\Lambda+1)$--many subsegments of exponent at most
    $E|\gamma|^\alpha\cdot (3/L)^{n}$, where $E$ is the constant of
    Lemma~\ref{lem:subdividecentral}, which is independent of $\gamma$.}
\end{equation}

Lemma~\ref{lem:subdividecentral} establishes
\eqref{induction_hypothesis} for $n=0$.
The inductive step is proved in Section~\ref{sec:inductive_steps}.
Let us suppose that
\eqref{induction_hypothesis} is true for all $n$ and see how to finish
the argument.

\begin{equation}
  \label{eq:11}
  \fbox{\text{Assume $R>12C(\Lambda+1)/\lambda$.}}
\end{equation}
Suppose $\gamma$ is a $K$--biLipschitz loop and $gH$ is a coset such that:
\begin{equation}
  \label{small_enough}
  \parbox{.85\textwidth}{$\gamma$ has a central escape, a compound escape from $gH$ of length at least
$|\gamma|/2$, whose trace is subdivided into at most
$2(\Lambda+1)$--many subsegments of exponent bounded by
$L\left(\frac{|\gamma|}{R}\right)^\alpha$.}
\end{equation}

Choose geodesic segments $\delta_1,\dots,\delta_m$, $m\leq 2(\Lambda+1)$,  with the same endpoints as the subdivision
segments. 
Let $\gamma|_P$ be the complement of the central escape.
It is $K$--biLipschitz, with the same endpoints as
$\delta_1+\cdots+\delta_m$, so $|\gamma|_P|\leq K\sum_{i=1}^m|\delta_i|$.
Since the exponents of the $\delta_i$ are bounded, we apply
Theorem~\ref{powerdistortion} to get
$|\delta_i|<C\left(L\left(\frac{|\gamma|}{R}\right)^\alpha\right)^{1/\alpha}=2C|\gamma|/R$.
Thus:
\begin{align*}
  |\gamma|_P|+\sum_{i=1}^m|\delta_i|&\leq
                                      (K+1)\sum_{i=1}^m|\delta_i|\\
                                    &<3\cdot 2(\Lambda+1)\cdot
                                      2C|\gamma|/R\\
  &=12C(\Lambda+1)\cdot|\gamma|\cdot\frac{1}{R}\\
  &<12C(\Lambda+1)\cdot |\gamma|\cdot
    \frac{\lambda}{12C(\Lambda+1)}\\
  &=\lambda|\gamma|
\end{align*}

Thus, if \eqref{small_enough} is satisfied then $\gamma|_P+\bar\delta_m+\cdots+\bar\delta_1$ is a loop of length
less than $\lambda|\gamma|$, so it is small enough to cap off 
with a single $2$--cell satisfying the desired mesh bound.

The induction hypothesis \eqref{induction_hypothesis} says that an
$n$--th level escape has trace subdivided into segments of exponent at
most $E|\gamma|^\alpha\cdot (3/L)^{n}$, so it is 
guaranteed to be small
enough to cap off with a single 2--cell once we have:
\begin{equation}
  \label{eq:9}
  E|\gamma|^\alpha\cdot (3/L)^{n}\leq
  L\left(\frac{|\gamma|}{R}\right)^\alpha\iff n\geq \log_{L/3}(ER^\alpha/L)
\end{equation}
Since $E$ and $R$ are independent of $\gamma$, \eqref{eq:9} gives a
uniform upper
bound on the number of shells we need to fill before we can cap off
all remaining branches with a single 2--cell apiece.

Suppose that we have area bounds $\mathcal{C}$, $\mathcal{E}$, and $\mathcal{B}$ 
for our fillings of the central region, enfilades, and branching
regions, respectively, by diagrams of controlled mesh.
The worst-case scenario is that the central region has four long
escapes, each belonging to an enfilade terminating in a
branching region, and for each of these branching regions the two long
escapes belong to enfilades terminating in a branching
region, etc, carried out to the $n$--th shell, where $n$ is the least
integer satisfying \eqref{eq:9}, at which point all remaining regions
are small enough and can be capped off with one additional 2--cell.
The area of the resulting diagram is bounded above by:
\begin{equation}
  \label{eq:13}
  \mathcal{C}+4(\mathcal{E}+\mathcal{B})(2^n-1)+2^{n+2}
\end{equation}
Thus, if we control the mesh and area of the filling diagrams used in
the induction steps then we can assemble a uniformly bounded number of
them to get a filling diagram for $\gamma$ of controlled mesh and
uniformly bounded area.

\subsection{Approximate polygons and fillings}\label{approximatepolygons}
The construction of fillings for the different pieces of the argument can
be unified in the framework of filling approximate polygons.
An approximate polygon, for us, will mean a cycle, in $H$, of segments of
$a$--lines, $x$--lines, and $y$--lines such that the terminal endpoint
of one is close to the initial endpoint of the next.
We construct fillings of certain approximate polygons.
These will be the building blocks for fillings of biLipschitz loops
constructed in the next two sections. 
  
  \subsubsection*{Approximate polygons are close to true polygons}
We will quantify approximate polygons as \emph{$D$--approximate} to
mean that the distance between consecutive segments is as most $D$.
A \emph{true} polygon is a 0--approximate polygon.

\begin{defn} \label{defn:approximatetriangle} Let a
  \emph{$D$--approximate triangle} mean a collection of elements
  $g_0,g_1,g_2$ of $\langle a,x,y\rangle$ and numbers
  $m_0,m_1,m_2\in\mathbb{Z}$ such that the distances
  $d(g_0x^{m_0},g_{1})$, $d(g_1y^{m_1},g_{2})$, and
  $d(g_2a^{m_2},g_{0})$ are at most $D$.
\end{defn}

 \begin{lemma}[Approximate triangles]\label{approximatetriangle}
    For every such $D$--approximate triangle there exists a
    true triangle described by $g_0',g_1',g_2'\in\langle
    a,x,y\rangle$ and $m_0',m_1',m_2'\in\mathbb{Z}$ such that the
    distance between every corresponding pair of points is at most
    $2D+L$. 
  \end{lemma}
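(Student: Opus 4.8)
The plan is to reduce an approximate triangle to a true one by correcting its ``defects'' one vertex at a time, absorbing each small error into a power of $a$ and paying the price of the distortion.

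First I would recall the structure. We have $g_0, g_1, g_2 \in H$ and exponents $m_0, m_1, m_2$ with the property that $g_0 x^{m_0}$ is within $D$ of $g_1$, that $g_1 y^{m_1}$ is within $D$ of $g_2$, and that $g_2 a^{m_2}$ is within $D$ of $g_0$. The idea is to set $g_0' := g_0$ and then successively move each vertex so that the sides fit together exactly, while keeping track of how far we have moved. Write $h_1 := (g_0 x^{m_0})^{-1} g_1 \in H$, the ``gap'' element at the first corner; by hypothesis $|h_1| \le D$. Similarly let $h_2 := (g_1 y^{m_1})^{-1} g_2$ and note $|h_2| \le D$, and let $h_3 := (g_2 a^{m_2})^{-1} g_0$, again with $|h_3| \le D$. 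The constraint that the triangle closes up is automatic: $g_0 x^{m_0} h_1 y^{m_1} h_2 a^{m_2} h_3 = g_0$.

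Next I would build the true triangle. Define $g_1' := g_0' x^{m_0}$ (so the first side is exactly an $x$-edge of exponent $m_0$) and $g_2' := g_1' y^{m_1'}$ for an $m_1'$ to be chosen so that the last side is a genuine $a$-edge. We need $g_2' a^{m_2'} = g_0'$ for some integer $m_2'$, i.e. $(g_0' x^{m_0} y^{m_1'})^{-1} g_0' \in \langle a\rangle$. Now $g_1' = g_1 h_1^{-1}$, and since $H \cong \Z^2$ with $H = \coprod_{i=0}^{L-1} a^i\langle x,y\rangle$, the element $g_1^{-1} g_2 = y^{m_1} h_2$ lies in $\langle x,y\rangle \cdot (\text{bounded})$; more to the point, $(g_1')^{-1} g_0' = h_1 (g_1^{-1} g_0') = h_1 x^{-m_0}$ and $(g_1')^{-1} g_2' = y^{m_1'}$, so the requirement ``$g_2'$ to $g_0'$ is a pure $a$-path'' becomes: $y^{-m_1'} h_1 x^{-m_0} \in \langle a \rangle$, equivalently $h_1 x^{-m_0} y^{-m_1'} \in \langle a\rangle$ since $H$ is abelian. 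Compare with the true relation $x^{m_0} h_1 y^{m_1} h_2 a^{m_2} h_3 = 1$, which says $h_1 h_2 h_3 = x^{-m_0} y^{-m_1} a^{-m_2} \in \langle x, y\rangle \cdot \langle a\rangle$; write the product of the three gap elements as $a^{\epsilon} x^{\delta_x} y^{\delta_y}$ with $|\epsilon|, |\delta_x|, |\delta_y|$ all bounded in terms of $D$ (since each $|h_i|\le D$ and coordinates of an element of $H$ of norm $\le 3D$ are bounded by $3D$, say). Then choosing $m_1' := m_1 + \delta_y$ makes the $y$-coordinate match, and the residual discrepancy between $g_0'$ and $g_2' a^{m_2'}$ is a power of $x$ times a power of $a$ of bounded exponent — but the $x$-coordinate must vanish because $g_0', g_1', g_2'$ and the desired relation all lie in the coset structure consistently, so in fact the leftover is $a^{m_2'}$ for a suitable integer $m_2'$, after noting $L \mid \delta_x + (\text{multiple of }L)$ and absorbing that multiple of $L$ into the $a$-exponent (this is the $+L$ in the bound).

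The main obstacle is precisely this last bookkeeping step: matching up the $x$-coordinate so that the closing side is genuinely a power of $a$, which forces an adjustment of the $x$-exponent $m_0$ (or $m_1$) by a multiple of $L$, and it is that multiple-of-$L$ correction that produces the extra $+L$ in the distance bound. Concretely, once the true triangle $g_0', g_1', g_2', m_0', m_1', m_2'$ is defined with $m_0' = m_0$ or $m_0' = m_0 \pm 1$ (and similarly for the others), each new vertex $g_i'$ differs from $g_i$ by an element of $H$ of norm at most, roughly, $2D$, plus each side endpoint may shift by one $x$- or $y$-edge, i.e. by $L$; carefully, every corresponding pair of points (the four ``corner'' points coming from the two decompositions of each side, or simply the three vertices together with the points $g_0 x^{m_0}$, etc.) is within $2D + L$ of its counterpart. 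I would finish by a short explicit estimate: $d(g_i, g_i') \le d(g_i, g_{i-1}' \cdot (\text{true side})) + (\text{gap}) \le L + 2D$, done corner by corner, and the sides themselves are parallel translates differing by changing an exponent by at most $\pm 1$, contributing at most $L$ to the Hausdorff distance between corresponding sides. This matches the claimed bound $2D + L$.
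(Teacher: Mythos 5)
Your plan is genuinely different from the paper's, and it has a gap that makes the bound fail. The central problem is that you treat norm and coordinate exponent as comparable, but in $G_L$ the subgroups $\langle a\rangle$, $\langle x\rangle$, $\langle y\rangle$ are \emph{distorted}: an element $h\in H$ with $|h|\le 3D$ can have coordinate exponents of order $D^\alpha$ (with $\alpha=\log_2 L>2$), not $3D$. So your assertion that ``coordinates of an element of $H$ of norm $\le 3D$ are bounded by $3D$, say'' is false, and it is exactly here that the intended bound $2D+L$ quietly disappears from the argument.

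Concretely, with $g_0'=g_0$ and $g_1'=g_0 x^{m_0}$, the constraint that $g_2' a^{m_2'}=g_0'$ with $g_2'=g_1'y^{m_1'}$ requires $y^{-m_1'}x^{-m_0}\in\langle a\rangle=\langle a^L\rangle$, which forces $m_1'=m_0$ (not $m_1+\delta_y$). Your intermediate computation $(g_1')^{-1}g_0'=h_1x^{-m_0}$ is also off; with $g_1'=g_0x^{m_0}$ it is just $x^{-m_0}$, with no $h_1$. With the forced choice $m_1'=m_0$ one gets $d(g_2,g_2')=\bigl|y^{m_1-m_0}\,h_1 h_2\bigr|$, and the discrepancy $m_1-m_0$ is the $x$-coordinate of $h_1h_2h_3$, which by the distortion estimate (Corollary~\ref{cor:planebound} combined with Theorem~\ref{powerdistortion}) can be as large as $\sim D^\alpha$; this makes $|y^{m_1-m_0}|\sim CD$, and the final bound is $(3C+2)D+O(1)$, which exceeds $2D+L$ for all but tiny $D$. (A concrete bad case: $h_1=x^M y^{-M}$ with $M\sim D^\alpha/2$, $h_2=h_3=1$.) The subsequent claim that ``the $x$-coordinate must vanish because of the coset structure'' and the ``$L\mid\delta_x+(\text{multiple of }L)$'' remark do not actually kill the $x$-coordinate $\delta_x$, which in general is nonzero and of size $\sim D^\alpha$; absorbing a multiple of $L$ cannot cancel it.

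The paper sidesteps all this bookkeeping by not going around the triangle and patching coordinates at all. It builds the true triangle's vertices directly as intersections of cosets --- $g_0':=g_0\langle x\rangle\cap g_2\langle a\rangle$, $g_1':=g_0\langle x\rangle\cap g_1a^p\langle y\rangle$ (where $|p|\le L/2$ is supplied by Lemma~\ref{notallxylinesintersect}), and $g_2':=g_2\langle a\rangle\cap g_1a^p\langle y\rangle$ --- and uses Lemma~\ref{axintersection} to know that such intersections exist and are nearest-point projections. Then $d(g_i,g_i')$ is bounded by a short chain of triangle inequalities that never converts a distance into an exponent. If you want to salvage your route, you would have to let $g_0'$ move (as the paper does) and make a clever choice of the common exponent $m_0'=m_1'$ that simultaneously minimizes the error at $g_1$ and at $g_2$; that choice exists, but your proposal neither makes it nor justifies a $2D+L$ estimate with it.
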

  \begin{proof}
    By Lemma~\ref{notallxylinesintersect}, there exists
    $p\in\mathbb{Z}$ with $|p|\leq L/2$ such that $g_0\langle
    x\rangle\cap g_1a^p\langle y\rangle\neq\emptyset$.
    Choose such a $p$, and let $g_1':=g_0\langle
    x\rangle\cap g_1a^p\langle y\rangle$.
    By Lemma~\ref{axintersection}, we can set $g_0':=g_0\langle
    x\rangle\cap g_2\langle a\rangle$ and $g_2':=g_2\langle
    a\rangle\cap g_1a^p\langle y\rangle=g_2\langle
    a\rangle\cap g'_1a^p\langle y\rangle$.
    Define $m_i'$ so that $g_0'x^{m_0'}=g_1'$, $g_1'y^{m_1'}=g_2'$,
    and $g_2'a^{m_2'}=g_0'$.
    Note that $m_0'=m_1'$ and $m_2'=-Lm_0'$.
    See Figure~\ref{fig:approxtriangle}.

    \begin{figure}[h]
      \centering
      \labellist
      \tiny
      \pinlabel $g_0$ [br] at 13 15
      \pinlabel $g_0x^{m_0}$ [b] at 93 131
      \pinlabel $g_1$ [bl] at 122 105
      \pinlabel $g_1y^{m_1}$ [bl] at 190 15
      \pinlabel $g_2$ [tl] at 190 10
      \pinlabel $g_2^{m_2}$ [t] at 39 2
      \pinlabel $g'_0=g'_2a^{m'_2}$ [tr] at 5 5
      \pinlabel $g_1'=g'_0x^{m'_0}$ [br] at 86 111
      \pinlabel $g_1a^p$ [tr] at 101 106
      \pinlabel $g_2'=g_1'y^{m_1'}$ [t] at 170 2
      \pinlabel $g_1a^py^{m_1}$ [r] at 166 17
      \endlabellist
      \includegraphics[height=1.5in]{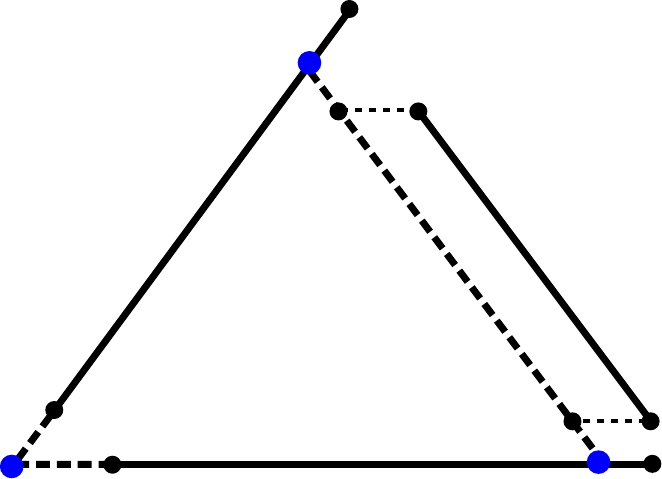}
      \caption{True triangle close to an approximate triangle}
      \label{fig:approxtriangle}
    \end{figure}

    It remains to bound the distance between corresponding points.
    We verify what we claim to be the worst case, and leave the
    remaining verifications to the reader.
    \begin{align*}
      d(g_2,g_2')&\leq d(g_2,g_1y^{m_1})+d(g_1y^{m_1},g_2')\\
                 &\leq D+d(g_1y^{m_1},g_2')\\
                 &= D+d(g_1y^{m_1},g_1'y^{m_1'})\\
      &\leq D+d(g_1y^{m_1},g_1a^py^{m_1})+d(g_1a^py^{m_1}, g_1'y^{m_1'})\\
      &=D+d(g_1y^{m_1},g_1a^py^{m_1})+d(g_1a^py^{m_1},g_2\langle
        a\rangle)\\
      &\leq D+2d(g_1y^{m_1},g_1a^py^{m_1})+d(g_1y^{m_1},g_2\langle
        a\rangle)\\
      &\leq D+L+d(g_1y^{m_1},g_2\langle
        a\rangle)\\
      &\leq D+L+d(g_1y^{m_1},g_2)\leq 2D+L\qedhere
    \end{align*}
  \end{proof}

\begin{lemma}[Approximate diamonds]\label{approximatediamond}
    Let a \emph{$D$--approximate diamond} mean a collection of elements
    $g_1$, $g_2$, $h_1$, $h_2\in\langle a,x,y\rangle$ and numbers
    $m_1$, $m_2$, $n_1$, $n_2\in\mathbb{Z}$ such that $d(g_1x^{m_1},h_1)$,
    $d(h_1y^{n_1},g_2)$, $d(g_2x^{m_2},h_2)$, and $d(h_2y^{n_2},g_1)$
    are all at most $D$.
    
    For every such $D$--approximate diamond there exits a
    true diamond described by $g'_1$, $g'_2$, $h'_1$, $h'_2\in\langle a,x,y\rangle$ and 
    $m'_1$, $m'_2$, $n'_1$, $n'_2\in\mathbb{Z}$ such that the distance between
    every corresponding pair of points is at most $D+3L/2$.
  \end{lemma}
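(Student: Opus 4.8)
The plan is to follow the proof of Lemma~\ref{approximatetriangle}. First I would unwind what a true diamond is: it is a $0$--approximate diamond, so its corners satisfy $h_1'=g_1'x^{m_1'}$, $g_2'=h_1'y^{n_1'}$, $h_2'=g_2'x^{m_2'}$, $g_1'=h_2'y^{n_2'}$; multiplying these relations around the cycle and using $\langle x\rangle\cap\langle y\rangle=1$ forces $m_2'=-m_1'$ and $n_2'=-n_1'$. Thus a true diamond is a ``parallelogram'' $g_1',\ g_1'x^{m},\ g_1'x^{m}y^{n},\ g_1'y^{n}$ with $m:=m_1'$, $n:=n_1'$, and producing one amounts to choosing a basepoint $g_1'\in H$ and two integers $m,n$. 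I would also record the tools that, as in Lemma~\ref{approximatetriangle}, do all the work: left translation by any group element is an isometry of $X$; since $H$ is abelian, for $g,g',h\in H$ we have $d(gh,g'h)=d(hg,hg')=d(g,g')$, so right translation by an element of $H$ restricts to an isometry of $H$; an $x$--line $g\langle x\rangle$ and a $y$--line $g'\langle y\rangle$ meet, in exactly one point, precisely when $g$ and $g'$ lie in a common coset of $\langle x,y\rangle$; when such an $x$--line and a point $q$ lie in a common coset, the intersection of the $x$--line with $q\langle y\rangle$ is the nearest point of the $x$--line to $q$ (Lemma~\ref{intersectingxylinesprojectnicely}), and symmetrically for $y$--lines; and every element of $H$ lies within $L/2$ of any prescribed $\langle x,y\rangle$--coset, reached by an $a$--path of length at most $L/2$ (Corollary~\ref{lem:alengths}\eqref{item:shorty}).

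Next I would build the true diamond by fixing a coset $c$ and choosing two $x$--lines and two $y$--lines inside $c$, taking the four corners $g_1'=\mathcal X_1\cap\mathcal Y_2$, $h_1'=\mathcal X_1\cap\mathcal Y_1$, $g_2'=\mathcal X_2\cap\mathcal Y_1$, $h_2'=\mathcal X_2\cap\mathcal Y_2$; whenever one of the four approximate lines is in the wrong coset, I replace the relevant approximate corner by a small $a$--translate (exponent $\le L/2$), using Lemma~\ref{notallxylinesintersect} to control the correction. A natural first attempt is $c:=g_1\langle x,y\rangle$, $\mathcal X_1:=g_1\langle x\rangle$, $\mathcal Y_2:=g_1\langle y\rangle$ (so $g_1'=g_1$), then $\mathcal Y_1$ the $y$--line through the nearest point $h_1'$ of $\mathcal X_1$ to $h_1$, and $\mathcal X_2$ an $a$--corrected copy of $g_2\langle x\rangle$ inside $c$.

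The corner distances are then estimated by chains of triangle inequalities exactly in the style of the displayed computation in Lemma~\ref{approximatetriangle}: each estimate is routed through one of the four hypothesised inequalities (contributing a single $D$) and through $a$--corrections together with trivial side lengths of the parallelogram (contributing the remaining $3L/2$), and the projection--lemma identifications above are precisely what keep the $D$--term from being doubled.

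The step I expect to be the main obstacle is the last corner $g_2'$. The data of a $D$--approximate diamond need not bound any parallelogram: there is a ``closure defect'' $x^{m_1+m_2}y^{n_1+n_2}$, which one checks has $X$--length at most $4D$ by propagating the four hypotheses around the cycle via the isometries above, and the choices sketched above force $g_2'$ to absorb this entire defect, which is too expensive once $D$ exceeds $O(L)$. The fix is to choose the exponents $m,n$ of the true diamond to interpolate between the approximate ones --- $m$ between $m_1$ and $-m_2$, and $n$ between $n_1$ and $-n_2$ --- so that $\mathcal Y_1$ and $\mathcal X_2$ sit ``between'' the relevant approximate lines and the defect is shared evenly among all four corners; in the extreme case where the two $x$--sides (or $y$--sides) point the same way, this collapses the diamond and one instead places $g_1'$ itself ``halfway'', exploiting the quasi-geodesic behaviour of $a$--powers. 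Tuning these choices and checking that every one of the four corner distances really comes out at most $D+3L/2$, rather than a larger multiple of $D$, is where the real work lies; the bookkeeping parallels the way Lemma~\ref{guards} and Corollary~\ref{lem:alengths} are used to manage $a$--geodesics. The routine verifications of the individual triangle-inequality estimates I would leave to the reader, as in Lemma~\ref{approximatetriangle}.
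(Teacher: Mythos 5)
Your overall strategy---building the true diamond from intersections of $x$-- and $y$--lines in a common $\langle x,y\rangle$--coset via Lemma~\ref{notallxylinesintersect}, and controlling each corner by the projection Lemma~\ref{intersectingxylinesprojectnicely} together with small $a$--corrections, patterned on Lemma~\ref{approximatetriangle}---is indeed the paper's. But the ``closure defect'' you worry about, and the interpolation scheme you propose to absorb it, are artifacts of your extra choice $\mathcal Y_2:=g_1\langle y\rangle$, which pins $g_1'=g_1$; the paper never makes that choice, and the interpolation would not recover the bound. The paper simply applies Lemma~\ref{notallxylinesintersect} to find $a$--translates (of exponent at most $L/2$) of \emph{all four} approximate side lines $g_1\langle x\rangle$, $h_1\langle y\rangle$, $g_2\langle x\rangle$, $h_2\langle y\rangle$ so that they lie in one coset, and takes the four corners of the true diamond to be the four cyclic intersections of these lines. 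Closure is then automatic---four cyclically interlocking $x$-- and $y$--lines in a common coset always bound a true diamond, whatever the values of $m_1+m_2$ and $n_1+n_2$---so there is nothing to distribute. By Lemma~\ref{intersectingxylinesprojectnicely}, each true corner is the nearest point of one of its two incident lines to an $a$--translate of the corresponding approximate corner, which lies within $D+L$; at most one more $a$--correction of $L/2$ yields $D+3L/2$ for each corner \emph{independently of the others}.

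The interpolation fix cannot close the gap for a concrete reason. The statement promises an error of the form $1\cdot D$ plus an additive constant, yet splitting the exponent mismatch $m_1+m_2$ arithmetically between two corners does not halve the metric error: by Theorem~\ref{powerdistortion} the distortion of $\langle x\rangle$ is power-law, so halving an exponent shrinks its word length only by about $2^{-1/\alpha}>1/2$, and each corner is left with an error whose coefficient of $D$ strictly exceeds $1$, which violates $D+3L/2$ once $D$ is large compared with $L$. The appeal to ``quasi-geodesic behaviour of $a$--powers'' in the degenerate case is a further warning sign, since $\langle a\rangle$ is polynomially distorted rather than undistorted. The remedy is not interpolation but simply to drop the pinning of $g_1'$: base $\mathcal Y_2$ on $h_2$, just as you already base $\mathcal Y_1$ on $h_1$ and $\mathcal X_2$ on $g_2$, and then the projection lemma handles every corner locally, exactly as in Lemma~\ref{approximatetriangle}.
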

  \begin{proof}
    By Lemma~\ref{notallxylinesintersect}, there exist numbers $p_1$,
    $p_2$, and $p_3$ with all $|p_i|\leq L/2$ such that $g_1\langle
    x\rangle\cap h_1a^{p_1}\langle y\rangle\neq \emptyset$,
    $g_1\langle x\rangle\cap h_2a^{p_2}\langle y\rangle\neq\emptyset$,
    and $g_2a^{p_3}\langle x\rangle\cap h_1a^{p_2}\langle
    y\rangle\neq\emptyset$.
    Choose such $p_i$.
    Define $h_1':=g_1\langle
    x\rangle\cap h_1a^{p_1}\langle y\rangle$.
    Define $g_2':=g_2a^{p_3}\langle x\rangle\cap h_1'\langle
    y\rangle$.
    Define $n_1'$ so that $h_1'y^{n_1'}=g_2'$.
    Define $h_2':=g_2'\langle x\rangle\cap h_2a^{p_2}\langle
    y\rangle$.
    Define $m_2'$ so that $h_2'=g_2'x^{m_2'}$.
    Define $g_1':=h_2'\langle y\rangle\cap g_1\langle
    x\rangle==h_2a^{p_2}\langle y\rangle\cap g_1\langle x\rangle$.
    Define $n_2'$ so that $g_1'=h_2'y^{n_2'}$.
    Define $m_1'$ so that $g_1'x^{m_1'}=h_2'$.
    Finally, note that $m_2'=-m_1'$ and $n_2'=-n_1'$.
    See Figure~\ref{fig:approxdiamond}.

    \begin{figure}[h]
      \centering
      \labellist
      \tiny
      \pinlabel $g_1$ [r] at 32 164
      \pinlabel $g_1x^{m_1}$ [r] at 130 285
      \pinlabel $h_1$ [l] at 137 252
      \pinlabel $h_1y^{n_1}$ [l] at 205 161
      \pinlabel $g_2$ [r] at 175 133
      \pinlabel $g_2x^{m_2}$ [r] at 80 5
      \pinlabel $h_2$ [r] at 70 45
      \pinlabel $h_2y^{n_2}$ [r] at 5 135
      \pinlabel $g_1'=h_2'y^{n_2'}$ [r] at 20 150
      \pinlabel $h_2a^{p_2}y^{n_2}$ [l] at 28 135
      \pinlabel $h_2a^{p_2}$ [bl] at 85 50
      \pinlabel $g_2a^{p_3}x^{m_2}$ [l] at 102 1
      \pinlabel $h_2'=g_2'x^{m_2'}$ [l] at 115 20
      \pinlabel $g_2'=h_1'y^{n_1'}$ [l] at 200 140
      \pinlabel $g_2a^{p_3}$ [tl] at 190 130
      \pinlabel $h_1a^{p_1}y^{n_1}$ [r] at 180 158
      \pinlabel $h_1a^{p_1}$ [tr] at 124 245
      \pinlabel $g_1'x^{m_1'}=h_1'$ [r] at 104 265
      \endlabellist
      \includegraphics[width=2in]{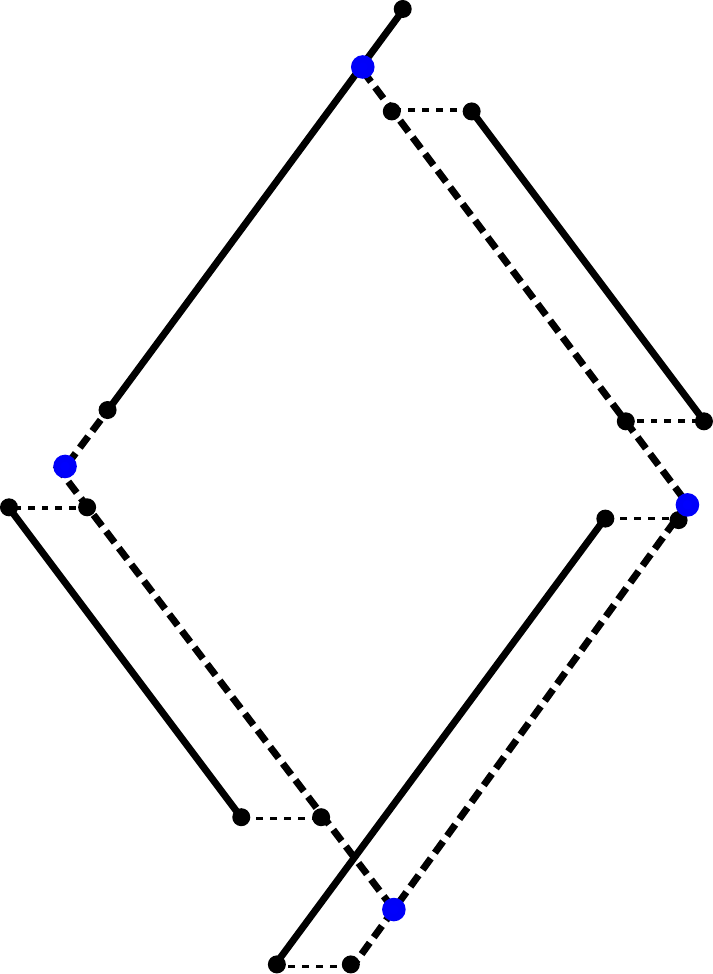}
      \caption{True diamond close to an approximate diamond}
      \label{fig:approxdiamond}
    \end{figure}

    It remains to bound the distance between corresponding points.
    We will check what we claim is the worst case, and leave the other,
    similar, verifications to the reader.
    \begin{align*}
      d(h_2,h_2')&\leq d(h_2,h_2a^{p_2})+d(h_2a^{p_2},h_2')\\
                 &= d(h_2,h_2a^{p_2})+d(h_2a^{p_2},g_2'x^{m_2'})\\
                 &\leq  d(h_2,h_2a^{p_2})+d(h_2a^{p_2},g_2'\langle x\rangle)\\
      &\leq  d(h_2,h_2a^{p_2})+d(h_2a^{p_2},g_2\langle
        x\rangle)+d_{\text{Haus}}(g_2\langle x\rangle, g'_2\langle
        x\rangle)\\
      &\leq L+d(h_2a^{p_2},g_2\langle
        x\rangle)\\
&\leq L +d(h_2a^{p_2},g_2x^{m_2})\\
&\leq L+d(h_2a^{p_2},h_2)+d(h_2,g_2x^{m_2})\leq L+L/2+D\qedhere
    \end{align*}
  \end{proof}

  \subsubsection*{Filling approxiate polygons}
  Let a \emph{$D$--approximate bigon} mean a collection of elements
  $g_0,g_1\in\langle a,x,y\rangle$, $g\in\{a,x,y\}$, and numbers
  $m_0,m_1\in\mathbb{Z}$ such that $d(g_ig^{m_i},g_{i+1})\leq D$ for
  all $0\leq i\leq 1$ with indices mod 2.

  For an approximate polygon, a collection of ``corner paths'' refers to a collection of
  paths connecting successive endpoints of sides.
 
  \begin{lemma}[Filling bigons]\label{fillbigon}
    There exist constants $M_0$, $M_1$, and $M_2$ such that for all
    positive $\Lambda$, $E$, and $D$, given an approximate bigon, a
    collection of corner paths of length at most $D$, and a
    subdivision of one side of the bigon into $\Lambda$--many
    subsegments, each of exponent at most $E$, the following hold.
    There exists a subdivision of the opposite side of the bigon into
    $\Lambda$--many subsegments of exponent bounded by $E+LD^\alpha$.
    Furthermore, if we construct a loop $\gamma$ out of the bigon with
    corner paths by replacing each of the subsegments of the sides by
    some geodesic, then there is a diagram for $\gamma$ with mesh at
    most $M_0+M_1D+M_2E^{1/\alpha}$ and area at most $\Lambda$.
    In fact, it suffices to take $M_0:=0$, $M_1:=2(2C+1)$, and
    $M_2:=2C$.
\end{lemma}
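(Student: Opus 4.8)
The plan is to fill the approximate bigon by constructing a "ladder" of $\Lambda$ quadrilateral regions, one for each subsegment of the subdivided side, together with two corner paths at the ends. Suppose the two sides of the bigon both have endpoints differing by $g^m$ where $m = m_0 = -m_1$ (after reversing orientation), and the first side is subdivided at points $h_0 = g_0, h_1, \dots, h_\Lambda = g_0 g^m$ with $h_{i-1} g^{e_i} = h_i$ and $|e_i| \le E$. We have corner paths $c_0$ from $g_0 g^{m_0}$ to $g_1$ and $c_1$ from $g_1 g^{m_1}$ to $g_0$, each of length at most $D$. The key idea is to transport the subdivision of the first side across the bigon: for each $i$, the second side passes (up to the corner-path error) near the point $h_i' := g_1 \cdot (g_0^{-1} h_i)^{\pm 1}$ translated appropriately — more precisely, since the bigon lives in the abelian group $H$ and the two sides are both $g$-lines, we translate by the group element realizing the bigon and obtain subdivision points on the second side. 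The segment of the second side between consecutive transported points has exponent $e_i$ shifted by a bounded amount coming from the corner paths; since corner paths have length at most $D$, their exponent in terms of $g$-powers is at most $D$ (because $|g^k| \ge 1$ gives $|k| \le |g^k|$, so a path of length $D$ between two points of $H$ whose difference is $g^k$ has $|k|$ with $|k|^{1/\alpha} \le C|k|^{1/\alpha}$-type control — more cleanly, we only need the crude bound $|k| \le L D^\alpha$ via Theorem~\ref{powerdistortion}, since $|k|^{1/\alpha} \le C^{-1}|g^k| \le D$ is false in general; instead use $|k|^{1/\alpha} \le |g^k| \le D$ from the lower bound $1 \le |g^k|/|k|^{1/\alpha}$, giving $|k| \le D^\alpha$, hence the new exponents are bounded by $E + D^\alpha$, and $L D^\alpha$ is a safe over-estimate).

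First I would set up the explicit subdivision of the opposite side. Using Lemma~\ref{lem:ag_bound}(\ref{item:glength}) and commutativity of $H$, the second side from $g_1 g^{m_1}$ to $g_1$ (traversed so it goes from the $c_0$-end to the $c_1$-end) is a $g$-line, and the two corner paths record a discrepancy $g^{d_0}$ and $g^{d_1}$ between its endpoints and the images of $g_0 g^{m_0}$, $g_0$ under the natural identification. Concretely the second side has total exponent $m_1 = -m_0 = -m$, and I subdivide it at the points obtained by subtracting off, cumulatively, the exponents $e_\Lambda, e_{\Lambda-1}, \dots$ adjusted at the two ends by $d_0$ and $d_1$. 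The $j$-th subsegment then has exponent $-e_{\Lambda+1-j}$ for $1 < j < \Lambda$, and exponents $-e_\Lambda \pm d_0$, $-e_1 \pm d_1$ at the two ends; each has absolute value at most $E + D^\alpha \le E + LD^\alpha$, as needed. This gives $\Lambda$ subsegments.

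Next I would build the diagram. Form the loop $\gamma$ by replacing each of the $2\Lambda$ subsegments (on both sides) by a geodesic with the same endpoints, and keeping the corner paths $c_0, c_1$. I claim there is a diagram of area exactly $\Lambda$: it is a ladder whose $i$-th rung, for $1 \le i \le \Lambda - 1$, is a geodesic $r_i$ from the $i$-th subdivision point on the first side to the corresponding point on the second side; the $0$-th and $\Lambda$-th "rungs" are the corner paths $c_1$ and $c_0$ (reversed as needed). Each of the $\Lambda$ cells of the ladder is a quadrilateral with boundary $(\text{geodesic side-segment } i \text{ of side } 1) + r_i + (\text{reverse of geodesic side-segment on side } 2) + \bar r_{i-1}$, and this boundary word is null-homotopic so bounds \emph{some} van Kampen diagram — but we want it to be a single $2$-cell in the relation set $\mathcal{R} = \{g : g =_G 1\}$, which it is, since $\mathcal{R}$ contains every null-homotopic word. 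So the area is $\Lambda$. The main obstacle — and the step that needs the most care — is the mesh bound: I need each quadrilateral cell's boundary length bounded by $M_0 + M_1 D + M_2 E^{1/\alpha} = 2(2C+1)D + 2C E^{1/\alpha}$. For $1 \le i \le \Lambda-1$, the rung $r_i$ is a geodesic between two points whose word-distance I must estimate. Using $|r_i| \le d(h_i, h_i') \le (\text{bigon discrepancy}) \le D$ is too optimistic; instead, $r_i$ connects a point on side 1 to a point on side 2, and those are at distance at most $D$ apart only modulo how the discrepancy accumulates — but in fact because $H$ is abelian and the defining discrepancies of the approximate bigon are the corner paths of length $\le D$, one gets $d(h_i, h_i') \le D$ directly by translating the appropriate corner path, since $h_i' h_i^{-1}$ equals $g_1 g_0^{-1}$ times a $g$-power that telescopes to $0$. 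Hence $|r_i| \le D$, $|r_0|, |r_\Lambda| \le D$. The two side-geodesic segments have length $|g^{e_i}| = 2 + |a^{e_i}|$ or $|a^{e_i}|$, each bounded by $C|e_i|^{1/\alpha} \le C E^{1/\alpha}$ by Theorem~\ref{powerdistortion} for the interior segments, and by $C(E + D^\alpha)^{1/\alpha} \le C E^{1/\alpha} + C D$ for the end ones (using Fact~\ref{fact:reverse_holder}). So each cell boundary has length at most $2 C E^{1/\alpha} + 2CD + 2D = 2C E^{1/\alpha} + 2(C+1)D \le 2C E^{1/\alpha} + 2(2C+1)D$, giving the claimed $M_0 = 0$, $M_1 = 2(2C+1)$, $M_2 = 2C$. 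I would write this last computation out carefully, tracking the three cases (interior rung cell, and the two end cells incorporating a corner path), since the constants have been pinned down precisely and the bookkeeping of which segment contributes what is where all the work lies.
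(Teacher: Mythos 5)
Your construction is in the right spirit --- translate the subdivision across the bigon via (translates of) the corner paths and build a ladder of $\Lambda$ quadrilaterals --- and it does coincide with the paper's approach in one of its two cases. But there is a genuine gap: you assume $m_0 = -m_1$, and this is simply false for an approximate bigon. The definition only gives $d(g_0 g^{m_0}, g_1) \le D$ and $d(g_1 g^{m_1}, g_0) \le D$, and from this you can deduce $|g^{m_0+m_1}| \le 2D$, hence $|m_0 + m_1| \le (2D)^{\alpha} = L D^{\alpha}$. That discrepancy is nonzero in general, and it is not split evenly between the two ends (your claim $|d_0|, |d_1| \le D^{\alpha}$ has no justification; the whole $L D^{\alpha}$ can land on one subsegment). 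You happen to survive this because $D^{\alpha} \le LD^{\alpha}$, so your stated exponent bound $E + LD^\alpha$ is still true, but the intermediate reasoning is wrong and the final mesh constant $M_1 = 2(2C+1)$ is tight exactly because one end segment really can have exponent up to $E + (2D)^\alpha$, which after reverse H\"older contributes $C E^{1/\alpha} + 2CD$, not $CE^{1/\alpha} + CD$.

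More seriously, dropping $m_0 = -m_1$ exposes the case your argument cannot handle: after normalizing $m_0 \ge 0$, the sign of $m_0 + m_1$ matters. If $m_0 + m_1 > 0$, the translated points $g_1 g^{m_1 + s_i}$ eventually overshoot past $g_1$ and are no longer on side 2 at all, so your ``$j$-th subsegment of side 2'' for large $j$ does not exist, and the last several cells of your ladder are not well-defined quadrilaterals whose perimeters you can estimate. This is precisely the paper's Case~\eqref{fillbigonB}: it defines $p$ to be the last index before overshoot, uses translated corner paths as rungs only up to index $p$, collapses all later rungs to geodesics landing at the single point $h = g_1$, and fills the wedge between index $p$ and index $\Lambda$ with triangular cells (hence still area $\Lambda$ total). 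Your proposal never constructs these cells, and without them there is no filling when $m_0 + m_1 > 0$ --- a case that genuinely occurs (for instance whenever $m_1 \ge 0$). So the argument needs the second case, and with it a bit more care in the mesh bookkeeping for those triangular cells, as in the paper's proof.
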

\begin{proof}
  Assume that we have been given a subdivision of the $g_0$--side of
  the bigon.  We refer to the $g_0$--side of the bigon as the
  ``bottom'' and the $g_1$--side as the ``top''.  Up to isometry we
  may assume $g_0=1$ and $m_0\geq 0$. Set $h:=g_1$. 
  
  Let $s_0:=0$ and let $s_i\geq 0$ be the sum of the first $i$
  exponents of the bottom, so that the bottom consists of
  $g$--segments $[g^{s_{i-1}},g^{s_i}]$ for $0<i\leq \Lambda$.  Note
  that $s_\Lambda=m_0$.  Let $\delta_0$ be the given corner path from
  $1$ to $hg^{m_1}$, and let $\delta_\Lambda$ be the corner path from
  $g^{s_\Lambda}$ to $h$.  Define $p$ to be the lesser of $\Lambda-1$
  and the greatest index such that
    $s_p$ is contained in the closed interval defined by $0$ and $m_1$
    (i.e. $[m_1,0]$ or $[0,m_1]$ depending on the sign of $m_1$).

  There are two cases according to whether or not $m_0>-m_1$.
    See Figure~\ref{fig:fillbigon}.
  \begin{figure}[h]
    \begin{subfigure}[c]{0.45\linewidth}
        \labellist
        \tiny
      \pinlabel $1$ [t] at 10 5
      \pinlabel $g^{s_p}$ [t] at 425 10
      \pinlabel $g^{s_\Lambda}$ [t] at 540 10
      \pinlabel $h$ [b] at 660 140
      \pinlabel $hg^{m_1}$ [b] at 40  140
      \pinlabel $hg^{m_1+s_p}$ [b] at 500 140
      \pinlabel $g^{s_1}$ [t] at 200 10
      \pinlabel $hg^{m_1+s_1}$ [b] at 250 140
        \endlabellist
         \centering
   \includegraphics[height=.5in]{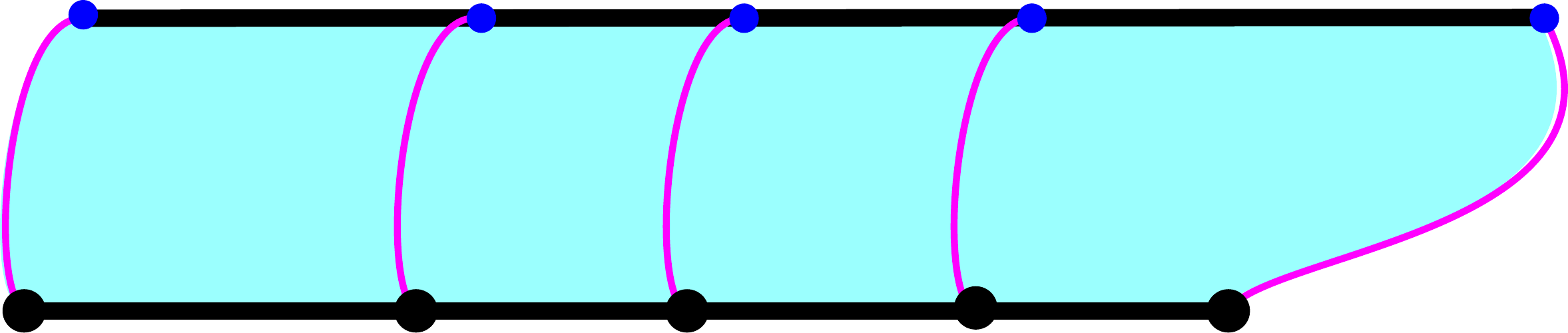}
   \caption{}
   \label{fillbigonA}
      \end{subfigure}
      \hfill
 \begin{subfigure}[c]{0.45\linewidth}
         \labellist
         \tiny
      \pinlabel $1$ [t] at 10 5
      \pinlabel $g^{s_p}$ [t] at 355 10
      \pinlabel $g^{s_\Lambda}$ [t] at 550 10
      \pinlabel $h$ [b] at 440 130
      \pinlabel $hg^{m_1}$ [b] at 40  140
      \pinlabel $hg^{m_1+s_p}$ [b] at 410 150
         \endlabellist
        \centering
    \includegraphics[height=.5in]{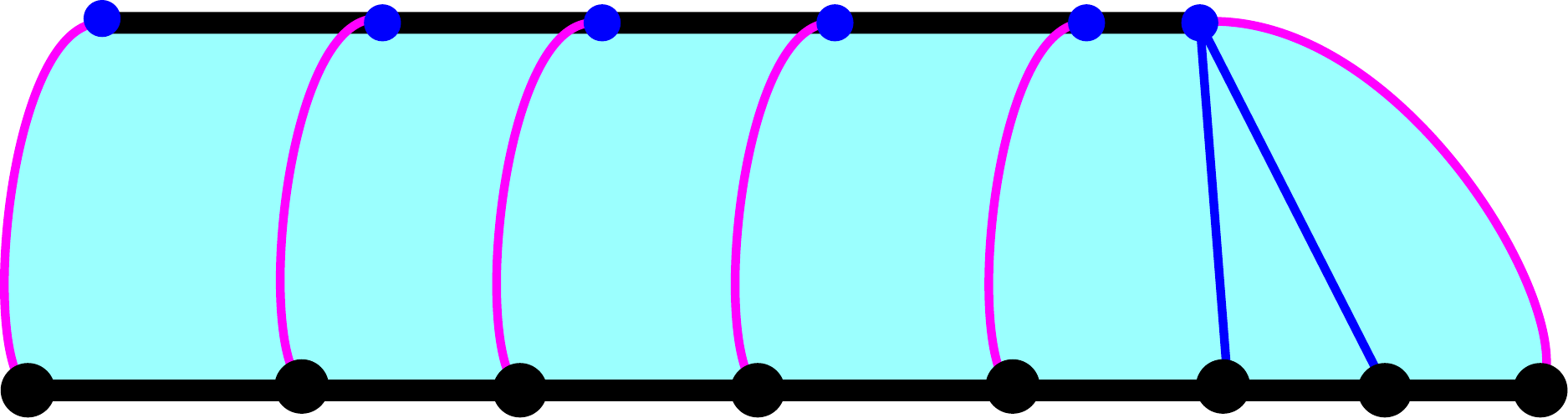}
    \caption{}
       \label{fillbigonB}
        \end{subfigure}
    \caption{Two cases for filling a bigon}
    \label{fig:fillbigon}
  \end{figure}

  First suppose, as in Figure~\eqref{fillbigonA}, that $m_0\leq -m_1$, so that $m_1 \leq 0$ and $p=\Lambda-1$.
    For each $0<i\leq p$, define $\delta_i=g^{s_i}.\delta_0$, so that $\delta_i$ is a path of
  length at most $D$ between $g^{s_i}$ and
  $g^{s_i}hg^{m_1}=hg^{m_1+s_i}$.
  
  For each $i<p$ define a 2--cell with boundary consisting of a
  geodesic from $g^{s_{i-1}}$ to $g^{s_{i}}$, $\delta_i$, a geodesic
  from $hg^{m_1+s_i}$ to $hg^{m+2+s_{i-1}}$, and $\bar\delta_{i-1}$.
  The $\delta$ sides have length at most $D$, while the other two
  sides have length bounded above by $CE^{1/\alpha}$, since
  $s_i-s_{i-1}\leq E$, so these 2--cells have perimeter of length at
  most $2D+2CE^{1/\alpha}<M_1D+M_2E^{1/\alpha}$.
  
  Now define a 2--cell whose boundary consists of a geodesic from
  $g^{s_{p}}$ to $g^{s_\Lambda}$, $\delta_\Lambda$, a geodesic from
  $h$ to $hg^{m_1+s_{p}}$, and $\bar\delta_{p}$.  The bottom has
  length less than $CE^{1/\alpha}$, since $p=\Lambda-1$, and the
  vertical sides each have length at most $D$ by hypothesis.  We
  have:
  \[|m_1+s_\Lambda|^{1/\alpha}<|g^{m_1+s_\Lambda}|=|(hg^{m_1+s_\Lambda})^{-1}h|\leq
    d(hg^{m_1+s_\Lambda},g^{s_\Lambda})+d(g^{s_\Lambda},h)\leq
    |\delta_0|+|\delta_\Lambda|\leq 2D\] So
  $|m_1+s_\Lambda|<(2D)^\alpha$, which implies
  $|m_1+s_{p}|<E+(2D)^\alpha=E+LD^\alpha$ and the length of a geodesic
  between $hg^{m_1+s_{p}}$ and $h$ is less than $CE^{1/\alpha}+2CD$,
  so the perimeter has length less than
  $2(C+1)D+2CE^{1/\alpha}<M_1D+M_2E^{1/\alpha}$.

  These 2--cells glue together along common $\delta_i$ to give the
  desired diagram.

  Now suppose $m_0>-m_1$, as in Figure~\eqref{fillbigonB}.
  In this case it is possible that $m_1 \ge 0$ with $p = 0$.
  For $i\leq p$ define $\delta_i$ as in the previous case.
  For each $i<p$, as in the previous case, define a 2--cell with boundary consisting of a geodesic from $g^{s_{i-1}}$ to $g^{s_{i}}$, $\delta_i$, a geodesic from $hg^{m_1+s_i}$ to $hg^{m+2+s_{i-1}}$, and $\bar\delta_{i-1}$.

  For $p<i<\Lambda$ choose a geodesic $\delta_i$ from $g^{s_i}$ to
  $h$.
  As in the previous case, $0<s_\Lambda+m_1\leq (2D)^\alpha$, so for
  all $p<i\leq \Lambda$ we also have $0\leq s_i+m_1\leq (2D)^\alpha$.
  This implies that for $i>p$ we have $|\delta_i|\leq (2C+1)D$.
  It also implies that, when $m_1 \ge 0$, we have $|g^{m_1}| < C|m_1|^{1/\alpha} \le 2D$.

  Define a 2--cell with boundary consisting of a geodesic from
  $g^{s_{p}}$ to $g^{s_{p+1}}$, $\delta_{p+1}$, a geodesic from $h$ to
  $hg^{m_1+s_{p}}$, and $\bar\delta_{p}$.
  The bottom of this 2--cell
    has exponent bounded by $E$, so has length less than
    $CE^{1/\alpha}$.  When $m_1 < 0$, the top of the 2--cell similarly
    has length bounded by $CE^{1/\alpha}$.  Otherwise, the top has
    length $|g^{m_1}| < 2D$.
    The sides have length bounded by $D$
  and $(2C+1)D$, so the perimeter has length less than
  $(2C+4)D+2CE^{1/\alpha} <
  2(2C+1)D+2CE^{1/\alpha}=M_1D+M_2E^{1/\alpha}$.

  Finally, for $p<i<\Lambda$ define a 2--cell\footnote{ We are
    constrained by the hypothesis that the bottom of $\gamma$ uses
    geodesics along the given subdivision of the bottom side, so if we
    just flip the construction from Figure~\eqref{fillbigonA} we
    cannot use the estimate for the length of a geodesic along the
    long side of the last 2--cell in the mesh calculation. We add the
    extra 2--cells and accept more area to get finer mesh.} with
  boundary consisting of a geodesic from $g^{s_i}$ to $g^{s_i+1}$,
  $\delta_{i+1}$, and $\bar\delta_i$.  It has length less than
  $CE^{1/\alpha}+2(2C+1)D<M_1D+M_2E^{1/\alpha}$.

  Again, glue the 2--cells together along common $\delta_i$ to get the
  desired diagram.
\end{proof}

\begin{lemma}[Filling triangles]\label{filltriangle}
  There exist constants $M_0$, $M_1$, and $M_2$ such that for all
  positive $\Lambda$, $E$, and $D$, given an approximate triangle,
  corner paths of length at most $D$, and a subdivision of the
  $a$--side of the triangle into $\Lambda$-many subsegments, each of
  exponent at most $E$, the following hold.  There exist subdivisions
  of the $x$--side and $y$--side into $\Lambda$--many subsegments of
  exponent bounded by $1+E/L+D^\alpha$.  Furthermore, if we construct
  a loop $\gamma$ by replacing each of the subsegments by some
  geodesic then there is a diagram for $\gamma$ with mesh at most
  $M_0+M_1D+M_2E^{1/\alpha}$ and area at most
  $(\Lambda^2+9\Lambda+6)/2$.  In fact, it suffices to take $M_0:=4C$,
  $M_1:=6C+2$ and $M_2:=2C$.
\end{lemma}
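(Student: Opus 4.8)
The plan is to realize the approximate triangle as a true triangle via Lemma~\ref{approximatetriangle}, subdivide each side of the true triangle, and then build the diagram for $\gamma$ out of (a) quadrilateral 2--cells linking corresponding subsegments of $\gamma$ to the true triangle's sides and corners, together with (b) a single efficient filling of the true triangle itself obtained by the inductive snowflake-style subdivision already used for $\sigma_{n,s}$. First I would invoke Lemma~\ref{approximatetriangle} to produce $g_0',g_1',g_2'$ and $m_0',m_1',m_2'$ describing a true triangle, with every corresponding pair of points within $2D+L$; recall from its proof that $m_0'=m_1'$, that $m_2'=-Lm_0'$, and that the $x$--side and $y$--side of the true triangle lie on $g_0'\langle x\rangle$, $g_1'\langle y\rangle$ etc. The given subdivision of the $a$--side of the \emph{approximate} triangle into $\Lambda$ segments of exponent $\le E$ induces, via the corner paths and the near-identification of endpoints, a corresponding subdivision of the $a$--side $[g_2',g_0']$ of the true triangle; since $m_2'=-Lm_0'$ and the approximate $a$--side has exponent $m_2$ with $|m_2-m_2'|$ controlled by $2(2D+L)$, Lemma~\ref{guards} and Corollary~\ref{lem:alengths} let me push the subdivision through $x$-- and $y$--escapes to get subdivisions of the $x$--side and $y$--side into $\Lambda$ segments. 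The exponent bound $1+E/L+D^\alpha$ on those sides should come exactly as in Lemma~\ref{lem:a_geodesic}: an $a$--segment of exponent $\le E$, once expressed through $L$, contributes $\le E/L$ plus a bounded correction, and the corner-path discrepancy of size $\le 2D+L$ feeds in a term of size $O(D^\alpha)$ after applying Theorem~\ref{powerdistortion}.

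Next I would assemble the diagram. Between the true triangle and the loop $\gamma$ I place, along each side, one quadrilateral 2--cell per subdivision segment, with two opposite sides being the chosen geodesic of $\gamma$ and the corresponding geodesic along the true triangle's side, and the other two being ``connecting'' geodesics between corresponding vertices; the connecting geodesics at the corners have length $\le C(2D+L)^{1/\alpha}+(\text{corner path length})\le M_1 D + O(1)$-type bounds, and the parallel geodesic sides have length $< CE^{1/\alpha}$ on the $a$--side and $< C(1+E/L+D^\alpha)^{1/\alpha} \le CE^{1/\alpha}\cdot L^{-1/\alpha}+O(D)$ on the other two sides, so each such 2--cell has perimeter $\le M_0+M_1 D+M_2 E^{1/\alpha}$ for suitable absolute constants. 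That accounts for $3\Lambda$ 2--cells plus a bounded number of corner 2--cells. Then I fill the true triangle itself: its $a$--side has exponent $\le LE$-ish, and by the snowflake construction preceding Theorem~\ref{thm:geodesicloop} (or directly by the inductive scheme of Section~\ref{sec:idea:snowflake} with $\Lambda=L$) the true triangle of $a$-, $x$-, $y$--escapes can be subdivided once into $L^2$ small sub-triangles whose sides have exponent $\le E/L$-order, requiring $O(\Lambda^2)$ 2--cells of bounded mesh; being a genuine corridor/vertex-region configuration, this is filled by standard HNN 2--cells (corridors and a central vertex region) which are cheap in area. The bookkeeping gives area $\le (\Lambda^2+9\Lambda+6)/2$ once the small sub-triangle fillings, the $3\Lambda$ side 2--cells, and the handful of corner 2--cells are counted.

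The main obstacle I expect is the mesh bookkeeping for the cells adjacent to the corners and for the ``last'' subdivision cell on each side, exactly the phenomenon flagged in the footnote of Lemma~\ref{fillbigon}: because $\gamma$ is constrained to use the \emph{given} subdivision of the $a$--side, I cannot freely re-slice that side, so the final cell on the $a$--side may want a long geodesic along its top; the fix, as in the bigon lemma, is to insert a few extra thin 2--cells (absorbed into the $9\Lambda$ term) so that every 2--cell has at least one side of controlled length and the mesh stays $\le M_0+M_1D+M_2E^{1/\alpha}$. A secondary nuisance is tracking the $O(D^\alpha)$ versus $O(D)$ discrepancies through Theorem~\ref{powerdistortion} in both directions (exponent-to-length and length-to-exponent), since the distortion is not monotone; but Corollary~\ref{lem:alengths}, Lemma~\ref{guards}, and the fixed constant $C$ of Theorem~\ref{powerdistortion} make every such estimate a finite computation, and the stated constants $M_0:=4C$, $M_1:=6C+2$, $M_2:=2C$ are what one gets after carrying out those estimates with the crude triangle-inequality bounds above.
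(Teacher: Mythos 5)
Your overall blueprint matches the paper: pass to a true triangle via Lemma~\ref{approximatetriangle}, transfer the given $a$--side subdivision to the true triangle, fill the true triangle's interior with $O(\Lambda^2)$ small cells, glue back to the approximate triangle along each side, and cap the corners.  But there is a genuine gap in how you fill the interior of the true triangle, and it is exactly where the lemma's exponent bound $1+E/L+D^\alpha$ (with the crucial factor $1/L$) and the precise area count come from.

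You propose to fill the true triangle ``by the snowflake construction'' with ``corridors and a central vertex region.''  That is not what is needed here and, more importantly, it glosses over the step that makes the interior subdivision well-defined.  The paper first adds an \emph{extra strip of $\Lambda$ 2--cells} along the $a$--side whose sole purpose is to move the $a$--side subdivision points so that each subsegment has exponent a multiple of $L$ (costing at most $L$ extra per subsegment).  Only after that adjustment does Lemma~\ref{projectionatox} apply: for $L\mid p$, the point $x^{p/L}$ is the \emph{unique} closest point of $\langle x\rangle$ to $a^p$, and by Proposition~\ref{3piecegeodesic} every geodesic between them is a single $y$--escape, hence all such geodesics share a common trace.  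This is what lets one draw, through each adjusted $a$--side subdivision point, a well-defined $x$--line and $y$--line, decomposing the true triangle into $(\Lambda^2-\Lambda)/2$ small diamonds and $\Lambda$ small triangles whose $x$-- and $y$--sides have exponent dropped by exactly a factor of $L$.  Without the divisibility adjustment, the projections of your $a$--side subdivision points onto the opposite sides need not be single points, the ``grid'' lines are not defined, and you have no mechanism to produce $\Lambda$ subsegments on the $x$-- and $y$--sides at all, let alone with exponent bounded by $1+E/L+D^\alpha$.  Your appeal to Lemma~\ref{guards} and Corollary~\ref{lem:alengths} does not supply this, since those results concern geodesics between powers of $a$, not nearest-point projections onto $x$-- or $y$--lines.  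Your area bookkeeping also misses the $\Lambda$ cells of the adjustment strip, which is why the total in the lemma is $(\Lambda^2+9\Lambda+6)/2$ and not the $(\Lambda^2+7\Lambda+6)/2$ that your count would give.

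A secondary issue: you assert a direct transfer of the $a$--side subdivision to the true triangle ``via the corner paths and the near-identification of endpoints.''  The paper instead applies Lemma~\ref{fillbigon} to the $(2D+L)$--approximate bigon formed by the two $a$--sides; this provides both the induced subdivision \emph{and} a concrete strip of 2--cells whose mesh is controlled by $M_1D+M_2E^{1/\alpha}$.  The same device is applied again on the $x$-- and $y$--sides, and it is precisely these three bigon applications (together with the divisibility strip) that account for the $M_1=6C+2$ and $M_2=2C$ constants you quote but do not derive.  As written, your proposal does not establish the quantitative mesh and area claims, nor the key qualitative claim that the outgoing $x$-- and $y$--sides carry $\Lambda$ subsegments of exponent $\le 1+E/L+D^\alpha$.
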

\begin{proof}
  By Lemma~\ref{approximatetriangle} there is a true triangle
 with
  corresponding points at distance at most $2D+L$.
  That is, for each of the three sides, the corresponding pairs of
  sides from the approximate and true triangle form a $(2D+L)$--approximate bigon. 
  We construct a filling by filling the true triangle, using
  Lemma~\ref{fillbigon} to fill the three approximate bigons, and then adding one more 2--cell at each
  corner to close the corner path.

  Apply Lemma~\ref{fillbigon} between the $a$--side approximate
  bigon. 
  This gives a decomposition of the $a$--side of the true triangle
  into at most $\Lambda$--many subsegments of exponent at most
  $E+(2D)^\alpha$, and a diagram of area $\Lambda$ and mesh at
  most $2(2C+1)D+2CE^{1/\alpha}<M_1D+M_2E^{1/\alpha}$.

  It would be preferable, because we want to apply
  Lemma~\ref{projectionatox}, if the subsegments of the $a$--side all
  had exponents that were multiplies of $L$.  This can be achieved by
  moving the endpoints slightly along the side, each moving distance
  at most $L/2$.  For the purposes of the diagram, we add an extra
  strip of 2--cells along the previous strip, the bottom of the new
  strip is identified with the top of the old strip and the top of the
  new strip is the $a$--side of the triangle with its new subdivision.
  A vertical division into 2--cells can be obtained by connecting the
  original subdivision points with the new ones by geodesics.  The
  exponents of the new subdivision segments increase by at most $L$
  from the previous bound.  The new strip has area $\Lambda$ and mesh
  bounded by:
\begin{align*}
  C(E+(2D)^\alpha)^{1/\alpha} +& C(L+E+(2D)^\alpha)^{1/\alpha}+L/2+L/2\\
                               &\leq 2CE^{1/\alpha}+4CD+CL^{1/\alpha}+L\\
                               &=2CE^{1/\alpha}+4CD+2C+L\\
  &<M_0+M_1D+M_2E^{1/\alpha}
\end{align*}
   
Having made this adjustment, by Lemma~\ref{projectionatox}, every
subdivision point $p$ of the $a$--side of the triangle has a unique
closest point $q$ on the $x$--side.
Furthermore, from the proof of  Lemma~\ref{projectionatox}, every geodesic from $p$ to $q$ consists of a single
geodesic $y$--escape, so they all have the same trace: it is the
$y$--segment between $p$ and $q$.
A similar argument holds for $p$ and the closest point on the
$y$--side of the triangle. 

Subdivide the triangle by taking the trace of a geodesic from each
subdivision point of the $a$--side to its closest point on the
$x$--side and the $y$--side.
This divides the triangle into $(\Lambda^2-\Lambda)/2$--many
small diamonds and $\Lambda$--many small triangles.
See Figure~\ref{fig:filltriangle}. 
\begin{figure}[h]
  \centering
  \includegraphics[width=1.5in]{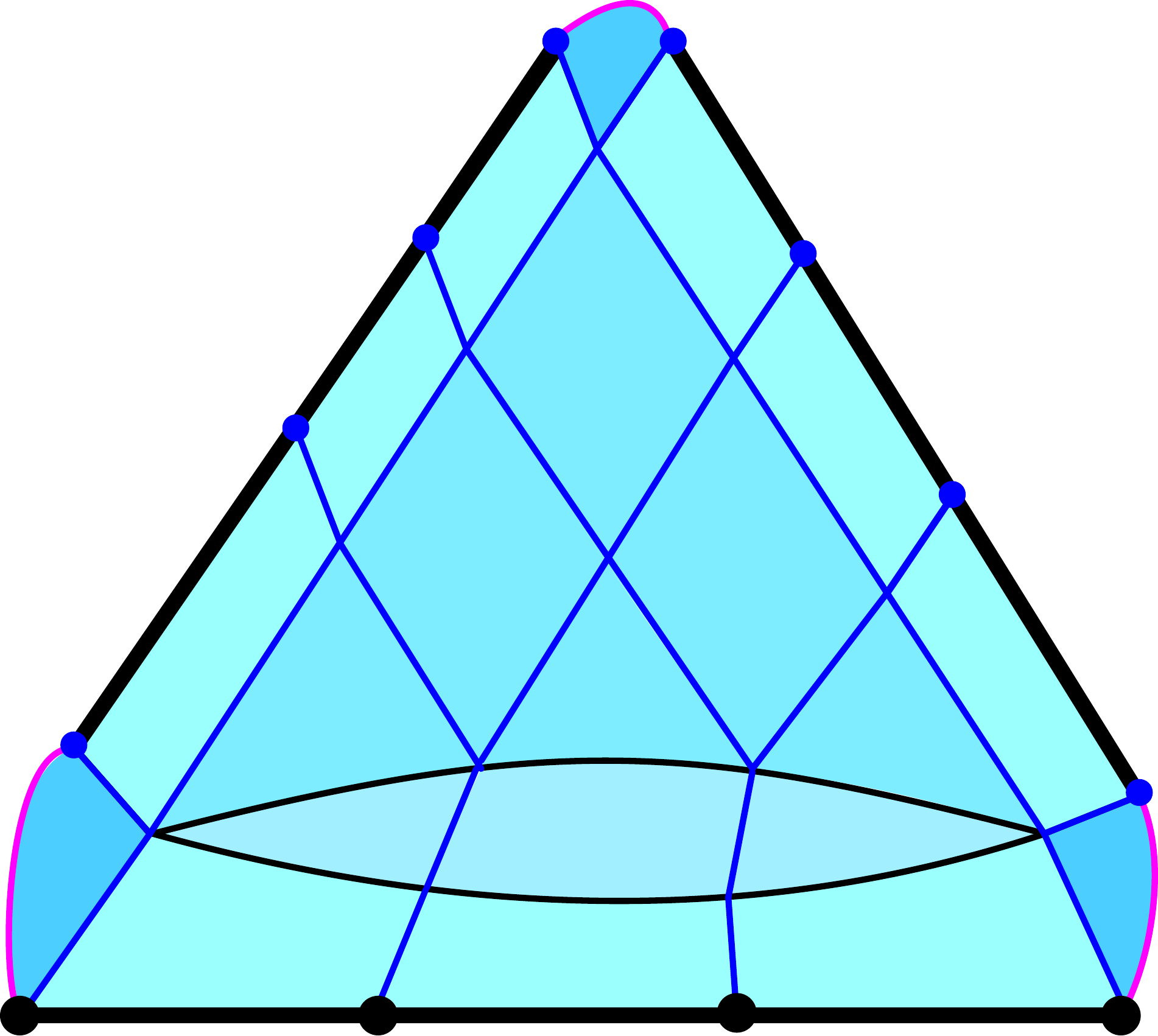}
  \caption{Filling an approximate triangle}
  \label{fig:filltriangle}
\end{figure}

The small triangles have as $a$--sides one of the subsegments of the
$a$--side of the triangle, so we know they have exponent at most
$L+E+(2D)^\alpha$.
From Lemma~\ref{projectionatox}, the resulting subdivisions of the
$x$ and $y$--sides have exponent that decreases by a factor of $L$, so
all $x$ and $y$--sides of all of the small triangles and small
diamonds have exponents bounded by $(L+E+(2D)^\alpha)/L=1+E/L +(2^\alpha/L)D^\alpha=1+E/L +D^\alpha$.

From this subdivision of the triangle define a diagram with one
2--cell for each of the small diamonds and small triangles, where the
1--skeleton of the diagram is obtained by replacing each side of each
of the small polygons by a geodesic with the same endpoints.
The mesh of this diagram is at most:
\begin{align*}
  4\cdot C((L+E+(2D)^\alpha)/L)^{1/\alpha}&= 4C\cdot
                                      L^{-1/\alpha}(L^{1/\alpha}+E^{1/\alpha}+2D)\\
                                    &=4C\cdot(1/2)(2+E^{1/\alpha}+2D)\\
  &<M_0+M_1D+M_2E^{1/\alpha}
\end{align*}

The next step is to use Lemma~\ref{fillbigon} to fill the approximate
bigons formed by the $x$--side of the approximate triangle and the
$x$--side of the true triangle, and similarly for the $y$--sides.
This results in subdivisions of the $x$--side and $y$--side of the approximate
triangle with $\Lambda$--many pieces of exponent at most
$(L+E+(2D)^\alpha)/L$ and two diagrams with area $\Lambda$ and
mesh at most:
\begin{align*}
  M_{1,\ref{fillbigon}}D&+M_{2,\ref{fillbigon}}\left((L+E+(2D)^\alpha)/L\right)^{1/\alpha}\\
                        &\leq
                          M_{1,\ref{fillbigon}}D+M_{2,\ref{fillbigon}}\left(1+E^{1/\alpha}/2+D\right)\\
                        &=M_{2,\ref{fillbigon}}+(M_{1,\ref{fillbigon}}+M_{2,\ref{fillbigon}})D+(M_{2,\ref{fillbigon}}/2)E^{1/\alpha}\\
                        &=2C+(6C+2)D+CE^{1/\alpha}\\
  &<M_0+M_1D+M_2E^{1/\alpha}
\end{align*}

To complete the diagram, at each corner add one 2--cell whose boundary
is the given path between the sides of the approximate triangle,
followed by a geodesic from one endpoint of one side of the
approximate triangle to the corner of the true triangle, followed by a
geodesic to the endpoint of the other side of the approximate
triangle.
The boundary of this 2--cell has length at most $D+2(2D+L)<M_0+M_1D$.

We have verified that the perimeter of every 2--cell of the diagram has length at
most $M_0+M_1D+M_2E^{1/\alpha}$.
The total area of our diagram is:
\[\Lambda+\Lambda+(\Lambda^2-\Lambda)/2+\Lambda + 2\Lambda+3=(\Lambda^2+9\Lambda+6)/2\qedhere\]
\end{proof}

\begin{lemma}[Filling diamonds]\label{filldiamond}
  There exist constants $M_0$, $M_1$, and $M_2$ such that for all
  positive $\Lambda$, $E$, and $D$, given an approximate diamond,
  corner paths of length at most $D$, and subdivisions of one
  $x$--side and one $y$--side into at most $\Lambda$-many subsegments
  of exponent at most $E$, the following hold.  There exist
  subdivisions of the other two sides into at most $\Lambda$--many
  subsegments of exponent at most $E+2LD^\alpha$.  Furthermore, if we
  construct a loop $\gamma$ by replacing each of the subsegments by
  some geodesic then there is a diagram for $\gamma$ with mesh at most
  $M_0+M_1D+M_2E^{1/\alpha}$ and area at most $\Lambda^2+4\Lambda+4$.
  In fact, it suffices to take $M_0:=3L$, $M_1:=8C+2$, and $M_2:=4C$.
\end{lemma}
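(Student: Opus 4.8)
The plan is to mimic the proof of Lemma~\ref{filltriangle}, replacing the approximate triangle by an approximate diamond and using Lemma~\ref{approximatediamond} in place of Lemma~\ref{approximatetriangle}. A simplification relative to the triangle case is that all four sides of a diamond are $x$--sides or $y$--sides, so there is no $a$--side; in particular we never need to adjust subdivision points to have exponents divisible by $L$ in order to apply Lemma~\ref{projectionatox} (that was the source of the extra strip of cells and of the projection-by-$L$ in the triangle argument). This is why the area bound $\Lambda^2 + 4\Lambda + 4$ comes out cleaner, and in fact the count confirms the strategy in advance: $\Lambda^2$ cells for a grid of small diamonds, plus $4\Lambda$ cells for four applications of Lemma~\ref{fillbigon}, plus $4$ cells closing off the four corners.

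First I would apply Lemma~\ref{approximatediamond} to the given $D$--approximate diamond to obtain a true diamond whose corresponding vertices lie within distance $D + 3L/2$ of those of the approximate diamond; thus each side of the approximate diamond together with the corresponding side of the true diamond forms a $(D+3L/2)$--approximate bigon. Applying Lemma~\ref{fillbigon} to the two bigons whose approximate sides are the two given subdivided sides (one $x$--side, one $y$--side), I transport those subdivisions onto the corresponding sides of the true diamond, obtaining subdivisions into at most $\Lambda$ subsegments of controlled exponent together with two diagrams, each of area $\Lambda$ and mesh bounded by an expression linear in $D$ and in $E^{1/\alpha}$.

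Next I would grid the true diamond. After translating so that one vertex is $1$, the true diamond lies in a single coset of $\langle x,y\rangle$, which is free abelian on $x$ and $y$, so it is literally a parallelogram spanned by $x^{m}$ and $y^{n}$; the product of the two transported subdivisions furnishes $(\Lambda+1)^2$ grid points $x^{s_i}y^{t_j}$ cutting it into $\Lambda^2$ small diamonds. Because opposite sides of the parallelogram are $H$--translates of one another, this grid simultaneously produces subdivisions of the remaining two sides of the true diamond with the same exponents as the input subdivisions. Transporting these back across the remaining two $(D+3L/2)$--approximate bigons via Lemma~\ref{fillbigon} produces the asserted subdivisions of the remaining two sides of the approximate diamond, where the exponent blow-up is controlled by a factor depending only on $L$ via the identity $(2D)^\alpha = L D^\alpha$ together with Fact~\ref{fact:reverse_holder}, yielding (after this bookkeeping) the bound $E + 2LD^\alpha$.

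Finally I would assemble the diagram: one $2$--cell for each of the $\Lambda^2$ small diamonds, with boundary four geodesics each of length at most $C(E+2LD^\alpha)^{1/\alpha} \le CE^{1/\alpha} + (2L)^{1/\alpha}CD$ by Fact~\ref{fact:reverse_holder}; the four diagrams of area $\Lambda$ coming from the four applications of Lemma~\ref{fillbigon}; and, at each of the four corners, one additional $2$--cell whose boundary is the given corner path (length $\le D$) followed by two geodesics to the corresponding true-diamond vertex (each length $\le D + 3L/2$). This gives area $\Lambda^2 + 4\Lambda + 4$ and a mesh bound of the form $M_0 + M_1 D + M_2 E^{1/\alpha}$; it then remains only to verify that the explicit constants $M_0 = 3L$, $M_1 = 8C+2$, $M_2 = 4C$ dominate every $2$--cell perimeter that appears (using $L^{1/\alpha} = 2$ and the constants $M_0, M_1, M_2$ from Lemma~\ref{fillbigon}). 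I expect this last step --- tracking the exponent growth through the (up to two) successive transports across bigons, and handling the two configurational cases of Lemma~\ref{fillbigon} so that one knows precisely which subdivision point is matched to which true-diamond vertex --- to be the only real obstacle; the geometric content is entirely supplied by Lemmas~\ref{approximatediamond} and~\ref{fillbigon}.
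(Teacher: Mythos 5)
Your proposal is correct and follows essentially the same route as the paper: pass to a true diamond via Lemma~\ref{approximatediamond}, transport the two given subdivisions across $(D+3L/2)$--approximate bigons with Lemma~\ref{fillbigon}, grid the true diamond into $\Lambda^2$ small diamonds (the paper justifies the grid via Lemma~\ref{intersectingxylinesprojectnicely} and Proposition~\ref{3piecegeodesic} rather than your direct parallelogram observation, but this is cosmetic), transport back out, and close the four corners. The only point to watch in the final verification is that the interior cells' sides have exponent at most $E+LD^\alpha$ (only one transport has occurred at that stage), not $E+2LD^\alpha$; with your looser bound the resulting perimeter estimate $4CE^{1/\alpha}+4(2L)^{1/\alpha}CD$ would exceed $M_2E^{1/\alpha}+M_1D$ for $M_1=8C+2$, whereas the tighter bound gives $4CE^{1/\alpha}+8CD$, which is dominated.
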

\begin{proof}
  The strategy is the same as in the triangle case.
  By Lemma~\ref{approximatediamond} there is a true diamond whose
  corners are within distance $D+3L/2$ of the ends of the
  corresponding sides of the approximate diamond.
  Use Lemma~\ref{fillbigon} with the given subdivided sides of the
  approximate diamond to get subdivisions of two sides of the true
  diamond.
  Each of these consists of at most $\Lambda$--many segments of
  exponent at most $E+(2D)^\alpha$.
  In terms of diagrams we get two strips each with area at most
  $\Lambda$ and mesh at most $2(2C+1)D+2CE^{1/\alpha}<M_1D+M_2E^{1/\alpha}$.

By Lemma~\ref{intersectingxylinesprojectnicely} for each of the
subdivision points $p$ on the given $x$--side of the true diamond, there
is a unique closest point $q$ on the opposite $x$--side.
Furthermore, by Proposition~\ref{3piecegeodesic}, every geodesic
between them is a single geodesic $y$--escape, so all have the same
trace: it is the $y$--segment between $p$ and $q$.
A similar statement holds for the $y$--sides. 

Subdivide the diamond by taking the trace of a geodesic between each
subdivision point and its closest point on the opposite side of the diamond.
This divides the diamond into at most $\Lambda^2$--many small diamonds whose
sides have the same exponent bound as we started with, $E+(2D)^\alpha$.
See Figure~\ref{fig:filldiamond}.
    \begin{figure}[h]
      \centering
  \includegraphics[height=1.5in]{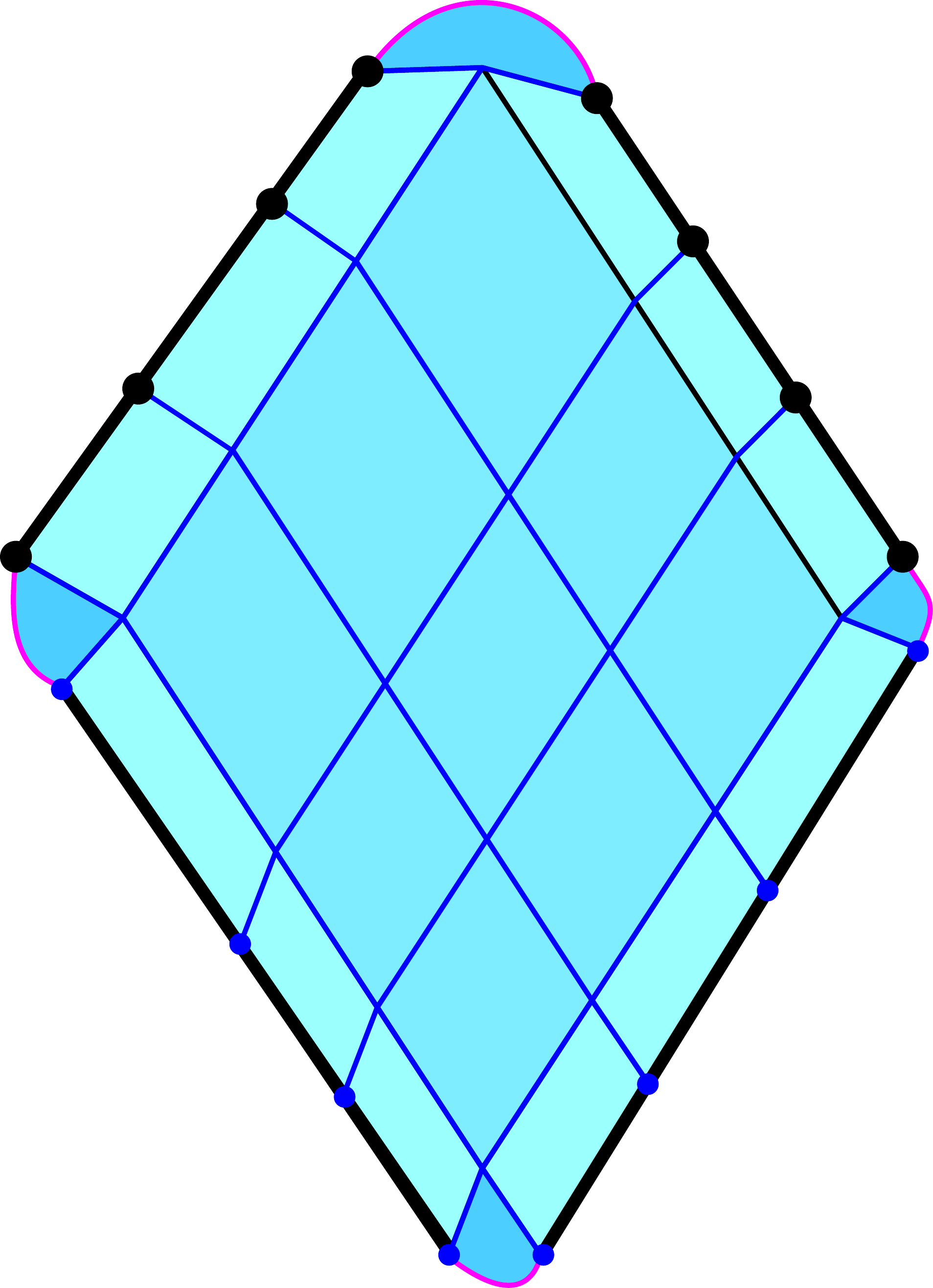}
  \caption{Filling an approximate diamond}
  \label{fig:filldiamond}
    \end{figure}

From the subdivision of the diamond define a diagram with one 2--cell
for each of the small diamonds, where the 1--skeleton is obtained by
replacing each side of each small diamond by a geodesic.
The mesh of this diagram is at most:
\[4\cdot C\left(E+(2D)^\alpha\right)^{1//\alpha}\leq
  4CE^{1/\alpha}+8D<M_1D+M_2E^{1/\alpha}\]

The next step is to use Lemma~\ref{fillbigon} to relate the
remaining $x$ and $y$--side of the true diamond with the corresponding
sides of the approximate diamond.
Each adds a subdiagram of area at most $\Lambda$ and mesh at most:
\[2(2C+1)D+2C(E+(2D)^\alpha)^{1/\alpha}\leq 2CE^{1/\alpha}+(8C+2)D<M_1D+M_2E^{1/\alpha}\]
The resulting subdivision of the sides of the approximate diamond have
exponent at most:
\[ E+(2D)^\alpha+(2D)^\alpha=E+2LD^\alpha\]

To complete the diagram, add a corner 2--cell at each of the four
corners are in the triangle case. The boundaries of these 2--cells have
length at most $D+2(D+3L/2)=3D+3L<M_0+M_1D$.

We have verified that the perimeter of every 2--cell of our diagram
has length bounded by $M_0+M_1D+M_2E^{1/\alpha}$.
Its area is $2\Lambda+\Lambda^2+2\Lambda+4$. 
\end{proof}

\subsection{Base case: identifying and filling a central region}\label{sec:base_case}
 \begin{lemma}\label{lem:subdividecentral}
   For all $\lambda\in (0,1)$ and all sufficiently large $\Lambda$
   depending on $\lambda$ and all sufficiently large $R$ depending on
   $\lambda$ and $\Lambda$ there exist $K>1$ depending on $R$, $A$
   depending on $\Lambda$, and
   $E:=\left(\frac{1}{L\Lambda}+\frac{2L}{R^\alpha}\right)$ and $J$
   depending on $R$ and $\Lambda$ such that for any $K$--biLipschitz
   loop $\gamma$ of length $|\gamma|> J$ we can construct a loop
   $\gamma'$ from $\gamma$ as follows:
   \begin{enumerate}
   \item    Identify a coset $gH$ such that every escape of $\gamma$ from $gH$
     has length at most $|\gamma|/2$.
     \item Select a manageable collection of long escapes of $\gamma$
       from $gH$.
       \item Subdivide the trace of each long escape
   into at most $2(\Lambda+1)$--many segments of exponent at most
   $E|\gamma|^\alpha$.
   \item Define $\gamma'$ to be the loop obtained by replacing each long
   escape of $\gamma$  by a concatenation of geodesics
   connecting the endpoint of these segments.
   \end{enumerate}   
   
  The loop $\gamma'$ can be filled by a diagram of area at most $A$
  and mesh at most $\lambda|\gamma|+4C$.
 \end{lemma}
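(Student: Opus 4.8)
The plan is to assemble this from the machinery built in the preceding sections: first locate a central `region' via Step 2 of Section~\ref{sec:generalcase} applied to a minimal-area HNN diagram, then invoke Corollary~\ref{cor:very_manageable} to extract a manageable collection of long escapes, and finally subdivide the traces and fill using the approximate polygon lemmas of Section~\ref{approximatepolygons}. Concretely, I would begin by choosing $R$ large enough (depending on $\lambda$ and $\Lambda$) to feed into Corollary~\ref{cor:very_manageable}, and let $K$, $J$ be the constants it produces, enlarging $J$ as needed below. For a $K$-biLipschitz loop $\gamma$ with $|\gamma|>J$, I take a minimal-area HNN diagram $D\to X$ (which is a disk since $\gamma$ is embedded) and run the argument of Step~2 to find a central region $R_0=\phi^{-1}(v)$, which maps into a single coset $gH$; by construction every component of $\gamma-R_0$, equivalently every escape of $\gamma$ from $gH$, has length at most $|\gamma|/2$. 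This handles item~(1).

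Next, apply Corollary~\ref{cor:very_manageable} to obtain a manageable set $T$ of longest escapes with $\sum_{\gamma|_P\in T}|\gamma|_P|\ge\frac{R-1}{R}|\gamma|$ and with the lower bound $\min_{\gamma|_P\in T}|\gamma|_P|\ge R\max\{|\gamma|/J,\,|\gamma|-\sum_{\gamma|_P\in T}|\gamma|_P|\}$; this gives item~(2), with $|T|\in\{2,3,4\}$ and the configuration one of those in Figure~\ref{fig:central_region_cases}. For each long escape $\gamma|_P\in T$, its trace in $gH$ is a power $g^r$ of $a$, $x$, or $y$; since $|g^r|\le K|\gamma|_P|\le K|\gamma|/2$ and, by Theorem~\ref{powerdistortion}, $|r|^{1/\alpha}\le|g^r|$, we get a bound $|r|\le(K|\gamma|/2)^\alpha$, crude but enough. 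I subdivide this $g$-segment into $\Lambda$-many subsegments of exponent $\lceil |r|/\Lambda\rceil$, so each has exponent at most $|r|/\Lambda+1\le E|\gamma|^\alpha$ after one checks the arithmetic with $E=\frac{1}{L\Lambda}+\frac{2L}{R^\alpha}$ (the second term absorbing the $+1$ and constants once $R$ is large, using that $|\gamma|^\alpha>J^\alpha$ is large); then I replace the (at most $4$) long escapes by concatenations of geodesics along these subdivision points, plus short corner paths of bounded length $D$ joining the traces to the original escape endpoints — here $D$ is uniformly bounded because the escapes of $T$ are escapes from $gH$ so each endpoint differs from its trace endpoint by an element at distance $0$, i.e. $D=0$, while the manageable configuration supplies the polygon structure. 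This produces $\gamma'$ and gives item~(3) and the count $\le 2(\Lambda+1)$ subsegments per trace (the factor accounts for splitting a trace when $\bar v$ lies in an escape, as in Corollary~\ref{cor:very_manageable}).

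Finally, to fill $\gamma'$: the manageable configuration of $T$ is precisely one of the six pictures in Figure~\ref{fig:central_region_cases}, each of which is a true polygon in $H$ (a bigon, triangle, or diamond, possibly with one extra side of bounded exponent $<L$ coming from an $a^q$ correction term as in the proof of Proposition~\ref{prop:manage_or_simplify}). I fill each such polygon using Lemma~\ref{fillbigon}, Lemma~\ref{filltriangle}, or Lemma~\ref{filldiamond}, feeding in the subdivisions of the $a$/$x$/$y$-sides already constructed; these lemmas produce diagrams of area bounded by a polynomial in $\Lambda$ (hence $A$ depending only on $\Lambda$) and mesh at most $M_0+M_1D+M_2\cdot(E|\gamma|^\alpha)^{1/\alpha}=M_0+M_2 E^{1/\alpha}|\gamma|$ since $D=0$. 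The coefficient $M_2 E^{1/\alpha}$ is at most $\lambda$ once $\Lambda$ and $R$ are large enough: $E^{1/\alpha}\le (\tfrac{1}{L\Lambda})^{1/\alpha}+(\tfrac{2L}{R^\alpha})^{1/\alpha}$ can be made arbitrarily small, and $M_0\le 4C$ is absorbed into the stated additive constant $4C$. The main obstacle is the bookkeeping in Step~3: one must check that a single subdivision into $\Lambda$ equal pieces really yields exponent bound $E|\gamma|^\alpha$ with the \emph{specific} $E=\frac{1}{L\Lambda}+\frac{2L}{R^\alpha}$ claimed — this requires carefully tracking how the crude bound $|r|\le(K|\gamma|/2)^\alpha$ together with $K\to 1$ and $|\gamma|>J$ large forces $|r|/\Lambda\le \frac{|\gamma|^\alpha}{L\Lambda}\cdot(\text{something} \le 1)$ plus an error swallowed by $\frac{2L}{R^\alpha}|\gamma|^\alpha$ — and that the several polygon-filling lemmas' outputs assemble consistently along the shared subdivided sides. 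Everything else is a direct citation of the earlier results.
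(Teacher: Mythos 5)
Your overall plan matches the paper's (central-region identification via Step~2, Corollary~\ref{cor:very_manageable} for the manageable collection, subdivision plus the polygon-filling lemmas of Section~\ref{approximatepolygons}), but there is a genuine error that derails the quantitative part. You claim the corner paths of the approximate polygon have length $D=0$ ``because the escapes of $T$ are escapes from $gH$ so each endpoint differs from its trace endpoint by an element at distance $0$.'' It is true that each trace shares endpoints with its own escape, but the approximate polygon is the chain of \emph{distinct} traces, and the corner path joining the end of one trace to the start of the next is the intervening \emph{junk} subsegment of $\gamma$ — the part between consecutive manageable escapes. These junk segments have endpoints in $gH$, but they travel through the Cayley graph and have total length up to $|\gamma|/R$, as Corollary~\ref{cor:very_manageable} controls. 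The paper therefore applies Lemmas~\ref{fillbigon}, \ref{filltriangle}, \ref{filldiamond} with $D_{8.*}=|\gamma|/R$, not $D=0$.

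This error propagates in two ways. First, the mesh acquires an $M_1 D = M_1|\gamma|/R$ term that you have dropped, and controlling it is exactly why the paper takes $R>2M_1/\lambda$. Second, and more subtly, the specific constant $E=\frac{1}{L\Lambda}+\frac{2L}{R^\alpha}$ is not the exponent bound of the \emph{initial} subdivision you choose: it is the worst-case exponent bound of the subdivisions that the filling lemmas \emph{output} on the other sides of the polygon, and those outputs degrade by $+LD^\alpha$ or $+2LD^\alpha$ as $D$-dependent error (for instance $E_{\text{in}} + 2LD^\alpha = |\gamma|^\alpha\bigl(\frac{1}{L\Lambda}+\frac{2L}{R^\alpha}\bigr)$ in the diamond case). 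With $D=0$ you would never see the $\frac{2L}{R^\alpha}$ term at all, which is a sign the bookkeeping has gone wrong. You also gloss over Cases~\eqref{fig:central_region:three_escapes:slim}, \eqref{fig:central_region:four_escapes:a_embed} and \eqref{fig:central_region:four_escapes:a_cross} of Figure~\ref{fig:central_region_cases}, which are \emph{not} bigons, triangles, or diamonds and are handled in the paper by translating subconfigurations via the $H$-action and decomposing into pairs of approximate polygons; asserting they are each ``a true polygon'' with a small correction side isn't quite right and leaves those cases unfilled.
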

 
 \begin{proof}
   Define:
\begin{align*}
  M_0&:=\max\{M_{0,\ref{fillbigon}},
       M_{0,\ref{filltriangle}}, M_{0,\ref{filldiamond}}\}=4C\\
  M_1&:=\max\{M_{1,\ref{fillbigon}},
       M_{1,\ref{filltriangle}}, M_{1,\ref{filldiamond}}\}\\
  M_2&:=\max\{M_{2,\ref{fillbigon}},
M_{2,\ref{filltriangle}}, M_{2,\ref{filldiamond}}\}/2
\end{align*}
Assume that $\Lambda>(2M_2/\lambda)^\alpha$ and that
$R>\max\{2M_1/\lambda,
\bigl(\frac{L^2\Lambda}{L-1}\bigr)^{1/\alpha}\}$.  Let $J$ and $K$ be
as in Corollary~\ref{cor:very_manageable} with respect to $R$, and
further increase $J$ if necessary so that
$J\geq
\left(\frac{L-1}{L^2\Lambda}-\frac{1}{R^\alpha}\right)^{-1/\alpha}$.
 
Assume $|\gamma|>J$.
As described in Step 2 of Section~\ref{sec:generalcase}, any HNN
diagram for $\gamma$ has a central region.
The boundary of the central region maps to a loop in some coset $gH$
of the vertex group. Up to isometry, we may assume $g=1$.
By definition, the components of $\gamma$ complementary to the central
region have length at most $|\gamma|/2$.
These components are compound escapes of $\gamma$ from $H$, and they
contain the simple escapes from $H$ as subpaths, so all of the escapes of
$\gamma$ from $H$ have length at most $|\gamma|/2$.
Corollary~\ref{cor:very_manageable} gives us a
manageable collection of  2, 3, or 4 $R$--long
     escapes from $H$.
     Call the subsegments of $\gamma$ between these long escapes the
     ``junk''.
     The total length of the junk
     is strictly less than $|\gamma|/R$.
     
    Together the traces of the long escapes form an approximate 2,
    3, or 4--gon, and the junk segments give corner paths of
    length less than $|\gamma|/R$.
    Furthermore, since each escape from the central region has length
    at most $|\gamma|/2$, its trace has exponent at most
    $(|\gamma|/2)^\alpha$.
    We will apply Lemmas~\ref{fillbigon},
\ref{filltriangle}, and \ref{filldiamond}, in all cases with 
$D_{8.*}=|\gamma|/R$, $E_{8.*}=(|\gamma|/2)^\alpha/\Lambda$, and $\Lambda_{8.*}=\Lambda+1$.
Then all
three lemmas produce a diagram with:
\begin{equation}
  \label{eq:14}
  \parbox{.85\textwidth}{$\text{mesh}\leq M_0+M_1
  |\gamma|/R+M_2 |\gamma|/\Lambda^{1/\alpha}$

  $\text{area}\leq (\Lambda+1)^2+4(\Lambda+1)+4$}
\end{equation}

To construct subdivisions and fillings there are six cases to consider, according to the configuration of
 traces of the escapes of the manageable collection as in Figure~\ref{fig:central_region_cases}.
   
     {\bf Case~\eqref{fig:central_region:two_escapes}:}
Approximate bigon:
Choose a subdivision of one side into at
most $(\Lambda+1)$--many segments of exponent at most $(|\gamma|/2)^\alpha/\Lambda$.
Apply Lemma~\ref{fillbigon}.
This yields a subdivision of the opposite side into at most
$(\Lambda+1)$--many segments of exponent at most
$|\gamma|^\alpha\left(\frac{1}{L\Lambda}+\frac{L}{R^\alpha}\right)$ and a filling
diagram  with mesh and area bounded by \eqref{eq:14}.

{\bf Case~\eqref{fig:central_region:three_escapes:fat}:}
Approximate triangle:
Choose a subdivision of the $a$--side into at most $(\Lambda+1)$--many
subsegments of exponent at most $(|\gamma|/2)^\alpha/\Lambda$.
Apply Lemma~\ref{filltriangle}.
This yields subdivisions of the other two sides into at most
$(\Lambda+1)$--many segments of exponent at most $1+|\gamma|^\alpha\left(\frac{1}{L^2\Lambda}+ \frac{1}{R^\alpha}\right)$
and a filling diagram  with mesh and area bounded by \eqref{eq:14}.

{\bf Case~\eqref{fig:central_region:four_escapes:rectangle}:}
Approximate diamond:
Choose subdivisions of one $x$--side and one $y$--side into at most
$\Lambda+1$ many segments of exponent at most $(|\gamma|/2)^\alpha/\Lambda$.
Apply Lemma~\ref{filldiamond}. 
This yields subdivisions of the other two sides into
$(\Lambda+1)$--many segments of exponent at most 
$|\gamma|^\alpha\left(\frac{1}{L\Lambda}+\frac{2L}{R^\alpha}\right)$ and a filling diagram  with mesh and area bounded by \eqref{eq:14}.

{\bf Case~\eqref {fig:central_region:three_escapes:slim}:}
The traces of the manageable escapes can be described as if they were an approximate triangle by
elements $g_0$, $g_1$, $g_2$ and numbers $m_0$, $m_1$, $m_2$, with the
exception that instead of the sides alternating $x$, $y$, $a$, they
are all of the same flavor $g\in\{a,x,y\}$.
Let us assume the $g_2$--side is the long side.
Let $\delta_i$ be the junk segment from $g_{i-1}g^{m_{i-1}}$ to $g_i$.
Then $g^{m_0}\delta_0$ is a path from $g_2g^{m_2+m_0}$ to
$g_0g^{m_0}$.

We treat this configuration as a pair of approximate bigons.
One of these has side $\delta_0$, top the $g_0$--side, side
$g^{m_0}\bar\delta_0$, and bottom the part of the $g_2$--side from
$g_2g^{m_2+m_0}$ to $g_2g^{m_2}$.
The two sides have length $|\delta_0|<|\gamma|/R$.
The second bigon has side $g^{m_0}\delta_0+\delta_1$, top the
$g_1$--side, side $\bar\delta_2$, and bottom the part of the
$g_2$--side from $g_2$ to $g_2g^{m_2+m_0}$.
One side has length $|\delta_2|<|\gamma|/R$.
The other has length $|\delta_0|+|\delta_1|$, which, since our junk bound
is actually on the total length of the junk and not the individual
segments, is still less than $|\gamma|/R$.

Choose a subdivision of the $g_0$ and $g_1$ sides into at most
$(\Lambda+1)$--many subsegments of exponent at most
$(|\gamma|/2)^\alpha/\Lambda$.
Apply Lemma~\ref{fillbigon} to each of the two approximate bigons.
The constants involved are exactly the same as in
Case~\eqref{fig:central_region:two_escapes}, except that the bounds
for the area and
number of induced subsegments of the  $g_2$--side are doubled. 

{\bf Case~\eqref {fig:central_region:four_escapes:a_embed}:}
Use the same trick as in
Case~\eqref{fig:central_region:three_escapes:slim}: use the group
action to translate the $y$--side and its attached junk segments to
the opposite end of the shorter $x$--side, and view this configuration
as a combination of an approximate diamond and an approximate
triangle, and fill them separately.

{\bf Case~\eqref{fig:central_region:four_escapes:a_cross}:}
In this case think of the configuration as two approximate triangles
meeting at the point where the $a$--side and $y$--side intersect. 
Fill them each separately and combine the fillings.

\medskip

In all cases we produce diagrams with mesh bounded by $M_0+M_1
  |\gamma|/R+M_2 |\gamma|/\Lambda^{1/\alpha}$, so since $M_0=4C$, 
$R>2M_1/\lambda$, and $\Lambda>(2M_2/\lambda)^\alpha$, we have mesh bounded by $\lambda|\gamma|+4C$.

In all cases we fill at most two approximate polygons, each with a
diagram of area at most $ (\Lambda+1)^2+4(\Lambda+1)+4$, so it
suffices to take $A$ to be double this bound.

Scanning the six cases for exponent bounds, we have that the
hypotheses $R>\bigl(\frac{L^2\Lambda}{L-1}\bigr)^{1/\alpha}$ and
$|\gamma| > J\geq
\left(\frac{L-1}{L^2\Lambda}-\frac{1}{R^\alpha}\right)^{-1/\alpha}$
imply that the largest of the bounds for the exponents of the
subdivision segments is at most
$|\gamma|^\alpha\left(\frac{1}{L\Lambda}+\frac{2L}{R^\alpha}\right)$,
so it suffices to take
$E:=\left(\frac{1}{L\Lambda}+\frac{2L}{R^\alpha}\right)$.
\end{proof}

\subsection{Inductive steps}\label{sec:inductive_steps}
\subsubsection*{Inductive step:  capping off small enough
  parts}\label{capoffverysmallregion}
Recall  \eqref{small_enough}.
There $\gamma$ had a central escape, from some
  coset $gH$, whose trace in $gH$ had been subdivided into
at most $2(\Lambda+1)$--many subsegments of exponent at most
$L\left(\frac{|\gamma|}{R}\right)^\alpha$, which were then replaced by geodesics
$\delta_1$,\dots,$\delta_m$.
Then if $\gamma|_P$ was the part of $\gamma$ complementary to the
central escape, we argued that the length of
$\gamma|_P+\bar\delta_m+\cdots+\bar\delta_1$ was small enough to cap
off with a single 2--cell satisfying the desired mesh bound.

Here we argue additionally that very small parts can be capped off.
Recall from Section~\ref{sec:noncentralregions} that the non-central
part $\gamma|_P$ of $\gamma$ at some coset of $H$ is called ``very small'' when its
length is less than $J$, where $J$ is some pre-specified constant,
independent of $\gamma$.
The setup is similar to the case described following
\eqref{small_enough}: the central escape of $\gamma$ from the coset has
trace subdivided into at most $m = 2(\Lambda+1)$--many
subsegments, and there are given geodesics $\delta_i$ with the same
endpoints as the subsegments.

  \begin{equation}
    \label{eq:15}
    \fbox{\text{Assume $M\geq(C(2(\Lambda+1))^{1-1/\alpha}+1)J$.}}
  \end{equation}

Since $\gamma|_P$ has the same endpoints as the central escape and
length less than $J$, by Theorem~\ref{powerdistortion} the
exponent of the central escape is less than $J^\alpha$.
Suppose that the exponent of $\delta_i$ is $e_i$.
Then $|\sum_i e_i|<J^\alpha$.
By Theorem~\ref{powerdistortion}, $|\delta_i|<C|e_i|^{1/\alpha}$.
  By reverse H\"older, $\sum_{i=1}^m|e_i|^{1/\alpha}\leq
  m^{1-1/\alpha}|\sum_{i=1}^me_i|^{1/\alpha}$.
  Therefore, the length of
  $\gamma|_P+\bar\delta_m+\cdots+\bar\delta_1$ is less than:
  \begin{equation}
    \label{very_small_region_mesh}
    (Cm^{1-1/\alpha}+1)J\leq(C(2(\Lambda+1))^{1-1/\alpha}+1)J\leq M
  \end{equation}

  Thus, we can cap off a very small part $\gamma|_P$ of $\gamma$ with a single 2--cell with
  boundary $\gamma|_P+\bar\delta_m+\cdots+\bar\delta_1$, whose length
  is at most $M$.

\subsubsection*{Inductive step: enfilades}
\begin{equation}
  \label{eq:inductionenfilades}
  \fbox{\text{Define $M_1:=2C+1+C\cdot2^{2+1/\alpha}$ and $M_2:=C$.}}
\end{equation}
\begin{equation}
  \label{eq:16}
  \fbox{\text{Assume $R\geq 2M_1/\lambda$ and $\Lambda\geq(2M_2/\lambda)^\alpha$.}}
\end{equation}

Suppose we have an escape $\gamma|_P$ in the $n$--th level, so that by
\eqref{induction_hypothesis} the central
side has been subdivided into at most $2(\Lambda+1)$--many
geodesic segments of exponent at most $|\gamma|^\alpha E(3/L)^{n}$,
where $E$ is the constant of Lemma~\ref{lem:subdividecentral}, which
depends on $R$ and $\Lambda$, but not on $\gamma$.
Suppose this exponent  bound exceeds that of \eqref{small_enough}, so that the escape is
not small enough to cap off. This means:
\begin{equation}
  \label{eq:enfiladenotsmall}
  |\gamma|^\alpha E(3/L)^{n}>L(|\gamma|/R)^\alpha\implies \frac{1}{R^\alpha}<\frac{3^nE}{L^{n+1}}
\end{equation}

\begin{equation}
  \label{eq:17}
  \fbox{\text{Assume $J$ and $K$ satisfy Proposition~\ref{enfilade_width} with
$R_{\ref{enfilade_width}}=3R$.}}
\end{equation}
Apply Proposition~\ref{enfilade_width} with
$R_{\ref{enfilade_width}}=3R$.  Then the distance between the
endpoints of the escape and the corresponding endpoints of the end of
the enfilade is at most
$3|\gamma|_P|/R_{\ref{enfilade_width}}=|\gamma|_P|/R\leq |\gamma|/2R$.
We claim that we can fill the enfilade with the same argument as
Lemma~\ref{fillbigon}, treating the enfilade as a
$(|\gamma|/(2R))$--approximate bigon.  Specifically, the point in that
proof where we used that the two sides of the bigon were in the same
coset of $\langle a,x,y\rangle$ was to say that we could translate
$\delta_0$ along the bottom side and obtain ``parallel'' copies, in
the sense that $g^a.\delta_0$ and $g^b.\delta_0$ are both paths from
the bottom side to the top side and their endpoint at the bottom and
top both differ by $g$ raised to the power $b-a$.  By
Lemma~\ref{enfilade_group_action}, exactly the same thing is true for
enfilades, except that we possibly have different $g$'s from the set
$\{a,x,y\}$ on the two sides.  As our length versus exponent estimates
are uniform for $\{a,x,y\}$, this does not change the rest of the
argument.

Apply that argument with $\Lambda_{\ref{fillbigon}}:=2(\Lambda+1)$,
$E_{\ref{fillbigon}}:=|\gamma|^\alpha E(3/L)^{n} $, and
$D_{\ref{fillbigon}}:=|\gamma|/(2R)$, with $M_{i,\ref{fillbigon}}$
denoting the constant $M_i$ in the conclusion of Lemma~\ref{fillbigon}.
This yields a subdivision of the outgoing side into at most
$2(\Lambda+1)$--many subsegments of exponent at most
$E_{\ref{fillbigon}}+LD_{\ref{fillbigon}}^\alpha$ and a filling
diagram of area  $\Lambda_{\ref{fillbigon}}$ and mesh at most:
\begin{align*}
  M_{0,\ref{fillbigon}}&+M_{1,\ref{fillbigon}}D_{\ref{fillbigon}}+M_{2,\ref{fillbigon}}E_{\ref{fillbigon}}^{1/\alpha}\\
  &=0+2(2C+1)D_{\ref{fillbigon}}+2CE_{\ref{fillbigon}}^{1/\alpha}\\
  &=(2C+1)|\gamma|/R+2C|\gamma|(E(3/L)^n)^{1/\alpha}\\
 &=(2C+1)|\gamma|/R+2CE^{1/\alpha}\left(3/L\right)^{n/\alpha}\cdot|\gamma|\\
                            &=(2C+1)|\gamma|/R+2C\left(\frac{1}{L\Lambda}+\frac{2L}{R^\alpha}\right)^{1/\alpha}\cdot\left(3/L\right)^{n/\alpha}\cdot|\gamma|&\text{definition of $E$}\\
  &<(2C+1)|\gamma|/R+2C\left(\frac{1}{L\Lambda}+\frac{2L}{R^\alpha}\right)^{1/\alpha}\cdot|\gamma|&\text{because
  $L>3$}\\
  &\leq
    (2C+1)|\gamma|/R+2C\left(\frac{1}{2\Lambda^{1/\alpha}}+\frac{2^{1+1/\alpha}}{R}\right)\cdot|\gamma|&\text{reverse
  H\"older}\\
                            &=(2C+1+C\cdot 2^{2+1/\alpha})|\gamma|/R+C|\gamma|/\Lambda^{1/\alpha}\\
  &=M_1|\gamma|/R+M_2|\gamma|/\Lambda^{1/\alpha} &\text{definition of $M_i$}
\end{align*}
Since $R\geq 2M_1/\lambda$ and $\Lambda\geq (2M_2/\lambda)^\alpha$, the mesh is at most $\lambda|\gamma|$.

Consider the ratio of outgoing to incoming exponent bounds:
\begin{align*}
  \frac{E_{\ref{fillbigon}}+LD_{\ref{fillbigon}}^\alpha}{E_{\ref{fillbigon}}}&=1+\frac{L(|\gamma|/(2R))^\alpha}{|\gamma|^\alpha E(3/L)^n}\\
  &=1+\frac{L^{n+1}}{2^\alpha3^nER^\alpha}\\
  &=1+\frac{L^{n}}{3^nER^\alpha}\\
           &<1+\frac{L^{n}}{3^nE}\cdot\frac{3^nE}{L^{n+1}}\qquad\text{by
             \eqref{eq:enfiladenotsmall}}\\
  &\leq 1+\frac{1}{L}= \frac{L+1}{L}
\end{align*}
Recall that for the induction we are aiming for an exponent ratio of
$3/L$, and here we have only shown for enfilades that
it is not too much larger than 1. The branching parts compensate.

\subsubsection*{Inductive step: branching
  parts}
\begin{equation}
  \label{eq:inductionbranchingregions}
  \fbox{\text{Let $M_0:=4C$, $M_1:=C\cdot
2^{3+1/\alpha}+6C+2$, and let $M_2:=2C$.}}
\end{equation}

Assume \eqref{eq:16} and \eqref{eq:17} are satisfied, and furthermore:
\begin{equation}
  \label{eq:18}
  \fbox{\text{Assume
$R\geq 2M_1/\lambda$ and $\Lambda\geq(2M_2/\lambda)^\alpha$ and $J\geq
2R$ and $M\geq M_0$.}}
\end{equation}
Let $E$ be the constant of Lemma~\ref{lem:subdividecentral}, which
depends on $R$ and $\Lambda$.

Suppose now that we have reached a branching part of $\gamma$ at the end of an
enfilade as in the previous subsection.
Then by the induction hypothesis \eqref{induction_hypothesis} and the
result of the previous subsection, the trace of the central escape is subdivided into at most
$2(\Lambda+1)$--many subsegments of exponent at most
$\frac{L+1}{L}|\gamma|^\alpha E(3/L)^n$.
Again, we assume that the branching part is not small enough to
cap off yet, so that \eqref{small_enough} fails and we have:
\begin{equation}
  \label{eq:branchingregionnotsmall}
  \frac{L+1}{L}|\gamma|^\alpha E(3/L)^{n}>L(|\gamma|/R)^\alpha\implies \frac{1}{R^\alpha}<\frac{3^nE(L+1)}{L^{n+2}}
\end{equation}

The traces of the central and two long non-central escapes form an
approximate triangle, so we apply Lemma~\ref{filltriangle} with
$\Lambda_{\ref{filltriangle}}:=2(\Lambda+1)$,
$E_{\ref{filltriangle}}:=\frac{L+1}{L}|\gamma|^\alpha E(3/L)^{n} $, and
$D_{\ref{filltriangle}}:=|\gamma|/R$.
This yields subdivisions of the other two sides into at most $2(\Lambda+1)$--many subsegments of
exponent at most
$1+E_{\ref{filltriangle}}/L+D_{\ref{filltriangle}}^\alpha$ and a
filling diagram of area quadratic in $\Lambda$ and mesh at most:
\begin{align*}
  M_{2,\ref{filltriangle}}&E_{\ref{filltriangle}}^{1/\alpha}+M_{1,\ref{filltriangle}}D_{\ref{filltriangle}}+M_{0,\ref{filltriangle}}\\
  &=2CE_{\ref{filltriangle}}^{1/\alpha}+(6C+2)D_{\ref{filltriangle}}+4C\\
  &=2C\left(\frac{L+1}{L}|\gamma|^\alpha
                                                                       E(3/L)^n\right)^{1/\alpha}+(6C+2)|\gamma|/R+4C\\
  &<4C|\gamma|
                                                                       E^{1/\alpha}+(6C+2)|\gamma|/R+4C&\text{because
                                                                                                                $L\geq
                                                                                                         6>3$}\\
  &=4C|\gamma|
                                                                       \cdot\left(\frac{1}{L\Lambda}+\frac{2L}{R^\alpha}\right)^{1/\alpha}+(6C+2)|\gamma|/R+4C&\text{definition
                                                                                                         of
                                                                                                                                                                $E$}\\
  &\leq 4C|\gamma|
                                                                       \cdot\left(\frac{1}{2\Lambda^{1/\alpha}}+\frac{2^{1+1/\alpha}}{R}\right)+(6C+2)|\gamma|/R+4C&\text{reverse
                                                                                                                                                                     H\"older}\\
  &=(C\cdot
    2^{3+1/\alpha}+6C+2)|\gamma|/R+2C|\gamma|/\Lambda^{1/\alpha}+4C\\
  &=M_0+M_1|\gamma|/R+M_2|\gamma|/\Lambda^{1/\alpha}&\text{definition of $M_i$}
\end{align*}
Since $M\geq M_0$, $R\geq 2M_1/\lambda$, and
$\Lambda\geq(2M_2/\lambda)^\alpha$, the mesh is less than $\lambda|\gamma|+M$.

Since  $|\gamma|>J\geq 2R$, the ratio of outgoing to incoming
  exponents is at most:
\begin{align*}
  \frac{1+E_{\ref{filltriangle}}/L+D_{\ref{filltriangle}}^\alpha}{E_{\ref{filltriangle}}}&=\frac{1}{L}+\frac{L}{L+1}\cdot\frac{L^n}{3^nE|\gamma|^\alpha}+\frac{L}{L+1}\cdot\frac{L^n}{3^nER^\alpha}\\
  &=
    \frac{1}{L}+\frac{L}{L+1}\cdot 
    \frac{L^n}{3^nE}\left(\frac{R^\alpha}{|\gamma|^\alpha}+1\right)\frac{1}{R^\alpha}\\
    &\leq
    \frac{1}{L}+\frac{L}{L+1}\cdot 
    \frac{L^n}{3^nE}\cdot\frac{5}{4}\cdot\frac{1}{R^\alpha}\qquad\text{
  Since $\alpha>2$ and $|\gamma|\geq 2R$.}\\
  &\leq
    \frac{1}{L}+\frac{L}{L+1}\cdot 
    \frac{L^n}{3^nE}\cdot\frac{5}{4}\cdot \frac{3^nE(L+1)}{L^{n+2}}\qquad\text{by
    \eqref{eq:branchingregionnotsmall}}\\
  &=\frac{9}{4L}
\end{align*}
This means that if we take the ratio of the exponent bound when
entering the enfilade to the bound when exiting through one of the
long escapes of the branching part at the end of the enfilade, then
we get $\frac{L+1}{L}\cdot\frac{9}{4L}$.
Since we have $L>3$, it follows that 
$\frac{L+1}{L}\cdot\frac{9}{4L}<\frac{3}{L}$, so this confirms \eqref{induction_hypothesis}.

\section{Proof of the main theorem and closing
  remarks.}\label{sec:closing}
\begin{thm}\label{maintheorem}
  For all even $L\geq6$, every asymptotic cone of $G_L$ is simply connected.
\end{thm}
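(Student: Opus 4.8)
The plan is to verify the criterion of Theorem~\ref{rileycriteria}(3): given $\lambda\in(0,1)$, produce constants $A$ and $M$ such that every null-homotopic word $w$ admits a van Kampen diagram of area at most $A$ and mesh at most $\lambda|w|+M$. By Corollary~\ref{cor:reduction}, it suffices to produce, for a suitable choice of $K>1$, such a filling for every $K$--biLipschitz loop $\gamma$, and then the general-loop case follows with slightly worse constants $A+3$, $\lambda'$, $M'$. So the entire content is: \emph{for all $\lambda\in(0,1)$ there exist sufficiently large $R$, $\Lambda$, and then $1<K<2$ and $A,J,M>0$ such that every $K$--biLipschitz loop $\gamma$ admits a filling of area $\le A$ and mesh $\le \lambda|\gamma|+M$.} Short loops ($|\gamma|\le J$) are trivial to fill by a single $2$--cell; the work is for $|\gamma|>J$.

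First I would fix $\lambda$ and then choose the constants in the order forced by the boxed inequalities in Section~\ref{sec:inductive_steps} and Lemma~\ref{lem:subdividecentral}: pick $\Lambda$ large (larger than $(2M_2/\lambda)^\alpha$ for the various $M_2$'s appearing in Lemmas~\ref{fillbigon}--\ref{filldiamond} and in \eqref{eq:inductionenfilades}, \eqref{eq:inductionbranchingregions}); then pick $R$ large (larger than $2M_1/\lambda$ for the various $M_1$'s, larger than $12C(\Lambda+1)/\lambda$ as in \eqref{eq:11}, and larger than the algebraic thresholds $\bigl(\tfrac{L^2\Lambda}{L-1}\bigr)^{1/\alpha}$, $3L+4$, etc.); this determines $E:=\tfrac{1}{L\Lambda}+\tfrac{2L}{R^\alpha}$ via Lemma~\ref{lem:subdividecentral}, and hence via \eqref{eq:9} a uniform integer $n_0:=\lceil\log_{L/3}(ER^\alpha/L)\rceil$ bounding the depth of the induction; then pick $M$ large (at least $M_0=4C$ and at least $(C(2(\Lambda+1))^{1-1/\alpha}+1)J$ as in \eqref{eq:15}); and finally pick $K>1$ small enough and $J$ large enough to simultaneously satisfy Corollary~\ref{cor:very_manageable}, Proposition~\ref{enfilade_width} with $R_{\ref{enfilade_width}}=3R$, the inductive-step requirements, and $J\ge 2R$. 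The point I would stress is that all the boxed constraints are of the form ``$\Lambda$ or $R$ or $J$ large enough'' or ``$K$ small enough'', with no circular dependencies, so they can indeed be met simultaneously.

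With the constants fixed, the filling of a $K$--biLipschitz loop $\gamma$ with $|\gamma|>J$ is assembled from the shells described in Section~\ref{sec:overview}. The base case is Lemma~\ref{lem:subdividecentral}: identify a coset $gH$ with all escapes of length $\le|\gamma|/2$ (Step~2 of Section~\ref{sec:generalcase} guarantees a central region exists in any HNN diagram), extract a manageable collection of $2$, $3$, or $4$ long escapes (Corollary~\ref{cor:very_manageable}), subdivide their traces, and fill the resulting approximate polygon by the appropriate one of Lemmas~\ref{fillbigon}--\ref{filldiamond}; this is the $0$-th shell and establishes the induction hypothesis \eqref{induction_hypothesis} for $n=0$. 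For the inductive step on an $n$-th level escape $\gamma|_P$: if its trace exponent is already $\le L(|\gamma|/R)^\alpha$ (equivalently \eqref{small_enough}/\eqref{eq:9} holds), cap it off with one $2$--cell of boundary length $\le\lambda|\gamma|$ (for ``small enough'' parts) or $\le M$ (for ``very small'' parts, via \eqref{very_small_region_mesh}); otherwise take its $R$--enfilade decomposition, fill the enfilade as a $(|\gamma|/2R)$--approximate bigon (using Proposition~\ref{enfilade_width} to bound the end displacement and Lemma~\ref{enfilade_group_action} to justify the bigon-filling argument applies), then at the end either cap off or, if branching, fill the approximate triangle of the central side against the two long non-central escapes by Lemma~\ref{filltriangle}, adding the two new long escapes to level $n+1$. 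The exponent-ratio computations in Section~\ref{sec:inductive_steps} show each enfilade contributes a factor $\le\tfrac{L+1}{L}$ and each branching part a factor $\le\tfrac{9}{4L}$, and since $L>3$ the product is $<\tfrac{3}{L}$, so \eqref{induction_hypothesis} propagates. After at most $n_0$ shells every remaining branch satisfies \eqref{eq:9} and is capped off, giving total area bounded by \eqref{eq:13}, namely $\mathcal{C}+4(\mathcal{E}+\mathcal{B})(2^{n_0}-1)+2^{n_0+2}$, a constant $A$ independent of $\gamma$; the mesh is $\le\lambda|\gamma|+M$ throughout.

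The main obstacle — and the reason the bulk of the paper precedes this theorem — is not the bookkeeping above but the structural input it relies on: that biLipschitz paths and loops in $X_L$ genuinely resemble snowflakes, i.e.\ that a $K$--biLipschitz path with endpoints in $H$ is dominated by one or two long escapes (Theorem~\ref{longescapesinlength}), that the central region has at most four long sides in one of six configurations (Corollary~\ref{cor:very_manageable} via Proposition~\ref{prop:manage_or_simplify}), that non-central regions branch like triangles, and that enfilades are metrically thin (Proposition~\ref{enfilade_width}). These are exactly the results established in Sections~\ref{sec:bilipschitz}--\ref{sec:noncentralregions}, and as the introduction warns, making them rigorous is delicate because the non-monotone deviation of the distortion function from a pure power law (Proposition~\ref{this_whole_paper_is_not_a_waste_of_time}) defeats naive reverse-Hölder estimates. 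For the proof of Theorem~\ref{maintheorem} itself, however, these are black boxes, and the remaining task is the purely logistical one of checking that the finitely many constants can be chosen consistently and that the $\le n_0$ shells glue along their common boundary geodesics into a single van Kampen diagram of the advertised area and mesh; I would present this as a careful but routine verification citing Lemma~\ref{lem:subdividecentral}, the three inductive-step calculations, and Corollary~\ref{cor:reduction}, followed by invoking Theorem~\ref{rileycriteria} to conclude simple connectivity of every asymptotic cone.
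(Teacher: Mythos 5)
Your proposal is correct and takes essentially the same route as the paper: fix $\lambda$, choose $\Lambda$, $R$, $J$, $K$, $M$ in the order forced by the boxed constraints, apply the shell‑based induction of Section~\ref{sec:constructfillings} with base case Lemma~\ref{lem:subdividecentral} and the three inductive steps, bound the area via \eqref{eq:13} with depth $n_0 = \lceil\log_{L/3}(ER^\alpha/L)\rceil$, and then invoke Corollary~\ref{cor:reduction} and Theorem~\ref{rileycriteria}. The only quibble is the stated ordering in which you fix $M$ before $(K,J)$, even though \eqref{eq:15} makes $M$ depend on $J$; as you observe, though, there is no circularity, and the paper simply chooses $(K,J)$ first and then sets $M := \max\{4C,\,(C(2(\Lambda+1))^{1-1/\alpha}+1)J\}$.
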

\begin{proof}
  For $i\in\{1,2\}$ let
  $M_i:=\max\{M_{i,{\ref{lem:subdividecentral}}},
  M_{i,{\ref{eq:inductionenfilades}}},
  M_{i,{\ref{eq:inductionbranchingregions}}}\}$.
  These depend only on $C$.
  
  Pick any $\lambda\in (0,1)$.  Pick any
  $\Lambda>(2M_2/\lambda)^\alpha$.  Pick any
  $R>\max\bigl\{3,2M_1\lambda, 12C(\Lambda+1)/\lambda,
  \bigl(\frac{L^2\Lambda}{L-1}\bigr)^{1/\alpha}\bigr\}$.  Pick $J$
  large enough and $1<K<2$ close enough to 1 to satisfy
  Corollary~\ref{cor:very_manageable} and
  Lemma~\ref{lem:subdividecentral} with respect to $R$ and
  Proposition~\ref{enfilade_width} with respect to $3R$.  Assume
  further that $J\geq 2R$.  Define
  $M=M_0:=\max\{4C,(C(2(\Lambda+1))^{1-1/\alpha}+1)J\}$.

These choices simultaneously satisfy Lemma~\ref{lem:subdividecentral}
and the assumptions \eqref{eq:15}, \eqref{eq:inductionenfilades}, \eqref{eq:16}, \eqref{eq:17},
\eqref{eq:inductionbranchingregions}, and \eqref{eq:18}.
  
Let $\mathcal{C}$ be the area bound given by
Lemma~\ref{lem:subdividecentral}, which is a quadratic function of $\Lambda$.
Let $\mathcal{E}=\Lambda_{\ref{fillbigon}}=2(\Lambda+1)$ be the area
bound for enfilades.
Let $\mathcal{B}$ be the area bound for branching parts, which is quadratic in $\Lambda$.
Let $A$ be the area bound of \eqref{eq:13} with respect to
$\mathcal{C}$, $\mathcal{E}$, and $\mathcal{B}$.

If $\gamma$ is a $K$--biLipschitz loop of length at most $J$ we
fill it by a single 2--cell with mesh $J<M$.

If $\gamma$ is a $K$--biLipschitz loop of length greater than $J$ then
we apply the inductive argument of Section~\ref{sec:constructfillings}
to get a filling diagram of $\gamma$ with area at most $A$ and mesh at
most the maximum of the meshes for the central part, very small
parts, enfilades, and branching parts.
We have chosen $M_0$, $M_1$, $M_2$, $R$, and $\Lambda$ so that this is
at most $\lambda|\gamma|+M$.

We conclude that for this choice of $K>1$, every $K$--biLipschitz loop $\gamma$ can be filled by a diagram of area
at most $A$ and mesh at most $\lambda|\gamma|+M$, with $A$, $\lambda$,
and $M$ independent of $\gamma$.
Now apply Corollary~\ref{cor:reduction} and
Theorem~\ref{rileycriteria} to conclude that every asymptotic cone of
$G_L$ is simply connected.
\end{proof}
We close with a list of open questions:
\begin{enumerate}
\item \label{q_gl_ss} Is $G_L$ a strongly shortcut group?
\item \label{q_acss_ss} If every asymptotic cone of a group is simply
  connected, is the group strongly shortcut?
\item \label{q_ss_gs_inv} Is the strong shortcut property for Cayley
  graphs of groups invariant under change of generating set?
\item Is the strong shortcut property for groups invariant under
  quasi-isometry?
\item Can a group of polynomial growth have an asymptotic cone
  containing an isometrically embedded circle?
\item Can a group with quadratic Dehn function have an asymptotic cone
  containing an isometrically embedded circle?
\item Are all asymptotic cones of all of the Brady-Bridson snowflake
  groups $G_{p,q}$ of \eqref{bradybridsonsnowflake} simply connected,
  or is it important that we have taken $q=1$?
\end{enumerate}
Note that, by Theorem~\ref{maintheorem} and
Corollary~\ref{cor:notstrongshort},
\begin{itemize}
\item if (\ref{q_gl_ss}) has a negative answer then (\ref{q_acss_ss})
  has a negative answer and
\item if (\ref{q_gl_ss}) has a positive answer then
  (\ref{q_ss_gs_inv}) has a negative answer.
\end{itemize}
Thus (\ref{q_acss_ss}) and (\ref{q_ss_gs_inv}) cannot both have
positive answers.

\bibliographystyle{hypersshort}
\bibliography{refs}
\end{document}